\theoremstyle{plain}\newtheorem{Theorem}{Theorem}[section]
\theoremstyle{plain}\newtheorem{Corollary}[Theorem]{Corollary}
\theoremstyle{plain}\newtheorem{Lemma}[Theorem]{Lemma}
\theoremstyle{plain}\newtheorem{Definition}[Theorem]{Definition}
\theoremstyle{plain}\newtheorem{Proposition}[Theorem]{Proposition}
\theoremstyle{plain}
\theoremstyle{plain}\newtheorem*{Theorem*}{Theorem}
\theoremstyle{remark}\newtheorem{remark}[Theorem]{Remark}
\DeclareMathOperator{\parr}{par}
\DeclareMathOperator{\ch}{ch}
\DeclareMathOperator{\TD}{TD}
\DeclareMathOperator{\rank}{rank}
\DeclareMathOperator{\II}{I}
\DeclareMathOperator{\ad}{ad}
\DeclareMathOperator{\SHI}{SHI}
\DeclareMathOperator{\AHI}{AHI}
\DeclareMathOperator{\AKh}{AKh}
\DeclareMathOperator{\muu}{\mu^{orb}}
\DeclareMathOperator{\Sp}{Sp}
\DeclareMathOperator{\SU}{SU}
\DeclareMathOperator{\SO}{SO}
\DeclareMathOperator{\Ad}{Ad}
\DeclareMathOperator{\spann}{span}
\DeclareMathOperator{\Hom}{Hom}
\DeclareMathOperator{\pt}{pt}
\DeclareMathOperator{\id}{id}
\newcommand{\bA}{\mathbb{A}}
\newcommand{\bC}{\mathbb{C}}
\newcommand{\bE}{\mathbb{E}}
\newcommand{\bK}{\mathbb{K}}
\newcommand{\bL}{\mathbb{L}}
\newcommand{\bQ}{\mathbb{Q}}
\newcommand{\bR}{\mathbb{R}}
\newcommand{\bU}{\mathbb{U}}
\newcommand{\bV}{\mathbb{V}}
\newcommand{\bW}{\mathbb{W}}
\newcommand{\bZ}{\mathbb{Z}}
\author{Yi Xie, Boyu Zhang}
\title{Instanton Floer homology for sutured manifolds with tangles}
\begin{document}

\begin{abstract}
We prove an excision theorem for the singular instanton Floer homology introduced in \cite{KM:YAFT,KM:Kh-unknot} that allows the excision surfaces to intersect the singular locus. This is an extension of the non-singular excision theorem by Kronheimer and Mrowka \cite[Theorem 7.7]{KM:suture} and the genus-zero singular excision theorem by Street \cite{Street}. We use the singular excision theorem to define an instanton Floer homology theory for sutured manifolds with tangles. As applications, we prove that the annular Khovanov homology introduced in \cite{APS} (1) detects the unlink, (2) detects the closure of the trivial braid, and (3) 
distinguishes braid closures 
from other links; we also prove that
 the annular instanton Floer homology introduced in \cite{AHI} detects the Thurston norm of meridional surfaces. \end{abstract}

\maketitle

\setcounter{tocdepth}{1}
\tableofcontents

\section{Introduction}
\label{sec_intro}

Heegaard Floer homology for 
sutured 3-manifolds is introduced by Juh\'asz in \cite{Juhasz-holo-disk,Juh:sut}. 
Motivated by Juh\'asz's work, Kronheimer and
Mrowka defined the instanton Floer homology and the monopole Floer homology for sutured 3-manifolds in \cite{KM:suture}. Another invariant introduced by Kronheimer and Mrowka
is the singular instanton Floer homology for 3-manifolds with links \cite{KM:YAFT,KM:Kh-unknot}. 
These invariants have become very important tools in the study of knots and links in 3-manifolds. 
In this article, we study the interaction between the sutured instanton Floer homology and the singular instanton Floer homology.  
More precisely, we use the singular instanton Floer homology to define invariants for sutured 3-manifolds with tangles. 

A crucial ingredient in the definition of sutured instanton Floer homology is the excision theorem \cite[Theorem 7.7]{KM:suture},
which is a generalization of Floer's original torus excision theorem \cite{floer1990instanton,Floer-memorial,braam1995floer}. 
Roughly speaking, Floer's excision theorem states that if one cuts a 3-manifold along two tori and re-glue it in a different way, then
the instanton Floer homology of the resulting 3-manifold  is isomorphic to the original one. Kronheimer and Mrowka \cite{KM:suture} generalized this theorem to arbitrary positive genera.

The proof of Kronheimer and Mrowka's excision theorem relies on a computation by Mu\~noz \cite{Munoz} for the instanton Floer homology ring of $S^1\times\Sigma$ where $\Sigma$ is a closed surface. In fact, the excision theorem follows from Mu\~noz's computation and the formal properties of the instanton Floer homology.  Mu\~noz's computation, on the other hand, is based on an explicit expression of the cohomology ring structure of the moduli space of stable bundles with rank $2$ over $\Sigma$, which was studied in \cite{Bara ,Zagier,STian, KingNewstead}.

We extend Kronheimer and Mrowka's excision theorem to singular instanton Floer homology. The singular instanton Floer homology is defined for a link $L$ in a 3-manifold $Y$. Kronheimer and Mrowka's excision theorem is still valid for the singular instanton Floer homology as long as the excision surfaces are disjoint from $L$, see Theorem \ref{thm_nonsingularExcision} below for the precise statement. Our result allows the excision surfaces to intersect the link $L$, under the condition that the number of intersection points of an excision surface with $L$ is odd and at least three. Theorem \ref{thm_excision} and Theorem \ref{thm_excisionAlongOneSurface} below show that in this case, the singular instanton Floer homology of the resulting $3$-manifold is isomorphic to the original one in a suitable sense. 
 When the genus of the excision surfaces is $0$, the
singular excision theorem
was proved by Street in \cite{Street}. 

Similar to the non-singular case, the proof of the singular excision theorem can be reduced to the computation of a singular instanton Floer homology ring on $S^1\times\Sigma$. 
Street \cite{Street} adapted the argument of Mu\~noz \cite{Munoz} and established a relation between this Floer homology ring and the cohomology ring of the moduli space of stable parabolic bundles over $\Sigma$ with rank $2$. However, the cohomology ring of this moduli space seems to be more complicated than its non-singular counterpart, and a direct generalization of Mu\~noz's computation seems difficult. 
In the genus-zero case, Street \cite{Street} computed the cohomology ring  of the moduli space using a volume formula given by \cite{jeffrey1994toric}. The algebra in \cite{Street} depends heavily on the genus assumption and seems difficult to generalize to the higher genus case. 

Our proof applies a different approach. Instead of computing the complete ring structure of the cohomology of the moduli space, we use an idea of Mumford (cf. \cite{Zagier}) to find one particular relation of the canonical generators using the vanishing of Chern classes beyond the rank of an index bundle over the moduli space. At the same time, we study the property of the singular instanton Floer homology ring by computing several cobordism maps. We then show that the above considerations already yield enough algebraic information to prove the singular excision theorem.

The singular excision theorem allows us to define the instanton Floer homology for a balanced sutured manifold with a balanced tangle following the strategy of Kronheimer and Mrowka \cite{KM:suture}.
The definition of a balanced sutured manifold was introduced by Juh\`asz in \cite{Juhasz-holo-disk}, which is a variation of Gabai's original 
definition of sutured manifold in \cite{G:Sut-1}. By definition, a balanced sutured manifold consists of a 3-manifold $M$ and a collection of circles $\gamma\subset \partial M$,
such that there is a decomposition
\begin{equation}\label{partial-M-decom'}
\partial M=A(\gamma)\cup R^+(\gamma)\cup R^-(\gamma),
\end{equation}
which satisfies extra conditions.
The precise definition will be reviewed in Definition \ref{Def-sutured-mfd}. 
Scharlemann \cite{Schar} studied a sutured manifold $(M,\gamma)$
with a properly embedded 1-complex $T$, and generalized   
Gabai's definition of tautness for sutured manifolds \cite{G:Sut-1} to the triple $(M,\gamma,T)$. 
The definition of tautness for $(M,\gamma,T)$ will be reviewed in Definition \ref{Def-taut}. We will only consider the case when $T$ is a 1-manifold. In this case, we make the following definition.
\begin{Definition}\label{Def-tangle'}
Let $(M,\gamma)$ be a balanced sutured manifold with the decomposition \eqref{partial-M-decom'} as above. 
A \emph{tangle} $T$ in $M$ is a properly embedded 1-manifold with $\partial T\subset R^+(\gamma)\cup R^-(\gamma)$. 
The tangle $T$ in $M$ is called \emph{balanced} if $|T\cap R^+(\gamma)|=|T\cap R^-(\gamma)|$.
The tangle $T$ in $M$ is called \emph{vertical} if  every connected component of $T$
intersects both $R^+(\gamma)$ and $R^-(\gamma)$. 
\end{Definition}

If $T$ is a balanced tangle on a balanced sutured manifold $(M,\gamma)$,
we use the singular excision theorem to define an instanton Floer homology group $\SHI(M,\gamma,T)$.
When $T$ is empty, our definition coincides with Kronheimer and Mrowka's definition of sutured instanton Floer homology in \cite{KM:suture}.
Moreover, we prove that the group $\SHI$ satisfies the following properties. 
\begin{Theorem}\label{SHI-properties}
Let $(M,\gamma,T)$ be a balanced sutured manifold with a balanced tangle.
\begin{itemize}
\item[(a)] If $(M,\gamma, T)$ is taut, then $\SHI(M,\gamma, T)\neq 0$.

\item[(b)]  If  $T$ is vertical,  $(M,\gamma)$ is a homology product, and
\begin{equation*}
\SHI(M,\gamma,T)\cong \mathbb{C},
\end{equation*}
then the triple $(M,\gamma,T)$ is diffeomorphic to a product sutured manifold with a product tangle, i.e. 
$$(M,T)\cong ([-1,1]\times F,[-1,1]\times \{p_1,\cdots,p_n\}),$$ 
where $F$ is a compact surface, $p_1,\cdots,p_n$ are distinct points on $F$,
and $R^\pm(\gamma)\subset \partial M$ are given by $\{\pm 1\}\times F$.
\end{itemize}
\end{Theorem}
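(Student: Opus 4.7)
The proof plan is to follow Kronheimer and Mrowka's strategy for sutured instanton Floer homology in \cite{KM:suture}, replacing the classical excision theorem with our singular excision theorem throughout, and incorporating Scharlemann's extension of sutured manifold theory to triples $(M,\gamma,T)$ from \cite{Schar}.

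For part (a), I would proceed by induction along a sutured manifold hierarchy. When $(M,\gamma,T)$ is taut, Scharlemann's theory produces a hierarchy that terminates in a product sutured manifold with a product tangle. The first step is the base case: show directly that $\SHI\ne 0$ for $([-1,1]\times F,\, [-1,1]\times\partial F,\, [-1,1]\times\{p_1,\ldots,p_n\})$. This reduces, via the definition of $\SHI$, to computing the singular instanton Floer homology of an explicit closure, which is a link inside a surface bundle over $S^1$ and can be read off using the standard $S^1\times\Sigma$ model computation. The second and more substantial step is the inheritance of non-vanishing under a single taut decomposition $(M,\gamma,T)\rightsquigarrow(M',\gamma',T')$: I would show that $\SHI(M',\gamma',T')$ embeds into $\SHI(M,\gamma,T)$ as a generalized eigenspace of an operator associated to the decomposing surface $S$ on a closure. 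The comparison of closures before and after decomposition is where the singular excision theorem is invoked, applied across two parallel copies of $S$.

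For part (b), I would show that a failure of the product structure for $(M,\gamma,T)$ forces $\dim_{\mathbb{C}}\SHI(M,\gamma,T)\geq 2$, contradicting the hypothesis. Using the homology product assumption, the verticality of $T$, and standard isotopy arguments, any such failure is witnessed by a properly embedded horizontal decomposing surface $S\subset M$ that is disjoint from $T$ and whose two sides are not both products. Applying the eigenspace analysis from the proof of part (a) to $S$ then produces at least two distinct eigenvalues of the associated operator on a closure, and hence at least two non-trivial summands in $\SHI(M,\gamma,T)$. The homology product hypothesis is needed to guarantee that $S$ can be chosen so that neither complementary piece is a priori trivial, and the verticality of $T$ is needed to arrange $S\cap T=\emptyset$ after isotopy.

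The main obstacle is the eigenspace decomposition step in the singular setting. In the non-singular case \cite{KM:suture}, this uses Mu\~noz's complete description \cite{Munoz} of the Floer homology ring of $S^1\times\Sigma$. In our singular case, only a partial description of the analogous ring is available, established earlier in the paper via the Mumford-style relation on Chern classes of the index bundle over the moduli space of stable parabolic bundles. I expect this partial information to suffice, because the eigenspace decomposition requires only the spectrum of a few canonical operators rather than the full ring structure, and this spectrum is precisely what the Chern-class relation controls.
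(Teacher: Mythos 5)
Your overall plan---inherit the non-vanishing and product-detection results by redoing the Kronheimer--Mrowka hierarchy/decomposition argument directly for triples $(M,\gamma,T)$, with the singular excision theorem replacing the non-singular one---is a genuinely different route from the paper's, and a much heavier one. The paper does not rerun any hierarchy in the singular setting. Instead it reduces to the already-established non-singular theorems by passing from $(M,\gamma,T)$ to the tangle-complement sutured manifold $(M_T,\gamma_T)$, where $M_T := M - N(T)$ and $\gamma_T$ adds a meridional suture on $\partial N(t_i)$ for each vertical component $t_i$ (and a pair of meridional sutures for each closed component). Two lemmas do all the work: Lemma~\ref{M_T-taut} shows that $(M,\gamma,T)$ taut implies $(M_T,\gamma_T)$ taut, and Lemma~\ref{SHI-vertical-tangle} shows that for $T$ vertical one has $\SHI(M,\gamma,T)\cong\SHI(M_T,\gamma_T)$. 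Part~(a) then follows from \cite[Theorem~7.12]{KM:suture} applied to $(M_T,\gamma_T)$ (for closed tangle components one additionally uses an earring/skein-triangle argument), and part~(b) follows from \cite[Theorem~7.18]{KM:suture} since $(M_T,\gamma_T)$ is a homology product when $(M,\gamma)$ is and $T$ is vertical. The singular excision theorem of the paper is used to prove the isomorphism in Lemma~\ref{SHI-vertical-tangle}, not to re-derive the decomposition machinery. Your approach would instead require proving, from scratch in the singular setting, the analogue of Kronheimer--Mrowka's ``taut decomposition yields a direct summand'' theorem; that is a substantial missing ingredient that the paper deliberately sidesteps.

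There is also a concrete flaw in your part~(b) argument: you claim the obstructing horizontal surface $S$ can be isotoped to be disjoint from $T$, ``using the verticality of $T$.'' This is backwards. A horizontal surface represents the class $[R^+]\in H_2(M,\partial M)$, and a vertical tangle component has algebraic intersection number $\pm 1$ with that class (it runs from $R^-$ to $R^+$). Therefore whenever $T$ is vertical and nonempty, every horizontal surface must intersect $T$, and no isotopy can remove those intersections. This is precisely why the paper's reduction to $(M_T,\gamma_T)$ is so effective: once $T$ is drilled out and meridional sutures are added, the relevant horizontal surfaces live in a sutured manifold with no tangle at all, and the non-singular product detection theorem applies without this difficulty.
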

The results above are analogous to the non-vanishing theorem \cite[
Theorem 7.12]{KM:suture} and the product detection theorem \cite[
Theorem 7.18]{KM:suture} for
sutured instanton Floer homology.

Under the additional assumption that $R^\pm(\gamma)$ can be embedded
in $S^2$, an equivalent version of $\SHI(M,\gamma,T)$ was already defined in \cite{Street}.  Part (b) of Theorem 
\ref{SHI-properties} was also proved under this assumption in \cite{Street}. 
\\

The instanton Floer homology for sutured manifolds with tangles has interesting applications in the theory of annular links. By definition, 
an \emph{annular link} is a link contained in $A\times [0,1]$, where $A$ is an annulus. An annular link $L$ is called an \emph{unlink} if it 
bounds a disjoint union of disks. 
Let $\mathcal{U}_n$ be the unlink with $n$ components. 
Let $\mathcal{K}_n$ be the closure of the trivial braid with $n$ strands. 

Khovanov \cite{Kh-Jones} defined a bi-graded homology group for an oriented link in the $3$-ball which is a categorification of the Jones polynomial. Asaeda, Przytycki, and Sikora
\cite{APS} introduced a generalization of Khovanov homology  to oriented links contained in $F\times [0,1]$ for compact surfaces $F$.
In particular, if $F$ is an annulus, then the invariant defined by \cite{APS} is an invariant for oriented annular links, which is called the \emph{annular Khovanov homology}. 
For more details, the reader may also refer to \cite[Section 2]{Rob}.
The annular Khovanov homology is equipped with three gradings called the h-grading, q-grading, and f-grading. (The f-grading is called the ``Alexander grading'' in \cite{Rob}.)
For an oriented annular link $L$,
we use $\AKh(L)$ to denote the annular Khovanov homology of $L$, and use $\AKh(L,i)$ to denote the component of $\AKh(L)$ with f-grading $i$. Changing the orientations of components of $L$ does not change the isomorphism classes of $\AKh(L,i)$.

As an application of the instanton Floer homology for sutured manifolds with tangles, we will prove the following 
theorem.
\begin{Theorem}
\label{thm_annular_intro}
Let $L$ be an oriented annular link, we have the following results:
\begin{itemize}
\item[(a)] $L$ is included in a 3-ball in $A\times [0,1]$ if and only if $\AKh(L;\bQ)$ is supported at the f-grading 0.
\item[(b)] $L$ is isotopic to the closure of a braid  with $n$ strands
if and only if the top f-grading of $\AKh(L;\bQ)$ is $n$, and $\AKh(L,n;\bQ)\cong \bQ.$ 
\end{itemize}
\end{Theorem}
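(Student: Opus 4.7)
The plan is to combine three ingredients:
(i) a spectral sequence $\AKh(L;\bQ) \Rightarrow \AHI(L;\bQ)$ whose differentials are non-increasing in the f-grading, so that vanishing or rank bounds at a given f-grading of $\AKh$ pass to the corresponding piece of $\AHI$;
(ii) the Thurston norm detection theorem for $\AHI$ mentioned in the introduction, which for an oriented annular link $L\subset A\times[0,1]$ identifies the top f-grading of $\AHI(L;\bQ)$ with the wrapping number
\[
w(L) := \min\bigl\{\,|L\cap D|\;:\;D \text{ a meridian disk of } A\times[0,1]\,\bigr\};
\]
and (iii) the nonvanishing and product detection results of Theorem \ref{SHI-properties}, applied to the sutured manifold with tangle obtained by cutting $(A\times[0,1], \gamma, L)$ along a meridian disk realizing $w(L)$.

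For part (a), the easy direction is geometric: if $L$ is contained in a 3-ball $B\subset A\times[0,1]$, then $B$ can be isotoped off some meridian disk of the annulus, so every resolution in an annular diagram of $L$ consists only of contractible circles, and every generator of the annular Khovanov complex sits at f-grading $0$. Conversely, if $\AKh(L;\bQ)$ is concentrated at f-grading $0$, then (i) forces the same for $\AHI(L;\bQ)$; by (ii) this gives $w(L)=0$, and a meridian disk $D$ disjoint from $L$ realizes $L$ inside $A\times[0,1]\setminus D\cong D^3$.

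For part (b), the easy direction reduces to a direct computation: for the closure $\hat\beta$ of a braid on $n$ strands, the standard annular diagram has wrapping number $n$, and only the ``braid-like'' resolution state produces $n$ non-contractible circles, yielding $\AKh(\hat\beta,n;\bQ)\cong\bQ$. For the converse, assume $\AKh(L,n;\bQ)\cong\bQ$ is the top f-grading piece. Ingredients (i) and (ii) give $w(L)=n$ and $\dim_\bQ \AHI(L,n;\bQ)\le 1$; tautness of the link exterior combined with Theorem \ref{SHI-properties}(a) promotes this to $\dim_\bQ \AHI(L,n;\bQ)=1$. Choose a meridian disk $D$ with $|L\cap D|=n$ and cut to form a sutured manifold with tangle $(M',\gamma',T)$, in which $M'\cong D^3$ carries a standard product suture and $T$ is a tangle with $n$ endpoints on each of $R^{\pm}(\gamma')$. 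Identifying the top f-grading summand of $\AHI(L;\bC)$ with $\SHI(M',\gamma',T;\bC)$ then yields $\SHI(M',\gamma',T;\bC)\cong\bC$. Minimality of $|L\cap D|$ forbids boundary-parallel arcs in $T$, so $T$ is vertical, and $(M',\gamma')$ is manifestly a homology product. Theorem \ref{SHI-properties}(b) then provides
\[
(M',T)\;\cong\;\bigl([-1,1]\times D^2,\;[-1,1]\times\{p_1,\ldots,p_n\}\bigr),
\]
and re-gluing along $D$ realizes $L$ as the closure of a braid on $n$ strands.

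The main obstacles are constructing the spectral sequence in (i) with the advertised f-grading behavior, and establishing the identification of the top f-grading summand of $\AHI(L;\bQ)$ with $\SHI(M',\gamma',T;\bQ)$: both should follow from a careful application of the singular excision theorem of this paper to the cut-open sutured-tangle picture, together with the Thurston norm detection input. Verifying tautness, the homology product condition, and the easy direction of (b) is comparatively routine.
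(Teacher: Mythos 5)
Your overall strategy matches the paper's: the annular Khovanov-to-annular instanton spectral sequence reduces everything to the corresponding statements for $\AHI$, and those in turn are proved by cutting along a meridional surface and invoking the sutured/tangle Floer theory of Theorem~\ref{SHI-properties}. Part (a) and the easy directions of (b) are essentially fine. However, there is a genuine gap in the hard direction of (b).

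In step (ii) you assert that Theorem~\ref{2g+n_intro} identifies the top f-grading of $\AHI(L)$ with the \emph{wrapping number} $w(L)=\min\{|L\cap D|:D\text{ a meridian disk}\}$. This is not what the theorem says. It identifies the top f-grading with $\min(2g+n)$ taken over all meridional \emph{surfaces} $S$, where $g=g(S)$ and $n=|S\cap L|$; the minimizer is allowed to have positive genus. These two invariants need not agree, and a priori one could have a genus-one meridional surface meeting $L$ in $n-2$ points realizing a top grading of $n$ with no disk doing so. Consequently, your converse argument for (b), which begins ``Choose a meridian disk $D$ with $|L\cap D|=n$,'' does not follow from the hypotheses: what you actually get from Theorem~\ref{2g+n_intro} is only a meridional surface with $2g+|S\cap L|=n$. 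If $g>0$, cutting along $S$ does not yield a product disk sutured manifold, and the re-gluing step does not produce a braid closure, so the chain of deductions breaks.

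Closing this gap requires an additional argument to force $g=0$. The paper does this in the proof of Corollary~\ref{AHI-braid-detection} with a parity/crossing-change argument: the parity of $\dim_\bC\AHI(L,n)$ is a crossing-change invariant, so if the absolute winding number of $L$ (the sum $\sum_i |[L_i]/[\mathcal{K}_1]|$) were strictly less than $n$, crossing changes would produce a link $L'$ with a meridian disk meeting $L'$ in fewer than $n$ points, whence $\AHI(L',n)=0$ by Theorem~\ref{2g+n}; this would force $\dim_\bC\AHI(L,n)$ even, contradicting $\AHI(L,n)\cong\bC$. Thus the absolute winding number is at least $n$, and since every meridional surface $S$ satisfies $|S\cap L|\ge$ absolute winding number, the relation $2g+|S\cap L|=n$ forces $g=0$ and $|S\cap L|=n$. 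This crossing-change input (and the fact that it forces verticality of the resulting tangle so Theorem~\ref{SHI-properties}(b) applies) is the missing ingredient in your proposal; it is not a routine verification. Two secondary remarks: the paper's spectral sequence runs from $\AKh(\overline{L},i)$ to $\AHI(L,i)$ (a mirror appears), and the claim that $\dim\AHI(L,n)=1$ is established there more directly by observing that the $E_2$-page at f-grading $n$ is already one-dimensional and concentrated in a single bidegree, so no further differentials can reduce it, rather than via tautness and Theorem~\ref{SHI-properties}(a).
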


Under the additional assumption that all the components of $L$ are null-homologous, Part (a) of Theorem \ref{thm_annular_intro} was 
proved in \cite{AHI}. Part (b) of the theorem generalizes \cite[Corollary 1.2]{GN-braid}, where the original statement assumes in addition 
 that there exists a disk with meridian boundary that intersects $L$
transversely at $n$ points. Using the unlink detection theorem of Khovanov homology in $S^3$ by Baston-Seed \cite{Kh-unlink} and Hedden-Ni \cite{HN-unlink}, and 
a theorem that detects the trivial braid among braids by Baldwin-Grigsby \cite{BG-braid}, we immediately have the following corollary.

\begin{Corollary}
\label{cor_annular_intro}
Let $L$ be an oriented annular link, we have
\begin{itemize}

\item[(a)] Suppose $L$ has $n$ components, then $L$ is isotopic to  $\mathcal{U}_n$ if and only if 
     $$\AKh(L;\bZ/2)\cong\AKh(\mathcal{U}_n;\bZ/2)$$ 
     as triply-graded abelian groups.
\item[(b)] $L$ is isotopic to $\mathcal{K}_n$ if and only if $\AKh(L;\bZ/2)\cong \AKh(\mathcal{K}_n;\bZ/2)$ as triply-graded abelian groups.
\end{itemize}
\end{Corollary}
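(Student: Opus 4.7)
The plan is to deduce both parts from Theorem \ref{thm_annular_intro} combined with the unlink-detection theorem of Batson-Seed \cite{Kh-unlink} and the trivial-braid-detection theorem of Baldwin-Grigsby \cite{BG-braid}. The only nontrivial preliminary step is a universal-coefficient passage from $\bZ/2$- to $\bQ$-coefficients, which is made routine by the fact that both $\AKh(\mathcal{U}_n;\bZ) \cong V^{\otimes n}$ and $\AKh(\mathcal{K}_n;\bZ) \cong W^{\otimes n}$ are torsion-free.

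For part (a), I would first use the Universal Coefficient Theorem to show that $\AKh(L;\bQ)$ is supported in f-grading $0$: any $\bZ$- or $2$-power-torsion summand of $\AKh(L;\bZ)$ outside f-grading $0$ would produce $\bZ/2$-classes outside f-grading $0$, contradicting the hypothesis. Theorem \ref{thm_annular_intro}(a) then places $L$ inside a $3$-ball $B \subset A \times [0,1]$. Choosing a diagram of $L$ contained in a disk of $A$, every Kauffman-state resolution consists of null-homotopic circles, so the annular and ordinary Khovanov chain complexes coincide. Hence $\operatorname{Kh}(L;\bZ/2) \cong \operatorname{Kh}(\mathcal{U}_n;\bZ/2)$ as bi-graded groups, and Batson-Seed yields that $L$ is isotopic to $\mathcal{U}_n$ in $S^3$. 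Since any two $3$-balls in the interior of $A \times [0,1]$ are ambiently isotopic, the spherical isotopy upgrades to an isotopy of annular links.

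For part (b), a parallel universal coefficient count would show that $\AKh(L;\bQ)$ has top f-grading $n$ with $\AKh(L,n;\bQ) \cong \bQ$: any even-torsion summand of $\AKh(L;\bZ)$ at f-grading $n$ contributes two $\bZ/2$-classes (one from tensor, one from $\operatorname{Tor}$), so rank one over $\bZ/2$ forces exactly one free summand in that grading and no free part above. Theorem \ref{thm_annular_intro}(b) then presents $L$ as $\hat\beta$ for some $\beta \in B_n$, and the trivial-braid detection theorem of Baldwin-Grigsby forces $\beta$ to be trivial, giving $L \cong \mathcal{K}_n$. The essential content is thus concentrated in Theorem \ref{thm_annular_intro}; no serious obstacle is anticipated beyond tracking grading information through the universal-coefficient and annular-to-spherical reductions described above.
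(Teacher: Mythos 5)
Your proposal is correct and follows essentially the same route as the paper: a universal-coefficient passage from $\bZ/2$- to $\bQ$-coefficients, then Theorem \ref{thm_annular_intro} to reduce to the corresponding detection result in the non-annular or braid setting, and finally Batson--Seed for the unlink and Baldwin--Grigsby for the trivial braid. The only difference is a matter of exposition: you spell out the final step that a spherical isotopy inside a ball upgrades to an annular isotopy (via ambient isotopy of balls in the thickened annulus), which the paper leaves implicit, and you note the agreement of annular and ordinary Khovanov chain complexes for links in a disk, which the paper simply states as $\AKh(L,0)\cong\text{Kh}(L)$; both are the same observation.
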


The proof of Theorem \ref{thm_annular_intro} relies on the study of the annular instanton Floer homology introduced in \cite{AHI}. The annular instanton Floer homology $\AHI(L)$ is a gauge-theoretic invariant for the annular link $L$. 
In $\bC$-coefficients, the group $\AHI(L;\bC)$ is equipped with a $\mathbb{Z}$-grading which is also called  the f-grading. Given an annular link $L$, we use 
$\AHI(L,i)$ to denote the component of $\AHI(L;\bC)$ with f-degree $i$. 
 Before discussing the properties of $\AHI(L,i)$, we need to introduce the following definition. 
\begin{Definition}
A properly embedded, connected, oriented 
surface $S\subset A\times[0,1]$ is called a \emph{meridional surface} if $\partial S$ is a meridian of $A\times[0,1]\cong S^1\times D^2$.
\end{Definition}
We prove the following result using the instanton Floer homology for sutured manifolds with tangles.
\begin{Theorem}\label{2g+n_intro}
Given an annular link $L$, suppose $S$ is a meridional surface that intersects $L$ transversely. Let $g$ be the genus of $S$, and let $n:=|S\cap L|$. Suppose $S$ minimizes the value of $2g+n$ among meridional surfaces,
then we have
\begin{equation*}
\AHI(L,i)= 0
\end{equation*}
when $|i|> 2g+n$, and
\begin{equation*}
\AHI(L,\pm(2g+n))\neq 0.
\end{equation*}
\end{Theorem}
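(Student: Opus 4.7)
The plan is to deduce the theorem from Theorem \ref{SHI-properties}(a) via a sutured manifold decomposition along meridional surfaces, in parallel with how Kronheimer--Mrowka derive Thurston norm detection from their sutured excision theorem in \cite{KM:suture}.

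First, I would identify $\AHI(L)$ with $\SHI(M,\gamma,L)$, where $(M,\gamma)$ is the annular thickening $A\times[0,1]$ with its standard sutures on $\partial A\times[0,1]$, and $L$ is regarded as a boundary-free (hence trivially balanced) tangle in $(M,\gamma)$. Under this identification, the f-grading on $\AHI(L)$ should correspond to the grading on $\SHI(M,\gamma,L)$ induced by a $\mu$-map associated to the homology class of a meridional disk of $A\times[0,1]$.

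Next, I would establish a decomposition formula. Given a meridional surface $S$ with $|S\cap L|=n$ and genus $g$, perturb $S$ to be transverse to $L$, view it as a decomposing surface for $(M,\gamma,L)$ in the sense of Scharlemann \cite{Schar}, and obtain a new balanced sutured manifold with balanced tangle $(M_S,\gamma_S,T_S)$, where $T_S$ consists of the remaining arcs of $L$ together with $n$ new vertical arcs coming from $S\cap L$. I would then prove
\begin{equation*}
\AHI(L,i)=0\quad\text{whenever }|i|>2g+n,
\end{equation*}
and
\begin{equation*}
\AHI(L,\pm(2g+n))\;\cong\;\SHI(M_S,\gamma_S,T_S).
\end{equation*}
The first statement is an adjunction-type bound on the $\mu$-grading valid for any meridional surface; the second is the standard identification of the extremal grading summand with the sutured invariant of the decomposition, proved by a Floer-theoretic gluing using the singular excision theorem of this paper. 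Applied to the minimizer $S$, the first statement yields the vanishing part of the theorem.

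For the non-vanishing at $\pm(2g+n)$, I would show that when $S$ minimizes $2g+n$ among meridional surfaces, the decomposed triple $(M_S,\gamma_S,T_S)$ is taut in the sense of Scharlemann. The argument is modelled on Gabai's: any compressing disk for $R^{\pm}(\gamma_S)$, any boundary-parallel piece, or any non-norm-minimizing component of $(M_S,\gamma_S,T_S)$ could be surgered back to produce a meridional surface of strictly smaller $2g+n$, contradicting minimality. Tautness then yields $\SHI(M_S,\gamma_S,T_S)\neq 0$ by Theorem \ref{SHI-properties}(a), and the decomposition identification above gives $\AHI(L,\pm(2g+n))\neq 0$.

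The main obstacle is establishing the decomposition formula. In the non-singular sutured setting of \cite{KM:suture}, the analogous identification of the top grading summand with $\SHI$ of the decomposition requires a delicate excision argument. In the present setting one must additionally handle the new tangle arcs created by $S\cap L$, track their contribution to the grading shift and verify it equals $2g+n$, and arrange for the hypotheses of the singular excision Theorems \ref{thm_excision} and \ref{thm_excisionAlongOneSurface} (odd intersection number at least three) to hold, possibly after stabilizing the tangle by adding auxiliary arcs and afterwards removing their contribution. Verifying these compatibilities, together with proving the adjunction bound in the first displayed equation, is where the bulk of the work lies.
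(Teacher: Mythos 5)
Your overall approach is the same as the paper's: cut $A\times[0,1]$ along the minimizing meridional surface $S$, identify the top f-graded piece $\AHI(L,\pm(2g+n))$ with $\SHI$ of the decomposed triple, argue tautness from minimality, and invoke the non-vanishing Theorem~\ref{SHI-properties}(a). The vanishing part via an adjunction bound on the spectrum of $\muu$ is also the paper's route (Proposition~\ref{prop_allEigenvaluesForGeneral3mfld}, after a parity adjustment by excision when $n$ is even). Two remarks are in order.

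First, a genuine gap: your proof of tautness of $(M_S,\gamma_S,T_S)$ from minimality of $2g+n$ is incomplete, because minimality does \emph{not} force $T$-irreducibility. Scharlemann's tautness requires that every $2$-sphere in $M_S - T_S$ bound a ball in $M_S - T_S$, and this can fail even when $S$ is optimal — for instance when $L$ has a split sublink lying in a $3$-ball disjoint from $S$, so that a $2$-sphere in $M_S$ enclosing that sublink is essential in $M_S - T_S$. Your ``surger it back to a cheaper meridional surface'' argument handles the norm-minimizing and incompressibility conditions but says nothing about such reducing spheres, and indeed one cannot use them to decrease $2g+n$. The paper therefore splits into two cases: when $(M_S,\gamma_S,T_S)$ is $T$-irreducible it proves tautness exactly as you sketch and applies Theorem~\ref{taut-non-vanishing}; when it is $T$-reducible it peels off the split sublink $L_2\subset B^3$, uses the K\"unneth-type splitting $\AHI(L,2g+n)\cong \AHI(L_1,2g+n)\otimes \AHI(L_2)$ from \cite{AHI}, and invokes a separate non-vanishing result for $\AHI$ of links in a ball (Lemma~\ref{lem_nonvanishingOfAHI}, via the identification $\AHI\cong\II^\sharp$ and local coefficients). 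Without this case analysis the non-vanishing claim is not established.

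Second, two smaller points. You should first normalize $S$ (by tubing) so that each component of $L$ meets $S$ with a single sign; this preserves $2g+n$ and is needed so that $T_S$ decomposes into a vertical tangle plus closed components, which is a hypothesis feeding into the tautness definition and into Lemma~\ref{SHI-vertical-tangle}. Also, the preliminary identification $\AHI(L)\cong\SHI(A\times[0,1],\gamma,L)$ is a detour the paper avoids; the paper identifies $\AHI(L,2g+n)$ with $\SHI(M_S,\gamma_S,T_S)$ directly from the definitions (gluing $S^+$ to $S^-$ in the closure recovers $S^1\times S^2$, and the top $\muu$-eigenvalue $2g+n$ is automatic), which is cleaner than introducing an intermediate sutured invariant whose grading one would then have to match.
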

When all the components of $L$ are null-homologous, the above theorem is equivalent to  \cite[Theorem 1.5]{AHI}.  

In \cite{AHI}, a spectral sequence relating the annular Khovanov homology and the annular instanton Floer homology is constructed. Moreover,
this spectral sequence respects the f-gradings of the two homology groups. In Section \ref{sec_applications}, we will prove 
Theorem \ref{thm_annular_intro} as a corollary of Theorem \ref{2g+n_intro} using this spectral sequence. 

\subsection*{Acknowledgements}
This project originated from a question posed to us by Peter Kronheimer. We would like to express our sincere gratitude for his encouragement.

\section{Preliminaries}
\label{sec_review}
\subsection{Singular instanton Floer homology}\label{review-instanton}
This subsection gives a brief review of the singular instanton Floer homology theory developed by Kronheimer and Mrowka 
\cite{KM:Kh-unknot,KM:YAFT}. Let $(Y,L,\omega)$ be a triple where
\begin{itemize}
\item $Y$ is a closed oriented 3-manifold,

\item $L\subset Y$ is a link,
 
\item  $\omega\subset Y$ is an embedded 1-manifold such that $ \partial \omega=\omega\cap L$.
\end{itemize}
There is a unique orbifold structure on $Y$ such that $L$ is its singular locus and the local stabilizer group at every point in $L$ is $\mathbb{Z}/2$. The 1-manifold $\omega$ determines a \emph{singular bundle data} $\check{P}$ on $(Y,L)$. (The reader may refer to \cite[Definition 2.1]{KM:Kh-unknot} for the definition of singular bundle data, and \cite[Section 4.2]{KM:Kh-unknot} for the construction of $\check{P}$ from $\omega$.) The singular bundle data $\check{P}$ is, roughly speaking, an $\SO(3)$-bundle over the orbifold determined by $(Y,L)$ whose second Stiefel-Whitney class is dual to $[\omega]$.

A closed embedded surface $\Sigma\subset Y$ is called a \emph{non-integral surface} of $(Y,L,\omega)$ if at least one of the following holds:
\begin{itemize}
  \item $\Sigma$ is disjoint from $L$ and the intersection number of $\omega$ and $\Sigma$ is odd,
  \item  The intersection number of $L$ and $\Sigma$ is odd. 
\end{itemize}
The triple $(Y, L,\omega)$ is called \emph{admissible} if every connected component of $Y$ contains a non-integral surface. For such a triple, Kronheimer and Mrowka \cite[Definition 3.7]{KM:Kh-unknot} defined the singular instanton Floer homology group $\II(Y,L,\omega)$.

We give a brief review of the definition of $\II(Y,L,\omega)$, the reader may refer to \cite{KM:YAFT, KM:Kh-unknot} for more details.
Let $\mathcal{A}$ be the space of orbifold connections on the singular bundle data $\check{P}$. Since the adjoint action $\Ad$ on $\SO(3)$ lifts to an action $\widetilde{\Ad}$ of $\SO(3)$ on $\SU(2)$,  the bundle $\check{P}\times_{\Ad}\SO(3)$ has a double cover defined by $\check{P}\times_{\widetilde{Ad}}\SU(2)$. Let $\mathcal{G}$ be the group consisting of
gauge transformations that lift to $\check{P}\times_{\widetilde{Ad}}\SU(2)$. 
Let $\mathcal{B}(Y,L,\omega):=\mathcal{A}/\mathcal{G}$ 
be the quotient of $\mathcal{A}$ by the action of $\mathcal{G}$. We will use $\mathcal{B}$ for $\mathcal{B}(Y,L,\omega)$ when the triple $(Y,L,\omega)$ is clear from the context.
The instanton Floer homology 
$\II(Y,L,\omega)$
is defined
to be a Morse homology of the Chern-Simons functional on $\mathcal{B}$. 
Since the Chern-Simons functional is not always Morse, a perturbation is needed. The set of critical points of the \emph{unperturbed} Chern-Simons functional is given by 
flat $\SU(2)$ connections on $Y\backslash(L\cup \omega)$ such that the holonomies around $\omega$ are $-1$ and 
the holonomies around $L$ have trace zero. Notice that these flat connections have structure group $\SU(2)$ instead of $\SO(3)$ because of the special definition of $\mathcal{G}$. The group $\II(Y,L,\omega)$ is equipped with a relative $\mathbb{Z}/4$ grading. In this paper we will always use the complex number field $\bC$ as the coefficients for instanton Floer homology groups. 
In this case, if $Y$ is disconnected, then $\II(Y,L,\omega)$ is the tensor product of the singular instanton Floer homology groups of the connected components of $Y$.
 
Let $M$ be a closed oriented submanifold of $Y-L$, then there is an operator $\mu(M)$ of degree $(4-\dim M)$ on
$\II(Y,L,\omega)$. The operator $\mu(M)$ is defined by evaluating the class $-\frac{1}{4} p_1(\mathbb{P})/[M]$
on the moduli spaces of trajectories of the Chern-Simons functional, where $\mathbb{P}$ is the universal $SO(3)$-bundle over
$\mathcal{B}\times (Y\setminus L)$. By definition, $\mu(M)$ only depends on the homology class of $M$. 
If $M,N\subset Y-L$ are two closed oriented submanifolds, then $
\mu(M)\mu(N)=(-1)^{\dim M\dim N}\mu(N)\mu(M)$.
The operator $\mu$ was introduced in \cite{donaldson1990polynomial, D-Floer}, and it has become a ubiquitous construction in gauge theory. For more details in the context of singular instanton Floer homology, the reader may refer to \cite[Section 2.3.2]{Street}.
Our convention of the constant $-1/4$ follows \cite{KM:suture, DK}.

Although $\check{P}|_{Y-L}$ does not always extend to an ordinary $\SO(3)$-bundle on $Y$, it 
always extends locally near each point $p\in L$.  
In fact, 
 at each point $p\in L$ there are two ways to extend $\check{P}|_{Y-L}$ to a neighborhood of $p$, and the choices of the extensions give us a double cover
 $L_\Delta\to L$ whose first Stiefel-Whitney class equals $P.D.[\partial \omega]\in H^1(L,\mathbb{Z}/2)$.
 Therefore $\check{P}|_{Y-L}$
 can be extended to an $\SO(3)$-bundle on $Y-\partial \omega$. 
 Given an embedded manifold $M\subset Y$ with $M\cap \partial \omega=\emptyset$,
let $P$ be a choice of the extension of $\check{P}|_{M-L}$ to ${M}$. We can extend the universal bundle $\mathbb{P}$
to $\mathcal{B}\times M$ according to $P$ and define an operator $\mu(M)$ with respect to $P$.

Let $p\in L$, there is an operator $\sigma_p$ on $\II(Y,L,\omega)$ defined in a similar way to $\mu(M)$. We sketch the definition of $\sigma_p$; for more details, the reader may refer to \cite[Section 2.2]{Kr-ob} and \cite[Section 2.3]{Street}. let $U$ be an open neighborhood of $p$ in $Y$, and let $P$ be an extension of $\check{P}|_{U-L}$ to $U$.
Recall that $\mathcal{A}$ is the space of orbifold connections, hence by definition, every connection in $\mathcal{A}$ has an asymptotic holonomy around $L$ conjugate to 
$$\begin{pmatrix}1 & 0 & 0\\ 0 & -1 & 0 \\ 0 & 0 & -1\end{pmatrix}\in \SO(3).$$
The bundle $P$ defines an extension of $\mathbb{P}|_{\mathcal{B}\times(U-L)}$ to $\mathbb{P}|_{\mathcal{B}\times U}$, we will abuse notation and denote the extended bundle by $\mathbb{P}$.
Let $\mathbb{S}$ be the associated $\bR^3$-bundle of $\mathbb{P}$, for every $[A]\in\mathcal{B}$,
the vector space $\mathbb{S}|_{[A]\times p}$ is decomposed into the eigenspaces of
the asymptotic holonomy of $[A]$. It can be proved that $\mathbb{S}|_{\mathcal{B}\times \{p\}}=\underline{\mathbb{R}}\oplus\mathbb{K}_p$, where $\underline{\mathbb{R}}$ is a trivial real line bundle corresponding to the eigenvalue $1$, and $\mathbb{K}_p$ is an orientable plane bundle corresponding to the eigenvalue $-1$.
A choice of orientation of $L$ at $p$ determines an orientation of $\mathbb{K}_p$ \cite[Section 2(iv)]{KM-surface1}.
The operator $\sigma_p$ is then defined by evaluating the class  
$\delta_p:=-\frac{1}{2} e(\mathbb{K}_p)$ on the moduli spaces of trajectories of the Chern-Simons functional. 
If we change the extension $P$, then the orientation of $\mathbb{K}_p$ will also change \cite[Section 2(iv)]{KM-surface1}.
Hence the sign of $\sigma_p$ depends on the choice
of $P$ and the orientation of $L$ at $p$.

If $Y$ is connected, the following formula is a straightforward consequence of \cite[Proposition 4.1]{Kr-ob} once all the notations are translated:
\begin{equation}\label{delta^2+beta}
\sigma_p^2+\mu(\pt)=2\id.
\end{equation}
We use the above formula to derive the following useful result.
\begin{Proposition}\label{mu-pt=2}
Suppose $(Y,L,\omega)$ is a connected admissible triple and there is a connected component $L_0$ of $L$ 
such that $|\partial \omega \cap L_0|$ is odd. Then we have
$$
\mu(\pt)=2\id
$$
on $\II(Y,L,\omega)$.
\end{Proposition}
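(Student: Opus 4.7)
The plan is to apply \eqref{delta^2+beta} to a point $p\in L_0$ and to show that $\sigma_p=0$; this immediately gives $\mu(\pt)=2\id-\sigma_p^2=2\id$, as required.

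The vanishing of $\sigma_p$ is a monodromy argument driven by the hypothesis that $|\partial\omega\cap L_0|$ is odd. Because the first Stiefel-Whitney class of the double cover $L_\Delta\to L$ equals $P.D.[\partial\omega]$, the restriction $L_\Delta|_{L_0}\to L_0$ is the \emph{non-trivial} connected double cover of the circle $L_0$. Fix $p\in L_0$ together with an orientation of $L_0$ at $p$ and a local extension $P$ of $\check{P}|_{U-L}$ to a neighborhood $U$ of $p$; the choice of $P$ is equivalent to a choice of lift $\tilde p$ of $p$ in $L_\Delta$. As $p$ moves continuously along an arc of $L_0$, one can parallel transport both the orientation and the lift $\tilde p$, which canonically propagates the germ of $P$ and thereby continuously determines $\mathbb{K}_p$ together with its orientation; consequently the class $\delta_p=-\tfrac12 e(\mathbb{K}_p)$, and hence the operator $\sigma_p$, is unchanged as $p$ varies along the arc. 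Traversing the whole circle $L_0$ once, the orientation of $L$ at $p$ returns to itself, but the lifted path in $L_\Delta$ ends at the \emph{opposite} preimage of $p$, because the cover is non-trivial, so the extension returns as $-P$. By the sign convention recalled in the excerpt, flipping $P$ while preserving the orientation of $L$ at $p$ negates $\sigma_p$. Combining the two statements yields $\sigma_p=-\sigma_p$, hence $\sigma_p=0$.

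The main obstacle is precisely the continuous-variation step: one must verify that $\mathbb{K}_p$ and its orientation fit into a continuous family as $p$ traverses an arc of $L_0$, so that the resulting Euler-class evaluations on the moduli spaces of Chern-Simons trajectories are truly independent of the endpoint. This should follow from the construction reviewed in \cite[Section 2(iv)]{KM-surface1}, since the local extension $P$ can be chosen continuously along such an arc and the asymptotic-holonomy decomposition $\mathbb{S}|_{\mathcal{B}\times\{p\}}=\underline{\mathbb{R}}\oplus\mathbb{K}_p$ is then itself continuous in $p$; modulo this point, the conclusion is just bookkeeping of signs via \eqref{delta^2+beta}.
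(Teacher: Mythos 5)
Your proof is correct and follows essentially the same approach as the paper: both arguments observe that the non-triviality of the double cover $L_{0,\Delta}\to L_0$ (forced by $|\partial\omega\cap L_0|$ being odd) means that traversing $L_0$ once deforms $\mathbb{K}_p$ into its orientation-reversal, giving $\delta_p=-\delta_p$ hence $\sigma_p=0$, after which \eqref{delta^2+beta} gives $\mu(\pt)=2\id$. The continuity step you flag as the remaining obstacle is precisely the content of the paper's phrase ``the bundle $\mathbb{K}_p$ can be deformed into its inverse,'' which the paper likewise treats as immediate from the construction in \cite[Section 2(iv)]{KM-surface1}.
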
 
\begin{proof}
Since $|\partial \omega \cap L_0|$ is odd, the double cover $L_{0,\Delta}\to L_0$ 
parameterizing the extensions of $\check{P}$ on $L_0$ is non-trivial. 
Pick $p\in L_0$ and an orientation of $L_0$ at $p$. 
The loop on $L_0$ with base point $p$ lifts to an arc on $L_{0,\Delta}$ with two different end points.
 This means the bundle $\mathbb{K}_p$
can be deformed into its inverse and hence $\delta_p=-\delta_p$. Therefore $\sigma_p=0$, and \eqref{delta^2+beta} implies
$\mu(\pt)=2\id$.
\end{proof}

Let
\begin{equation*}
(W,S,\omega):(Y_0,L_0,\omega_0)\to (Y_1,L_1,\omega_1)
\end{equation*}
be a cobordism between admissible triples, \cite[Section 3.7]{KM:YAFT} defined a map
\begin{equation}\label{I-functor}
\II(W,S,\omega): \II(Y_0,L_0,\omega_0)\to \II(Y_1,L_1,\omega_1).
\end{equation} 
This map is only well-defined up to an overall sign. 
The singular instanton Floer homology is a functor from the cobordism category
of admissible triples to the category of vector spaces modulo $\pm 1$. If $W$ is an almost complex manifold and $S$ is an almost complex submanifold, a canonical sign of $\II(W,S,\omega)$ can be chosen which is compatible with compositions of cobordisms.

\subsection{Nonsingular excisions}
\label{subsection_nonsingularEx}
The following two theorems were originally stated for $L=\emptyset$, but the proofs apply verbatim to the general case. 

\begin{Theorem}[{\cite[Corollary 7.2]{KM:suture}}]
\label{thm_nonsingularRangeOfEigenvalues}
Let $(Y,L,\omega)$ be a triple, let $\Sigma\subset Y-L$ be a connected non-integral surface of $(Y,L,\omega)$ with genus $g\ge 1$.  Then the simultaneous eigenvalues of
the actions $(\mu(\Sigma),\mu(\pt))$ on $\II(Y,L,\omega)$ is a subset of
$$
\{(i^r(2k),(-1)^r2)|k=0,\cdots,g-1,\,\, r=0,1,2,3\}.
$$
\end{Theorem}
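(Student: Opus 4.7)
The plan is to reduce the statement to Mu\~noz's computation of the instanton Floer ring on $S^1\times\Sigma$ and transfer the resulting spectral bound to $\II(Y,L,\omega)$ via the non-singular excision theorem (Theorem \ref{thm_nonsingularExcision}). Since $\Sigma$ is disjoint from $L$, the entire argument takes place in a neighborhood of $\Sigma$ in which the singular locus is absent, so the presence of $L$ should play no essential role.

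First, I would pick a $1$-cycle $\omega_N\subset N:=S^1\times\Sigma$ whose intersection number with $\Sigma_0:=\{\pt\}\times\Sigma$ is odd, so that $(N,\emptyset,\omega_N)$ is admissible and $\Sigma_0$ is a non-integral surface in $N$. Applying the non-singular excision theorem to the disjoint pair of non-integral surfaces $\Sigma\subset Y$ and $\Sigma_0\subset N$ produces an isomorphism between $\II(Y,L,\omega)\otimes\II(N,\emptyset,\omega_N)$ and the singular instanton Floer homology of the re-glued $3$-manifold. Under this isomorphism the $\mu$-operators associated to $\Sigma$ and $\Sigma_0$ become identified, because after the swap-regluing the two surfaces are parallel copies of a single gluing surface and are therefore homologous in the re-glued manifold; an analogous argument, using the fact that $\mu(\pt)$ depends only on the homology class of the point, identifies the $\mu(\pt)$ operators on the two tensor factors. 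Consequently, the joint spectrum of $(\mu(\Sigma),\mu(\pt))$ on $\II(Y,L,\omega)$ must be contained in the joint spectrum of $(\mu(\Sigma_0),\mu(\pt))$ on $\II(N,\emptyset,\omega_N)$.

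Second, I would invoke Mu\~noz's theorem, which computes the joint spectrum of $(\mu(\Sigma_0),\mu(\pt))$ on $\II(N,\emptyset,\omega_N)$ to be exactly the set displayed in the statement. Combining this with the inclusion of spectra from the previous step yields the desired conclusion.

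The main technical hurdle will be Step 1: carefully verifying that the excision isomorphism genuinely intertwines the $\mu$-operators on the two sides. This requires tracing through the cobordism realizing the excision and combining the functoriality of $\II$ with the homological invariance of $\mu$ in an explicit neighborhood of the gluing surface. The hypothesis $g\ge 1$ enters in two places: it is needed for $\Sigma_0\subset N$ to serve as a non-integral surface in an admissible triple, and it is required for Mu\~noz's eigenvalue computation to take the stated form.
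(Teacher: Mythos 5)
There is a genuine gap in Step~1, and it is the crux of the matter. The non-singular excision theorem (Theorem~\ref{thm_nonsingularExcision}) does \emph{not} give an isomorphism of the full groups $\II(Y,L,\omega)\otimes\II(N,\emptyset,\omega_N)$ with the Floer homology of the re-glued manifold; it only gives an isomorphism between the simultaneous $(2g-2,2)$-generalized eigenspaces $\II(\cdot\mid\Sigma)$ on both sides. For $g\ge 2$ the isomorphism genuinely fails outside these eigenspaces, so the claimed containment of joint spectra does not follow. Moreover, even granting an isomorphism of the full groups, your proposed identification of the $\mu$-operators is not coherent: the excision map would intertwine $\mu(\Sigma)\otimes 1$ and $1\otimes\mu(\Sigma_0)$ each with $\mu$ of the corresponding image surface in the re-glued manifold, but the homological observation that those image surfaces are parallel (hence homologous) imposes a relation between two operators \emph{downstairs}, not between $\mu(\Sigma)\otimes 1$ and $1\otimes\mu(\Sigma_0)$ on the tensor product upstairs. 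There is also a logical dependency problem: in \cite{KM:suture} the eigenvalue-range statement (their Corollary~7.2) is an input to the proof of the excision theorem (their Theorem~7.7), so appealing to Theorem~\ref{thm_nonsingularExcision} here is circular.

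The correct transfer to $S^1\times\Sigma$ goes through a factorization of the identity cobordism, not through excision. Take $W=[0,1]\times Y$ and let $N\subset W$ be a tubular neighborhood of $\{1/2\}\times\Sigma$; then $N$ is a cobordism from $\emptyset$ to $S^1\times\Sigma$, and $W$ minus the interior of $N$ is a cobordism from $Y\sqcup(S^1\times\Sigma)$ to $Y$, so the identity on $\II(Y,L,\omega)$ factors (up to sign) through $\II(Y,L,\omega)\otimes\II(S^1\times\Sigma,\emptyset,\omega_N)$. The operators $\mu(\Sigma)$ and $\mu(\pt)$ commute with these cobordism maps because $\Sigma\subset Y$, $\{\pt\}\times\Sigma\subset S^1\times\Sigma$, and $\{1/2\}\times\Sigma\subset W$ are all homologous in $W$ (and likewise for points); this yields the containment of spectra, and Mu\~noz's computation for $S^1\times\Sigma$ finishes the proof. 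This is exactly the mechanism the paper uses in Proposition~\ref{prop_allEigenvaluesForGeneral3mfld} for the singular analogue. Your intuition that the presence of $L$ is irrelevant because $\Sigma$ is disjoint from $L$ is sound, and it is precisely why the paper's remark that the KM proof applies verbatim to $L\ne\emptyset$ is correct---but the proof being transferred is the cobordism-factorization argument, not an excision argument.
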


For $Y,L,\omega,\Sigma$ as in Theorem \ref{thm_nonsingularRangeOfEigenvalues}, define $\II(Y,L,\omega|\Sigma)$ to be the simultaneous generalized eigenspace of the operators $(\mu(\Sigma),\mu(\pt))$ for the eigenvalues $(2g-2,2)$. If $\Sigma$ has genus one,  then by Theorem \ref{thm_nonsingularRangeOfEigenvalues} the only eigenvalue of $\mu(\Sigma)$ is zero, therefore $\II(Y,L,\omega|\Sigma)$ is equal to the generalized eigenspace of $\mu(\pt)$ for the eigenvalue $2$.

More generally, if $\Sigma\subset Y-L$ is disconnected and every connected component of $\Sigma$ is a non-integral surface, define $\II(Y,L,\omega|\Sigma)$ to be the intersection of $\II(Y,L,\omega|\Sigma_i)$ where $\{\Sigma_i\}$ are the connected components of $\Sigma$.

\begin{Theorem}[{\cite[Theorem 7.7]{KM:suture}}]
\label{thm_nonsingularExcision}
Let $\Sigma_1,\Sigma_2\subset Y-L$ be two disjoint connected closed surfaces such that they have the same positive genus,  and suppose $\Sigma_1$ and $\Sigma_2$ intersect $\omega$ transversely at the same odd number of points. Let $\varphi:\Sigma_1\to\Sigma_2$ be a diffeomorphism that maps $\Sigma_1\cap \omega$ to $\Sigma_2\cap \omega$. Let $(\widetilde{Y}, \widetilde{L}, \tilde{\omega})$ be the resulting triple after cutting $Y$ open along $\Sigma=\Sigma_1\cup \Sigma_2$ and glue the boundary given by $\Sigma_1$ to the boundary given by  $\Sigma_2$ by the map $\varphi$, let $\widetilde{\Sigma}\subset\widetilde{Y}$ be the image of $\Sigma$ after the excision. Then
$$
\II(Y,L,\omega|\Sigma) \cong \II(\widetilde{Y},\widetilde{L},\tilde{\omega}| \widetilde{\Sigma}).
$$
\end{Theorem}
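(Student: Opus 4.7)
The strategy is to replay Kronheimer and Mrowka's proof of \cite[Theorem 7.7]{KM:suture} in the singular instanton setting. Since the excision surfaces $\Sigma_1,\Sigma_2$ are assumed disjoint from $L$, the link $L$ plays only a passive role during the construction, sitting in a product region of the cobordism. Consequently the arguments of \cite{KM:suture} carry over word-for-word, once one is careful to use singular instanton cobordism maps and singular bundle data everywhere.

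The first step is to construct the standard excision cobordism. Write $\Sigma$ for the abstract surface, with $\Sigma \cong \Sigma_i$. Take tubular neighborhoods of $\Sigma_1,\Sigma_2$ in $Y$ and form a 4-dimensional cobordism by cutting $[0,1]\times Y$ along a suitable piece bounded by copies of $\Sigma$ and regluing the boundary via $\varphi$ and its inverse; this produces a cobordism
\[
W : Y \sqcup (S^1\times\Sigma) \longrightarrow \widetilde{Y}.
\]
Because $\omega\cap\Sigma_i$ consists of finitely many points and $\varphi$ matches them, the 1-manifold $\omega$ extends canonically over $W$, restricting to $\tilde{\omega}$ on $\widetilde{Y}$ and to $S^1 \times (\omega\cap\Sigma_1)$ on $S^1\times\Sigma$; and because $L\cap\Sigma_i=\emptyset$, the link $L$ extends as $[0,1]\times L$ on the $Y$-side, restricting to $\widetilde{L}$ on $\widetilde{Y}$. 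Reversing the construction yields a cobordism $\overline{W} : \widetilde{Y} \to Y \sqcup (S^1\times\Sigma)$ with the analogous decoration data. After adjoining auxiliary non-integral surfaces to $\omega$ if needed, all triples are admissible, so the cobordism maps $\II(W)$ and $\II(\overline{W})$ are defined in the sense of \eqref{I-functor}.

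The second step is to restrict these maps to the relevant eigenspaces and identify them as mutual inverses. The operators $\mu(\Sigma)$ and $\mu(\mathrm{pt})$ commute with the cobordism maps when applied to the image of $\Sigma$ inside $W$, and by Theorem \ref{thm_nonsingularRangeOfEigenvalues} only eigenvalues of the form $(i^r(2k),(-1)^r 2)$ occur. On the simultaneous generalized eigenspace for $(2g-2,2)$ the cobordism map $\II(W)$ then induces
\[
\II(Y,L,\omega\,|\,\Sigma) \otimes \II(S^1\times\Sigma,\emptyset,\omega_{\Sigma}\,|\,\Sigma) \longrightarrow \II(\widetilde{Y},\widetilde{L},\tilde{\omega}\,|\,\widetilde{\Sigma}),
\]
and composing with the map induced by $\overline{W}$ produces, on the top eigenspace, an endomorphism which by a standard deformation argument (breaking the composed cobordism apart into a product and a closed $S^1\times\Sigma$ piece) is multiplication by a nonzero scalar. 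The key algebraic input is Mu\~noz's theorem \cite{Munoz} asserting that $\II(S^1\times\Sigma,\emptyset,\omega_\Sigma\,|\,\Sigma)$ is one-dimensional; combined with the composition identity, this gives the desired isomorphism.

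The main obstacle in this strategy — already the crux of the original proof — is the one-dimensionality of $\II(S^1\times\Sigma\,|\,\Sigma)$, which rests on Mu\~noz's nontrivial ring computation. In the present setting that input is imported directly; what remains to check is that enlarging the ambient data to include the link $L$ does not interfere with the argument. This reduces to the observation that every step of the construction takes place away from $L$: the excision region, the cobordism $W$, and the auxiliary surfaces used for admissibility can all be chosen disjoint from the product piece $[0,1]\times L$, so the cobordism maps respect the decomposition of the Floer homology into eigenspaces exactly as in \cite{KM:suture}. Verification of the requisite admissibility at each stage is routine, and no new gauge-theoretic input beyond \cite{KM:suture} and \cite{Munoz} is needed.
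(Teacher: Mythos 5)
Your approach is the same as the paper's: the paper does not give an independent proof of Theorem~\ref{thm_nonsingularExcision}, but rather observes (immediately before the statement) that Kronheimer--Mrowka's proof of \cite[Theorem 7.7]{KM:suture}, though stated for $L=\emptyset$, ``applies verbatim'' when the excision surfaces are disjoint from $L$. Your write-up correctly identifies the two ingredients that make this observation valid --- that $L$ extends over the excision cobordism as a product $[0,1]\times L$ disjoint from all of the action, and that the one-dimensionality of $\II(S^1\times\Sigma,\emptyset,\omega_\Sigma\,|\,\Sigma)$ from Mu\~noz's computation is the essential algebraic input. One small bookkeeping inaccuracy: Kronheimer--Mrowka's excision cobordism has the form $W:Y\to\widetilde Y$, not $W:Y\sqcup(S^1\times\Sigma)\to\widetilde Y$; the $S^1\times\Sigma$ piece is not a boundary component of $W$, but rather appears when one cuts the round-trip composition $\overline W\circ W$ along an interior copy of $S^1\times\Sigma$ and caps it with disk bundles to recognize a product --- which is in fact what you describe in your ``deformation'' step, so the substance of the argument is right even if the cobordism is mislabeled. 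Since the conceptual content matches the paper's (implicit) proof, this does not affect the correctness of your reasoning.
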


The special case of Theorem \ref{thm_nonsingularExcision} when $g=1$ was due to Floer \cite{floer1990instanton, braam1995floer}.

\subsection{Flip symmetry}\label{subsection-flip}
Let $(Y,L,\omega)$ be an admissible triple, and suppose $L_1$ is a union of connected components of $L$ such that
$\omega\cap L_1=\emptyset$. Let $L_1'$ be a parallel copy of $L_1$, then the triple $(Y,L,\omega+L_1')$ also satisfies the non-integral condition. Let $\check{P}$ and $\check{P}'$ be the singular bundle data for $(Y,L,\omega)$ and $(Y,L,\omega+L_1)$. There is an isomorphism from $\check{P}|_{Y-L}$ to $\check{P}'|_{Y-L}$ which induces a diffeomorphism
\begin{equation}\label{flip-sym}
\tau : \mathcal{B}(Y,L,\omega)\to  \mathcal{B}(Y,L,\omega+L_1).
\end{equation}
The map $\tau$ is called the \emph{flip symmetry}, and it was first introduced in \cite[Section 2(iv)]{KM-surface1}; see also \cite[Section 4.2]{Kr-ob}. The map $\tau$ induces diffeomorphisms on the critical sets and the moduli spaces of trajectories of the (perturbed) Chern-Simons functional, therefore the map $\tau$ induces an isomorphism on Floer homology, 
\begin{equation}\label{flip-Floer}
\tau:\II(Y,L,\omega)\stackrel{\cong}{\longrightarrow}  \II(Y,L,\omega+L_1),
\end{equation}
which we also denote by $\tau$ by abusing notation.

If we further assume that $0=[L_1]\in H_1(Y,\mathbb{Z}/2)$, then the singular bundle data $\check{P}$ and $\check{P}'$
are the same, and $\tau$ is an involution on $\mathcal{B}(Y,L,\omega)$.  
Fix a choice of orientation on $L$ and take $p\in L_1$, recall that $\delta_p=-\frac12 e(\mathbb{K}_p)$. We have
\begin{equation}\label{delta-flip-sym}
\tau^\ast (\delta_p)=-\delta_p.
\end{equation} 
Let $\Sigma\subset Y$ be a closed oriented embedded surface that intersects $L_1$ transversely, the following formula was proved in \cite[Section 4.2]{Kr-ob}; for a reference closer to our notation, see \cite[Lemma 1.5.14]{Street}:
\begin{equation}\label{mu-flip-sym-homology}
\tau^\ast \Big( -\frac14 p_1(\mathbb{P})/[\Sigma]\Big)=-\frac14 p_1(\mathbb{P})/[\Sigma]+\sum_{p\in \Sigma\cap L_1} \text{sign}_p(\Sigma,L_1)   \delta_p.
\end{equation} 
As a consequence,
\begin{equation}\label{delta-flip-sym}
\tau^\ast (\sigma_p)=-\sigma_p,
\end{equation} 
\begin{equation}\label{mu-flip-sym}
\tau^\ast (\mu(\Sigma))=\mu(\Sigma)+\sum_{p\in \Sigma\cap L_1} \text{sign}_p(\Sigma,L_1)  \sigma_p .
\end{equation} 
Therefore, the cohomology class
$$
-\frac14 p_1(\mathbb{P})/[\Sigma] + \frac12 \sum_{p\in \Sigma\cap L_1} \text{sign}_p(\Sigma,L_1)   \delta_p
$$
and the operator
\begin{equation}\label{mu-orb}
\muu(\Sigma):=\mu(\Sigma)+\frac{1}{2}\sum_{p\in \Sigma\cap L_1} \text{sign}_p(\Sigma,L_1)  \sigma_p 
\end{equation} 
are both invariant under $\tau$. 

\subsection{The space $\mathbb{V}_{g,n}$}\label{subsection-Vgn}
\label{subsec_Vgn}
This subsection reviews some basic constructions from \cite{Street} that will be useful later.
Let $\Sigma$ be a closed oriented surface with genus $g$,
and let $p_1,\cdots,p_n\in \Sigma$ be $n$ points with $n$ odd, then the triple 
$
(S^1\times \Sigma, S^1\times\{p_1,\cdots,p_n\},\emptyset)
$
is admissible. Fix an orientation on $S^1$, it induces orientations on $S^1\times \Sigma$ and 
$S^1\times\{p_1,\cdots,p_n\}$.  Define
\begin{equation*}
\mathbb{V}_{g,n}:=\II(S^1\times \Sigma, S^1\times\{p_1,\cdots,p_n\},\emptyset).
\end{equation*} 
Let $\omega:=S^1\times \{q\}\subset S^1\times \Sigma$ with $q\in \Sigma-\{p_1,\cdots,p_n\}$.
Apply \eqref{flip-Floer} to $L_1:=S^1\times\{p_1\}$, we have
\begin{equation*}
\mathbb{V}_{g,n}\cong \mathbb{V}_{g,n}':= \II(S^1\times \Sigma, S^1\times\{p_1,\cdots,p_n\},\omega).
\end{equation*} 

Notice that the critical point set of the Chern-Simons functional that defines $\bV_{0,1}$  is empty, therefore $\bV_{0,1}=\bV_{0,1}'=0$. For the rest of this article, we will always assume $(g,n)\neq(0,1)$ when discussing properties of $\bV_{g,n}$.

If $S\subset\{1,\cdots,n\}$ is a set with an even number of elements, then $S^1\times \{p_i|i\in S\}$ is null-homologous in $H_1(S^1\times\Sigma;\bZ/2)$, hence
the flip symmetries along this set give involutions
on $\mathbb{V}_{g,n}$ and $\mathbb{V}_{g,n}'$. We denote both involutions by $\tau_S$.

Let $F$ be a 2-dimensional cobordism, i.e. an oriented compact surface with boundary. Equip $F\times \Sigma$ with the product almost complex structure, then $F\times \{p_1,\cdots,p_n\}$ is an almost complex submanifold, hence the map 
\begin{equation*}
\II(F\times \Sigma, F\times \{p_1,\cdots,p_n\},\emptyset)
\end{equation*}
is defined without sign ambiguity.
In particular, if we choose $F$ to be the pair-of-pants cobordism from two circles to one circle, we obtain a map
\begin{equation*}
\mathbb{V}_{g,n}\otimes \mathbb{V}_{g,n} \to \mathbb{V}_{g,n}.
\end{equation*}
This map defines a commutative multiplication on $\bV_{g,n}$. Let $e\in \mathbb{V}_{g,n}$ be the image of the element $1\in \bC$ under the map
$$\II(D^2\times \Sigma, D^2\times\{p_1,\cdots,p_n\},\emptyset):\bC\cong \II(\emptyset,\emptyset,\emptyset) \to \mathbb{V}_{g,n}.$$
By the functoriality of $\II$, multiplication by $e$ on $\bV_{g,n}$ gives the identity map. Therefore the multiplication on $\bV_{g,n}$ defines a ring structure.

 Let $F'$ be a 2-sphere with two disks removed, and regard it
as a cobordism from two circles to the empty set, we obtain a map
\begin{equation}\label{Vgn-pairing}
\langle\cdot~,\cdot\rangle:\mathbb{V}_{g,n}\otimes \mathbb{V}_{g,n} \to \mathbb{C}\cong \II(\emptyset, \emptyset,\emptyset).
\end{equation}    
This is a bilinear pairing on $\mathbb{V}_{g,n}$. Since
$F'$ becomes the product cobordism after changing the orientation
of one of its boundary, the dual of \eqref{Vgn-pairing} induces an isomorphism  $\mathbb{V}_{g,n}\to \mathbb{V}_{g,n}^\ast$, therefore the pairing defined by \eqref{Vgn-pairing} is non-degenerate.

Let 
\begin{equation*}
E : \mathbb{V}_{g,n}\to  \mathbb{V}_{g,n}
\end{equation*}
be the map induced by
\begin{equation}\label{epsilon-cob}
([0,1]\times S^1\times \Sigma, [0,1]\times S^1\times \{p_1,\cdots,p_n\}, \{\pt\}\times \Sigma),
\end{equation}
where $\pt$ is a point in $(0,1)\times S^1$. It follows from the index formula of moduli spaces of trajectories of the Chern-Simons functional that $E$ is a degree 2 map.
Since $[\{\pt\}\times \Sigma]+[\{\pt\}\times \Sigma]=0\in H_2([0,1]\times S^1\times \Sigma;\bZ/2)$, we have $E^2=\id$.
The space $\bV_{g,n}$ decomposes as $\bV_{g,n}=\bV_{g,n}^+\oplus \bV_{g,n}^-$, where $\bV_{g,n}^\pm$ is the eigenspace of $E$ with the eigenvalue $\pm 1$ respectively. Since $E$ is a degree $2$ isomorphism acting on a $\mathbb{Z}/4$-graded space, we have
\begin{equation}\label{eqn_VgnpmHaveSameDim}
\dim\bV_{g,n}^+ = \dim\bV_{g,n}^-.
\end{equation}
It follows from the functoriality of $\II$ that $(Ex)(y)=(Exy)$ for all $x,y\in\bV_{g,n}$, therefore $\bV_{g,n}^\pm$ are rings themselves and $\bV_{g,n}=\bV_{g,n}^+\oplus\bV_{g,n}^-$ as a direct sum of rings.

We use $R_{g,n}$  to denote the space of flat $\SU(2)$ connections on $\Sigma-\{p_1,\cdots,p_n\}$ whose holonomies around all $p_i$'s
have trace zero modulo gauge transformations. Let $d_i$ be a loop around $p_i$ on $\Sigma$, and let
$\{a_i\}_{1\le i\le 2g}$ be a set of based loops on $\Sigma$ that gives the standard generators of $\pi_1(\Sigma)$, then 
\begin{equation*}
\pi_1(\Sigma-\{p_1,\cdots,p_n\})=\langle a_1,\cdots,a_{2g},d_1,\cdots,d_n \rangle/ 
\langle [a_1,a_{g+1}]\cdots [a_{g},a_{2g}]  d_1\cdots d_n \rangle.
\end{equation*}
The space $R_{g,n}$ can also be described as the space of
$\SU(2)$ representations of $\pi_1(\Sigma-\{p_1,\cdots,p_n\})$ such that the images of $d_i$ have trace zero modulo conjugations. Therefore
\begin{multline}\label{Rgn-BC}
R_{g,n}=\{(B_1,\cdots,B_{2g}, C_1,\cdots, C_n)\in \SU(2)^{2g+n}| 
\\ 
[B_1,B_{g+1}]\cdots [B_{g},B_{2g}]  C_1\cdots C_n=1, C_i^2=-1\}/\sim
\end{multline}  
where $\sim$ denotes the conjugations in $\SU(2)$. Since $n$ is odd, all the representations in $R_{g,n}$
are irreducible, hence $R_{g,n}$ is a smooth compact manifold, and it is straighforward to compute that $\dim R_{g,n} = 6g+2n-6$. Let
\begin{equation*}
G:=\pi_1(S^1\times (\Sigma-\{p_1,\cdots,p_n\}) )\cong \pi_1(S^1)\times \pi_1(\Sigma-\{p_1,\cdots,p_n\}).
\end{equation*}
The critical set of the unperturbed Chern-Simons functional of $\II(S^1\times \Sigma, S^1\times\{p_1,\cdots,p_n\},\emptyset)$ are conjugation classes of $\SU(2)$-representations of $G$, which consists of two copies of $R_{g,n}$:
the first copy consists of representations of $G$ which is trivial on $\pi_1(S^1)$; the second copy consists of 
representations of $G$ which maps the generator of $\pi_1(S^1)$ to $-1\in \SU(2)$. 

Since $\dim R_{g,n}=6g+2n-6$, the Chern-Simons functional is not Morse unless $(g,n)=(0,3)$. However,
the Chern-Simons functional is always Morse-Bott. Therefore the instanton Floer homology $\mathbb{V}_{g,n}$ 
is a sub-quotient of 
$H_\ast(R_{g,n}\sqcup R_{g,n};\bC) $, hence 
\begin{equation}\label{MB-inequality}
\dim \mathbb{V}_{g,n} \le 2\dim H_\ast(R_{g,n};\bC).
\end{equation}

Similarly, we can consider the critical set of the unperturbed Chern-Simons functional which defines $\II(S^1\times \Sigma, S^1\times\{p_1,\cdots,p_n\},\{\pt\}\times \Sigma)=\bV_{g,n}'$. In this case, the critical set is given by 
\begin{multline}\label{Rgn'-BC}
R_{g,n}':=\{(B_1,\cdots,B_{2g}, C_1,\cdots, C_n)\in \SU(2)^{2g+n}| \\
[B_1,B_{g+1}]\cdots [B_{g},B_{2g}]  C_1\cdots C_n=-1, C_i^2=-1\}/\sim.
\end{multline}  
The flip symmetry \eqref{flip-sym} along $S^1\times \{p_i\}$ restricts to a map from $R_{g,n}$ to $R_{g,n}'$, which maps $C_i$ to $-C_i$ and fixes the other coordinates in
\eqref{Rgn-BC} and \eqref{Rgn'-BC}. The flip symmetry along multiple connected components of $S^1\times\{p_1,\cdots,p_n\}$ are described in a similar way.

There are some special submanifolds of $R_{g,n}$ which we will  refer to later. In the notations of \eqref{Rgn-BC}, let $D_{i,j}^+$ be the submanifold of $R_{g,n}$ given by $C_i=C_j$, and let $D_{i,j}^-$ be the submanifold of $R_{g,n}$ given by $C_i=-C_j$. Recall that if $(Y,L,\omega)$ is an admissible triple and $p\in L$, Section \ref{review-instanton} defined a cohomology class $\delta_p$ on $\mathcal{B}(Y,L,\omega)$ with the sign depending on auxiliary choices. In the following proposition, we abuse notation and let $\delta_{p_i}$ be the cohomology class defined by a point on $S^1\times\{p_i\}$.
\begin{Proposition}[{\cite[Proposition 1.4.2, Proposition 1.4.12]{Street}}] \label{delta=Dij}
Given $1\le i\neq j\le n$,
there are $r,s\in \{1,-1\}$ depending on the orientations of $D_{i,j}^\pm$ and the choice of sign of $\delta_{p_i}$, such that
\begin{equation*}
P.D.(\delta_{p_i}|_{R_{g,n}})=r[D_{i,j}^+]+s[D_{i,j}^-].
\end{equation*} 
\end{Proposition}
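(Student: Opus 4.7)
The plan is to compute $P.D.(\delta_{p_i}|_{R_{g,n}})$ by exhibiting a transverse section of the plane bundle $\mathbb{K}_{p_i}|_{R_{g,n}}$ whose zero locus is $D_{i,j}^+\cup D_{i,j}^-$, and then invoking the relation $\delta_{p_i}=-\tfrac12 e(\mathbb{K}_{p_i})$. First I would describe $\mathbb{K}_{p_i}|_{R_{g,n}}$ explicitly in terms of the representation data \eqref{Rgn-BC}: at $[\rho]$ with $\rho=(B_1,\dots,B_{2g},C_1,\dots,C_n)$, the asymptotic holonomy around $S^1\times\{p_i\}$ is conjugate to $C_i$, which lies on the trace-zero $2$-sphere $S:=\{X\in\SU(2):X^2=-1\}$. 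The adjoint action $\Ad(C_i)$ on $\mathfrak{su}(2)\cong\bR^3$ has eigenvalues $(1,-1,-1)$, the $+1$ eigenspace being spanned by $C_i$ itself (viewed inside $\mathfrak{su}(2)$) and the $(-1)$-eigenspace being its Killing-orthogonal complement $C_i^\perp$. Hence $\mathbb{K}_{p_i}|_{[\rho]}$ is canonically identified with $C_i^\perp\subset\mathfrak{su}(2)$, equipped with the orientation determined by the orientation of $L$ at $p_i$ recalled in Section \ref{review-instanton}.

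Next I would construct a distinguished section $s_{ij}$ of $\mathbb{K}_{p_i}|_{R_{g,n}}$: at $[\rho]$, set $s_{ij}([\rho])$ to be the Killing-orthogonal projection of $C_j\in\mathfrak{su}(2)$ onto $C_i^\perp$. This assignment is $\Ad$-equivariant in the pair $(C_i,C_j)$ and hence descends to a well-defined section of $\mathbb{K}_{p_i}$ over $R_{g,n}$. Its zero set consists of those $[\rho]$ for which $C_j\in\bR\cdot C_i$; since both $C_i$ and $C_j$ lie on $S$, this forces $C_j=\pm C_i$, so the zero locus is exactly $D_{i,j}^+\cup D_{i,j}^-$. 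Transversality is verified locally by perturbing $C_j$ along the $2$-sphere $S$ near $\pm C_i$ with the remaining data fixed: the differential of the projection is an isomorphism from $T_{\pm C_i}S$ onto $C_i^\perp$, so $D_{i,j}^\pm$ are transversely cut out codimension-$2$ submanifolds along which $s_{ij}$ vanishes simply. The standard identification of the Poincar\'e dual of the Euler class of an oriented rank-$2$ bundle with the signed transverse zero locus of a generic section then yields $P.D.(e(\mathbb{K}_{p_i})|_{R_{g,n}})=\alpha[D_{i,j}^+]+\beta[D_{i,j}^-]$ for appropriate signs $\alpha,\beta$, and combining with $\delta_{p_i}=-\tfrac12 e(\mathbb{K}_{p_i})$ produces a formula of the claimed shape.

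The main obstacle is the sign and normalization bookkeeping. One must carefully match the natural orientation on $\mathbb{K}_{p_i}$ against the chosen orientations of $D_{i,j}^\pm$, and one must reconcile the factor of $-\tfrac12$ in the definition of $\delta_{p_i}$ with the assertion that the resulting coefficients lie in $\{1,-1\}$ rather than $\{\tfrac12,-\tfrac12\}$. Resolving this requires identifying a factor-of-two multiplicity that must enter somewhere --- either from the structure of $\mathbb{K}_{p_i}$ as an associated $\SO(3)$-bundle carrying its local $\bZ/2$-stabilizer along the singular locus, or from an analogous doubling in the local intersection indices of $s_{ij}$ along $D_{i,j}^\pm$ relative to the chosen fundamental classes. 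A direct computation at model representatives of $D_{i,j}^\pm$ should pin down the signs $r,s\in\{1,-1\}$ precisely in terms of the orientation conventions, completing the proof.
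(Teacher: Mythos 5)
You are right to flag the factor-of-two issue, and unfortunately it is not a harmless bookkeeping problem that can be swept into ``a doubling somewhere''; it is the crux of the proposition, and your argument as written gives the wrong normalization.

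Here is the difficulty made precise. Your section $s_{ij}$ of $\mathbb{K}_{p_i}|_{R_{g,n}}$ is (as you verify correctly) transverse with \emph{simple} zeros along each of $D_{i,j}^{+}$ and $D_{i,j}^{-}$: near $D_{i,j}^{+}$, writing $C_j = C_i + tV + O(t^2)$ with $V\in C_i^\perp$, the projection is $tV+O(t^2)$, so the derivative is an isomorphism. That computation gives $P.D.\bigl(e(\mathbb{K}_{p_i})|_{R_{g,n}}\bigr)=\pm[D_{i,j}^+]\pm[D_{i,j}^-]$, and since $\delta_{p_i}=-\tfrac12 e(\mathbb{K}_{p_i})$ you would conclude $P.D.(\delta_{p_i}|_{R_{g,n}})=\mp\tfrac12[D_{i,j}^+]\mp\tfrac12[D_{i,j}^-]$. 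These half-integral coefficients are not what the proposition asserts, and they cannot be made integral by a change of orientation convention; the discrepancy is a genuine factor of two, not a sign. Moreover the paper later \emph{uses} the integral version explicitly: in the proof of Lemma 4.3 it takes $P.D.\Psi(\delta_{n+1})=[D_{n+1,n+2}^+]+[D_{n+1,n+2}^-]$ and deduces $P.D.\Psi(\delta_{n+1}+q_{g,n}\delta_{n+2})=2[D_{n+1,n+2}^-]$, which then matches the cobordism computation with coefficient $\tfrac12$. With the $\tfrac12$ version everything downstream would come out wrong by a factor of two, so the proposition's normalization is not negotiable.

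Where does the discrepancy come from? The parabolic-bundle description gives a hint that your identification of $\mathbb{K}_{p_i}|_{R_{g,n}}$ is too naive. One has $\mathbb{K}_i = \mathbb{E}^{(2)}_{p_i}\otimes(\mathbb{E}_{p_i}/\mathbb{E}^{(2)}_{p_i})^{-1}$, and restricted to $N_{g,n}^d$ (where $\det\mathbb{E}$ is pulled back from a fixed point of the Jacobian, so $c_1(\mathbb{E}_{p_i})|_{N_{g,n}^d}=0$) one finds $c_1(\mathbb{K}_i)|_{N_{g,n}^d}=2\,c_1(\mathbb{E}^{(2)}_{p_i})$, i.e.\ $c_1(\mathbb{K}_i)$ is \emph{twice} the first Chern class of the tautological filtration line bundle, and $\delta_{p_i}=-\tfrac12 c_1(\mathbb{K}_i)=-c_1(\mathbb{E}^{(2)}_{p_i})$ is manifestly integral. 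So the ``tautological $C_i^\perp$-plane bundle'' you build and the bundle $\mathbb{K}_{p_i}|_{R_{g,n}}$ constructed from the universal $\SO(3)$-bundle do not have the same Euler class; they differ exactly by the factor of two you are missing. Your proposal never establishes that $s_{ij}$ is in fact a section of the universal $\mathbb{K}_{p_i}$ rather than of the tautological plane bundle, and this is precisely where the gap sits. Your closing paragraph correctly guesses that the resolution lives either in the universal-bundle identification or in a hidden multiplicity, but it leaves both options open and resolves neither, so the argument as given does not prove the stated integral formula.

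(The paper itself does not reprove this statement; it cites Street's Propositions 1.4.2 and 1.4.12, so there is no in-paper argument to compare against, but the normalization used later in the paper confirms that the integral coefficients are essential.)
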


Let 
\begin{equation}\label{A_{g,n}}
\mathbb{A}_{g,n}:=\mathbb{C}[\alpha,\beta,\psi_1,\cdots,\psi_{2g},\delta_1,\cdots,\delta_n]
\end{equation}
be the free graded commutative $\mathbb{C}$-algebra with
\begin{equation*}
\deg \alpha=2,~\deg \beta=4, ~\deg \psi_i=3,~\deg \delta_j=2.
\end{equation*}
Notice that the $\psi_i$'s are anti-commutative to each other.

Recall that $e\in\bV_{g,n}$ is the ``$1$'' element in the ring structure of $\bV_{g,n}$.
Define a ring homomorphism
\begin{equation}\label{Phi-map}
\Phi:\mathbb{A}_{g,n}[\epsilon]/(\epsilon^2-1)\to \mathbb{V}_{g,n}
\end{equation}
by 
$\Phi(\alpha) :=  \mu(\Sigma)(e) $,
$\Phi(\beta) :=  \mu(\pt)(e) $,
$\Phi(\psi_i) := \mu(a_i)(e)$,
$\Phi(\delta_i) := \sigma_{p_i}(e)$,
$\Phi(\epsilon) := E(e) $.
For $S\subset \{1,\cdots,n\}$, there is an endomorphism on $\mathbb{A}_{g,n}[\epsilon]/(\epsilon^2-1)$ that maps $\delta_i$ to $-\delta_i$ if $i\in S$, maps $\alpha$ to $\alpha+\sum_{i\in S} \delta_i$, and fixes the other generators. By \eqref{delta-flip-sym} and \eqref{mu-flip-sym}, if $S$ has an even number of elements, this endomorphism is a lift of the flip symmetry $\tau_S$ via $\Phi$.

Define two other ring homomorphisms
\begin{equation}\label{Phi-pm}
\Phi^\pm :\mathbb{A}_{g,n}\to \mathbb{V}_{g,n}^\pm
\end{equation}
by
\begin{equation*}
\Phi^\pm (f):=\frac{\id\pm E}{2}(\Phi(f)).
\end{equation*}

\subsection{Parabolic bundles}
\label{subsection_ParabolicBundles}
In this subsection, let $\Sigma$ be a closed Riemann surface of genus $g$ 
with $n$ marked points $\{p_1,\cdots, p_n\}$.
\begin{Definition}
A \emph{parabolic bundle with rank 2} over $(\Sigma,\{p_1,\cdots, p_n\})$ 
consists of the following data:
\begin{itemize}
\item A holomorphic vector bundle $E$ over $\Sigma$;

\item Filtrations $E|_{p_i}=E_{p_i}^{(1)}\supset E_{p_i}^{(2)}$.

\item Weights $w_{i}^{(1)}<w_{i}^{(2)}$ for each filtration.
\end{itemize} 
In this paper, we require that $\rank E_{p_i}^{(2)}=1$, $w_{i}^{(1)}=-1/4$, $w_{i}^{(2)}=1/4$ for all $i$.
\end{Definition}

\begin{Definition}
A \emph{parabolic line bundle} over $(\Sigma,\{p_1,\cdots, p_n\})$ 
consists of the following data:
\begin{itemize}
\item A holomorphic line bundle $L$ over $\Sigma$;
\item A weight $w_{i}$ at each $p_i$.
\end{itemize} 
In this paper, we require that $w_i\in\{-1/4,1/4\}$ for all $i$.
\end{Definition}

The reader may refer to \cite{MSesh} for the general definition of parabolic bundles with arbitrary ranks.
We will use the same notation to denote a parabolic vector bundle and its corresponding holomorphic vector bundle when there is no source of confusion.

The parabolic degree of a parabolic bundle $E$ with rank $2$ is defined to be
$$
\parr \deg E:= \deg E + \sum_i w_i^{(1)} \rank (E_{p_i}^{(1)}/E_{p_i}^{(2)}) + \sum_i w_i^{(2)}\rank E_{p_i}^{(2)}.
$$
Because of our specific choice of ranks and weights, we have
$$
\parr \deg E= \deg E.
$$
The 
parabolic degree of a parabolic line bundle $L$ is defined to be
\begin{equation}
\parr \deg L:= \deg L + \sum_i w_i.
\end{equation}

\begin{Definition}
\label{def_parabolicMap}
Let $L$ be a parabolic line bundle and $E$ be a parabolic bundle with rank $2$ on $(\Sigma,\{p_1,\cdots, p_n\})$. 
A holomorphic map $f:L\to E$ is called \emph{parabolic} if for every $p_i$ whose weight is $1/4$ with respect to $L$, we have $f(L|_{p_i})\subset E_{p_i}^{(2)}$.
\end{Definition}

\begin{remark}
Definition \ref{def_parabolicMap} agrees with the standard definition of parabolic maps under our specific choices of ranks and weights. For the general definition of a parabolic map, the reader may refer to \cite{MSesh}.
\end{remark}

A parabolic bundle $E$ with rank $2$ is called \emph{semi-stable} if every non-zero map from a parabolic line bundle $L$ to $E$ satisfies
$$
\parr \deg L  \le \frac{\parr \deg E}{2}.
$$
The parabolic bundle $E$ is called \emph{stable} if the above inequality is always strict.
When $n$ is odd, the parabolic degree of every parabolic line bundle is an odd multiple of $1/4$, and
the parabolic degree of $E$ is an integer, therefore every semi-stable parabolic bundle with rank $2$ is stable. We will assume $n$ is odd for the rest of this subsection.

For $d\in\mathbb{Z}$, let $\widetilde{N}^d_{g,n}$ be the moduli space of all rank-$2$ stable parabolic bundles on $(\Sigma,\{p_1,\cdots,p_n\})$ with weights given as above and degree equal to $d$. The space $\widetilde{N}^d_{g,n}$ is a smooth projective  variety \cite{MSesh}. Moreover 
there is a universal family   
 $(\widetilde{\bE}, \widetilde{\mathbb{E}}^{(2)})$  over 
 $(\widetilde{N}_{g,n}^d\times \Sigma,\widetilde{N}_{g,n}^d\times \{p_1,\cdots,p_n\})$ \cite[Proposition 3.2]{BY}. 
 Let $J_g$ be the Jacobian of $\Sigma$. Then $\widetilde{N}_{g,n}^d$ is a fiber bundle over $J_g$ via the determinant map,
 let $N_{g,n}^d$ be a fiber.  We denote the restriction of the universal family $(\widetilde{\bE}, \widetilde{\mathbb{E}}^{(2)})$
  to $(N_{g,n}^d\times \Sigma,N_{g,n}^d\times \{p_1,\cdots,p_n\})$ by  $({\bE},{\mathbb{E}}^{(2)})$.
  The fiber bundle $\widetilde{N}_{g,n}^d\to J_g$ is a homological product in $\bC$-coefficients 
 (cf. \cite[Proposition 9.7]{AB}). 
 Most importantly, we have the following correspondence which is a special case of a more general result
 of Mehta and Seshadri. 
\begin{Theorem}[{\cite[Theorem 4.1]{MSesh}}]\label{DUY-par}
There is a canonical diffeomorphism
$N_{g,n}^d\cong R_{g,n}$
when $d$ is even, and a canonical diffemorphism
$N_{g,n}^d\cong R_{g,n}'$
when $d$ is odd.
\end{Theorem}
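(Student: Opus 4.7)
The plan is to deduce the statement from Mehta--Seshadri's main theorem \cite[Theorem 4.1]{MSesh}, which provides a canonical diffeomorphism between the moduli space of stable rank-$2$ parabolic bundles on $(\Sigma,\{p_1,\ldots,p_n\})$ with prescribed weights and the moduli space of irreducible unitary representations of $\pi_1(\Sigma\setminus\{p_1,\ldots,p_n\})$ whose holonomy around each $p_i$ is conjugate in $U(2)$ to a diagonal matrix determined by the parabolic weights at $p_i$. The two remaining tasks are then to translate the numerical data of our specific setup and to match the parities of $d$ to the two representation varieties $R_{g,n}$ and $R_{g,n}'$.

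First, I would spell out the Mehta--Seshadri local holonomy prescription for the weights $w_i^{(1)}=-1/4$ and $w_i^{(2)}=1/4$. These weights force the local holonomy around $p_i$ to be conjugate to $\mathrm{diag}(e^{-\pi i/2},e^{\pi i/2})=\mathrm{diag}(-i,i)\in\SU(2)$, a matrix of trace zero whose square is $-I$. This matches precisely the local condition $C_i\in\SU(2)$, $C_i^2=-1$ appearing in the definitions \eqref{Rgn-BC} and \eqref{Rgn'-BC}. Since $n$ is odd, every such representation is automatically irreducible (as already observed after \eqref{Rgn-BC}), so semi-stability equals stability and the moduli spaces involved are smooth manifolds, which ensures Mehta--Seshadri yields a diffeomorphism rather than just a homeomorphism of topological quotients.

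Next, I would account for the fixed-determinant condition. The determinant map $\widetilde{N}_{g,n}^d\to J_g$ corresponds under Mehta--Seshadri to taking determinants of representations; fixing a fiber $N_{g,n}^d$ corresponds to restricting the determinant character to be trivial and then choosing an $\SU(2)$ lift of the resulting projective representation. Because the prescribed local holonomies $\pm i$ admit canonical $\SU(2)$ lifts, the only remaining ambiguity is whether the relation $\prod_{i=1}^g [B_i,B_{g+i}]\prod_{j=1}^n C_j$ evaluates to $+I$ or $-I$ in $\SU(2)$: projectively it is always $+I$, but the $\SU(2)$ ambiguity records the obstruction to lifting an $\SO(3)$-bundle to an $\SU(2)$-bundle, which is detected by $w_2$, and in this correspondence by $d\pmod 2$. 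Thus even $d$ yields the relation $\prod[B_i,B_{g+i}]\prod C_j=1$ (i.e.\ $R_{g,n}$) while odd $d$ yields $=-1$ (i.e.\ $R_{g,n}'$).

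The main obstacle will be pinning down this last sign correspondence: showing that even degree matches the $+1$ constraint and odd degree matches the $-1$ constraint, with the conventions of \cite{MSesh} for the weights $\pm 1/4$. This is essentially the parabolic analogue of the fact that the mod-$2$ degree of an $\SO(3)$-bundle over $\Sigma$ equals its second Stiefel--Whitney class evaluated on $[\Sigma]$, but carried out in the punctured setting with specified local models; I would verify it by choosing a standard presentation of $\pi_1(\Sigma\setminus\{p_1,\ldots,p_n\})$ via loops $a_1,\ldots,a_{2g},d_1,\ldots,d_n$ and computing the degree of the determinant line bundle associated to a unitary connection with the prescribed local form. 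Once this sign identification is settled, canonicity of the diffeomorphism is a direct consequence of the functoriality of the Mehta--Seshadri construction.
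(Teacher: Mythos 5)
The statement you are asked to prove is not actually proved in the paper: it is presented as a direct citation of Mehta--Seshadri's Theorem~4.1, with no accompanying argument, so there is no ``paper's own proof'' against which to compare your write-up. Your plan is nonetheless a reasonable account of how the cited result specializes to the formulation used here. You correctly translate the weight data $(-1/4,1/4)$ into the condition that the prescribed holonomy around each $p_i$ is conjugate to $\mathrm{diag}(-i,i)$, hence trace zero with square $-I$, and you correctly note that oddness of $n$ forces irreducibility so that stability and semi-stability coincide and the moduli spaces are smooth. The identification of the fixed-determinant fiber with $\SU(2)$ representations is also the right step.

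The one place where your argument is genuinely incomplete---and you flag it yourself---is the parity bookkeeping. Saying that the sign of the product $\prod[B_i,B_{g+i}]\prod C_j$ ``records the obstruction to lifting an $\SO(3)$-bundle to $\SU(2)$, detected by $w_2$, hence by $d\bmod 2$'' is a plausible heuristic, but it needs a concrete verification because there are several competing conventions in play: the shift by $1/2$ between the paper's weights $\pm1/4$ and Mehta--Seshadri's normalization in $[0,1)$, the fact that the original Mehta--Seshadri theorem is stated for parabolic degree zero so that the general-$d$ case requires twisting by line bundles, and the residual $\pm$ ambiguity in lifting each $C_i$ (which changes the product's sign when applied to an odd number of the $C_i$'s). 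Any one of these could conceivably flip the correspondence between $d$ even and $R_{g,n}$. The concrete computation you propose---fixing a presentation of $\pi_1(\Sigma\setminus\{p_i\})$ and computing the degree of the determinant line bundle of a unitary connection with the prescribed local model---is the right way to settle it, but until that computation is carried out the proof is incomplete. Since the paper invokes the theorem as external input, this gap does not affect the paper; it is simply the nontrivial content of the citation.
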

We use $\ad \mathbb{\widetilde{E}}$ to denote the adjoint $\mathfrak{so}(3)$-bundle of $\mathbb{\widetilde{E}}$. As topological vector bundles, we have the decompositions 
$\mathbb{\widetilde{E}}_{p_i}\cong \mathbb{\widetilde{E}}_{p_i}^{(2)}\oplus \mathbb{\widetilde{E}}_{p_i}/\mathbb{\widetilde{E}}_{p_i}^{(2)}$ and 
$\ad \mathbb{\widetilde{E}}_{p_i}\cong  \underline{\mathbb{R}} \oplus \widetilde{\mathbb{K}}_i$ where
$$
 \widetilde{\mathbb{K}}_i :=\mathbb{\widetilde{E}}_{p_i}^{(2)}\otimes (\mathbb{\widetilde{E}}_{p_i}/\mathbb{\widetilde{E}}_{p_i}^{(2)})^{-1}.
$$
Let $\bK_i$ be the restriction of $\widetilde{\mathbb{K}}_i$ to $N_{g,n}^d$.
\begin{Theorem}[\cite{BR}]
\label{thm_generatorsForN_gd}
The cohomology classes 
\begin{equation*} 
p_1(\ad \mathbb{E})/ h ~(h\in H_\ast(\Sigma)), \text{ and }~c_1(\mathbb{K}_i )
~(1\le i\le n)
\end{equation*}
generate the cohomology ring $H^\ast(N_{g,n}^d;\mathbb{C})$.
\end{Theorem}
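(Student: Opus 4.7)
The plan is to adapt the Atiyah--Bott gauge-theoretic argument for the non-parabolic moduli space to the parabolic setting. Fix a smooth complex rank-2 bundle $E \to \Sigma$ of degree $d$ together with a smoothly chosen line $\ell_i \subset E_{p_i}$ at each marked point. Let $\mathcal{C}$ be the affine space of holomorphic structures on $E$, and let $\mathcal{G}_{\mathrm{par}}$ be the group of complex gauge transformations preserving each $\ell_i$; then $\widetilde{N}_{g,n}^d$ is identified with the quotient $\mathcal{C}^{ss}/\mathcal{G}_{\mathrm{par}}$ of the semistable locus. Because $n$ is odd and the weights are $\pm 1/4$, semistability equals stability, the action is (projectively) free, and the quotient is smooth. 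The fixed-determinant slice yields $N_{g,n}^d$.

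The first step is to compute $H^\ast_{\mathcal{G}_{\mathrm{par}}}(\mathcal{C})$. Since $\mathcal{C}$ is contractible this equals $H^\ast(B\mathcal{G}_{\mathrm{par}})$, and the natural fibration $B\mathcal{G}_{\mathrm{par}} \to B\mathcal{G}$ has fibers a product of $n$ copies of $\mathbb{CP}^1 = \SU(2)/U(1)$, one per marked point. By the classical Atiyah--Bott computation, $H^\ast(B\mathcal{G};\mathbb{C})$ is a free graded-commutative algebra on the Künneth components of $p_1(\ad\bE^{\mathrm{univ}})$ slanted with $H_\ast(\Sigma)$; the fiber contribution adds one tautological class per marked point, which one identifies with $c_1(\mathbb{K}_i)$ using the decomposition $\ad\bE|_{p_i} \cong \underline{\mathbb{R}} \oplus \mathbb{K}_i$. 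Thus the equivariant cohomology of $\mathcal{C}$ is generated by precisely the classes appearing in the theorem.

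The second step is to show that the restriction map
\begin{equation*}
H^\ast_{\mathcal{G}_{\mathrm{par}}}(\mathcal{C}) \longrightarrow H^\ast_{\mathcal{G}_{\mathrm{par}}}(\mathcal{C}^{ss}) = H^\ast(\widetilde{N}_{g,n}^d;\mathbb{C})
\end{equation*}
is surjective. For this one invokes the Shatz/Harder--Narasimhan stratification of $\mathcal{C}$ adapted to parabolic structures, and proves that the parabolic Yang--Mills functional is equivariantly minimally degenerate along each unstable stratum. The key input is a parabolic Riemann--Roch index computation showing that the normal bundle of each higher stratum has rank equal to its real codimension, which yields equivariant perfection and hence the desired surjection. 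Finally, the fibration $\widetilde{N}_{g,n}^d \to J_g$ is a homological product in $\mathbb{C}$-coefficients (cited above the theorem), so the generators for $\widetilde{N}_{g,n}^d$ restrict to generators of $H^\ast(N_{g,n}^d;\mathbb{C})$ after discarding the Jacobian factor; the $2g$ one-dimensional slant products $p_1(\ad\bE)/[a_i]$ survive restriction to the fiber, while the Jacobian cohomology is carried by the component of $p_1(\ad\bE)/[\Sigma]$ that records $c_1(\det\bE)$.

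The main obstacle is the equivariant perfection step: while in the non-parabolic case this is the heart of Atiyah--Bott's analysis, the parabolic setting introduces a refined stratification that must also track how each HN sub-bundle meets the prescribed parabolic lines $\ell_i$, and the index count for the normal direction of each stratum has to be carried out with parabolic Riemann--Roch. Once this stratification argument is in place, identifying the generators is essentially bookkeeping with the universal family $(\widetilde{\bE}, \widetilde{\bE}^{(2)})$.
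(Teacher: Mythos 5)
The paper does not prove this theorem; it is stated as a citation to \cite{BR} (Biswas--Raghavendra), so there is no in-paper argument to compare against. Your gauge-theoretic outline is a reasonable adaptation of the Atiyah--Bott program to the parabolic setting, and the overall structure — identifying $\widetilde{N}_{g,n}^d$ as a symplectic/GIT quotient, computing $H^*(B\mathcal{G}_{\mathrm{par}})$ via the fibration over $B\mathcal{G}$ with $(\mathbb{CP}^1)^n$ fibers, and then running equivariant perfection to get surjectivity onto $H^*(\widetilde{N}_{g,n}^d)$ before restricting to the fixed-determinant fiber — is the standard route. Two points deserve sharpening. First, your stated ``key input'' for equivariant perfection (``the normal bundle of each higher stratum has rank equal to its real codimension'') is tautological and is not the actual criterion; what one needs is that the equivariant Euler class of the normal bundle to each unstable stratum is not a zero-divisor, which in the Atiyah--Bott/Kirwan framework follows from the presence of a subtorus in the stabilizer acting with nonzero weights on the normal fiber. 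Parabolic Riemann--Roch does enter, but its role is to compute stratum codimensions for the Morse inequalities, not to verify the ``rank = codimension'' identity. Second, your closing remark that the Jacobian cohomology is ``carried by the component of $p_1(\ad\bE)/[\Sigma]$ that records $c_1(\det\bE)$'' is off: $p_1(\ad\widetilde\bE)$ is built from $c_1^2 - 4c_2$ and is insensitive to twisting by line bundles pulled back from $J_g$, which is precisely why the paper must pass to the fixed-determinant fiber $N_{g,n}^d$. The correct justification for the last reduction is simply that the fibration $\widetilde{N}_{g,n}^d \to J_g$ is a homological product in $\bC$-coefficients, so restriction to the fiber is a surjective ring map and hence takes a generating set to a generating set.
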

 We define a graded ring homomorphism 
\begin{equation}
\label{eqn_Psi-tilde}
\widetilde{\Psi}:\mathbb{A}_{g,n}\to H^\ast(\widetilde{N}_{g,n}^d,\mathbb{C})
\end{equation}
by 
$\widetilde{\Psi}(\alpha) := -\frac14 p_1(\ad  \widetilde{\bE})/[\Sigma]$, 
$\widetilde{\Psi}(\beta) := -\frac14 p_1(\ad  \widetilde{\bE})/[\pt]$,
$\widetilde{\Psi}(\psi_j) := -\frac14 p_1(\ad  \widetilde{\bE})/[a_j]$, 
$\widetilde{\Psi}(\delta_i) := -\frac{1}{2}c_1(  \widetilde{\bK}_i)$,
where $\{a_i\}_{1\le i\le 2g}$ is a set of oriented closed curves on $\Sigma$ whose holomogy classes give the standard basis of $H_1(\Sigma;\bZ)$. The curves $a_i$ are oriented such that the intersection number of $a_i$ and $a_j$ is 1 if $j=i+g$, and is $0$ if $|i-j|\neq g$.
By Theorem \ref{thm_generatorsForN_gd}, the map $\widetilde{\Psi}$ restricts to a surjective homomorphism
\begin{equation}
\label{Psi-map}
\Psi:\mathbb{A}_{g,n}\twoheadrightarrow H^\ast(N_{g,n}^d,\mathbb{C}).
\end{equation}

\subsection{Connections between $\mathbb{V}_{g,n}$ and $N_{g,n}^0$}
Let $\Sigma$ be a closed Riemann surface of genus $g$ 
with $n$ marked points $\{p_1,\cdots, p_n\}$, and suppose $n$ is odd.

We identify $R_{g,n}$ with $N_{g,n}^0$ using the canonical diffeomorphism given by Theorem \ref{DUY-par}. Let $\mathbb{P}\to\mathcal{B}\times (S^1\times \Sigma)$ be an extension of the universal $\SO(3)$-bundle over $\mathcal{B}\times (S^1\times \Sigma-S^1\times\{p_1,\cdots,p_n\})$ as in Section \ref{review-instanton}. It follows from the universal properties that we can find $\mathbb{P}$ and $\mathbb{E}$ such that the pull-back of $\mathbb{P}$ to 
$R_{g,n}\times \Sigma\cong R_{g,n}\times (\{\pt\}\times \Sigma) $ coincides with the frame bundle of $\ad\mathbb{E}$,
and the pull-back of the bundle $\mathbb{K}_{p_i}$ defined in Section \ref{review-instanton} coincides with the bundle $\mathbb{K}_i$ defined in 
Section \ref{subsection_ParabolicBundles}. Therefore $\Psi(\alpha),\Psi(\beta),\Psi(\psi_i),\Psi(\delta_j)$ coincide with 
the restrictions of the
cohomology classes $-\frac14 p_1(\mathbb{P})/[\Sigma], -\frac14 p_1(\mathbb{P})/[\pt],-\frac14 p_1(\mathbb{P})/[a_i],\delta_{p_j}$ to $R_{g,n}$.

Let $\Psi^{-1}$ be a homogeneous 
right inverse of $\Psi$ in \eqref{Psi-map}. Notice that we only require $\Psi^{-1}$ to be a homogeneous linear map instead of a 
ring homomorphism. The following result is implicitly included in {\cite[Proposition 2.6.4]{Street}}.
\begin{Proposition}[{\cite{Street}}]\label{ring-deformation}
Let $\Phi^\pm$ be the maps in \eqref{Phi-pm}.
The compositions 
\begin{equation*}
F^\pm:=\Phi^\pm\circ \Psi^{-1}:H^\ast(R_{g,n})\to \mathbb{V}^\pm _{g,n}
\end{equation*}
are isomorphisms of vector spaces. As a consequence, $\Phi$ and $\Phi^\pm$ are epimorphisms.  
\end{Proposition}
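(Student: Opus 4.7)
The plan is to use the Morse--Bott structure of the unperturbed Chern--Simons functional on $\mathcal{B}(S^1\times\Sigma, S^1\times\{p_1,\ldots,p_n\},\emptyset)$ together with the correspondence between universal cohomology classes on $\mathcal{B}$ and those on $N_{g,n}^0\cong R_{g,n}$ recorded just above. First I would record the dimension bound: by \eqref{MB-inequality} and \eqref{eqn_VgnpmHaveSameDim},
\[
2\dim \mathbb{V}_{g,n}^\pm = \dim \mathbb{V}_{g,n} \leq 2\dim H^*(R_{g,n};\mathbb{C}),
\]
so $\dim \mathbb{V}_{g,n}^\pm\leq \dim H^*(R_{g,n};\mathbb{C})$.

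Next I would produce a ``leading-term'' map $\ell:\mathbb{V}_{g,n}^\pm\to H^*(R_{g,n};\mathbb{C})$ from the Morse--Bott spectral sequence, whose $E_1$-page is $H^*(R_{g,n}\sqcup R_{g,n};\mathbb{C})=H^*(R_{g,n};\mathbb{C})^{\oplus 2}$. The cobordism map $E$ multiplies the $S^1$-holonomy by $-1$ and exchanges the two components of the critical set, so the $\pm 1$ eigenspaces of $E$ at the $E_1$-level are each identified with a single copy of $H^*(R_{g,n};\mathbb{C})$ via symmetrization/anti-symmetrization. After choosing a splitting of the Morse--Bott filtration, this gives injective leading-term maps $\ell:\mathbb{V}_{g,n}^\pm\to H^*(R_{g,n};\mathbb{C})$. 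The identifications of $\mathbb{P}$ with the adjoint frame bundle of $\mathbb{E}$ on $R_{g,n}\times\Sigma$ and of $\mathbb{K}_{p_i}$ with $\mathbb{K}_i$ stated in the paragraph preceding the proposition then guarantee that for each generator $x\in\{\alpha,\beta,\psi_j,\delta_i\}$ of $\mathbb{A}_{g,n}$, the leading term of $\Phi^\pm(x)$ equals $\Psi(x)$. Multiplicativity of the leading-term map under the pair-of-pants ring structure extends this to $\ell\circ\Phi^\pm=\Psi$ on all of $\mathbb{A}_{g,n}$.

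Since $\Psi$ is surjective by Theorem \ref{thm_generatorsForN_gd}, the relation $\ell\circ\Phi^\pm=\Psi$ forces $\ell$ to be surjective; combined with the dimension bound, $\ell$ must be an isomorphism and the Morse--Bott spectral sequence collapses. For any homogeneous right inverse $\Psi^{-1}$ of $\Psi$,
\[
\ell\circ F^\pm = \ell\circ\Phi^\pm\circ\Psi^{-1} = \Psi\circ\Psi^{-1} = \id_{H^*(R_{g,n};\mathbb{C})},
\]
so $F^\pm=\ell^{-1}$ is a vector space isomorphism. Epimorphism of $\Phi^\pm$ is then immediate since $F^\pm=\Phi^\pm\circ\Psi^{-1}$ is onto, and epimorphism of $\Phi$ follows from $\mathbb{V}_{g,n}=\mathbb{V}_{g,n}^+\oplus\mathbb{V}_{g,n}^-$ together with $\Phi^\pm(f)=\Phi(\tfrac{1\pm\epsilon}{2}f)$.

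The main obstacle is making the leading-term identification $\ell\circ\Phi^\pm=\Psi$ rigorous. This requires a careful Morse--Bott perturbation analysis showing that Floer chain representatives of the universal operators $\mu(\cdot)$ and $\sigma_{p_i}$ acting on the ring identity $e$ restrict, on the critical set, to the cohomology classes defining $\Psi$. Once this setup is in place, the remainder is a clean diagram chase plus a dimension count. This leading-term analysis is essentially the content of \cite[Proposition 2.6.4]{Street} cited in the statement.
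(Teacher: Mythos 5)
Your overall skeleton — bound $\dim\bV_{g,n}^\pm$ from above by $\dim H^*(R_{g,n})$ via \eqref{MB-inequality} and \eqref{eqn_VgnpmHaveSameDim}, then exhibit a ``leading-order'' identification that forces equality — is the right shape of argument, and the dimension count is exactly the one the paper uses. However, the central step, the identity $\ell\circ\Phi^\pm=\Psi$ on all of $\bA_{g,n}$, does not follow from what you have set up, and this is a genuine gap rather than a routine detail.

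The issue is that the leading-term map $\ell$ associated with the Morse--Bott filtration is not a ring homomorphism. It only becomes one after choosing a splitting of the filtration, and no such choice makes $\ell$ multiplicative in general; and without multiplicativity, the equality $\ell(\Phi^\pm(x))=\Psi(x)$ on the generators $x\in\{\alpha,\beta,\psi_j,\delta_i\}$ does not propagate to products. Indeed, the entire thrust of Corollary \ref{deform-vanishing-polynomial} is that $\Phi^\pm$ is a \emph{deformation} of $\Psi$: a homogeneous relation of $\Psi$ lifts to a relation of $\Phi^\pm$ only after adding strictly lower-degree correction terms. If you had an honest ring map $\ell$ with $\ell\circ\Phi^\pm=\Psi$ those corrections would be invisible, which already signals that the multiplicativity you are invoking cannot be taken for granted. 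What one can say for free is that the filtration is multiplicative, hence $\mathrm{gr}(\bV_{g,n}^\pm)$ is a ring; but identifying that associated-graded ring with the cup-product ring $H^*(R_{g,n})$ is precisely the nontrivial geometric input, and your proposal does not supply it.

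The paper sidesteps this entirely by computing the pairing $\langle F^\pm(u),F^\pm(v)\rangle$ as a Donaldson invariant of $(S^2\times\Sigma, S^2\times\{p_1,\ldots,p_n\})$. Because the minimal-energy ASD moduli space is $R_{g,n}$ in dimension $6g+2n-6$ and the next stratum has dimension $\ge 6g+2n-2$, a pure dimension count shows the pairing vanishes when $\deg u+\deg v<\dim R_{g,n}$ and equals $\tfrac12(u\smile v)[R_{g,n}]$ at the critical degree. Sorting a graded basis then makes the Gram matrix skew-upper-triangular with nondegenerate skew-diagonal blocks, giving injectivity of $F^\pm$ without ever asserting a ring identity; the dimension bound finishes. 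If you want to rescue the spectral-sequence route, you would need to prove that the ring structure on the associated graded agrees with the cup product, which requires the same moduli-space dimension analysis the paper carries out — at that point the pairing argument is shorter and cleaner.
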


\begin{proof} For $f\in \mathbb{A}_{g,n}$, and a surface $(S,\partial S)\subset (S^2\times\Sigma,S^2\times\{p_1,\cdots,p_n\})$, let 
$$D(S^2\times\Sigma,S^2\times\{p_1,\cdots,p_n\},S, f)$$ be the Donaldson invariant of the triple $(S^2\times\Sigma,S^2\times\{p_1,\cdots,p_n\},S)$ with respect to the $\mu$ class given by $f$. In our notation, if $S\subset D^2\times\Sigma\subset S^2\times \Sigma$, then the Donaldson invariant is given by 
\begin{equation*}
D(S^2\times\Sigma,S^2\times\{p_1,\cdots,p_n\},S,f)=\II(D^2\times\Sigma, S^2\times\{p_1,\cdots,p_n\},S)(\Phi(f))\in \mathbb{C},
\end{equation*}
where 
$$\II(D^2\times\Sigma, S^2\times\{p_1,\cdots,p_n\},S):\II(S^1\times\Sigma, S^2\times\{p_1,\cdots,p_n\},\emptyset)\to\II(\emptyset,\emptyset,\emptyset)\cong \bC$$ is the cobordism map.

Pick $u,v\in H^\ast(R_{g,n})$ of degrees $d_1,d_2$ respectively. Then 
the pairing $$\langle \Phi(\Psi^{-1} (u)), \Phi(\Psi^{-1} (v))\rangle$$ 
given by \eqref{Vgn-pairing} is equal to
$D(S^2\times\Sigma, S^2\times\{p_1,\cdots,p_n\},\emptyset,\Psi^{-1} (u)\Psi^{-1} (v))$.
Since 
$$
\pi_1(S^2\times \Sigma- S^2\times\{p_1,\cdots, p_n\})= \pi_1( \Sigma- \{p_1,\cdots, p_n\}),
$$
the moduli space of flat $\SU(2)$ connections on the triple
\begin{equation}\label{S2xSigma}
(S^2\times \Sigma,S^2\times\{p_1,\cdots, p_n\},\emptyset)
\end{equation}
is just $R_{g,n}$. It is also the moduli space of ASD connections on \eqref{S2xSigma} with minimal (zero) energy. Every component of the moduli space of ASD connections on \eqref{S2xSigma} has formal dimension at least $6g+2n-6=\dim R_{g,n}$.
Therefore if $d_1+d_2< 6g+2n-6$, we have 
$D(S^2\times\Sigma,S^2\times\{p_1,\cdots,p_n\}, \Psi^{-1} (u)\Psi^{-1} (v)  )=0$. If $d_1+d_2=6g+2n-6$, then the Donaldson invariant equals the integration of the corresponding cohomology class on $R_{g,n}$, therefore
\begin{multline*}
D(S^2\times\Sigma, S^2\times\{p_1,\cdots,p_n\},\emptyset,\Psi^{-1} (u)\Psi^{-1} (v)  )
\\
=\Psi(\Psi^{-1} (u)\Psi^{-1} (v)  )[R_{g,n}] = (u\smile v) [R_{g,n}].
\end{multline*}
Every moduli space of ASD connections on the triple \eqref{S2xSigma}
with non-zero energy has formal dimension at least $\dim R_{g,n}+4$, thus if $\dim R_{g,n}<d_1+d_2<\dim R_{g,n}+4$, then    
$D(S^2\times\Sigma,S^2\times\{p_1,\cdots,p_n\},\emptyset,\Psi^{-1} (u)\Psi^{-1} (v)  )=0$. 

Similarly, the pairing  
 $\langle \Phi(\epsilon\Psi^{-1} (u)), \Phi(\Psi^{-1} (v))\rangle$ is equal to the Donaldson invariant 
\begin{equation*}
D(S^2\times\Sigma, S^2\times\{p_1,\cdots,p_n\},S^2\times \{\pt\},\Psi^{-1} (u)\Psi^{-1} (v))
\end{equation*} 
The moduli space of ASD connections with minimal energy on the triple 
$$(S^2\times \Sigma,S^2\times\{p_1,\cdots, p_n\},S^2\times\{\pt\})$$ has dimension $\dim R_{g,n}+2$. Therefore
$\langle \Phi(\epsilon\Psi^{-1} (u)), \Phi(\Psi^{-1} (v))\rangle =0$ when $d_1+d_2<\dim R_{g,n}+2$.

Notice that
\begin{align*}
\langle F^+(u), F^+(v)\rangle &=\frac{1}{4} \langle \Phi(\Psi^{-1} (u)) +  \Phi(\epsilon\Psi^{-1} (u)), 
\Phi(\Psi^{-1} (v)) +  \Phi(\epsilon\Psi^{-1} (v)) \rangle \\
&= \frac{1}{2}\langle \Phi(\Psi^{-1} (u)), \Phi(\Psi^{-1} (v) ) \rangle +\frac{1}{2} 
\langle \Phi(\epsilon\Psi^{-1} (u)), \Phi(\Psi^{-1} (v) ) \rangle.
\end{align*}
When $\deg u+\deg v=d_1+d_2 < 6g+2n-6$ or $d_1+d_2$ is odd, 
the above pairing is $0$. When $d_1+d_2=6g+2n-6$, we have
\begin{equation}\label{F+-pairing}
\langle F^+(u), F^+(v)\rangle =\frac12 u\smile v [R_{g,n}].
\end{equation}
Take a (homogeneous) basis $\{u_i\}$ for $H^\ast(R_{g,n})$, then the discussion above implies 
that the pairing matrix for $\{F^+(u_i)\}$ is a  skew upper triangular block matrix with non-degenerate 
skew diagonal blocks. Such a matrix is non-degenerate by simple linear algebra. This implies 
$F^+$ is an injection. On the other hand we have
$$
\dim \mathbb{V}^+_{g,n}=\frac{1}{2}\mathbb{V}_{g,n}\le \dim H^\ast(R_{g,n})
$$
by \eqref{eqn_VgnpmHaveSameDim} and \eqref{MB-inequality}. Therefore $F^+$ is a linear isomorphism. A similar argument works for $F^-$.
\end{proof}

The proof above established the following result, which we state separately for later reference.
\begin{Proposition}\label{Agn-pairing-Rgn}
Suppose $f,f'\in \mathbb{A}_{g,n}$ are homogeneous polynomials of degrees $d_1$ and $d_2$ respectively. Then if $d_1+d_2<\dim R_{g,n}$ or $d_1+d_2$ is odd, then
\begin{equation*}
\langle \Phi^\pm(f),\Phi^\pm(f') \rangle=0
\end{equation*}
and if $d_1+d_2=\dim R_{g,n}$, then
\begin{equation*}
\langle \Phi^\pm(f),\Phi^\pm (f') \rangle=\frac12 \Psi(f)\smile \Psi(f')[R_{g,n}].
\end{equation*}
\end{Proposition}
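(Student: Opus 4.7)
My plan is to observe that this proposition is essentially a byproduct of the dimension-counting argument already carried out in the proof of Proposition \ref{ring-deformation}, and to extract it cleanly by unwinding the definitions of $\Phi^\pm$ and $\langle\,\cdot\,,\,\cdot\,\rangle$.

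First I would express the pairing as a Donaldson invariant of the closed triple $(S^2\times\Sigma,\,S^2\times\{p_1,\ldots,p_n\},\,\emptyset)$, with an additional surface insertion $S^2\times\{\pt\}$ corresponding to the factor $\epsilon$. Concretely, the pair-of-pants cobordism used to define $\langle\,\cdot\,,\,\cdot\,\rangle$ in \eqref{Vgn-pairing} composes with $D^2\times\Sigma$ to close up to $S^2\times\Sigma$, so
\[
\langle \Phi(f),\Phi(f')\rangle = D(S^2\times\Sigma,S^2\times\{p_1,\ldots,p_n\},\emptyset,\,ff'),
\]
and similarly with an $S^2\times\{\pt\}$ inserted when an $\epsilon$ appears. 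Because $\pi_1(S^2\times\Sigma-S^2\times\{p_1,\ldots,p_n\}) = \pi_1(\Sigma-\{p_1,\ldots,p_n\})$, the flat moduli space on this triple is exactly $R_{g,n}$, and it is the zero-energy component of the ASD moduli space.

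Next I would invoke the dimension computations already used: every connected component of the ASD moduli space has formal dimension at least $\dim R_{g,n}=6g+2n-6$, with equality attained precisely at the flat component $R_{g,n}$; the next energy level has formal dimension at least $\dim R_{g,n}+4$. Consequently, evaluating $\mu$-classes of total degree $d_1+d_2$ on these moduli spaces gives $0$ whenever $d_1+d_2<\dim R_{g,n}$ or $\dim R_{g,n}<d_1+d_2<\dim R_{g,n}+4$, and it gives $(\Psi(f)\smile\Psi(f'))[R_{g,n}]$ when $d_1+d_2=\dim R_{g,n}$. Vanishing in the odd-degree case is immediate from the $\mathbb{Z}/4$-grading: since $\dim R_{g,n}$ is even, an odd total degree cannot match the dimension of any component of the moduli space. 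An analogous argument handles the insertion of $S^2\times\{\pt\}$: the minimal-energy moduli space then has dimension $\dim R_{g,n}+2$, so the corresponding Donaldson invariant vanishes whenever $d_1+d_2<\dim R_{g,n}+2$.

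Finally, I would unpack $\Phi^\pm=\tfrac{\id\pm E}{2}\circ\Phi$ and expand
\[
\langle \Phi^\pm(f),\Phi^\pm(f')\rangle = \tfrac{1}{2}\langle\Phi(f),\Phi(f')\rangle \pm \tfrac{1}{2}\langle \Phi(\epsilon f),\Phi(f')\rangle,
\]
using that $E$ is self-adjoint with respect to the pairing (which follows from the functoriality of $\II$ since gluing two copies of the cobordism \eqref{epsilon-cob} on either side of the pair-of-pants yields the same closed configuration). In the range $d_1+d_2<\dim R_{g,n}$ both terms vanish by the degree count above, and in the matching-degree case the $\epsilon$-insertion contribution vanishes because $d_1+d_2<\dim R_{g,n}+2$, so only the first term survives and yields the stated formula with its factor of $\tfrac{1}{2}$. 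The only step that requires a little care is confirming that the $\epsilon$-insertion is indeed self-adjoint and that the degree of the inserted surface class is $2$, but both facts are already used implicitly in the proof of Proposition \ref{ring-deformation}, so no genuine obstacle arises.
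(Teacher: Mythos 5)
Your proposal is correct and reproduces the paper's own argument: Proposition~\ref{Agn-pairing-Rgn} is extracted verbatim from the computation carried out inside the proof of Proposition~\ref{ring-deformation}, where the pairing is rewritten as a Donaldson invariant on $(S^2\times\Sigma, S^2\times\{p_1,\ldots,p_n\},\emptyset)$ (or with $S^2\times\{\pt\}$ inserted for the $\epsilon$-term) and then evaluated by the same dimension count on the ASD moduli space. The one point you flag — that $E$ is self-adjoint for the pairing, so that the cross terms collapse and the $E\otimes E$ term reduces to the identity via $E^2=\id$ — is indeed used silently in the paper when it expands $\langle F^+(u),F^+(v)\rangle$ into two terms, so your caution is well placed but poses no obstacle.
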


\begin{Corollary}\label{deform-vanishing-polynomial}
Suppose $f\in\mathbb{A}_{g,n}$ is a homogeneous polynomial of degree $d$ such that $\Psi(f)=0$. Then there is a polynomial
\begin{equation*}
\widetilde{f}=f+ \sum_{\substack{m\le d-2\\ m \equiv d \text{ mod } 2}} g_m
\end{equation*} 
where $g_m\in \mathbb{A}_{g,n}$ has degree $m$,
such that $\Phi^+(\widetilde{f})=0$. A similar result holds for $\Phi^-$.
\end{Corollary}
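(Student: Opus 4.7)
The plan is to use the linear isomorphism $F^+ = \Phi^+\circ\Psi^{-1}: H^*(R_{g,n}) \to \mathbb{V}_{g,n}^+$ from Proposition \ref{ring-deformation} to reduce the problem to a question about cohomological degrees. Concretely, since $F^+$ is bijective, there is a unique $h \in H^*(R_{g,n})$ with $F^+(h) = \Phi^+(f)$, and because $\Phi^+$ and $F^+$ both respect the mod-$2$ parity, the components of $h$ lie only in degrees congruent to $d$ modulo $2$. Once I establish that the top cohomological degree of $h$ is at most $d-2$, the polynomial
\[
\widetilde{f} \;:=\; f - \Psi^{-1}(h)
\]
will do the job: since $\Psi^{-1}$ is homogeneous, the correction $-\Psi^{-1}(h)$ splits as $\sum_{m \leq d-2,\, m \equiv d\,(\mathrm{mod}\, 2)} g_m$, and by construction $\Phi^+(\widetilde{f}) = \Phi^+(f) - F^+(h) = 0$.

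The key step is the degree bound on $h$. Letting $k_0$ denote the top nonzero degree of $h$, I would argue by contradiction that $k_0 \ge d$ is impossible. For an arbitrary $h' \in H^{N-k_0}(R_{g,n})$, where $N := \dim R_{g,n} = 6g+2n-6$, I compute $\langle F^+(h), F^+(h')\rangle$ in two ways. Expanding $h = \sum_k h_k$ and applying Proposition \ref{Agn-pairing-Rgn} term by term, every contribution with $k < k_0$ vanishes (the degree sum is $k+(N-k_0) < N$), while the top term contributes $\tfrac{1}{2}\, h_{k_0}\smile h'\,[R_{g,n}]$, giving
\[
\langle F^+(h), F^+(h')\rangle \;=\; \tfrac{1}{2}\, h_{k_0}\smile h'\,[R_{g,n}].
\]
On the other hand this pairing equals $\langle \Phi^+(f), \Phi^+(\Psi^{-1}(h'))\rangle$, and the assumption $k_0 \ge d$ forces $\deg f + \deg\Psi^{-1}(h') = d + N - k_0 \le N$, so Proposition \ref{Agn-pairing-Rgn} again applies: when $d < k_0$ the pairing is automatically zero, and when $d = k_0$ it equals $\tfrac{1}{2}\Psi(f)\smile h'\,[R_{g,n}] = 0$ by hypothesis. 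Equating the two expressions yields $h_{k_0} \smile h'\,[R_{g,n}] = 0$ for every $h' \in H^{N-k_0}(R_{g,n})$, and Poincar\'e duality on the compact manifold $R_{g,n}$ then forces $h_{k_0} = 0$, contradicting the choice of $k_0$. Hence $k_0 < d$, and by parity $k_0 \le d-2$, which finishes the proof.

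I expect the main subtlety to be the first pairing computation: one has to verify that only the top-degree component of $h$ contributes, which is exactly what the strict inequality version of Proposition \ref{Agn-pairing-Rgn} delivers. Everything else is formal linear algebra and bookkeeping with parities. The statement for $\Phi^-$ follows from the same argument with $F^-$ in place of $F^+$ throughout.
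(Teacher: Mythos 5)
Your proof is correct and follows essentially the same approach as the paper: both define a class $h\in H^*(R_{g,n})$ with $F^+(h)=\Phi^+(f)$ and pin it down using the degree-dependent pairing computation of Proposition \ref{Agn-pairing-Rgn}. The only difference is organizational---you take $h=(F^+)^{-1}(\Phi^+(f))$ directly and bound its top degree via Poincar\'e duality, whereas the paper constructs an $h$ of degree $\le d-2$ from its pairing values on the complementary range and then invokes nondegeneracy of the pairing on $\mathbb{V}_{g,n}^+$ to conclude $\Phi^+(f)=F^+(h)$; these are the same argument read in opposite directions.
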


Roughly speaking, the above result says that the ring structures of $\mathbb{V}_{g,n}^\pm$ are graded deformations of the cohomology ring $H^\ast(R_{g,n})$. 

\begin{proof}
By Proposition \ref{Agn-pairing-Rgn} and the assumption that $\Psi(f)=0$, we have
\begin{equation*}
\langle\Phi^+(f), \Phi^+(f') \rangle=0
\end{equation*}
whenever $\deg f' \le \dim R_{g,n}-d+1$.
Let $F^+:H^\ast(R_{g,n})\to \mathbb{V}_{g,n}^+$ be the isomorphism in Proposition \ref{ring-deformation}. 
It follows from Proposition \ref{Agn-pairing-Rgn} that the pairing
\begin{equation*}
\langle\cdot,\cdot\rangle : F^+(H^{\ast\le d-2}(R_{g,n})) \times  F^+(H^{\ast\ge \dim R_{g,n}-d+2}(R_{g,n})) \to \mathbb{C}
\end{equation*}
is non-degenerate. Therefore there exists $h\in H^{\ast\le d-2}(R_{g,n})$ such that
the map
\begin{equation*}
\langle F^+(h),\cdot\rangle : F^+(H^{\ast\ge \dim R_{g,n}-d+2}(R_{g,n})) \to \mathbb{C}
\end{equation*}
is equal to the map
\begin{equation*}
\langle \Phi^+(f),\cdot\rangle : F^+(H^{\ast\ge \dim R_{g,n}-d+2}(R_{g,n})) \to \mathbb{C}.
\end{equation*}
On the other hand, the degree of $h$ and Proposition \ref{Agn-pairing-Rgn} imply 
\begin{equation*}
\langle F^+(h),F^+(f')\rangle=0=\langle \Phi^+(f),F^+(f')\rangle
\end{equation*}
for all $f'\in H^{\ast < \dim R_{g,n}-d+2}(R_{g,n})$. The non-degeneracy of the pairing on $\mathbb{V}_{g,n}$ then implies
$\Phi^+(f)=F^+(h)$. Hence 
\begin{equation*}
\Psi^{-1}(-h)=:\sum_{\substack{m\le d-2\\ m \equiv d \text{ mod } 2}} g_m
\end{equation*}
gives the desired polynomial. A similar argument works for $F^-$.
\end{proof}

\section{Cohomology of the moduli space of stable parabolic bundles}
\label{sec_parabolic}
  
Let $\Sigma$ be a closed Riemann surface of genus $g$ 
with marked points $\{p_1,\cdots, p_n\}$, and suppose $n$ is odd.
Recall that 
$$
\mathbb{A}_{g,n}=\mathbb{C}[\alpha,\beta,\psi_1,\cdots,\psi_{2g},\delta_1,\cdots,\delta_n],
$$
and there are ring homomorphisms $\widetilde{\Psi}:\bA_{g,n} \to H^*(\widetilde{N}_{g,n}^d; \bC)$ and $\Psi:\bA_{g,n} \to H^*(N_{g,n}^d; \bC)$ defined by \eqref{eqn_Psi-tilde} and \eqref{Psi-map}. In this section we will use the elements of $\bA_{g,n}$ to refer to its images under $\widetilde{\Psi}$ and $\Psi$ when it is clear that we are refering to elements of $H^*(\widetilde{N}_{g,n}^d; \bC)$ or $H^*(N_{g,n}^d; \bC)$.

\subsection{Decomposition of $H^*(N_{g,n}^d; \bC)$}
\label{Rgn-decom}
Let $W=\mathbb{C}^{2g}$ be the standard representation of $\Sp(2g,\bR)$, let $\omega\in \Lambda^2 W^*$ be an $\Sp(2g, \bR)$-invariant symplectic 2-form on $W$, and let $\gamma_\omega \in \Lambda^2 W$ be the dual of $\omega$. For $0\le k\le 2g$, let
$$
\varphi_k: \Lambda^k W \to \Lambda^{k-2} W
$$
be the contraction with $\omega$. For $k=0,1$, the map $\varphi_k$ is the zero map on $\Lambda^k W$.

Let 
$\Lambda_0^k W$ be the kernel of $
\varphi_k$. It is well-known that for $k\le g$,
$$
 \gamma_\omega^{g-k}\wedge: \Lambda_0^k W \to \Lambda^{2g-k}W
$$
is injective,
and
$$
\gamma_\omega^{g-k+1}\wedge: \Lambda_0^k W \to \Lambda^{2g-k+2}W
$$
is the zero map.  Therefore,
\begin{equation}\label{eqn_decompositionOfWedgeW}
\Lambda^*W =\bigoplus_{k=0}^g \, \Lambda_0^k W\otimes \bC[\gamma_\omega]/(\gamma_\omega^{g-k+1}).
\end{equation}
For a reference, see for example \cite[Proposition 1.2.30]{Huybrechts}.

Recall that $\{a_i\}_{1\le i\le 2g}$ is a set of oriented closed curves on $\Sigma$ whose holomogy classes give the standard basis of $H_1(\Sigma;\bZ)$. 
Define
$$
\gamma := \sum_{j=1}^g \psi_j \psi_{j+g}\in \bA_{g,n}.
$$
Identify $\spann\{\psi_1,\cdots,\psi_{2g}\}\subset \bA_{g,n}$ with $H_1(\Sigma,\bC)=\spann\{a_1,\cdots,a_{2g}\}$. 
By equation \eqref{eqn_decompositionOfWedgeW},
$$
\bA_{g,n} \cong \bigoplus_{k=0}^{g} \Lambda_0^k H_1(\Sigma;\bC) \otimes \mathbb{C}[\alpha,\beta,\gamma,\delta_1,\cdots,\delta_n]/(\gamma^{g-k+1}).
$$
We will denote the pull-backs of $\Psi$ and $\widetilde{\Psi}$ to 
$$\bigoplus_{k=0}^{g} \Lambda_0^k H_1(\Sigma;\bC) \otimes \mathbb{C}[\alpha,\beta,\gamma,\delta_1,\cdots,\delta_n]$$
still by $\Psi$ and $\widetilde{\Psi}$ by abuse of notation.
 Since the maps $\widetilde{\Psi}$ and $\Psi$ are equivariant with respect to the mapping class group of $(\Sigma, \{p_1,\cdots,p_n\})$, they are equivariant with respect to the action of $\Sp(2g,\bZ)$ on $H_1(\Sigma,\bC)$.
  By the Borel density theorem \cite{Borel}, $\Sp(2g,\bZ)$ is Zariski dense in $\Sp(2g,\bR)$. It is well-known that for $0\le k\le g$, the space $\Lambda_0^k W$ is irreducible as an $\Sp(2g,\bR)$-representation \cite[Theorem 17.5]{FultonHarris}, therefore it is also irreducible as an $\Sp(2g,\bZ)$ representation. By Schur's lemma, we have the following proposition.

\begin{Proposition}
\label{prop_DefOfIgnkd}
 The kernel of the surjection 
$$
\Psi:\bigoplus_{k=0}^{g} \Lambda_0^k H_1(\Sigma;\bC) \otimes \mathbb{C}[\alpha,\beta,\gamma,\delta_1,\cdots,\delta_n] \twoheadrightarrow H^\ast(R_{g,n}^d) $$
has the form  
$$
\ker \Psi = \bigoplus_{k=0}^{g} \Lambda_0^k H_1(\Sigma;\bC) \otimes I_{g,n,k}^d,
$$
where 
$I_{g,n,k}^d$ are ideals of  $\bC[\alpha,\beta,\gamma,\delta_1,\cdots,\delta_n].$ \qed
\end{Proposition}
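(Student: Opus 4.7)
The plan is to combine Schur's lemma with the $\Sp(2g,\bZ)$-equivariance of $\Psi$ and the irreducibility of the $\Lambda_0^k H_1(\Sigma;\bC)$, both of which are essentially already set up in the excerpt. Write $\mathcal{R}:=\bC[\alpha,\beta,\gamma,\delta_1,\ldots,\delta_n]$. The generators $\alpha,\beta,\delta_i$ correspond via $\widetilde{\Psi}$ to cohomology classes arising from $[\Sigma]$, $[\pt]$, and the marked points, and are therefore invariant under the mapping class group action; the element $\gamma=\sum_j \psi_j\psi_{j+g}$ is $\Sp$-invariant because it is dual to the symplectic form on $H_1(\Sigma;\bC)$. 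Consequently $\Sp(2g,\bZ)$ acts trivially on $\mathcal{R}$, so the given direct sum $\bigoplus_{k=0}^g \Lambda_0^k H_1(\Sigma;\bC) \otimes \mathcal{R}$ is precisely the decomposition of its $\Sp(2g,\bZ)$-structure into the pairwise non-isomorphic isotypic components $\Lambda_0^k H_1(\Sigma;\bC) \otimes \mathcal{R}$.

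The map $\Psi$ is graded, so its kernel is a graded $\Sp(2g,\bZ)$-stable subspace. Restricting to each (finite-dimensional) graded piece and applying Borel density as in the excerpt promotes $\Sp(2g,\bZ)$-invariance to $\Sp(2g,\bR)$-invariance, and Schur's lemma together with the irreducibility of each $\Lambda_0^k H_1(\Sigma;\bC)$ over $\Sp(2g,\bR)$ forces any such invariant subspace to respect the isotypic decomposition. Collecting this over all graded degrees produces graded subspaces $I_{g,n,k}^d\subseteq \mathcal{R}$ satisfying $\ker \Psi = \bigoplus_{k=0}^g \Lambda_0^k H_1(\Sigma;\bC) \otimes I_{g,n,k}^d$.

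Finally, to upgrade each $I_{g,n,k}^d$ from a subspace to an ideal of $\mathcal{R}$, observe that multiplication by any $r\in \mathcal{R}$ acts componentwise with respect to the isotypic decomposition and commutes with the $\Sp$-action; since $\Psi$ is a ring homomorphism, $\ker \Psi$ is closed under multiplication by all of $\mathcal{R}$, and hence so is each $I_{g,n,k}^d$. I do not expect a serious obstacle at any stage: the entire argument is a formal application of Schur's lemma, with Borel density and the irreducibility statement cited from \cite{FultonHarris} entering essentially as black boxes.
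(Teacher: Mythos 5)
Your argument is correct and is essentially the paper's own reasoning: the text immediately preceding the proposition (equivariance of $\Psi$ under the mapping class group, Borel density to promote $\Sp(2g,\bZ)$-invariance to $\Sp(2g,\bR)$-invariance, irreducibility of $\Lambda_0^k H_1(\Sigma;\bC)$, then Schur's lemma) is the paper's proof, which it compresses into a single sentence. You have simply spelled out the two points left implicit there, namely the reduction to finite-dimensional graded pieces and the observation that the $I_{g,n,k}^d$ inherit the ideal property because $\ker\Psi$ is an $\mathcal{R}$-submodule and multiplication by $\mathcal{R}$ preserves each isotypic summand.
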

From now on, we will use $I_{g,n,k}^d$ to denote the ideals given by Proposition \ref{prop_DefOfIgnkd}.

\begin{Corollary}
\begin{equation}\label{decom}
H^\ast(N_{g,n}^d)\cong \bigoplus_{k=0}^{g} \Lambda_0^k H_1(\Sigma;\bC) \otimes \bC[\alpha,\beta,\gamma,\delta_1,\cdots,\delta_n]/I_{g,n,k}^d.
\end{equation}
\end{Corollary}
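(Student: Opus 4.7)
The plan is to read off this Corollary directly from Proposition~\ref{prop_DefOfIgnkd} by the first isomorphism theorem, so very little extra work is needed beyond bookkeeping with direct sums and tensor products.

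More precisely, Proposition~\ref{prop_DefOfIgnkd} gives a surjective $\bC$-linear map
$$
\Psi:\bigoplus_{k=0}^{g} \Lambda_0^k H_1(\Sigma;\bC) \otimes_{\bC} \bC[\alpha,\beta,\gamma,\delta_1,\ldots,\delta_n] \twoheadrightarrow H^\ast(N_{g,n}^d),
$$
whose kernel is identified as
$$
\ker \Psi = \bigoplus_{k=0}^{g} \Lambda_0^k H_1(\Sigma;\bC) \otimes_{\bC} I_{g,n,k}^d.
$$
Each $\Lambda_0^k H_1(\Sigma;\bC)$ is a finite-dimensional $\bC$-vector space, so the functor $\Lambda_0^k H_1(\Sigma;\bC) \otimes_{\bC}(-)$ is exact, and finite direct sums are exact as well. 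Consequently the quotient of the domain of $\Psi$ by $\ker\Psi$ decomposes summand-by-summand as
$$
\bigoplus_{k=0}^{g} \Lambda_0^k H_1(\Sigma;\bC) \otimes_{\bC} \bigl(\bC[\alpha,\beta,\gamma,\delta_1,\ldots,\delta_n]/I_{g,n,k}^d\bigr),
$$
which, by the first isomorphism theorem applied to $\Psi$, is isomorphic to $H^\ast(N_{g,n}^d)$. This gives the claimed decomposition~\eqref{decom}.

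The real content of the statement is already absorbed into Proposition~\ref{prop_DefOfIgnkd}, whose proof rests on the mapping-class-group equivariance of $\Psi$, Borel density for $\mathrm{Sp}(2g,\bZ)\subset\mathrm{Sp}(2g,\bR)$, the irreducibility of each $\Lambda_0^k W$ as an $\mathrm{Sp}(2g,\bR)$-representation, and Schur's lemma. Granting that proposition, the Corollary presents no genuine obstacle; the only thing to check is that one is free to pull the quotient through the tensor product and direct sum, which is automatic over a field.
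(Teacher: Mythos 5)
Your proof is correct and is essentially the argument the paper intends: the Corollary is stated without proof precisely because it follows immediately from Proposition~\ref{prop_DefOfIgnkd} by the first isomorphism theorem together with the exactness of $\Lambda_0^k H_1(\Sigma;\bC)\otimes_{\bC}(-)$ over a field.
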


The following result was proved for the non-punctured case in \cite{KingNewstead}, but the proof works verbatim for the punctured case.
\begin{Theorem}[\cite{KingNewstead}, Theorem 3.2]
\label{thm_IOnlyDependsOng-k}
The ideal $I_{g,n,k}^d$ only depends on $g-k$, $n$, and $d$. 
\end{Theorem}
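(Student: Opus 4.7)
My plan is to prove $I_{g,n,k}^d = I_{g-k,n,0}^d$ for every $k\le g$, through a reduction to a push-forward formula. The first step is to reduce the question to a single primitive class. Take $\xi := \psi_1 \psi_2 \cdots \psi_k \in \bA_{g,n}$; since no symplectic pair $(a_i, a_{i+g})$ appears in $\{a_1, \ldots, a_k\}$, the class $\xi$ lies in $\Lambda_0^k H_1(\Sigma; \bC)$. By Proposition \ref{prop_DefOfIgnkd}, the $\Sp(2g, \bZ)$-equivariance of $\Psi$, and Schur's lemma applied to the irreducible $\Sp(2g, \bZ)$-representation $\Lambda_0^k H_1(\Sigma; \bC)$,
\begin{equation*}
I_{g,n,k}^d = \{f \in \bC[\alpha, \beta, \gamma, \delta_1, \ldots, \delta_n] : \Psi(\xi \cdot f) = 0 \text{ in } H^*(N_{g,n}^d; \bC)\},
\end{equation*}
and the analogous (simpler) identity holds for $I_{g-k,n,0}^d$ with $\xi=1$ on a surface of genus $g-k$.

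Next I would dualize via Poincar\'e duality on the smooth projective variety $N_{g,n}^d$. The identity $\Psi(\xi f) = 0$ holds iff $\int_{N_{g,n}^d} \Psi(\xi f) \smile \Psi(h) = 0$ for every $h \in \bA_{g,n}$. The $\Sp$-equivariance of both $\Psi$ and the integration pairing, combined with the Lefschetz-type decomposition of $\bA_{g,n}$ from Section \ref{Rgn-decom}, forces the only relevant test classes $h$ to be of the form $\xi^\vee \cdot q$, where $\xi^\vee := \psi_{g+1} \cdots \psi_{g+k}$ realizes the Lefschetz-dual primitive class to $\xi$ and $q \in \bC[\alpha, \beta, \gamma, \delta_1, \ldots, \delta_n]$. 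The same dualization applied to $N_{g-k,n}^d$ picks out test classes of the form $q$ alone.

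The theorem thus reduces to the push-forward identity
\begin{equation*}
\int_{N_{g,n}^d} \Psi(\xi \cdot \xi^\vee \cdot f \cdot q) = C_{g,k} \cdot \int_{N_{g-k,n}^d} \Psi(f \cdot q)
\end{equation*}
for every $f, q \in \bC[\alpha, \beta, \gamma, \delta_1, \ldots, \delta_n]$, with $C_{g,k}$ a nonzero constant independent of $f, q$; establishing this identity is the main obstacle. To prove it I would realize $\Sigma_g$ as the result of attaching $k$ handles to $\Sigma_{g-k}$ away from all marked points, with the curves $a_i, a_{i+g}$ for $1\le i \le k$ supported on the new handles. Under the Mehta--Seshadri diffeomorphism of Theorem \ref{DUY-par}, the corresponding representation variety $R_{g,n}$ can be expressed as a conjugation quotient of an $\SU(2)^{2k}$-family of commutator data over $R_{g-k,n}$; the classes $\psi_i, \psi_{i+g}$ are computed from the universal $\SO(3)$-bundle restricted to these handle factors, while $\alpha, \beta, \gamma, \delta_j$ are determined by data supported on $\Sigma_{g-k}$ and its marked points. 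Integration along the handle factors produces a universal nonzero top-class contribution on each $\SU(2)^2$ handle (yielding $C_{g,k}$), and the remaining integral lives on $N_{g-k,n}^d$. This is the strategy of King--Newstead \cite{KingNewstead} for the unpunctured case; the marked points and their classes $\delta_j$ are spectators to the handle analysis, so the argument extends with only notational changes, provided one carefully tracks that the universal parabolic bundle restricts correctly under the handle decomposition.
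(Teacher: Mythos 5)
Your reduction is sound: identifying $I_{g,n,k}^d$ with $\{f : \Psi(\psi_1\cdots\psi_k\cdot f)=0\}$ via irreducibility of $\Lambda_0^k H_1(\Sigma;\bC)$, dualizing through the top intersection pairing on $N_{g,n}^d$, and using $\Sp(2g,\bZ)$-equivariance to restrict the test classes to the form $\psi_{g+1}\cdots\psi_{g+k}\cdot q$. The degree count also works out, since $\deg(\xi\xi^\vee) = 6k = \dim N_{g,n}^d - \dim N_{g-k,n}^d$. You are also right to isolate the push-forward identity as the entire content of the theorem; the paper itself offers no proof, only the assertion that King--Newstead's unpunctured argument carries over verbatim.

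The gap is in your argument for that identity. You assert that $R_{g,n}$ is a conjugation quotient of an $\SU(2)^{2k}$-family of commutator data over $R_{g-k,n}$, and propose to ``integrate along the handle factors.'' But there is no such fibration. In the presentation \eqref{Rgn-BC} the single relation $[B_1,B_{g+1}]\cdots[B_g,B_{2g}]\,C_1\cdots C_n = 1$ couples $(B_1,\ldots,B_k,B_{g+1},\ldots,B_{g+k})$ to all the remaining generators; forgetting them does not produce a point of $R_{g-k,n}$, since the remaining tuple satisfies $[B_{k+1},B_{g+k+1}]\cdots[B_g,B_{2g}]\,C_1\cdots C_n = \bigl([B_1,B_{g+1}]\cdots[B_k,B_{g+k}]\bigr)^{-1}$ rather than $=1$. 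So ``integration along the handle factors'' is undefined and the constant $C_{g,k}$ is never actually produced. The mechanism behind King--Newstead's Theorem 3.2 is instead the explicit top-degree intersection pairing formula (of Thaddeus--Zagier type), from which the proportionality of the two linear functionals on $\bC[\alpha,\beta,\gamma,\delta_1,\ldots,\delta_n]$ can be read off directly; the geometric route through cutting $\Sigma_g$ along a separating curve does exist but runs through the extended (quasi-Hamiltonian) moduli space and its fusion product, not a naive $\SU(2)^{2k}$ fiber. Either way, you would still need the parabolic analogue of the intersection formula, which is exactly the ``verbatim'' step the paper leaves implicit and your sketch does not supply.
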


The same decomposition argument applies to the maps $\Phi^\pm$, and we have the following result.

\begin{Proposition}
\label{prop_decompositionOfPhi}
 The kernel of the surjection 
$$
\Phi^+:\bigoplus_{k=0}^{g} \Lambda_0^k H_1(\Sigma;\bC) \otimes \mathbb{C}[\alpha,\beta,\gamma,\delta_1,\cdots,\delta_n] \twoheadrightarrow \bV_{g,n}^+ $$
has the form  
$$
\ker \Phi^+ = \bigoplus_{k=0}^{g} \Lambda_0^k H^1(\Sigma;\bC) \otimes J_{g,n,k}^+
$$
where 
$J_{g,n,k}^+$ are ideals of  $\bC[\alpha,\beta,\gamma,\delta_1,\cdots,\delta_n].$ 
Therefore
$$
\bV_{g,n}^+\cong \bigoplus_{k=0}^{g} \Lambda_0^k H_1(\Sigma;\bC) \otimes \bC[\alpha,\beta,\gamma,\delta_1,\cdots,\delta_n]/J_{g,n,k}^+.
$$
The same result also holds for $\Phi^-$. \qed
\end{Proposition}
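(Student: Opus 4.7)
The plan is to imitate the proof of Proposition \ref{prop_DefOfIgnkd} verbatim, with $\Psi$ replaced by $\Phi^+$ (and then by $\Phi^-$). The main new ingredient is the equivariance of $\Phi^\pm$ under the natural action of the mapping class group $\mathrm{MCG}(\Sigma, p_1, \ldots, p_n)$ of orientation preserving diffeomorphisms of $\Sigma$ fixing each $p_i$ pointwise.

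First I would construct the mapping class group action on $\bV_{g,n}^\pm$. Given $\varphi \in \mathrm{MCG}(\Sigma, p_1, \ldots, p_n)$, the diffeomorphism $\id_{S^1} \times \varphi$ preserves the pair $(S^1 \times \Sigma, S^1 \times \{p_1, \ldots, p_n\})$, so by the functoriality of $\II$ it induces an automorphism $\varphi_\ast$ of $\bV_{g,n}$. The cobordism \eqref{epsilon-cob} defining $E$ uses only the reference surface $\{\pt\} \times \Sigma$, which is preserved by $\id \times \varphi$ up to isotopy, so $\varphi_\ast$ commutes with $E$ and therefore restricts to an automorphism of $\bV_{g,n}^\pm$. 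Since the ring multiplication is defined by the product-form pair-of-pants cobordism $F \times \Sigma$, the map $\varphi_\ast$ is moreover a ring automorphism.

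Next I would check equivariance of $\Phi^+$ on the generators of $\bA_{g,n}$. The unit $e$ is $\varphi_\ast$-invariant because it is the image of $1$ under capping with $D^2 \times \Sigma$. The operators $\mu(\Sigma), \mu(\pt), \sigma_{p_i}$ are defined via the classes $[\Sigma], [\pt], [p_i]$, all of which are $\varphi$-invariant, so $\Phi^+(\alpha), \Phi^+(\beta), \Phi^+(\delta_i)$ are $\varphi_\ast$-fixed. On the other hand $\varphi_\ast \mu(a_i) \varphi_\ast^{-1} = \mu(\varphi_\ast a_i)$, so $\Phi^+(\psi_i)$ transforms through the induced action of $\varphi$ on $H_1(\Sigma; \bC) = \spann\{\psi_1, \ldots, \psi_{2g}\}$. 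Since $\Phi^+$ is a ring homomorphism, it is therefore equivariant with respect to the action of $\mathrm{MCG}(\Sigma, p_1, \ldots, p_n)$ on $\bA_{g,n}$ that is trivial on $\alpha, \beta, \delta_1, \ldots, \delta_n$ and acts through the symplectic representation $\mathrm{MCG} \twoheadrightarrow \Sp(2g, \bZ)$ on $\spann\{\psi_1, \ldots, \psi_{2g}\}$; this is the same action used in the proof of Proposition \ref{prop_DefOfIgnkd}.

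Given the equivariance, $\ker \Phi^+$ is an $\Sp(2g, \bZ)$-stable ideal of $\bA_{g,n}$. By the Borel density theorem it is $\Sp(2g, \bR)$-stable, and Schur's lemma applied to the isotypic decomposition $\bA_{g,n} \cong \bigoplus_k \Lambda_0^k H_1(\Sigma; \bC) \otimes \bC[\alpha, \beta, \gamma, \delta_1, \ldots, \delta_n]$ forces $\ker \Phi^+ = \bigoplus_k \Lambda_0^k H_1(\Sigma; \bC) \otimes J_{g,n,k}^+$ for some subspaces $J_{g,n,k}^+$ of $\bC[\alpha, \beta, \gamma, \delta_1, \ldots, \delta_n]$; these are ideals because $\ker \Phi^+$ is an ideal and $\bC[\alpha, \beta, \gamma, \delta_1, \ldots, \delta_n]$ sits in the Schur-trivial factor. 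The identical argument, with $\frac{1}{2}(\id + E)$ replaced by $\frac{1}{2}(\id - E)$, handles $\Phi^-$. There is no substantive obstacle here once the equivariance is in place; the only point that requires care is verifying that each generator of $\bA_{g,n}$ transforms as claimed under $\mathrm{MCG}(\Sigma, p_1, \ldots, p_n)$, which is immediate from the definitions in Section \ref{subsection-Vgn}.
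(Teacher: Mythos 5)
Your proposal is correct and follows the paper's approach: the paper states the result with a $\qed$ because it is exactly the argument of Proposition \ref{prop_DefOfIgnkd} applied to $\Phi^\pm$, and you spell out the one thing that needs checking, namely the equivariance of $\Phi^\pm$ under the mapping class group action realizing $\Sp(2g,\bZ)$ on $\spann\{\psi_1,\dots,\psi_{2g}\}$. One small inaccuracy: you write ``$\bA_{g,n}\cong \bigoplus_k \Lambda_0^k H_1(\Sigma;\bC)\otimes\bC[\alpha,\beta,\gamma,\delta_1,\dots,\delta_n]$,'' but this is a surjection, not an isomorphism (the $\gamma^{g-k+1}$ relations are absent on the right); the Schur/Borel-density argument is applied to the surjection $\Phi^+$ defined on this larger free domain, which is exactly what the paper does by ``pulling back'' $\Psi$ and $\Phi^\pm$.
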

From now on, we will use $J_{g,n,k}^\pm$ to denote the ideals given by Proposition \ref{prop_decompositionOfPhi}.
We will discuss more about these ideals in Section \ref{sec_cobordism}.

\subsection{Mumford relations}
Recall that $n$ is odd. Let $m:=(n-1)/2$. 
The main result of this subsection is the following proposition.
\begin{Proposition} \label{prop_MumfordRelations}
Given $g,n,d$, there exists a homogeneous polynomial $f\in I_{g,n,0}^d$ with degree $2(g+m)$, such that the coefficient of $\alpha^{g+m}$ in $f$ is $1$.
\end{Proposition}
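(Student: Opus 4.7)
My plan is to follow Mumford's strategy (cf.\ \cite{Zagier}) in the parabolic setting: construct a natural vector bundle $V$ on $N_{g,n}^d$ of rank $g+m-1$, so that $c_{g+m}(V)=0$ automatically, and then identify $c_{g+m}(V)=\Psi(f)$ for a polynomial $f\in\bC[\alpha,\beta,\gamma,\delta_1,\ldots,\delta_n]$ of degree $2(g+m)$ with leading term $\alpha^{g+m}$. Membership $f\in I_{g,n,0}^d$ will then follow from Proposition~\ref{prop_DefOfIgnkd}, since $f$ does not involve the $\psi_j$'s individually.

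Let $\pi_N\colon N_{g,n}^d\times\Sigma\to N_{g,n}^d$ be the projection. I would take $V:=R^1\pi_{N,*}(\bE'\otimes\pi_\Sigma^*\xi)$, where $\bE'$ is either the universal bundle $\bE$ or its parabolic elementary modification $\bE^{par}=\ker\bigl(\bE\to\bigoplus_i\bE|_{\{p_i\}}/\bE^{(2)}|_{\{p_i\}}\bigr)$ (of fiber degree $d-n$), and $\xi$ is a line bundle on $\Sigma$ of sufficiently negative degree, chosen so that $\rank V=-\chi=g+m-1$. Stability of parabolic fibers kills $R^0\pi_{N,*}$, so $V$ is an honest vector bundle; since $n$ is odd, switching between $\bE$ and $\bE^{par}$ supplies the parity flexibility needed for an integer $\deg\xi$ satisfying the rank constraint to exist. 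Grothendieck--Riemann--Roch then gives
\begin{equation*}
\ch(V)\;=\;-\pi_{N,*}\bigl(\ch(\bE')\cdot\pi_\Sigma^*\bigl(\ch(\xi)\cdot\mathrm{Td}(\Sigma)\bigr)\bigr),
\end{equation*}
and Newton's identities express $c_{g+m}(V)$ polynomially in the $\ch_j(V)$'s. The key observation is that $\ch(\bE)$ involves only $c_1(\bE)$ and $c_2(\bE)$ (since $\bE$ has rank two), and the K\"unneth decomposition on $N_{g,n}^d\times\Sigma$ writes these in terms of $\alpha,\beta,\psi_j,\delta_i$. Integration along $\Sigma$ kills all terms of odd $\psi$-degree and pairs the remaining $\psi_j$'s via the intersection form on $\Sigma$, producing precisely the combination $\gamma=\sum_{j=1}^g\psi_j\psi_{j+g}$. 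Consequently $c_{g+m}(V)=\Psi(f)$ for some homogeneous $f\in\bC[\alpha,\beta,\gamma,\delta_1,\ldots,\delta_n]$ of degree $2(g+m)$, and $\rank V<g+m$ forces $\Psi(f)=0$.

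For the $\alpha^{g+m}$-coefficient, I would track the purely $\alpha$-part of the $\ch_j(V)$: since $\alpha\propto c_2(\bE)/[\Sigma]$ on the fixed-determinant locus, the leading contribution of $\ch_j(V)$ reduces to a nonzero universal multiple of $\alpha^j$ arising from the pushforward of $c_2(\bE)^j$, and Newton's identities then force the $\alpha^{g+m}$-coefficient of $c_{g+m}(V)$ to be a nonzero universal constant depending only on $g,n,d$; rescaling $f$ by this constant yields the desired leading term $\alpha^{g+m}$. The main obstacle is arranging $\rank V=g+m-1$ exactly: the integer solution $\deg\xi$ to the rank equation may fail to be negative enough for $R^0\pi_{N,*}$ to vanish outright, and in that regime one must either invoke further elementary modifications along a single fiber $N_{g,n}^d\times\{p_0\}$ to absorb the discrepancy, or work instead with Segre-class analogs of a virtual index bundle; once such a $V$ is in hand, the GRR computation and the extraction of the nonvanishing $\alpha^{g+m}$-coefficient are bookkeeping.
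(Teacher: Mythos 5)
Your strategy is the right one in broad outline---Mumford's index-bundle argument, as in Zagier---and your reading of why $f$ lands in $I^d_{g,n,0}$ (it involves only $\alpha,\beta,\gamma,\delta_i$, hence lies in the $\Lambda_0^0$-isotypic part of $\ker\Psi$) is correct, as is the expectation that the $\alpha$-only part of the Chern classes produces a nonzero leading coefficient. But the obstacle you flag in your last sentence is not bookkeeping; it is the crux of the problem, and the fallbacks you suggest do not resolve it.

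Concretely: you want $V=R^1\pi_{N,*}(\bE'\otimes\pi_\Sigma^*\xi)$ on $N^d_{g,n}$ to have rank exactly $g+m-1$ so that $c_{g+m}(V)=0$. Riemann--Roch (together with $R^0\pi_{N,*}=0$) then forces $\deg\xi=(g-m-1-d)/2$ when $\bE'=\bE$. On the other hand, the parabolic-stability argument killing $R^0\pi_{N,*}$ requires, in the worst case (all induced weights on the image equal to $-1/4$), that $-\deg\xi - n/4 \ge d/2$, i.e.\ $\deg\xi\le -d/2-n/4$. Substituting $n=2m+1$ and combining the two constraints yields $g\le 1/2$. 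Passing to $\bE^{par}$ (degree $d-n$) repairs the parity of $\deg\xi$, as you intend, but makes the magnitude bound strictly worse: one gets $2g+4m\le -1$. So the direct construction over $N^d_{g,n}$ works only for $g=0$---which is exactly the range already treated by Street. Elementary modifications along a single fiber shift rank and degree in lockstep and so cannot both fix the rank at $g+m-1$ and restore vanishing of $R^0$; and Segre classes of the virtual complex do not give vanishing relations once $R^0\pi_*\ne 0$.

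The paper's resolution, which is the missing idea, is to work over the full moduli space $\widetilde N^d_{g,n}$ (fibered over the Jacobian $J_g$) rather than the fixed-determinant slice. There one chooses a parabolic line bundle $\hat L$ with all weights $-1/4$ and $\parr\deg\hat L = d/2+1/4$, barely above the stability threshold $d/2$, so $R^0\pi_*\mathcal{H}om(L,\widetilde\bE)=0$ holds for \emph{every} $g$. The index bundle then has the larger rank $2g+m-1$, and the vanishing $c_k=0$ for $k\ge 2g+m$ lives in degree $2(2g+m)$---apparently too big. The extra step is to pair against $[J_g]\in H_{2g}(J_g)$, using Zagier's identity
\begin{equation*}
e^{\kappa(-A+Bt)}/[J_g] = (-1)^g\,\kappa^g\, e^{\kappa\gamma t^2},
\end{equation*}
which drops the degree by $2g$ and lands at $2(g+m)$ on $N^d_{g,n}$. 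In short, the paper trades a higher-rank bundle (easy to make stable) for a subsequent degree reduction by $J_g$-pairing, which you did not consider. The leading coefficient is then pinned down cleanly via the first-order ODE
\begin{equation*}
(1+\beta t^2)\,\frac{F'(t)}{F(t)} = -\tfrac{\gamma}{2}t^2 + (m-1)\beta t + \alpha
\end{equation*}
for the generating function $F(t)=\sum_k \xi_{k,n} t^k$: the recursion shows the coefficient of $\alpha^{g+m}$ in $\xi_{g+m,n}$ is $1/(g+m)!$, and rescaling by $(g+m)!$ finishes the proof.
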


Notice that taking the tensor product with a fixed line bundle of degree $1$ maps $N_{g,n}^d$ to $N_{g,n}^{d+2}$ while preserving the cohomology classes 
$$\Psi(\alpha),\Psi(\beta),\Psi(\gamma),\Psi(\delta_1),\cdots,\Psi(\delta_n).$$ On the other hand, 
using the identification of $N_{g,n}^d$ and $R_{g,n}$ (or $R_{g,n}'$) given by Theorem \ref{DUY-par}, 
the discussions in Section \ref{subsection-flip} and Section \ref{subsec_Vgn} show that
there is a flip symmetry that maps $N_{g,n}^0$ to $N_{g,n}^1$ and maps $\Psi(\alpha)$ to $\Psi(\alpha + \delta_1)$, maps 
$\Psi(\delta_1)$ to $-\Psi(\delta_1)$, and preserves $\Psi(\beta)$, $\Psi(\gamma)$ and $\Psi(\delta_i)$ ($2\le i\le n$).
Therefore, for every given pair $(g,n)$, one only needs to verify Proposition \ref{prop_MumfordRelations} for a single value of $d$. We choose the value of $d$ as follows. If $m$ is even, take $d=1$; if $m$ is odd, take $d=0$. We will use $N_{g,n}$ and $\widetilde N_{g,n}$ to denote the corresponding moduli spaces with this particular choice of $d$.

Let $\hat L$ be a parabolic line bundle over $(\Sigma,\{p_1\,\cdots,p_n\})$ such that all the weights $w_i=-1/4$, and $\deg \hat L = (d+m+1)/2$. Then  $\parr \deg \hat L = d /2+1/4$. 

Let $E$ be a stable parabolic bundle with rank $2$ and degree $d$, such that its weights are given as in Section \ref{subsection_ParabolicBundles}. Let $\parr \mathcal{H}om(\hat L,E)$ be the sheaf of parabolic maps from $\hat L$ to $E$, and let $\mathcal{H}om(\hat L,E)$ be the sheaf of holomorphic maps from $\hat L$ to $E$ as vector bundles. Since all the weights on $\hat L$ are taken to be $-1/4$, we have
$$\parr \mathcal{H}om(\hat L,E)=\mathcal{H}om(\hat L,E).$$

Recall that $\widetilde \bE\to \widetilde N_{g,n}\times \Sigma$ is a universal bundle of the stable parabolic bundles with rank $2$ and degree $d$ over $(\Sigma,\{p_1,\cdots,p_n\})$.
For all $x\in \widetilde N_{g,n}$, let $E_x$ be the restriction of $\bE$ to $\{x\}\times \Sigma$. Since $E_x$ is stable, 
$\Hom(\hat L,E_x)= \parr\Hom(L,E_x) = 0$. By the Riemann-Roch theorem, $\rank H^1(\mathcal{H}om(\hat L,E_x))= 2g+m-1$. 
By the standard construction of index bundles, the vector spaces $H^1(\mathcal{H}om(\hat L,E_x))$ can be arranged to define a vector bundle of rank $(2g+m-1)$ over $\widetilde N_{g,n}$. Alternatively, this vector bundle can be constructed by derived functors as follows. 
Let $\pi:\widetilde N_{g,n}\times \Sigma\to \widetilde N_{g,n}$ be the projection map, let $L$ be the pull-back of $\hat L$ to $\widetilde N_{g,n}\times\Sigma$.
By cohomology and base change, we have
$R^0\pi_\ast \mathcal{H}om(L, \widetilde\bE)=0$, and 
$R^1\pi_\ast \mathcal{H}om(L, \widetilde\bE)$ is a holomorphic vector bundle with rank $(2g+m-1)$ over $\widetilde N_{g,n}$. 

Let
$$
\mathbf{R}\pi_\ast \mathcal{H}om(L, \widetilde\bE) := [R^0\pi_\ast \mathcal{H}om(L, \widetilde\bE)]-[R^1\pi_\ast \mathcal{H}om(L, \widetilde\bE)]
\in K(\widetilde N_{g,n}).
$$
For a topological space $X$ and $V\in K(X)$, let $c_t(V) := \sum_{n=0}^\infty c_n(V) t^n$ be the formal power series generated by the Chern classes of $V$ in $\bC$-coefficients. 

Let $\sigma\in H^2(\Sigma;\bC)$ be the Poincar\'e dual of $[\pt]$. Recall that $\{a_j\}_{j=1}^{2g}$ is a set of oriented simple closed curves on $\Sigma$ such that the intersection number of $a_i$ and $a_j$ is 1 if $j=i+g$, and is zero if $|i-j|\neq g$. Let $e_1,\cdots,e_{2g}\in H^1(\Sigma,\bC)$ be the Poincar\'e duals of $a_1,\cdots,a_{2g}$. We have the following lemma.

\begin{Lemma} \label{lem_GRRForPushForward}
Let $X$ be a smooth projective variety over $\bC$, and let $V$ be a holomorphic vector bundle over $X\times \Sigma$. Let $\mathcal{V}$ be the sheaf of holomorphic sections of $V$. If 
\begin{equation}
\label{eqn_lnc_t(V)}
\ln c_t(V) = u(t)\otimes 1 + \sum_{j=1}^{2g} v_j(t)\otimes e_j + w(t)\otimes \sigma
\end{equation}
for $u(t),v_j(t),w(t)\in H^*(X;\bC)[[t]]$, then
$$
\ln c_t(\mathbf{R}\pi_\ast  \mathcal{V}) = -(g-1) u(t) - \int_0^t \frac{w(s)-w'(0)s}{s^2}\, ds.
$$
The integral is defined formally on power series using term-by-term integrations.
\end{Lemma}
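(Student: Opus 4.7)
The plan is to apply Grothendieck--Riemann--Roch (GRR) to the projection $\pi : X \times \Sigma \to X$ and then convert the resulting Chern character formula into one in terms of $\ln c_t$. First I would note that the relative tangent bundle $T_\pi$ is the pullback of $T\Sigma$, and because $c_1(T\Sigma)^2 \in H^4(\Sigma;\bC) = 0$ the Todd class simplifies to $\mathrm{td}(T_\pi) = 1 + (1-g)\sigma$. GRR then gives
$$
\ch(\mathbf{R}\pi_\ast \mathcal{V}) = \pi_\ast\bigl(\ch(V)\bigr) + (1-g)\,\pi_\ast\bigl(\ch(V) \cdot \sigma\bigr).
$$
Writing each component via the K\"unneth formula as $\ch_k(V) = A_k \otimes 1 + \sum_j B_{j,k}\otimes e_j + C_k\otimes \sigma$ and using $\pi_\ast(\alpha\otimes 1) = \pi_\ast(\alpha\otimes e_j) = 0$ and $\pi_\ast(\alpha\otimes \sigma) = \alpha$, I would extract
$$
\ch_j(\mathbf{R}\pi_\ast \mathcal{V}) = C_{j+1} + (1-g) A_j.
$$

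The next step is to translate between $\ln c_t$ and $\ch$ via Chern roots. Splitting $V$ formally into line bundles with Chern roots $\alpha_i$ and expanding $\ln(1 + \alpha_i t)$ gives the universal $K$-theoretic identity
$$
[t^k]\ln c_t(V) = (-1)^{k-1}(k-1)!\,\ch_k(V),
$$
valid on $K$-theory because both sides are additive on direct sums. Matching this against the hypothesis \eqref{eqn_lnc_t(V)} identifies the K\"unneth components of $\ch_k(V)$ as $A_k = \tfrac{(-1)^{k-1}}{(k-1)!} u_k$ and $C_k = \tfrac{(-1)^{k-1}}{(k-1)!} w_k$, where $u_k = [t^k]u(t)$ and $w_k = [t^k]w(t)$. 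Substituting these into the formula for $\ch_j(\mathbf{R}\pi_\ast \mathcal{V})$ above and applying the conversion identity in reverse should produce
$$
[t^j]\ln c_t(\mathbf{R}\pi_\ast \mathcal{V}) = -(g-1)\,u_j - \frac{w_{j+1}}{j}.
$$

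Finally I would recognize this as the $t^j$-coefficient of the proposed closed form by direct expansion. Since $\ln c_0(V) = 0$ forces $w(0) = 0$, the formal integrand $(w(s) - w'(0)s)/s^2 = \sum_{k \geq 2} w_k s^{k-2}$ has no singularity, and termwise integration yields $\sum_{k \geq 2} (w_k/(k-1))\,t^{k-1}$, whose $t^j$-coefficient is exactly $w_{j+1}/j$. The two sides then agree coefficient by coefficient.

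The whole argument is essentially a direct consequence of GRR, so the main obstacle is purely bookkeeping: carefully tracking the signs, factorials, and degree shifts that accumulate over the three conversions (pushforward, $\ln c_t \leftrightarrow \ch$, and formal integration). The decision to state the lemma in terms of $\ln c_t$ rather than $\ch$ is presumably motivated by the multiplicativity of $c_t$ under direct sums, which should be convenient when this formula is later specialized to the universal bundle $\widetilde{\mathbb{E}}$.
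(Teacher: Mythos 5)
Your proof is correct and takes essentially the same route as the paper: both apply GRR with $\mathrm{td} = 1+(1-g)\sigma$, convert between $\ln c_t$ and $\ch$, and match Künneth components. The only cosmetic difference is that you rederive the identity $[t^k]\ln c_t(V) = (-1)^{k-1}(k-1)!\,\ch_k(V)$ from Chern roots, whereas the paper cites it from Zagier.
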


\begin{proof}
By the Grothendieck–Riemann–Roch theorem,
$$
\ch (\mathbf{R}\pi_\ast  \mathcal{V}) = (\ch(V)\TD(\Sigma)) /[\Sigma].
$$
Write
$$
u(t) =: \sum_{i=1}^\infty u^{(2i)} t^i,
$$
$$
v_j(t) =: \sum_{i=1}^\infty v_j^{(2i-1)} t^i,
$$
$$
w(t) =: \sum_{i=1}^\infty w^{(2i-2)} t^i.
$$
Let $k$ be the rank of $V$,
then by \eqref{eqn_lnc_t(V)} and \cite[Lemma 1]{Zagier},
$$
\ch (V) = k + \sum_{i=1}^\infty \frac{(-1)^{i+1}}{(i-1)!} (u^{(2i)}+v_j^{(2i-1)}\otimes e_j + w^{(2i-2)}\otimes \sigma).
$$
Since $\TD(\Sigma) = 1-(g-1)\sigma$, we have
\begin{align*}
\ch (\mathbf{R}\pi_\ast  \mathcal{V}) &= (\ch(V)\TD(\Sigma)) /[\Sigma]
\\
&= - k(g-1)
+\sum_{i=1}^\infty \frac{(-1)^{i+1}}{(i-1)!}  w^{(2i-2)} 
-(g-1)\sum_{i=1}^\infty \frac{(-1)^{i+1}}{(i-1)!} u^{(2i)},
\end{align*}
thus by \cite[Lemma 1]{Zagier} again,
$$
\ln c_t(\mathbf{R}\pi_\ast  \mathcal{V}) = -(g-1) \sum_{i=1}^\infty u^{(2i)}t^i - \sum_{i=2}^\infty\frac{1}{i-1}w^{(2i-2)}t^{i-1},
$$
and this proves the lemma.
\end{proof}

Recall that $L\to \widetilde N_{g,n}\times\Sigma$ is the pull-back of a parabolic line bundle over $\Sigma$ with all the weights equal to $-1/4$ and degree equals to $(d+m+1)/2$. 
We will use Lemma \ref{lem_GRRForPushForward} to compute the Chern classes of $R^1\pi_\ast  \mathcal{H}om(L, \widetilde\bE)$ in terms of $\alpha$, $\beta$, and $\gamma$. Since $\rank R^1\pi_\ast \parr \mathcal{H}om(L, \widetilde\bE) = 2g+m-1$, every element in the ideal generated by 
$$c_k(R^1\pi_\ast \parr \mathcal{H}om(L, \widetilde\bE)),\qquad k\ge 2g+m
$$
is zero in $H^\ast(\widetilde{N}_{g,n};\bC)$. Our argument is adapted from \cite{Zagier}.

Recall that $J_g$ denotes the Jacobian of $\Sigma$. Let $\bL\to J_g\times \Sigma$ be a universal family for the line bundles of degree $d$ over $\Sigma$, such that $\det \widetilde{\bE}$ is the pull-back of $\bL$ from $J_g$. Let $d_j\in H^1(J_g;\bC)$ be given by $d_j := c_1(\bL)/[a_j]$. Then there exists $x\in H^2(J_g;\bC)$ such that
$$
c_1(\widetilde{\bE}) = d\otimes \sigma+\sum_{j=1}^{2g} d_j\otimes e_j + x\otimes 1,
$$
where the right-hand side is understood as the pull-back from $H^*( J_g\times \Sigma;\bC)$ to $H^\ast(\widetilde{N}_{g,n}\times\Sigma;\bC)$. Therefore
$$
c_1(\Hom(L,\widetilde{\bE})) = 
-(m+1)\otimes \sigma + \sum_{j=1}^{2g}d_j \otimes e_j+ x\otimes 1.
$$
On the other hand, by the definitions of $\alpha$, $\psi_j$, and $\beta$,
$$
-\frac14 p_1(\ad \widetilde \bE) = \alpha \otimes \sigma + \sum_{j=1}^{2g} \psi_j\otimes e_j+\beta\otimes 1.
$$
Therefore
\begin{align*}
c_2(\Hom(L,\widetilde{\bE})) & = \frac14 c_1(\Hom(L,\widetilde{\bE}))^2 -\frac14 p_1(\ad \widetilde \bE) \\
& = -\frac12 \sum_{j=1}^{g} (d_j\smile d_{j+g}) \otimes \sigma + \alpha \otimes \sigma + \sum_{j=1}^{2g} \psi_j\otimes e_j+\beta\otimes 1\\
& \qquad + \frac12 (x\otimes 1)\smile \Big(-(m+1)\otimes \sigma + \sum_{j=1}^{2g}d_j \otimes e_j\Big) + \frac14 x^2\otimes 1.
\end{align*}

The formulas for the Chern classes of $\Hom(L,\widetilde{\bE})$ contains an unknown term $x$ because the universal family $\widetilde{\bE}$ is not unique. In fact, taking the tensor product of $\widetilde{\bE}$ with the pull-back of a line bundle over $\widetilde{N}_{g,n}$ produces another universal family. Formally, the contribution of the terms involving $x$ is equivalent to taking the tensor product of $\widetilde{\bE}$ with a line bundle with first Chern class $x/2$. Let $c_x(t)$ be the formal polynomial where the coefficient of $t^k$ is equal to $c_k(R^1\pi_\ast \parr \mathcal{H}om(L, \widetilde\bE))$, which depends on  $x$, for all $k$. Since the rank of  $R^1\pi_\ast \parr \mathcal{H}om(L, \widetilde\bE)$ is $2g+m-1$, we have
$$
c_0(t) = (1-tx)^{2g+m-1}c_x\Big(\frac{t}{1-tx}\Big).
$$
Therefore, the fact that $\deg c_x(t)\le 2g+m-1$ for some $x$ implies that $\deg c_0(t)\le 2g+m-1$. Since we are only interested in the ideal generated by $$c_k(R^1\pi_\ast \parr \mathcal{H}om(L, \widetilde\bE)),\qquad k\ge 2g+m,
$$
we can assume $x=0$ without loss of generality.

To simplify notations, we will omit the tensor product symbols from the following computations.
Let 
\begin{align*}
&D  := \sum_{j=1}^{2g}d_j  e_j, \quad \Psi := \sum_{j=1}^{2g} \psi_j e_j,
\\
&A := \sum_{j=1}^{g} d_j\smile d_{j+g}, \quad 
B := \sum_{j=1}^g (-d_j\smile\psi_{j+g}+d_{j+g}\smile \psi_j).
\end{align*}
Then $D^2=-2A \sigma$, $\Psi^2 = -2\gamma\sigma$, $D\Psi = B\sigma$, and $D^3=D^2\Psi=D\Psi^2=\Psi^3=0$. 

Under the assumption that $x=0$,
$$
c_t(\Hom(L,\widetilde{\bE})) = 1+ \big(-(m+1)\sigma+D\big) t + (\alpha\sigma + \Psi+\beta-A\sigma/2)t^2.
$$
Therefore,
\begin{align*}
\ln c_t(\Hom(L,\widetilde{\bE})) &= 
\ln (1+\beta t^2) + \ln \Big(1+\frac{\big(-(m+1)\sigma+D\big) t +(\alpha\sigma + \Psi-A\sigma/2)t^2}{1+\beta t^2} \Big)
\\
&= \ln (1+\beta t^2) +\frac{\big(-(m+1)\sigma+D\big) t +(\alpha\sigma + \Psi-A\sigma/2)t^2}{1+\beta t^2} 
\\
& \qquad -\frac12 \Big(\frac{\big(-(m+1)\sigma+D\big) t +(\alpha\sigma + \Psi-A\sigma/2)t^2}{1+\beta t^2} \Big)^2
\\
&= \ln (1+\beta t^2) +\frac{\big(-(m+1)\sigma+D\big) t +(\alpha\sigma + \Psi-A\sigma/2)t^2}{1+\beta t^2} 
\\
& \qquad -\frac12 \frac{-2A\sigma t^2 +2B\sigma t^3 - 2\gamma\sigma t^4 }{(1+\beta t^2)^2}.
\end{align*}

Applying lemma \ref{lem_GRRForPushForward} with
$$
u(t) = \ln (1+\beta t^2)
$$
and
$$
w(t) = \frac{-(m+1)t+(\alpha-A/2) t^2}{1+\beta t^2} +\frac{At^2-Bt^3+\gamma t^4}{(1+\beta t^2)^2},
$$ we have
\begin{align*}
&\ln c_t(\mathbf{R}\pi_\ast  \mathcal{H}om(L, \widetilde\bE))) 
\\
=& -(g-1)\ln (1+\beta t^2) - \int_0^t
\Bigg( 
\frac{(\alpha-A/2) +(m+1)\beta s}{1+\beta s^2} +\frac{A-Bs+\gamma s^2}{(1+\beta s^2)^2}
\Bigg)
\, ds
\\
=& (\frac12 -g - \frac{m}{2})\ln (1+\beta t^2) 
+\frac{t}{2}\cdot \frac{-A+\gamma/\beta +Bt}{1+\beta t^2}+\frac{2\alpha\beta+\gamma}{4(-\beta)^{3/2}}\cdot \ln \Big(\frac{1+t\sqrt{-\beta}}{1-t\sqrt{-\beta}}\Big),
\end{align*}
therefore
\begin{multline*}
c_t(\mathbf{R}\pi_\ast  \mathcal{H}om(L, \widetilde\bE)))
\\
=
(1+\beta t^2)^{1/2-g-m/2}  \Big(\frac{1+t\sqrt{-\beta}}{1-t\sqrt{-\beta}}\Big)^{\frac{2\alpha\beta+\gamma}{4(-\beta)^{3/2}}} \exp\Big(\frac{t}{2} \frac{-A+\gamma/\beta +Bt}{1+\beta t^2}\Big).
\end{multline*}
As a consequence,
\begin{multline}
\label{eqn_c_tR^1}
c_t(R^1\pi_\ast  \mathcal{H}om(L, \widetilde\bE)))
\\
=
(1+\beta t^2)^{-1/2+g+m/2}  \Big(\frac{1-t\sqrt{-\beta}}{1+t\sqrt{-\beta}}\Big)^{\frac{2\alpha\beta+\gamma}{4(-\beta)^{3/2}}} \exp\Big(-\frac{t}{2} \frac{-A+\gamma/\beta +Bt}{1+\beta t^2}\Big).
\end{multline}

Recall that $c_t(R^1\pi_\ast \parr \mathcal{H}om(L, \widetilde\bE))\in H^*(\widetilde{N}_{g,n})[[t]]$ and 
$$H^*(\widetilde{N}_{g,n};\bC) \cong H^*(N_{g,n};\bC) \otimes H^*(J_g;\bC).$$
Let $[J_g]\in H^{2g}(J_g;\bC)$ be the Poincar\'e dual of $\Pi_{j=1}^{g} (d_j\smile d_{j+g})$. 
The following lemma follows from the definitions of $A$ and $B$ and straightforward algebra. For details, the reader may refer to \cite[Lemma 3]{Zagier}. The difference in constants between our statement and the one in \cite{Zagier} is due to the different conventions in the definitions of $\alpha$, $\beta$, and $\gamma$.
\begin{Lemma}[\cite{Zagier}]
$$
\frac{A^r}{r!} \frac{B^s}{s!}/[J_g]=\begin{cases}
\frac{(-\gamma)^p}{p!} &\quad \text{ if } 2r+s=2g, \,p=s/2,\\
0 &\quad \text{ if } 2r+s\neq 2g.
\end{cases}
$$
\end{Lemma}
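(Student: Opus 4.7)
The plan is to package $tA+uB$ into local ``blocks,'' one per index $j\in\{1,\ldots,g\}$. Set
$$
X_j := t\, d_j d_{j+g} - u\, d_j \psi_{j+g} + u\, d_{j+g}\psi_j,
$$
so that $tA+uB=\sum_{j=1}^g X_j$. Each $X_j$ is an even-degree element (its summands have degrees $2$ or $4$), so the $X_j$'s commute with each other. Therefore $\exp(tA+uB)=\prod_{j=1}^g \exp(X_j)$, and $\tfrac{A^r}{r!}\tfrac{B^s}{s!}$ is the coefficient of $t^r u^s$ in this product.

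A direct calculation using $d_j^2=\psi_j^2=0$ and the super-commutation $\psi_k d_\ell=-d_\ell\psi_k$ gives $X_j^2=-2u^2\,d_j d_{j+g}\psi_j\psi_{j+g}$ and $X_j^3=0$, hence
$$
\exp(X_j)= 1 + t\,d_j d_{j+g} - u\,d_j \psi_{j+g} + u\,d_{j+g}\psi_j - u^2\,d_j d_{j+g}\psi_j\psi_{j+g}.
$$

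The pairing with $[J_g]$ selects the coefficient of $\prod_{j=1}^g d_j d_{j+g}$. Since the pair $d_j, d_{j+g}$ appears only in the $j$-th factor of $\prod_j \exp(X_j)$, every factor must contribute a summand containing the full monomial $d_j d_{j+g}$. Among the five summands of $\exp(X_j)$, only $t\,d_j d_{j+g}$ and $-u^2\,d_j d_{j+g}\psi_j\psi_{j+g}$ qualify. Choosing the first summand for a subset $S\subset\{1,\ldots,g\}$ of size $k$ and the second on its complement contributes $t^k u^{2(g-k)}$ to the bi-grading, so to get a nonzero coefficient of $t^r u^s$ one needs $k=r$ and $s=2(g-r)$. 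This already yields the vanishing whenever $2r+s\neq 2g$.

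In the critical case $2r+s=2g$, set $p:=g-r=s/2$. Summing over subsets $S$ of size $r$ gives
$$
\frac{A^r}{r!}\frac{B^s}{s!}/[J_g] = (-1)^p \sum_{|T|=p}\,\prod_{j\in T}\psi_j\psi_{j+g} = (-1)^p\,e_p(\psi_1\psi_{1+g},\ldots,\psi_g\psi_{2g}).
$$
Since the commuting even-degree elements $\psi_j\psi_{j+g}$ all satisfy $(\psi_j\psi_{j+g})^2=0$, the expansion of $\gamma^p=(\sum_j\psi_j\psi_{j+g})^p$ collapses to $p!\,e_p$, hence $(-1)^p e_p=(-\gamma)^p/p!$, which is the desired identity. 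The main technical point is the careful super-commutation bookkeeping in the computation of $X_j^2$; once that is fixed, the block factorization and the monomial extraction against $[J_g]$ deliver the formula directly.
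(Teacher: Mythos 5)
Your proof is correct, and it follows essentially the same route as the argument in Zagier's paper that the authors cite (a direct expansion over the exterior algebra $H^*(J_g;\bC)$, grouping the $d_j$'s in conjugate pairs and matching against the top monomial $\prod_j d_j d_{j+g}$); the paper itself gives no proof and simply refers to \cite[Lemma 3]{Zagier}. Packaging the computation through the commuting blocks $\exp(X_j)$ is a clean way to organize Zagier's bookkeeping, and your verification of $X_j^2 = -2u^2\,d_j d_{j+g}\psi_j\psi_{j+g}$, $X_j^3=0$, and of $\gamma^p = p!\,e_p(\psi_1\psi_{1+g},\dots,\psi_g\psi_{2g})$ is all accurate.
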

As a corollary,
\begin{Corollary}[\cite{Zagier}]
\label{cor_ZagierCor}
Let $\kappa\in H^*(N_{g,n})[[t]]$. Then 
$$
e^{\kappa(-A+Bt)}/[J_g]=(-1)^g\kappa^g e^{\kappa \gamma t^2}.
$$
\end{Corollary}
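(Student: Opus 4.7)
The proof is a direct formal-power-series computation using the preceding Lemma. The plan is to expand the exponential, apply the binomial theorem, reindex, and then invoke the Lemma term-by-term.

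First I would write
\begin{equation*}
e^{\kappa(-A+Bt)} = \sum_{n\ge 0} \frac{\kappa^n (-A+Bt)^n}{n!} = \sum_{r,s\ge 0} \kappa^{r+s} (-1)^r \frac{A^r}{r!}\frac{B^s}{s!}\, t^s,
\end{equation*}
where the second equality is obtained by applying the binomial theorem to $(-A+Bt)^n$ and reindexing by $r=n-k$, $s=k$. This is permissible because $\kappa$, $A$, and $B$ live in (commuting parts of) the cohomology ring of $\widetilde{N}_{g,n}$: $\kappa$ has even total degree in the power-series expansion, while $A$ has degree $2$ and $B$ has degree $4$ (with $A^2=0$ and $B^3=0$ already built in).

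Next I would apply the slant product $/[J_g]$ term by term. Since $\kappa\in H^\ast(N_{g,n})[[t]]$ is pulled back from $N_{g,n}$, it passes through $/[J_g]$ unchanged, so
\begin{equation*}
e^{\kappa(-A+Bt)}/[J_g] = \sum_{r,s\ge 0} \kappa^{r+s} (-1)^r \Big(\tfrac{A^r}{r!}\tfrac{B^s}{s!}\Big)\!/[J_g]\cdot t^s.
\end{equation*}
The preceding Lemma says this pairing vanishes unless $2r+s = 2g$, in which case (with $s=2p$, $r=g-p$, $p\in\{0,\dots,g\}$) it equals $(-\gamma)^p/p!$. Substituting,
\begin{equation*}
e^{\kappa(-A+Bt)}/[J_g] = \sum_{p=0}^{g} \kappa^{g+p}(-1)^{g-p}\,\frac{(-\gamma)^p}{p!}\, t^{2p} = (-1)^g \kappa^g \sum_{p=0}^{g} \frac{(\kappa \gamma t^2)^p}{p!}.
\end{equation*}

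Finally I would identify the truncated sum with the full exponential $e^{\kappa\gamma t^2}$: since $N_{g,n}$ is finite-dimensional and $\gamma$ has positive cohomological degree, the coefficient of each fixed power of $t$ in $\kappa^g e^{\kappa\gamma t^2}$ is a finite sum, and the terms with $p>g$ contribute nothing to the equality because they are not produced by $/[J_g]$ (they correspond to the vanishing case $2r+s\neq 2g$ of the Lemma). Rearranging gives $(-1)^g\kappa^g e^{\kappa\gamma t^2}$, which is the asserted identity. The only step with any subtlety is the bookkeeping in the reindexing and verifying that sign $(-1)^{g-p}(-1)^p = (-1)^g$ appears as claimed; everything else is a routine formal manipulation from the Lemma.
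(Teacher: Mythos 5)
Your expansion and the application of the preceding Lemma to obtain
\[
e^{\kappa(-A+Bt)}/[J_g] = (-1)^g\kappa^g\sum_{p=0}^{g}\frac{(\kappa\gamma t^2)^p}{p!}
\]
match the paper's computation, but the last step has a genuine gap. To pass from the truncated sum to $e^{\kappa\gamma t^2}$ you must show that the tail $\sum_{p>g}\kappa^{g+p}\gamma^p t^{2p}/p!$ vanishes. Your stated justification, that those terms ``are not produced by $/[J_g]$'' and ``correspond to the vanishing case $2r+s\neq 2g$,'' does not work: the absence of those terms on the slant-product side is exactly what you have just computed, and it says nothing about whether they vanish in the exponential on the right-hand side. (In fact no pair $(r,s)$ with $r,s\ge 0$ has $p=s/2>g$ together with $2r+s=2g$, so those terms do not ``correspond'' to anything in the Lemma.) Your other remark --- that $N_{g,n}$ is finite-dimensional and $\gamma$ has positive degree --- only shows $\gamma^N=0$ for some $N$, not for $N=g+1$, and since $\dim N_{g,n}=6g+2n-6$ can far exceed $6(g+1)=\deg\gamma^{g+1}$ when $n$ is large, dimension counting alone does not close the gap.

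The missing fact, which the paper invokes explicitly, is that $\gamma^{g+1}=0$. This holds already in $\bA_{g,n}$: $\gamma=\sum_{j=1}^{g}\psi_j\psi_{j+g}$ with the $\psi_i$ anticommuting of odd degree, so $\psi_i^2=0$, and any monomial in the expansion of $\gamma^{g+1}$ repeats some $\psi_{j}$ by pigeonhole. With this observation in place, the rest of your argument goes through and reproduces the paper's proof. (Incidentally, the parenthetical claim that $A^2=0$ and $B^3=0$ is false --- $A$ and $B$ are nilpotent but with much larger index --- though this does not affect the argument since the binomial expansion needs only graded-commutativity.)
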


\begin{proof}
By the previous lemma,
\begin{align*}
e^{\kappa(-A+Bt)}/[J_g] &= \sum_{r+p=g}
\frac{(-A)^r(Bt)^{2p}\kappa^{r+2p}}{r!(2p)!}/[J_g]\\
&=\sum_{r+p=g}\frac{(-1)^r(-\gamma)^p \, t^{2p}}{p!}\kappa^{r+2p}
\\
&= \sum_{p=0}^g (-1)^g \kappa^g \frac{\gamma^p t^{2p}\kappa^{p}}{p!}
\\
&= (-1)^g\kappa^g e^{\gamma \kappa t^2}.
\end{align*}
The last equality follows from the fact that $\gamma^{g+1}=0$.
\end{proof}

\begin{Proposition}
\begin{multline*}
c_t(R^1\pi_\ast \parr \mathcal{H}om(L, \widetilde\bE)))/[J_g] \\
=  (1+\beta t^2)^{-1/2+m/2} \Big(\frac{t}{2}\Big)^g \Big(\frac{1-t\sqrt{-\beta}}{1+t\sqrt{-\beta}}\Big)^{\frac{2\alpha\beta+\gamma}{4(-\beta)^{3/2}}} \exp\Big(-\frac{t\gamma}{2\beta}\Big).
\end{multline*}
\end{Proposition}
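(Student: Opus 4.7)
The plan is to read off the result directly from equation \eqref{eqn_c_tR^1} by taking the slant product with $[J_g]$ factor by factor. The structural observation that drives the whole calculation is that all of the dependence on the Jacobian classes $d_j\in H^1(J_g;\bC)$ in \eqref{eqn_c_tR^1} sits inside the final exponential, through the combinations $A=\sum_j d_j\smile d_{j+g}$ and $B=\sum_j(-d_j\smile\psi_{j+g}+d_{j+g}\smile\psi_j)$; the remaining factors $(1+\beta t^2)^{-1/2+g+m/2}$ and $((1-t\sqrt{-\beta})/(1+t\sqrt{-\beta}))^{(2\alpha\beta+\gamma)/(4(-\beta)^{3/2})}$ lie in $H^*(N_{g,n};\bC)[[t]]$ and therefore pass through the slant with $[J_g]$ untouched.

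First I would rewrite the exponent in \eqref{eqn_c_tR^1} as
$$-\frac{t}{2}\cdot\frac{-A+\gamma/\beta+Bt}{1+\beta t^2}=\kappa(-A+Bt)-\frac{t\gamma}{2\beta(1+\beta t^2)},\quad\text{where }\kappa:=-\frac{t}{2(1+\beta t^2)}.$$
The second summand is $d_j$-free and therefore survives the slant unchanged, while the first is in exactly the form demanded by Corollary \ref{cor_ZagierCor}. Applying that corollary yields
$$\exp\bigl(\kappa(-A+Bt)\bigr)/[J_g]=(-1)^g\kappa^g e^{\kappa\gamma t^2}=\Bigl(\frac{t}{2(1+\beta t^2)}\Bigr)^g\exp\Bigl(-\frac{\gamma t^3}{2(1+\beta t^2)}\Bigr).$$

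Next I would combine this with the reserved $d_j$-free tail using the elementary identity
$$-\frac{\gamma t^3}{2(1+\beta t^2)}-\frac{t\gamma}{2\beta(1+\beta t^2)}=-\frac{t\gamma(\beta t^2+1)}{2\beta(1+\beta t^2)}=-\frac{t\gamma}{2\beta},$$
which collapses the two surviving exponentials into the single factor $\exp(-t\gamma/(2\beta))$ appearing in the target. Finally I would absorb the $(1+\beta t^2)^{-g}$ coming from $\bigl(t/(2(1+\beta t^2))\bigr)^g$ into the pre-existing $(1+\beta t^2)^{-1/2+g+m/2}$ to produce $(1+\beta t^2)^{-1/2+m/2}$, leaving the extra $(t/2)^g$ as the middle factor on the right-hand side.

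The calculation is essentially bookkeeping once Corollary \ref{cor_ZagierCor} is in hand; the only points that warrant care are the sign coming from $(-1)^g\kappa^g$, the telescoping of $(1+\beta t^2)$-powers, and confirming that the fractional-power factor $((1-t\sqrt{-\beta})/(1+t\sqrt{-\beta}))^{(2\alpha\beta+\gamma)/(4(-\beta)^{3/2})}$ is indeed pulled back from $H^*(N_{g,n};\bC)[[t]]$ so that it commutes through the slant. No ingredients beyond \eqref{eqn_c_tR^1} and Corollary \ref{cor_ZagierCor} are needed.
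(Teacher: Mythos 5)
Your proof is correct and takes essentially the same approach as the paper: plug $\kappa = -t/(2(1+\beta t^2))$ into Corollary \ref{cor_ZagierCor}, observe that the non-exponential factors of \eqref{eqn_c_tR^1} are $d_j$-free and pass through the slant product unchanged, absorb the $(1+\beta t^2)^{-g}$ from $\kappa^g$, and telescope the two exponentials via $-\gamma t^3/(2(1+\beta t^2)) - t\gamma/(2\beta(1+\beta t^2)) = -t\gamma/(2\beta)$. The paper presents exactly this computation in two displayed lines.
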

\begin{proof}
Apply Corollary \ref{cor_ZagierCor} to \eqref{eqn_c_tR^1} with
$\kappa = -t/(2(1+\beta t^2)),$ we have
\begin{align*}
&c_t(R^1\pi_\ast \parr \mathcal{H}om(L, \widetilde\bE)))/[J_g] \\
=&  (1+\beta t^2)^{-1/2+m/2} \Big(\frac{t}{2}\Big)^g \Big(\frac{1-t\sqrt{-\beta}}{1+t\sqrt{-\beta}}\Big)^{\frac{2\alpha\beta+\gamma}{4(-\beta)^{3/2}}} 
\exp\Big(-\frac{\gamma t}{2\beta (1+\beta t^2)} - \frac{\gamma t^3}{2(1+\beta t^2)}\Big)
\\
=& (1+\beta t^2)^{-1/2+m/2} \Big(\frac{t}{2}\Big)^g \Big(\frac{1-t\sqrt{-\beta}}{1+t\sqrt{-\beta}}\Big)^{\frac{2\alpha\beta+\gamma}{4(-\beta)^{3/2}}} \exp\Big(-\frac{t\gamma}{2\beta}\Big).
\end{align*}
\end{proof}

\begin{proof}[Proof of Proposition \ref{prop_MumfordRelations}]
Let 
$$F(t) := (1+\beta t^2)^{-1/2+m/2} \Big(\frac{1-t\sqrt{-\beta}}{1+t\sqrt{-\beta}}\Big)^{\frac{2\alpha\beta+\gamma}{4(-\beta)^{3/2}}}\exp\Big(-\frac{t\gamma}{2\beta}\Big),$$
then $F(t)$ satisfies the following equation
$$
(1+\beta t^2)\frac{F'(t)}{F(t)} = -\frac{\gamma}{2} t^2 
+ (m-1)\beta t +\alpha.
$$
Write
$$
F(t) =: \sum_{k=0}^\infty \xi_{k,n}(\alpha,\beta,\gamma) t^{k}
$$
with $\xi_{k,n}\in \bC(\alpha,\beta,\gamma)$, then $\{\xi_{k,n}\}$ satisfies the following recursive relation for $k\ge 2$:
$$(k+1)\xi_{k+1,n}= \alpha \xi_{k,n} + (m-k)\beta \xi_{k-1,n} - \frac{\gamma}{2}\xi_{k-2,n}.$$
Moreover,
$\xi_{0,n} = 1$, $\xi_{1,n} = \alpha$, $\xi_{2,n}=\alpha^2/2+(m-1)\beta/2$. As a result, $\xi_{k,n}$ is a homogeneous polynomial in $\bC[\alpha,\beta,\gamma]$ of degree $2k$, and the coefficient of $\alpha^k$ in $\xi_{k,n}$ is $1/k!$. 
Since 
$$
\Big(\frac{t}{2}\Big)^g\cdot F(t)  = c_t(R^1\pi_\ast \parr \mathcal{H}om(L, \widetilde\bE)))/[J_g],
$$
it follows from the previous discussion that when $k\ge g+m$, the cohomology class $\Psi(\xi_{k,n})\in H^*(N_{g,n};\bC)$ is zero. Since $\xi_{k,n}$ is invariant under the $\Sp(2g,\bZ)$ action, we have $\xi_{g+m,n}\in I_{g,n,0}$. The result is proved by taking $f:=(g+m)!\,\xi_{g+m,n}$.
\end{proof}

\section{Cobordism maps}
\label{sec_cobordism}
In this section, let $\Sigma_g$ be a closed surface with genus $g$, let $\{p_1,\cdots, p_n\}$ be marked points on $\Sigma_g$, and suppose $n$ is odd.
Recall that by Proposition \ref{prop_decompositionOfPhi}, we have
\begin{equation}\label{Vgn-decom}
\mathbb{V}_{g,n}^\pm\cong \bigoplus_{k=0}^{g}\Lambda_0^k H_1(\Sigma_g) \otimes \mathbb{C}[\alpha,\beta,\gamma,\delta_1,\cdots,\delta_n]/
J_{g,k,n}^\pm,
\end{equation}
where $J_{g,k,n}^\pm\subset \mathbb{C}[\alpha,\beta,\gamma,\delta_1,\cdots,\delta_n]$ are ideals given by $\ker\Phi^\pm$. Since the map $F^+$ ($F^-$) in Proposition \ref{ring-deformation} can be taken to be $\Sp(2g,\bZ)$-equivariant, by Proposition \ref{ring-deformation} and Corollary \ref{deform-vanishing-polynomial},
\begin{equation}\label{eqn_equalDim}
\dim \mathbb{C}[\alpha,\beta,\gamma,\delta_1,\cdots,\delta_n]/J_{g,k,n}^\pm
=\dim \mathbb{C}[\alpha,\beta,\gamma,\delta_1,\cdots,\delta_n]/I_{g,k,n}^0,
\end{equation}
and
 $J_{g,k,n}^+$ ($J_{g,k,n}^-$) is a deformation of $I_{g,k,n}$, i.e. the elements of $I_{g,k,n}$ are given by the leading order terms of the elements of $J_{g,k,n}^+$ ($J_{g,k,n}^-$). 

We show that the ideal $J_{g,k,n}$ only depends on $g-k$ and $n$. Our argument is 
adapted from \cite[Section 5]{D-TFT}.
Attaching a 1-handle to the product cobordism $\Sigma_g\times [0,1]$ gives the elementary cobordism $Z:\Sigma_g\to \Sigma_{g+1}$.
The cobordism $(S^1\times Z,S^1\times  \{p_1,\cdots,p_n\}\times [0,1],\emptyset)$ defines a map $f^{g,n}_{g+1,n}: \bV_{g,n} \to \bV_{g+1,n}$.
This map is a $\mathbb{C}[\alpha,\beta,\gamma,\delta_1,\cdots,\delta_n,\epsilon]$-module homomorphism.
To see this, notice that the multiplication by $\Phi(\alpha)$ on $\bV_{g,n}$ ($\bV_{g+1,n}$)
is the same as the operator $\mu(\Sigma_g)$ ($\mu(\Sigma_{g+1})$) and the cobordism map intertwines $\mu(\Sigma_g)$ on the in-coming end
with $\mu(\Sigma_{g+1})$ on the out-going end since $\Sigma_g$ and $\Sigma_{g+1}$ are homologous in $Z$.
Similar arguments work for the other generators of the ring.
 In particular, $f^{g,n}_{g+1,n}$ maps $\bV_{g,n}^\pm$ to $\bV_{g+1,n}^\pm$. 
 Moreover, $f^{g,n}_{g+1,n}$ is equivariant with respect to  the action of $\text{Diff}(Z)$. 
Let $\delta$ be the belt circle of the $1$-handle, i.e. $\delta$ is the non-separating circle in $\Sigma_{g+1}$ which is null-homologous in $Z$. We use $G$ to denote the subgroup of  
$\Sp(2g+2,\mathbb{Z})$ whose action on $H_1(\Sigma_{g+1};\bZ)$ fixes $[\delta]$. Since every element of $G$ can be realized by an 
element of $\text{Diff}(Z)$, the map $f^{g,n}_{g+1,n}$ intertwines with the $G$-actions on $\bV_{g,n}$ and $\bV_{g+1,n}$. 
We have the following result:
\begin{Proposition}
There are linear maps 
$$
g_k:\Lambda_0^k H_1(\Sigma_g) \to  \Lambda_0^{k+1} H_1(\Sigma_{g+1})
$$
and 
$$
h_k: \mathbb{C}[\alpha,\beta,\gamma,\delta_1,\cdots,\delta_n]/J_{g,k,n}^+
\to  \mathbb{C}[\alpha,\beta,\gamma,\delta_1,\cdots,\delta_n]/J_{g+1,k+1,n}^+
$$
such that
\begin{equation}\label{f-decom}
f_{g+1,n}^{g,n}|_{ \mathbb{V}_{g,n}^+}=\bigoplus_k g_k\otimes h_k
\end{equation}
under the identification $\eqref{Vgn-decom}$.
Moreover, $h_k$ is an isomorphism of $\mathbb{C}[\alpha,\beta,\gamma,\delta_1,\cdots,\delta_n]$-modules for all $k$.
The same result also holds for $f_{g+1,n}^{g,n}|_{ \mathbb{V}_{g,n}^-}$. 
\end{Proposition}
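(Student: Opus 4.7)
The plan is to exploit the $G$-equivariance of $f_{g+1,n}^{g,n}$ together with a dimension count and a handle-cancellation argument. Write $V := H_1(\Sigma_g;\bC)$ and $W := H_1(\Sigma_{g+1};\bC)$. Since $G$ fixes $[\delta]\in W$, the symplectic quotient $[\delta]^\perp/\bC[\delta]$ is canonically isomorphic to $V$ as an $\Sp(2g,\bZ)$-representation, giving a surjection $G\twoheadrightarrow \Sp(2g,\bZ)$; under this surjection we view each $\Lambda_0^k V$ as a $G$-representation. Wedging with $[\delta]$ produces a canonical $G$-equivariant injection
\[
g_k:\Lambda_0^k V\hookrightarrow \Lambda_0^{k+1} W,\qquad v\mapsto \tilde v\wedge[\delta],
\]
where $\tilde v\in\Lambda^k[\delta]^\perp$ is any lift (the lifting ambiguity is killed by wedging with $[\delta]$).

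To obtain the decomposition \eqref{f-decom}, use that $f_{g+1,n}^{g,n}$ is simultaneously $G$-equivariant and $\bC[\alpha,\beta,\gamma,\delta_1,\ldots,\delta_n]$-linear. A branching analysis shows that the $G$-isotypic component of $\Lambda_0^k V$ inside $\bV_{g+1,n}^+$ lies entirely in the summand $\Lambda_0^{k+1}W\otimes \bC[\alpha,\beta,\gamma,\delta_1,\ldots,\delta_n]/J_{g+1,k+1,n}^+$, and is cut out precisely as the image of $g_k\otimes\id$. The subtle point is that, while the restriction of $\Lambda_0^j W$ from $\Sp(2g+2,\bZ)$ to the Levi $\Sp(2g,\bZ)$ of $G$ exhibits $\Lambda_0^k V$ for several nearby values of $j$, equivariance under the unipotent radical of $G$ singles out $j=k+1$. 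Combined with Schur's lemma, this forces the restriction of $f_{g+1,n}^{g,n}$ to $\Lambda_0^k V\otimes \bC[\ldots]/J_{g,k,n}^+$ to factor as $g_k\otimes h_k$ for a unique $\bC[\alpha,\beta,\gamma,\delta_1,\ldots,\delta_n]$-linear map $h_k$.

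For the isomorphism statement, first note that \eqref{eqn_equalDim} combined with Theorem \ref{thm_IOnlyDependsOng-k} gives
\[
\dim \bC[\alpha,\beta,\gamma,\delta_1,\ldots,\delta_n]/J_{g,k,n}^+=\dim \bC[\alpha,\beta,\gamma,\delta_1,\ldots,\delta_n]/I_{g,n,k}^0,
\]
which depends only on $g-k$ and $n$; since $g-k=(g+1)-(k+1)$, the source and target of $h_k$ have equal finite dimension. Let $\bar Z:\Sigma_{g+1}\to \Sigma_g$ be the reverse elementary cobordism, obtained from $\Sigma_{g+1}\times[0,1]$ by attaching a $2$-handle along the belt circle $\delta$. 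Since this $2$-handle cancels the $1$-handle of $Z$, we have $\bar Z\circ Z\cong \Sigma_g\times[0,1]$, hence $(S^1\times\bar Z)\circ(S^1\times Z)\cong S^1\times\Sigma_g\times[0,1]$ as cobordisms of triples. The functoriality of $\II$ then gives $f^{g+1,n}_{g,n}\circ f^{g,n}_{g+1,n}=\pm\id$ on $\bV_{g,n}$, so $f^{g,n}_{g+1,n}$ is injective; restricting to the $k$-summand forces $h_k$ to be injective, and together with the dimension count we conclude that $h_k$ is an isomorphism. The argument for $\bV_{g,n}^-$ is identical.

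The main obstacle is the representation-theoretic branching step above: one must verify that the unipotent radical of $G$ acts nontrivially enough on the a priori candidate copies of $\Lambda_0^k V$ inside $\Lambda_0^{j}W$ for $j\ne k+1$ to rule them out as possible targets, leaving only $\Lambda_0^{k+1}W$ and showing that the resulting invariant embedding coincides with $g_k$ up to scalar. By contrast, the handle-cancellation and dimension bookkeeping are routine once this decomposition has been established.
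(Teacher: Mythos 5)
Your overall strategy matches the paper's: invoke Schur's lemma together with $G$-equivariance and $\bC[\alpha,\beta,\gamma,\delta_\bullet]$-linearity to restrict the possible targets of $f^{g,n}_{g+1,n}$, then combine injectivity of $f^{g,n}_{g+1,n}$ with a dimension count to conclude that each $h_k$ is an isomorphism. The explicit description of $g_k$ by wedging with $[\delta]$, the handle-cancellation argument for injectivity (equivalent to the paper's remark that $Z$ embeds into the product cobordism $\Sigma_g\times[0,2]$ with complement $Z'$), and the dimension bookkeeping via \eqref{eqn_equalDim} and Theorem \ref{thm_IOnlyDependsOng-k} are all fine.

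The gap is in the step that singles out $j=k+1$. You propose that equivariance under the unipotent radical $U$ of $G$ rules out $j=k$ and $j=k+2$, and you flag this branching analysis as ``the main obstacle'' without carrying it out. In fact this claim fails: for $k=0$, both $\Lambda_0^0 H_1(\Sigma_g)=\bC$ and $\Lambda_0^0 H_1(\Sigma_{g+1})=\bC$ are the trivial $G$-representation, so $\Hom_G\bigl(\Lambda_0^0 H_1(\Sigma_g),\Lambda_0^0 H_1(\Sigma_{g+1})\bigr)=\bC\neq 0$, and the unipotent radical does not kill the identity map. So the representation theory of $G$ alone, even using all of $G$ and not just its Levi, cannot eliminate $j=k$. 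The paper instead uses the relative $\mathbb{Z}/4$-grading of the Floer groups: since $\deg\psi_i=3$ while $\alpha,\beta,\gamma,\delta_i$ are all of even degree, the summand $\Lambda_0^j H_1(\Sigma)\otimes\bC[\alpha,\beta,\gamma,\delta_\bullet]/J^\pm$ has $\mathbb{Z}/4$-parity $j\bmod 2$, and the cobordism map $f^{g,n}_{g+1,n}$ shifts the $\mathbb{Z}/4$-grading by an odd amount; this forces the $j=k$ and $j=k+2$ components to vanish, leaving only $j=k+1$. Your argument should be corrected to invoke this (or an equivalent) grading parity argument rather than the branching analysis.
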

\begin{proof}[Sketch of the proof]
The proof is exactly the same as \cite[Lemma 22]{D-TFT}.  We briefly sketch the argument and refer the reader to \cite{D-TFT} for more details.
By Schur's lemma, every $G$-equivariant map from 
$\Lambda_0^k H_1(\Sigma_g)$ to  $\Lambda_0^{j} H_1(\Sigma_{g+1})$ is zero when $j\neq k,k+1,k+2$. 
Therefore $f_{g+1,n}^{g,n}$ maps 
$$\Lambda_0^k H_1(\Sigma_g)\otimes \mathbb{C}[\alpha,\beta,\gamma,\delta_1,\cdots,\delta_n]/J_{g,k,n}^+$$ into  
$$\bigoplus_{j=k}^{k+2}\Lambda_0^{j} H_1(\Sigma_{g+1})\otimes \mathbb{C}[\alpha,\beta,\gamma,\delta_1,\cdots,\delta_n]/J_{g,j,n}^+.$$
The components of $f_{g+1,n}^{g,n}$ that maps 
$\Lambda_0^k H_1(\Sigma_g)\otimes \mathbb{C}[\alpha,\beta,\gamma,\delta_1,\cdots,\delta_n]/J_{g,k,n}^+$
into
$$\Lambda_0^{k} H_1(\Sigma_{g+1})\otimes \mathbb{C}[\alpha,\beta,\gamma,\delta_1,\cdots,\delta_n]/J_{g,k,n}^+$$
or
$$\Lambda_0^{k+2} H_1(\Sigma_{g+1})\otimes \mathbb{C}[\alpha,\beta,\gamma,\delta_1,\cdots,\delta_n]/J_{g,k+2,n}^+$$
 must be zero because of the $\mathbb{Z}/4$-gradings. Moreover, by Schur's lemma again, there is a unique $G$-equivariant map up to scalar from $\Lambda_0^k H_1(\Sigma_g)$ to $\Lambda_0^{k+1} H_1(\Sigma_{g+1})$, so we obtain the decomposition \eqref{f-decom}. 
 The cobordism $Z$ can be embedded into the product cobordism $\Sigma_g\times [0,2]$, where the complement of $Z$ is a 
cobordism $Z':\Sigma_{g+1}\to \Sigma_g$. The functoriality of instanton Floer homology then implies that $f_{g+1,n}^{g,n}$ is an injection. On the other hand, by \eqref{eqn_equalDim} and
 Theorem \ref{thm_IOnlyDependsOng-k}, 
$$\dim \mathbb{C}[\alpha,\beta,\gamma,\delta_1,\cdots,\delta_n]/J_{g,k,n}^+
=\dim \mathbb{C}[\alpha,\beta,\gamma,\delta_1,\cdots,\delta_n]/J_{g+1,k+1,n}^+,
$$ hence $h_k$ must be an isomorphism.
\end{proof}

\begin{Corollary}
\label{cor_IdealOnlyDependsOng-k}
The ideal $J_{g,k,n}^+$ only depends on $g-k$ and $n$. The same result also holds for $J_{g,k,n}^-$. \qed
\end{Corollary}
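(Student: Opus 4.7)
The plan is to derive this corollary as an immediate consequence of the preceding proposition together with one elementary fact from commutative algebra. Set $R := \bC[\alpha,\beta,\gamma,\delta_1,\ldots,\delta_n]$. The preceding proposition supplies, for every $g$ and every $0 \le k \le g$, an $R$-module isomorphism
$$h_k: R/J_{g,k,n}^+ \xrightarrow{\;\cong\;} R/J_{g+1,k+1,n}^+.$$

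I would then invoke the fact that for any ideal $I \subset R$, the annihilator $\mathrm{Ann}_R(R/I)$ equals $I$, since $r\cdot\bar 1 = 0$ in $R/I$ precisely when $r \in I$. Because isomorphic $R$-modules have equal annihilators, the existence of $h_k$ forces $J_{g,k,n}^+ = J_{g+1,k+1,n}^+$. Iterating this identity along the chain $(g,k), (g+1,k+1), (g+2,k+2), \ldots$ shows that $J_{g,k,n}^+$ depends only on the difference $g-k$ and on $n$, which is exactly the content of the corollary. The argument for $J_{g,k,n}^-$ is verbatim the same, applied to the $\bV_{g,n}^-$ summand treated by the same proposition.

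There is essentially no obstacle here. All the substantive work was carried out in the preceding proposition, whose proof combines (a) $G$-equivariance and Schur's lemma to produce the block-decomposition of $f^{g,n}_{g+1,n}$, (b) functoriality together with the existence of a dual cobordism $Z':\Sigma_{g+1}\to\Sigma_g$ to obtain injectivity, and (c) the dimension count \eqref{eqn_equalDim} combined with Theorem \ref{thm_IOnlyDependsOng-k} to promote each $h_k$ to a surjection, hence an isomorphism. Once those inputs are in place, the present corollary is a one-line formal consequence, which is presumably why the statement is marked with \qed.
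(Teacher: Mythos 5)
Your proposal is correct and captures exactly what the paper's \(\qed\) is standing for: the preceding proposition hands you an isomorphism \(h_k\colon R/J^+_{g,k,n}\xrightarrow{\;\cong\;} R/J^+_{g+1,k+1,n}\) of \(R=\bC[\alpha,\beta,\gamma,\delta_1,\ldots,\delta_n]\)-modules, the annihilator of \(R/I\) recovers \(I\), annihilators are isomorphism invariants, and iterating along \((g,k)\mapsto(g+1,k+1)\) gives dependence only on \(g-k\) and \(n\). This is the same route as the paper's implicit argument.
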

From now on we will also denote $J_{g,k,n}^\pm$ by $J_{g-k,n}^\pm$. The decomposition \eqref{Vgn-decom} is then rewritten as
\begin{equation}\label{Vgn-decom2}
\mathbb{V}_{g,n}^\pm \cong \bigoplus_{k=0}^{g}\Lambda_0^k H_1(\Sigma_g) \otimes \mathbb{C}[\alpha,\beta,\gamma,\delta_1,\cdots,\delta_n]/
J_{g-k,n}^\pm.
\end{equation}
Therefore, to understand the ring structure of $\bV_{g,n}^\pm$, it suffices to understand the ideals
$J_{g',n}^\pm$ for $g'\le g$. 

The following lemma is an analogue of \cite[Lemma 9]{Munoz}. 
\begin{Lemma}
\label{lem_inclusionOfJIng+1}
We have 
\begin{equation*}
J_{g,n}^\pm \subset J_{g-1,n}^\pm
\end{equation*}
\end{Lemma}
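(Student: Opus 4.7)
The plan is to derive the containment purely algebraically, using only (i) that $\Phi^+$ is a ring homomorphism and (ii) the isotypic decomposition of $\ker \Phi^+$ from Proposition \ref{prop_decompositionOfPhi}. No new geometric input (such as another cobordism argument) should be needed.

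I will treat the ``$+$'' case; the ``$-$'' case is identical. Let $f\in J_{g,n}^+\subset \bC[\alpha,\beta,\gamma,\delta_1,\ldots,\delta_n]$. Viewed inside the $k=0$ summand $\Lambda_0^0 H_1\otimes \bC[\alpha,\beta,\gamma,\delta_1,\ldots,\delta_n]$ of the (pulled-back) domain of $\Phi^+$, the hypothesis says precisely that $\Phi^+(f)=0$ in $\bV_{g,n}^+$. Now choose any nonzero $\psi\in \Lambda_0^1 H_1(\Sigma_g;\bC)$, for instance $\psi=\psi_1$ (nonzero since $g\ge 1$). Because $\Phi^+$ is a ring homomorphism — this uses that $\bV_{g,n}=\bV_{g,n}^+\oplus \bV_{g,n}^-$ is a direct sum of rings and that $\Phi^+=\tfrac12(\id+E)\circ\Phi$ — we obtain
$$
\Phi^+(\psi f)=\Phi^+(\psi)\cdot\Phi^+(f)=0.
$$
The element $\psi f$ lies entirely in the $k=1$ summand $\Lambda_0^1 H_1\otimes \bC[\alpha,\beta,\gamma,\delta_1,\ldots,\delta_n]$, and by Proposition \ref{prop_decompositionOfPhi} the intersection of $\ker\Phi^+$ with this summand is $\Lambda_0^1 H_1\otimes J_{g,1,n}^+$. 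Hence $\psi\otimes f\in \Lambda_0^1 H_1\otimes J_{g,1,n}^+$. Since $\psi\ne 0$, this forces $f\in J_{g,1,n}^+=J_{g-1,n}^+$, the last equality being Corollary \ref{cor_IdealOnlyDependsOng-k}.

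I do not foresee any real obstacle. The one mild book-keeping point is that the pulled-back source of $\Phi^+$ differs from $\bA_{g,n}$ by the truncations $\gamma^{g-k+1}$ in each $k$-summand; this is harmless because $\gamma^{g-k+1}\in J_{g,k,n}^+$ for each $k$, so the truncations already sit inside the ideals and the product $\psi f$ can be lifted unambiguously. If this purely algebraic route were to fail, a fallback would be to directly compare the $k=0$ and $k=1$ pieces of $\bV_{g,n}^+$ via an $\Sp$-equivariant cobordism map in the spirit of the proof of the preceding proposition, but this seems unnecessary.
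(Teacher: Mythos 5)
Your proof is correct and takes essentially the same route as the paper's: both arguments multiply a given $f\in J_{g,n}^\pm$ by a nonzero class in $\Lambda_0^1 H_1(\Sigma_g)$ (you use $\psi_1$, the paper uses $[a_i]$), invoke the ring-homomorphism property of $\Phi^\pm$ to conclude that $\Phi^\pm(\psi_1 f)=0$, and then read off $f\in J_{g,1,n}^\pm = J_{g-1,n}^\pm$ from the isotypic decomposition of $\ker\Phi^\pm$ given in Proposition~\ref{prop_decompositionOfPhi}. Your side remark about the $\gamma^{g-k+1}$ truncations is a correct sanity check but, as you anticipated, plays no role since the paper already pulls $\Phi^\pm$ back to the untruncated direct sum.
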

\begin{proof}
For each $i$, we have
$$
 \Phi^\pm([a_i]J_{g,n}^\pm)=\Phi^\pm([a_i])\Phi^\pm(J_{g,n}^\pm)=  \{0\}\subset \mathbb{V}_{g,n}^\pm.
$$
Since $[a_i]\in \Lambda^1 H_1(\Sigma_g)=\Lambda_0^1 H_1(\Sigma_g)$, 
The decomposition \eqref{Vgn-decom} implies 
$$J_{g,n}^\pm \subset J_{g,1,n}^\pm=J_{g-1,n}^\pm.$$
\end{proof}

\begin{Lemma} \label{lem_multiplyGamma}
We have 
\begin{equation*}
\gamma J_{g-1,n}^\pm \subset J_{g,n}^\pm.
\end{equation*}
\end{Lemma}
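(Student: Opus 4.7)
The plan is to reduce the statement to an algebraic computation inside the ring homomorphism $\Phi^\pm:\bA_{g,n}\to \bV_{g,n}^\pm$, using Corollary \ref{cor_IdealOnlyDependsOng-k} to reinterpret $J_{g-1,n}^\pm$ as the ideal attached to the $k=1$ Lefschetz piece of $\bV_{g,n}^\pm$. Concretely, by Corollary \ref{cor_IdealOnlyDependsOng-k} together with Proposition \ref{prop_decompositionOfPhi}, the equality $J_{g-1,n}^\pm=J_{g,1,n}^\pm$ says that for every $h\in J_{g-1,n}^\pm$ and every $v\in H_1(\Sigma_g)=\Lambda_0^1 H_1(\Sigma_g)$, the product $v\cdot h\in \bA_{g,n}$ lies in $\ker\Phi^\pm$.

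Choose a standard symplectic basis $\psi_1,\dots,\psi_{2g}$ of $H_1(\Sigma_g;\bC)$, so that in $\bA_{g,n}$ we have $\gamma=\sum_{j=1}^g \psi_j\psi_{j+g}$. Then, using that $\Phi^\pm$ is a ring homomorphism,
\begin{equation*}
\Phi^\pm(\gamma h)=\sum_{j=1}^g \Phi^\pm(\psi_j)\,\Phi^\pm(\psi_{j+g}\,h),
\end{equation*}
and the observation in the previous paragraph, applied with $v=\psi_{j+g}$, shows each factor $\Phi^\pm(\psi_{j+g}\,h)$ vanishes. Hence $\Phi^\pm(\gamma h)=0$.

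Finally, since $h\in\bC[\alpha,\beta,\gamma,\delta_1,\dots,\delta_n]$ involves no $\psi$-variables, the element $\gamma h$ sits inside the $k=0$ summand $\Lambda_0^0 H_1(\Sigma_g)\otimes \bC[\alpha,\beta,\gamma,\delta_1,\dots,\delta_n]$ of the Lefschetz decomposition of $\bA_{g,n}$ (here $\gamma\in\Lambda^2 H_1$ is entirely in the $k=0$ Lefschetz component). By Proposition \ref{prop_decompositionOfPhi}, $\ker \Phi^\pm$ on that summand is precisely $J_{g,0,n}^\pm=J_{g,n}^\pm$, so the vanishing above yields $\gamma h\in J_{g,n}^\pm$, which is the desired inclusion.

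The argument is essentially algebraic and parallels Lemma 10 of \cite{Munoz} in the non-singular case. The only delicate point is bookkeeping: one must remember that the ideals $J_{g',k,n}^\pm$ all live in the same abstract polynomial ring $\bC[\alpha,\beta,\gamma,\delta_1,\dots,\delta_n]$, and that Corollary \ref{cor_IdealOnlyDependsOng-k} is an equality of such ideals, independent of the particular $(g',k)$ used to realize each side. Once this is made precise, no further input (e.g.\ Mumford relations or cobordism computations) is needed.
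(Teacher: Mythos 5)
Your proof is correct and follows essentially the same approach as the paper's: both use the Lefschetz decomposition of $\bA_{g,n}$ and the identification $J_{g-1,n}^\pm = J_{g,1,n}^\pm$ to deduce $\Phi^\pm(\psi_{j+g}\,h)=0$, expand $\gamma = \sum_j \psi_j\psi_{j+g}$, apply the ring homomorphism property of $\Phi^\pm$, and then read off $\gamma h \in J_{g,n}^\pm$ from the $k=0$ summand of the decomposition. Your write-up just makes the bookkeeping of which Lefschetz piece $\gamma h$ lands in a bit more explicit than the paper's one-line version.
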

\begin{proof}
Recall that in  the decomposition \eqref{Vgn-decom}, the space $H_1(\Sigma_g)$ is identified with $\spann\{\psi_1,\cdots,\psi_{2g}\}$.
Since $[a_i]\in \Lambda^1 H_1(\Sigma_g)=\Lambda_0^1 H_1(\Sigma_g)$,
 the decomposition \eqref{Vgn-decom} implies that for every $j$,
 $$
 \Phi^\pm(\psi_j J_{g-1,n}^\pm)=\Phi^\pm(\psi_j J_{g,1,n}^\pm)=\{0\}\subset \bV_{g,n}^\pm.
 $$
Since $\gamma=\sum_{j=1}^g \psi_j \psi_{j+g}$,
we have
$$\Phi^\pm(\gamma J_{g-1,n}^\pm) \subset \sum_{j=1}^g\Phi^\pm(\psi_{j} J_{g-1,n}^\pm)= \{0\} \subset \bV_{g,n}^\pm,$$
therefore $\gamma J_{g-1,n}^\pm\subset J_{g,n}^\pm$.
\end{proof}

\begin{figure}
\centering
\begin{tikzpicture}
\draw[dashed] (0,0) to (8,0);  \node at (-0.4,-0.05) {$\Sigma_g$};
\draw[dashed] (0,4) to (8,4);  \node at (-0.4,3.95) {$\Sigma_g$};

\draw[thick]  (0.5,0) to (0.5,4); \node[below] at (0.5,0) {1};
\draw[thick]  (1.5,0) to (1.5,4);  \node[below] at (1.5,0) {2};
\draw[thick]  (2.5,0) to (2.5,4);  \node[below] at (2.5,0) {3};
\node    at (3.25,2) {\bf{...}};   \node[below] at (3.25,-0.1) {\bf{...}};
\draw[thick]  (4,0) to (4,4);      \node[below] at (4,-0.07) {n};
\draw[thick]  (7,0) arc [radius=1, start angle=0, end angle=180];
\node[below] at (5,0) {n+1};
\node[below] at (7,0) {n+2};
\end{tikzpicture}
\caption{The U-cobordism}\label{U-cob}
\end{figure}

Notice that $J_{g,n}^\pm$ and $J_{g,n+2}^\pm$ are ideals in two different rings. The following lemma is stated with respect to the obvious inclusion 
\begin{equation}
\label{eqn_def_iota}
\iota^{g,n}_{g,n+2}:\mathbb{C}[\alpha,\beta,\gamma,\delta_1,\cdots,\delta_n] \to 
\mathbb{C}[\alpha,\beta,\gamma,\delta_1,\cdots,\delta_n,\delta_{n+1},\delta_{n+2}].
\end{equation}

\begin{Lemma} \label{lem_multiplyDelta}
There exist $q_{g,n}\in\{1,-1\}$ and $c_{g,n}\in \mathbb{C}$ such that
$$(  \delta_{n+1} +   q_{g,n}\,\delta_{n+2} +c_{g,n} )J_{g,n}^+\subset J_{g,n+2}^+.$$
 A similar result holds for $J_{g,n}^-$ and $J_{g,n+2}^-$.
\end{Lemma}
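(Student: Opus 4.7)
The plan is to exploit the U-cobordism of Figure~\ref{U-cob}. Let $\bar{W}$ be its orientation reversal, viewed as a cobordism
$$
(S^1\times\Sigma_g,\,S^1\times\{p_1,\ldots,p_n\},\,\emptyset)\;\longrightarrow\;(S^1\times\Sigma_g,\,S^1\times\{p_1,\ldots,p_{n+2}\},\,\emptyset);
$$
its underlying $4$-manifold is $S^1\times([0,1]\times\Sigma_g)$, and the embedded surface consists of the $n$ product annuli $S^1\times[0,1]\times\{p_i\}$ together with one extra annulus $A:=S^1\times\gamma$, where $\gamma$ is a properly embedded arc in $[0,1]\times\Sigma_g$ joining $(1,p_{n+1})$ to $(1,p_{n+2})$ on the outgoing end. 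I equip $\bar{W}$ with a product almost complex structure making the embedded surface almost complex, so the induced map $\bar{U}:\bV_{g,n}\to\bV_{g,n+2}$ is defined without sign ambiguity. The same arguments used in this section for the handle-attachment cobordism $f^{g,n}_{g+1,n}$ show that $\bar{U}$ commutes with the operator $E$ and intertwines the actions of $\alpha,\beta,\gamma,\delta_1,\ldots,\delta_n$ through the inclusion \eqref{eqn_def_iota}, so $\bar{U}$ restricts to a $\bC[\alpha,\beta,\gamma,\delta_1,\ldots,\delta_n]$-module homomorphism $\bar{U}^+:\bV_{g,n}^+\to\bV_{g,n+2}^+$.

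The main task is then to establish the identity
$$
\bar{U}^+(1_{\bV_{g,n}^+}) \;=\; \Phi^+(\delta_{n+1}+q_{g,n}\delta_{n+2}+c_{g,n})\qquad\text{in }\bV_{g,n+2}^+
$$
for some $q_{g,n}\in\{\pm 1\}$ and $c_{g,n}\in\bC$. Granting this, the lemma is immediate: for $f\in J_{g,n}^+$, the vanishing $f\cdot 1_{\bV_{g,n}^+}=0$ and the module property of $\bar{U}^+$ give
$$
0 = \bar{U}^+\bigl(f\cdot 1_{\bV_{g,n}^+}\bigr) = \iota^{g,n}_{g,n+2}(f)\cdot\bar{U}^+(1_{\bV_{g,n}^+}) = \Phi^+\!\bigl(\iota^{g,n}_{g,n+2}(f)(\delta_{n+1}+q_{g,n}\delta_{n+2}+c_{g,n})\bigr),
$$
whence $\iota^{g,n}_{g,n+2}(f)(\delta_{n+1}+q_{g,n}\delta_{n+2}+c_{g,n})\in J_{g,n+2}^+$.

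To pin down the form of $\bar{U}^+(1_{\bV_{g,n}^+})$ I would combine three ingredients. First, equivariance of $\bar{U}^+$ under the mapping class group of $(\Sigma_g,\{p_1,\ldots,p_{n+2}\})$ preserving $\gamma$ setwise, together with Schur's lemma applied to the $\Sp(2g,\bZ)$-decomposition of Proposition~\ref{prop_decompositionOfPhi}, forces $\bar{U}^+(1_{\bV_{g,n}^+})$ into the $k=0$ summand and ensures that it has a polynomial representative in $\alpha,\beta,\gamma,\delta_1,\ldots,\delta_{n+2}$ fixed by the symmetric group permuting $\{p_1,\ldots,p_n\}$ and by the transposition $p_{n+1}\leftrightarrow p_{n+2}$. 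Second, an index count shows the minimal-energy ASD moduli space on $\bar{W}$ has formal dimension $2$ relative to $R_{g,n}$ (reflecting the two-dimensional locus $C_{n+1}\in\SU(2)$ with $C_{n+1}^2=-1$ and $C_{n+2}=-C_{n+1}$ in the parabolic description), which, together with further equivariance ruling out terms in $\alpha,\beta,\delta_1,\ldots,\delta_n$, reduces the representative to the form $a_1\delta_{n+1}+a_2\delta_{n+2}+c$. Third, the symmetry of $\bar{W}$ swapping $p_{n+1}\leftrightarrow p_{n+2}$ (composed with a flip symmetry to absorb the orientation reversal on $\gamma$ and the resulting sign on $\delta_{p_{n+i}}$, as in \eqref{delta-flip-sym}) forces $a_2=\pm a_1$. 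A Donaldson-invariant pairing computation on $\bar{W}$ in the style of Proposition~\ref{Agn-pairing-Rgn}, against a class in $R_{g,n+2}$ Poincar\'e-dual to the divisor $D_{n+1,n+2}^{\pm}$ of Proposition~\ref{delta=Dij}, confirms that $a_1\neq 0$, after which rescaling normalizes $a_1$ to $1$.

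The main obstacle will be the index-and-equivariance argument that restricts the representative of $\bar{U}^+(1_{\bV_{g,n}^+})$ modulo $J_{g,n+2}^+$ to something of polynomial degree at most $2$ with no $\alpha,\beta,\gamma,\delta_1,\ldots,\delta_n$ contribution, together with the pairing computation verifying nonvanishing of the $\delta_{n+1}$-coefficient. The remaining steps are formal bookkeeping in the module structure, and the analytic input from gauge theory is entirely concentrated in that index and pairing analysis on $\bar{W}$.
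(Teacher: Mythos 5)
Your overall strategy coincides with the paper's: use the U-cobordism as a $\bC[\alpha,\beta,\gamma,\delta_1,\ldots,\delta_n,\epsilon]$-module homomorphism, determine the image of the unit $\Phi^+(1)$, and then deduce the ideal inclusion from the module structure. The cobordism, the $\Sp(2g,\bZ)$-equivariance, the identification of the minimal-energy moduli space with the diagonal divisor $D_{n+1,n+2}^-$, and the appeal to Proposition~\ref{delta=Dij} are all present in both arguments. The final formal step (applying the module property to $f\in J_{g,n}^+$) is identical.

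However, there is a genuine gap in the middle step where you pin down $\bar{U}^+(1)$. You assert that ``further equivariance ruling out terms in $\alpha,\beta,\delta_1,\ldots,\delta_n$'' reduces the representative to $a_1\delta_{n+1}+a_2\delta_{n+2}+c$, and you invoke the Donaldson pairing against $D_{n+1,n+2}^\pm$ only to confirm $a_1\neq 0$. Equivariance alone does not suffice here. The $S_n$-action on $\{p_1,\ldots,p_n\}$ only forces the $\delta_1,\ldots,\delta_n$ coefficients to be equal, not zero, and nothing in the symmetry of $\bar{W}$ constrains the $\alpha$ coefficient a priori: the flip symmetry $\tau_S$ (with $S\subset\{1,\ldots,n\}$ of even size) acts by $\alpha\mapsto\alpha+\sum_{i\in S}\delta_i$, $\delta_i\mapsto-\delta_i$ $(i\in S)$, and $\tau_S$-invariance of $a\alpha+\sum_i b_i\delta_i+\cdots$ only yields $b_i=a/2$, leaving $a$ free. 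The paper instead computes the full Donaldson pairing at degree $\dim R_{g,n+2}-2$ and shows $\langle\II(\bW)(\Phi^+(1)),\Phi^+(f)\rangle=\tfrac12\Psi(f)[D_{n+1,n+2}^-]$, i.e.\ it identifies the pairing against \emph{all} degree-$(\dim R-2)$ classes with evaluation on $D_{n+1,n+2}^-$; the full content of Proposition~\ref{delta=Dij} (namely $2[D_{n+1,n+2}^-]=P.D.\Psi(\delta_{n+1}+q_{g,n}\delta_{n+2})$) then identifies the degree-2 part of $\II(\bW)(\Phi^+(1))$ and simultaneously kills all $\alpha,\delta_1,\ldots,\delta_n$ contributions. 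You should replace the equivariance step by this complete pairing identification rather than using the pairing merely as a nonvanishing check. You should also handle the degenerate case where $\Psi(\delta_{n+1})$ and $\Psi(\delta_{n+2})$ are linearly dependent (this only occurs for $g=0$, $n=1$, and is addressed separately in the paper via the flip symmetry and Corollary~\ref{deform-vanishing-polynomial}).
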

\begin{remark}
\label{rmk_c=0}
We will show that $c_{g,n}=0$ in  Lemma \ref{lem_multiplyDeltac=0}.
\end{remark}
\begin{proof}
Let $\mathbb{Y}_{g,n}:=(S^1\times\Sigma_g, S^1\times\{p_1,\cdots,p_n\}, \emptyset)$ be the admissible triple that defines $\mathbb{V}_{g,n}$.
Consider the ``U-cobordism'' 
\begin{equation*}
\mathbb{W}:=([0,1]\times S^1\times \Sigma_g, [0,1]\times S^1\times \{p_1,\cdots,p_n\}\cup  U, \emptyset):\mathbb{Y}_{g,n}\to \mathbb{Y}_{g,n+2},
\end{equation*}
where $U$ is the product of $S^1$ with an arc joining the points $p_n,p_{n+1} \in \{1\}\times \Sigma_g$ in  $[0,1]\times\Sigma_g $. 
Figure \ref{U-cob} shows a slice of the U-cobordism (from the top to the bottom) in $[0,1]\times \Sigma_g$.
The map $\II(\mathbb{W})$ is a 
$$
\mathbb{A}_{g,n}[\epsilon]/(\epsilon^2-1)
=\mathbb{C}[\alpha,\beta,\psi_1,\cdots, \psi_{2g},\delta_1,\cdots,\delta_n,\epsilon]/(\epsilon^2-1)
$$
module homomorphism since the homology classes used to define 
$$\mu(\Sigma_g),\mu(\pt),\mu(a_1),\cdots,\mu(a_{2g}),\sigma_1,\cdots,\sigma_n,E$$ on the two ends are homologous.
This map is also equivariant with respect to the $Sp(2g,\mathbb{Z})$ action on $\bV_{g,n}$ and $\bV_{g,n+2}$ since the action of the mapping class group $MCG(\Sigma_g)$ 
can be lifted to $\mathbb{W}$. 

Let $R(\mathbb{W})$ be the space of flat connections on $\mathbb{W}$. Recall that using 
\eqref{Rgn-BC}, we define a
codimension 2 submanifold $D_{n+1,n+2}^-$ ($D_{n+1,n+2}^+$) of $R_{g,n+2}$ by the equation  $C_{n+1}=-C_{n+2}$ ($C_{n+1}=C_{n+2}$).
The restriction map $R(\mathbb{W})\to R(\mathbb{Y}_{g,n+2})=R_{g,n+2}\sqcup R_{g,n+2}$ 
is an embedding which maps $R(\mathbb{W})$ onto $D_{n+1,n+2}^-\sqcup D_{n+1,n+2}^-$.
The restriction map 
$$ D_{n+1,n+2}^-\sqcup D_{n+1,n+2}^-\cong R(\mathbb{W}) \to R(\mathbb{Y}_{g,n})=R_{g,n}\sqcup R_{g,n}$$
is given by the projection map which forgets the coordinates $C_{n+1}$ and $C_{n+2}$.

We define 
$$
\mathbb{D}_1:=(D^2\times \Sigma_g, D^2\times \{p_1,\cdots,p_n\},\emptyset),
$$
$$
\mathbb{D}_2:=(D^2\times \Sigma_g, D^2\times \{p_1,\cdots,p_{n+2}\},\emptyset),
$$
and
$$
\mathbb{D}_2':=(D^2\times \Sigma_g, D^2\times \{p_1,\cdots,p_{n+2}\},\{\pt\}\times \Sigma_g).
$$
Now we calculate $\II(\mathbb{W})(\Phi^+(1))$. Recall that $\II(\mathbb{W})$ is only defined up to a sign, we choose an arbitrary sign and fix it from now on. By the definition of $\Phi^+$ in \eqref{Phi-pm},
$$
\II(\mathbb{W})(\Phi^+(1))=\frac12 (\II(\mathbb{D}_1\cup\mathbb{W})(1) + E\circ \II(\mathbb{D}_1\cup\mathbb{W})(1) ).
$$
Consider the pairing 
$$
\langle \II(\mathbb{D}_1\cup\mathbb{W})(1), \Phi(f)    \rangle
$$
on $\mathbb{V}_{g,n+2}$ for a given $f\in \mathbb{A}_{g,n+2}$. This is the same as the Donaldson invariant 
$D(\mathbb{D}_1\cup\mathbb{W}\cup \mathbb{D}_2,f)$ with a suitable homological orientation.
The minimal energy moduli space on $\mathbb{D}_1\cup\mathbb{W}\cup \mathbb{D}_2$ is the space of flat connections.
It is the fibered product 
$$R(\mathbb{D}_1)\times_{R(\mathbb{Y}_{g,n})} R(\mathbb{W})\times_{R(\mathbb{Y}_{g,n+2})}R(\mathbb{D}_2),$$ which can be identified with $D_{n+1,n+2}^-\subset R_{g,n+2}=R(\mathbb{D}_2)$. 

If $d=\deg f \le  \dim R_{g,n+2}-3$ or $d=\dim R_{g,n+2}-1$, then 
\begin{equation}\label{f-vanihsing}
\langle \II(\mathbb{D}_1\cup\mathbb{W}), \Phi(f)    \rangle=0
\end{equation} 
since no component of the moduli space of $ASD$ connections over 
$\mathbb{D}_1\cup\mathbb{W}\cup \mathbb{D}_2$ has dimension $d$.
If $d=\dim R_{g,n+2}-2$, then we have
\begin{equation}\label{f-D-minus}
\langle \II(\mathbb{D}_1\cup\mathbb{W}), \Phi(f)    \rangle=\Psi(f)[D_{n+1,n+2}^-].
\end{equation}
Similarly, we can consider the paring 
$$
\langle \II(\mathbb{D}_1\cup\mathbb{W}), \Phi(\epsilon f)    \rangle.
$$
It is the same as the  Donaldson invariant
$D(\mathbb{D}_1\cup\mathbb{W}\cup \mathbb{D}_2'; f)$.
The minimal energy moduli space on $\mathbb{D}_1\cup\mathbb{W}\cup \mathbb{D}_2'$ has dimension $\dim D_{n+1,n+2}^-+2$. 
Therefore
\begin{equation}\label{epsilon-f-vanishing}
\langle \II(\mathbb{D}_1\cup\mathbb{W}), \Phi(\epsilon f)\rangle =0
\end{equation}
if $\deg f < \dim D_{n+1,n+2}^-+2=\dim R_{g,n+2}$.
Now combine \eqref{f-vanihsing}, \eqref{f-D-minus} and \eqref{epsilon-f-vanishing}, we obtain
\begin{equation}\label{W-map-vanishing}
\langle \II(\mathbb{W})(\Phi^+(1)), \Phi^+(f)\rangle=0
\end{equation}
when $\deg f \le \dim R_{g,n+2} -3$ or $\deg f=\dim R_{g,n+2} -1$ and 
\begin{equation}\label{W-map-D-minus}
\langle \II(\mathbb{W})(\Phi^+(1)), \Phi^+(f)\rangle=\frac12\Psi(f)[D_{n+1,n+2}^-]
\end{equation}
when $\deg f=  \dim R_{g,n+2}-2$.

If $\Psi(\delta_{n+1})$ and $\Psi(\delta_{n+2})$ are linearly dependent\footnote{In fact $\Psi(\delta_{n+1})$ and $\Psi(\delta_{n+2})$ are always linearly independent unless $g=0,n=1$. 
This can be seen from the fact that the Betti number $b_2(R_{g,n+2})=n+3$ unless $g=0,n=1$ 
(See \cite{Nitsure} or \cite[Section 1.3]{Street}).},
then there are non-zero constants $a,b\in\mathbb{C}$ such that
$a\Psi(\delta_{n+1})+b\Psi(\delta_{n+2})=0$ in $H^\ast(R_{g,n})$. By flip symmetry we also have 
$a\Psi(\delta_{n+1})-b\Psi(\delta_{n+2})=0$. This means $\Psi(\delta_{n+1})=\Psi(\delta_{n+2})=0$ in $H^\ast(R_{g,n})$. 
By Proposition \ref{deform-vanishing-polynomial} we have $\Phi^+(\delta_{n+1}+a')=0$ and
$\Phi^+(\delta_{n+2}+b')=0$ for some $a',b'\in \mathbb{C}$. Therefore we have 
\begin{equation*}
(\delta_{n+1}+\delta_{n+2}+a'+b')J_{g,n}^+=\{0\}\subset J_{g,n+2}^+.
\end{equation*}

Now we assume $\Psi(\delta_{n+1})$ and $\Psi(\delta_{n+2})$ are linearly independent.
According to Proposition \ref{delta=Dij}, we have 
$$
P.D.\Psi(\delta_{n+1})=[D_{n+1,n+2}^+]+[D_{n+1,n+2}^-]
$$ 
after choosing the orientations of $[D_{n+1,n+2}^+]$ and $[D_{n+1,n+2}^-]$ properly. Since 
$\Psi(\delta_{n+1})$ and $\Psi(\delta_{n+2})$ are linearly independent, we must have
$$
P.D.\Psi(\delta_{n+2})=\pm([D_{n+1,n+2}^+]-[D_{n+1,n+2}^-]).
$$
Hence there is some $q_{g,n}\in \{1,-1\}$ such that 
\begin{equation}\label{delta+delta=D-minus}
P.D.\Psi(\delta_{n+1}+q_{g,n}\delta_{n+2})=2[D_{n+1,n+2}^-].
\end{equation}

According to \eqref{W-map-vanishing}, \eqref{W-map-D-minus}, \eqref{delta+delta=D-minus}, and Proposition \ref{Agn-pairing-Rgn},
we have 
\begin{equation*}
\langle \II(\mathbb{W})(\Phi^+(1))- \frac12 \Phi^+ (\delta_{n+1}+q_{g,n}\delta_{n+2}), \Phi^+(f) \rangle=0
\end{equation*}
when $\deg f \le \dim R_{g,n+2}-1$. 
Now let $F^+:H^\ast(R_{g,n+2})\to \mathbb{V}_{g,n+2}$ be the map in Proposition \ref{ring-deformation}. 
Assume 
\begin{equation*}
\langle \II(\mathbb{W})(\Phi^+(1))- \frac12 \Phi^+ (\delta_{n+1}+q_{g,n}\delta_{n+2}), F^+(P.D.[\pt]) \rangle=:\frac14 c_{g,n}
\end{equation*}
By the above two equations and Proposition \ref{Agn-pairing-Rgn} we have
\begin{equation*}
\langle \II(\mathbb{W})(\Phi^+(1))- \frac12 \Phi^+ (\delta_{n+1}+q_{g,n}\delta_{n+2}+c_{g,n}), F^+(g) \rangle =0 
\end{equation*}
for all $g\in H^\ast(R_{g,n+2})$. The non-degeneracy of the pairing on $\mathbb{V}_{g,n}^+$ implies 
\begin{equation*}
 \II(\mathbb{W})(\Phi^+(1))= \frac12 \Phi^+ (\delta_{n+1}+q_{g,n}\delta_{n+2}+c_{g,n})
\end{equation*}
Since $\II(\mathbb{W})$ is equivariant with respect to the $Sp(2g,\mathbb{Z})$-action and commutes with the $\epsilon$-action, it maps the $Sp(2g,\mathbb{Z})$-invariant part
of $\mathbb{V}_{g,n}^+$ into the  $Sp(2g,\mathbb{Z})$-invariant part of $\mathbb{V}_{g,n+2}^+$. Since $\II(\mathbb{W})$ is an
$\mathbb{A}_{g,n}$-module homomorphism, we have
\begin{equation}
\label{eqn_U_map_bW}
0=\II(\mathbb{W})(\Phi^+(h))=\Phi^+(h)\cdot \II(\mathbb{W})(\Phi^+(1))=\Phi^+(h) \cdot \frac12 \Phi^+ (\delta_{n+1}+q_{g,n}\delta_{n+2}+c_{g,n})
\end{equation}  
for all $h\in J_{g,n}^+$, hence $(  \delta_{n+1} +   q_{g,n}\,\delta_{n+2} +c_{g,n} )J_{g,n}^+\subset J_{g,n+2}^+.$ A similar argument works for $J_{g,n}^-$.
\end{proof}

Notice that the cobordism map $\bW$ defined in the proof of Lemma \ref{lem_multiplyDelta} has a left inverse given by another U-cobordism where the U-arc connects $p_n$ and $p_{n+1}$, therefore $\bW$ is an injective map. Hence by \eqref{eqn_U_map_bW} we have the following lemma, which is also proved in \cite{Street} using different methods. 
\begin{Lemma}[{cf. {\cite[Corollary 2.6.8]{Street}}}]
\label{lem_inclusionWhenIncreaseN}
Let $\iota^{g,n}_{g,n+2}$ be the map defined by \eqref{eqn_def_iota}. We have 
\begin{equation*}
 (\iota^{g,n}_{g,n+2})^{-1} J_{g,n+2}^\pm\subset  J_{g,n}^\pm.
\end{equation*}
\end{Lemma}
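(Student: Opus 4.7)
The plan is to deduce the inclusion from the U-cobordism $\mathbb{W}\colon \mathbb{Y}_{g,n} \to \mathbb{Y}_{g,n+2}$ constructed in the proof of Lemma \ref{lem_multiplyDelta}, together with the injectivity of $\II(\mathbb{W})$ observed in the remark immediately preceding the statement. The overall strategy is to show that $f \in (\iota^{g,n}_{g,n+2})^{-1} J_{g,n+2}^\pm$ forces $\II(\mathbb{W})(\Phi^\pm(f)) = 0$, and then invoke injectivity.

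First, I would extract from the proof of Lemma \ref{lem_multiplyDelta} the identity
\begin{equation*}
\II(\mathbb{W})\bigl(\Phi^+(1)\bigr) \;=\; \tfrac{1}{2}\,\Phi^+\bigl(\delta_{n+1} + q_{g,n}\,\delta_{n+2} + c_{g,n}\bigr),
\end{equation*}
where on the left $\Phi^+$ denotes the map for $(g,n)$ and on the right the one for $(g,n+2)$. This formula is derived there without any assumption that the input lies in $J_{g,n}^+$, so it is available unconditionally. Combining it with the fact (also proved there) that $\II(\mathbb{W})$ is an $\mathbb{A}_{g,n}$-module homomorphism yields, for every $f \in \mathbb{C}[\alpha,\beta,\gamma,\delta_1,\ldots,\delta_n]$,
\begin{equation*}
\II(\mathbb{W})\bigl(\Phi^+(f)\bigr) \;=\; \Phi^+\!\bigl(\iota^{g,n}_{g,n+2}(f)\bigr)\cdot \tfrac{1}{2}\,\Phi^+\bigl(\delta_{n+1} + q_{g,n}\,\delta_{n+2} + c_{g,n}\bigr).
\end{equation*}

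Now suppose $f \in (\iota^{g,n}_{g,n+2})^{-1}J_{g,n+2}^+$. Then $\iota^{g,n}_{g,n+2}(f) \in J_{g,n+2}^+$, so $\Phi^+(\iota^{g,n}_{g,n+2}(f)) = 0$ in $\mathbb{V}_{g,n+2}^+$, and the displayed identity forces $\II(\mathbb{W})(\Phi^+(f)) = 0$. Injectivity of $\II(\mathbb{W})$ then gives $\Phi^+(f) = 0$, i.e.\ $f \in J_{g,n}^+$. The argument for $J^-$ is identical, using the analogous formula from Lemma \ref{lem_multiplyDelta}.

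I do not anticipate any serious obstacle: the three ingredients needed (the formula for $\II(\mathbb{W})(\Phi^+(1))$, the module property of $\II(\mathbb{W})$, and its injectivity) are all in place. The only point that requires mild care is verifying that the derivation of $\II(\mathbb{W})(\Phi^+(1))$ in Lemma \ref{lem_multiplyDelta} was obtained before invoking the hypothesis $h \in J_{g,n}^+$, so that substitution of an arbitrary polynomial $f$ is legitimate; this is immediate from inspection of that proof.
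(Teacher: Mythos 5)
Your proof is correct and follows the paper's own argument exactly: extract the formula for $\II(\mathbb{W})(\Phi^+(1))$ from the proof of Lemma~\ref{lem_multiplyDelta}, use the $\mathbb{A}_{g,n}$-module property to get $\II(\mathbb{W})(\Phi^+(f))=\Phi^+(\iota^{g,n}_{g,n+2}(f))\cdot\II(\mathbb{W})(\Phi^+(1))$, conclude this vanishes when $\iota^{g,n}_{g,n+2}(f)\in J_{g,n+2}^\pm$, and invoke the injectivity of $\II(\mathbb{W})$. Your observation that the derivation of $\II(\mathbb{W})(\Phi^+(1))$ does not depend on $h\in J_{g,n}^+$, so that the intermediate identity in \eqref{eqn_U_map_bW} can be substituted with an arbitrary polynomial, is the correct reading of what the paper's one-line citation of \eqref{eqn_U_map_bW} implicitly relies on.
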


\begin{proof}
Suppose $f\in  (\iota^{g,n}_{g,n+2})^{-1} J_{g,n+2}^+$, then by \eqref{eqn_U_map_bW}, we have 
$$\II(\bW)(\Phi^+(f))  = 0 \in \bV_{g,n+2}.$$
Since $\II(\bW)$ is injective, we have $\Phi^+(f)=0$, namely $f\in J_{g,n}^+$. The same argument works for $J_{g,n}^-$.
\end{proof}

\section{Eigenvalues of the surface class}\label{section-spec} 
In this section, let $\Sigma_{g,n}$ be a closed surface of genus $g$ with $n$ marked points $\{p_1,\cdots,p_n\}$. We study the eigenvalues and eigenspaces of $\muu(\Sigma)$ on $\bV_{g,n}$.

\subsection{Existence of eigenvectors}\label{subsection-spec-existence}

Let $(Y_{g},L_{g,n}):=S^1\times (\Sigma_{g,n},\{p_1,\cdots,p_n\})$. Fix a point $x_0\in S^1$.
If $g\ge 1$, let $c_1\subset \Sigma_{g,n}-\{p_1,\cdots,p_n\}$ be a non-separating simple closed curve. Let $\omega$ be the circle 
$\{x_0\} \times c_1$ in $(Y_{g},L_{g,n})$ and let
$$
\bW_{g,n} := \II(Y_{g},L_{g,n},\omega).
$$
If $n\ge 2$, let $c_2\subset \Sigma_{g,n}$ be an arc that connects $p_1$ and $p_2$.
Let $\omega'$ be the arc $ \{x_0\}\times c_2$ in $(Y_{g},L_{g,n})$ and let 
$$\bU_{g,n} := \II(Y_{g},L_{g,n},\omega').$$ 

Notice that $\bW_{g,n}$ and $\bU_{g,n}$ are still well-defined when $n$ is \emph{even}, which is different from the case of 
$\mathbb{V}_{g,n}$. 
We will study the eigenvalues of $\muu(\Sigma_{g,n})$ and $\mu(\pt)$ on $\bU_{g,n}$ and $\bW_{g,n}$ using 
Theorem \ref{thm_nonsingularExcision}. By Proposition \ref{mu-pt=2}, $\mu(\pt)=2$ on $\bU_{g,n}$.
The following result holds for both even and odd values of $n$.

\begin{Lemma}\label{Lem-UgnWgn-eigenvalues}
The eigenvalues of $\muu(\Sigma_{g,n})$ on $\bU_{g,n}$ ($n\ge 2$) are 
$$
\{-(2g+n-2),-(2g+n-4),\cdots, 2g+n-4,2g+n-2 \}.
$$
Moreover, the generalized eigenspaces of $\muu(\Sigma_{g,n}) $
with the eigenvalue $2g+n-2$ and $-(2g+n-2)$ are 1-dimensional. 
The same result holds for $\bW_{g,n}$ ($g\ge 1$) when restricted to the generalized eigenspace of $\mu(\pt)$ for 
the eigenvalue $2$.
\end{Lemma}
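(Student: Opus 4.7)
The plan is to reduce the spectral analysis of $\muu(\Sigma_{g,n})$ to the non-singular Theorem~\ref{thm_nonsingularRangeOfEigenvalues} by constructing an auxiliary closed embedded surface $\Sigma^{\#} \subset Y_{g} \setminus L_{g,n}$ whose $\mu$-class on the Floer homology equals $2\muu(\Sigma_{g,n})$. First I would take two parallel fibers $\Sigma_{1}=\{x_{1}\}\times\Sigma_{g}$ and $\Sigma_{2}=\{x_{2}\}\times\Sigma_{g}$ with $x_{1},x_{2}\neq x_{0}$, each meeting $L_{g,n}$ transversely in $n$ points. For each $i$, I would excise small disks around both intersection points with $S^{1}\times\{p_{i}\}$ and glue in an annulus $A_{i}$ routed inside a tubular neighborhood of $S^{1}\times\{p_{i}\}$. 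The outcome is a closed connected embedded surface $\Sigma^{\#}$ of genus $G=2g+n-1$ contained in $Y_{g}\setminus L_{g,n}$.

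The key geometric step is to verify the identity $\mu(\Sigma^{\#})=2\muu(\Sigma_{g,n})$ on the Floer homology. Each tube $A_{i}$ is precisely the piece of $2$-cycle responsible for the $\sigma_{p_{i}}$ correction appearing in the definition~\eqref{mu-orb} of $\muu$; using the flip-symmetry formula~\eqref{mu-flip-sym-homology} and choosing the orientations of the tubes compatibly with $\mathrm{sign}_{p_{i}}$ in~\eqref{mu-orb} yields the stated identity. I would also need to arrange that $\Sigma^{\#}$ is a non-integral surface of the ambient triple. For $\bU_{g,n}$ this is done by routing the annulus $A_{1}$ (or $A_{2}$) the ``long way'' around the $S^{1}$ factor so that $|\Sigma^{\#}\cap\omega'|$ becomes odd without changing the genus; the tubes around the remaining $p_{i}$'s lie in small neighborhoods disjoint from $\omega'$. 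For $\bW_{g,n}$ a similar but more delicate adjustment of one of the tubes, exploiting that $\omega$ is a non-separating loop on $\{x_{0}\}\times\Sigma_{g}$, produces an odd intersection with $\omega$ while keeping the genus equal to $G$.

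Applying Theorem~\ref{thm_nonsingularRangeOfEigenvalues} to the non-integral surface $\Sigma^{\#}$ of genus $G$ then constrains the simultaneous eigenvalues of $(\mu(\Sigma^{\#}),\mu(\pt))$ to lie in $\{(i^{r}\cdot 2k,(-1)^{r}2) : k=0,\dots,G-1,\ r=0,1,2,3\}$. On $\bU_{g,n}$ we have $\mu(\pt)=2$ by Proposition~\ref{mu-pt=2}, since $|\omega'\cap(S^{1}\times\{p_{1}\})|=1$ is odd; on $\bW_{g,n}$ we restrict to the $\mu(\pt)=2$ eigenspace as in the statement. The eigenvalues of $\mu(\Sigma^{\#})=2\muu(\Sigma_{g,n})$ in this eigenspace then fall in $\{-2(G-1),-2(G-3),\dots,2(G-1)\}$, and dividing by $2$ gives the prescribed range $\{-(2g+n-2),-(2g+n-4),\dots,2g+n-2\}$ for $\muu(\Sigma_{g,n})$.

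To finish, I would show that the extremal eigenvalues $\pm(2g+n-2)$ are attained and their generalized eigenspaces are one-dimensional by a Morse--Bott analysis of the unperturbed Chern--Simons functional. Its critical set is a representation variety whose top-degree cohomology is one-dimensional and corresponds to these extremal eigenspaces via the Morse--Bott estimate $\dim\,\bU_{g,n}\le 2\dim H_{*}(\text{crit set})$; non-vanishing of the top eigenspaces follows from the non-degeneracy of the pairing~\eqref{Vgn-pairing} and its analogue for $\bU_{g,n}$ and $\bW_{g,n}$. The main obstacle is the sign bookkeeping in $\mu(\Sigma^{\#})=2\muu(\Sigma_{g,n})$, which requires consistently tracking the extensions of the singular bundle data across the tubes and the signs $\mathrm{sign}_{p_{i}}$ from~\eqref{mu-flip-sym-homology}, together with arranging non-integrality of $\Sigma^{\#}$ for $\bW_{g,n}$ without inflating the genus beyond $G$; the $1$-dimensionality of the extremal eigenspaces also calls for a careful local computation on the representation variety.
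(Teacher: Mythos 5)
Your approach has a fundamental homological obstruction for $\bW_{g,n}$. The surface $\Sigma^{\#}$ you construct is disjoint from $L_{g,n}$ and satisfies $[\Sigma^{\#}] = 2[\{\mathrm{pt}\}\times\Sigma_g]$ in $H_2(Y_g;\bZ/2)$, hence $[\Sigma^{\#}] = 0$ mod $2$. For $\bW_{g,n}$ the $1$-manifold $\omega=\{x_0\}\times c_1$ is a closed loop disjoint from $L$, so $|\Sigma^{\#}\cap\omega|\equiv[\Sigma^{\#}]\cdot[\omega]\equiv 0\pmod 2$ regardless of how the tubes are routed; and $|\Sigma^{\#}\cap L|=0$. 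Thus $\Sigma^{\#}$ can \emph{never} be a non-integral surface of $(Y_g,L_{g,n},\omega)$, and Theorem~\ref{thm_nonsingularRangeOfEigenvalues} is simply inapplicable. Routing a tube inside a neighborhood of $S^1\times\{p_i\}$ cannot help because $c_1$ is disjoint from $\{p_1,\ldots,p_n\}$.

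There are two further gaps even on the $\bU_{g,n}$ side. First, Theorem~\ref{thm_nonsingularRangeOfEigenvalues} constrains the eigenvalues of $\mu(\Sigma^{\#})$ in the $\mu(\pt)=2$ generalized eigenspace to the set of \emph{all} even integers in $[-2(G-1),2(G-1)]$; after dividing by $2$ (granting the unverified identity $\mu(\Sigma^{\#})=2\muu(\Sigma_{g,n})$, which requires a careful homological computation that may pick up correction terms proportional to $\mu$ of the meridional tori), you only get $\{-(2g+n-2),-(2g+n-3),\dots,2g+n-2\}$, not the parity-restricted set in the lemma's statement; there is no step that excludes the wrong-parity eigenvalues. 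Second, the $1$-dimensionality of the top eigenspace does not follow from the Morse--Bott bound $\dim\bU_{g,n}\le 2\dim H_*(R)$: that inequality gives a total dimension bound, but it does not link the eigenspace decomposition of $\muu(\Sigma_{g,n})$ to the cohomological grading of the critical set. Establishing that link is precisely the hard part (the Mu\~noz/Street-type computation). The paper instead applies the non-singular excision theorem to split $\bU_{g,n}\cong\bW_{g+1,0}^2\otimes\bU_{0,n}$ (and likewise $\bU_{g-1,n}\cong\bW_{g,n-2}^2$), and then quotes the known spectra of $\bW_{g,0}^2$ and $\bU_{0,n}$ from Mu\~noz and Street, which gives the exact eigenvalue set and the $1$-dimensionality in one stroke. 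You would need some substitute for that reduction.
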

\begin{proof}
We use $\bW_{g,n}^2$ to denote the generalized eigenspaces of $\muu(\pt)$ for the eigenvalue $2$
in $\bW_{g,n}$.
When $g\ge 1$ and $n\ge 2$, consider the disjoint union of $(Y_{g},L_{g,0},\omega)$ and 
$(Y_{0},L_{0,n},\omega')$. Let $u_1$ be a simple closed curve on $\Sigma_{g,0}$ that intersects $c_1$ transversely at one point, and let $u_2$ be a 
simple closed curve on $\Sigma_{0,n}$ that intersects $c_2$ transversely at one point. 
Apply Theorem \ref{thm_nonsingularExcision}
on $(Y_{g,0},L_{g,0},\omega)\sqcup(Y_{0,n},L_{0,n},\omega')$ for the excision along $S^1 \times u_1$ and $S^1\times u_2$ yields
\begin{equation}
\label{eqn_U_{g-1,n}GivenByTensorProduct}
\bW_{g,0}^2\otimes\bU_{0,n}\cong \bU_{g-1,n}.
\end{equation}

By \cite[Proposition 2.5.4]{Street}, the spectrum of $\muu(\Sigma_{g,0})=\mu(\Sigma_{g,0})$ on $\bW_{g,0}^2$ is 
$$
\{-2(g-1),-2(g-2),\cdots, -2,0,2,\cdots,2(g-2),2(g-1)\},
$$
and the generalized eigenspace for the top (bottom) eigenvalue is 1-dimensional. 
The eigenvalues are rescaled here to agree with our conventions.
By \cite[Section 2.8]{Street}, the spectrum of $\muu(\Sigma_{0,n})$ on $\bU_{0,n}$  is 
$$
\{-(n-2),-(n-4),\cdots,-2,0,2,\cdots, n-4,n-2 \},
$$
and the generalized eigenspace for the top (bottom) eigenvalue is 1-dimensional. 
Since $\Sigma_{g,0}\cup\Sigma_{0,n}$ is homologous to $\Sigma_{g,n}$ in the excision cobordism, the isomorphism
\eqref{eqn_U_{g-1,n}GivenByTensorProduct} intertwines $\muu(\Sigma_{g,0})\otimes 1+1\otimes \muu(\Sigma_{0,n})$ with
$\muu(\Sigma_{g-1,n})$. 
Therefore by \eqref{eqn_U_{g-1,n}GivenByTensorProduct}, when $g\ge 0$ and $n\ge 2$, the spectrum of $\muu(\Sigma_{g,n})$ on $\bU_{g,n}$ is 
$$
\{-(2g+n-2),-(2g+n-4),\cdots, 2g+n-4,2g+n-2  \},
$$
and the generalized eigenspace with the top (bottom) eigenvalue is 1-dimensional. 

Let $u_3$ and $u_4$ be two small  
circles around $p_1$ and $p_2$ respectively on $\Sigma_{g-1,n}$ so  
that both $u_3$ and $u_4$ intersect $c_2$ transversely at one point.
Excision on $(Y_{g-1,n},L_{g-1,n},\omega')$ along $S^1\times u_3$ and $S^1\times u_4$ yields that if $g\ge 1$ and $n\ge 2$,
\begin{equation*}
\bU_{g-1,n}\cong \bW_{g,n-2}^2\otimes \bU_{0,2}\cong \bW_{g,n-2}^2.
\end{equation*}
The above isomorphism and the previous result for $\bU_{g,n}$ then determines the spectrum of $\muu(\Sigma_{g,n})$ on $\bW_{g,n}^2$ when $g\ge 1$ by a similar argument.
\end{proof}

Recall that we always assume $(g,n)\neq (0,1)$ when talking about $\bV_{g,n}$.
\begin{Lemma} \label{lem_ExistenceOfEigenvectors}
Suppose $n$ is odd.
For each $\lambda\in \{-(2g+n-2),-(2g+n-4),\cdots, 2g+n-4,2g+n-2 \}$, there is a non-zero simultaneous eigenvector of
$ (\muu(\Sigma),\mu(\pt))$
with eigenvalues
$(\lambda, 2)$
in $\mathbb{V}_{g,n}$.
\end{Lemma}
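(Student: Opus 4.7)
The plan is to reduce the existence of the required eigenvectors in $\bV_{g,n}$ to the eigenvalue spectra of $\bU$ and $\bW$ given by Lemma \ref{Lem-UgnWgn-eigenvalues}, via a non-singular excision argument in the spirit of that lemma's proof. The obstruction to a direct application is that $\omega=\emptyset$ for $\bV_{g,n}$, so no surface in $Y_g-L_{g,n}$ can meet $\omega$ in an odd number of points, as required by Theorem \ref{thm_nonsingularExcision}. I would first use the flip symmetry from Section \ref{subsection-flip} to pass from $\bV_{g,n}=\II(Y_g,L_{g,n},\emptyset)$ to the isomorphic space $\bV_{g,n}'=\II(Y_g,L_{g,n},S^1\times\{q\})$ with $q\in\Sigma_g-\{p_1,\ldots,p_n\}$, thereby activating a non-trivial $\omega$.

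Next I would construct non-integral surfaces in $Y_g-L_{g,n}$. Starting from the slice $\{x_0\}\times\Sigma_g$ (which meets $L_{g,n}$ transversely at the $n$ points $(x_0,p_i)$), I tube each intersection by replacing a small disk around $(x_0,p_i)$ with an annular piece of $\partial N(S^1\times\{p_i\})$, producing a closed genus-$(g+n)$ surface $\Sigma^*$ disjoint from $L_{g,n}$ meeting $\omega$ transversely at $(x_0,q)$. I would then apply Theorem \ref{thm_nonsingularExcision} (possibly iteratively, in the style of the proof of Lemma \ref{Lem-UgnWgn-eigenvalues} where a single torus excision gave $\bW_{g,0}^2\otimes\bU_{0,n}\cong\bU_{g-1,n}$) to decompose the relevant eigenspace of $\bV_{g,n}'$ as a tensor product of Floer homologies of simpler admissible triples of the form $\bU$ or $\bW$. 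Under such a decomposition, $\muu(\Sigma)$ acts additively, so its eigenvalues on $\bV_{g,n}'$ are sums of factor eigenvalues; choosing the decomposition so that the factor ranges combine to $\{-(2g+n-2),\ldots,2g+n-2\}$ produces the desired eigenvectors, which one translates from $\mu(\Sigma)$ to $\muu(\Sigma)$ via the flip-symmetric formulas \eqref{mu-flip-sym-homology} and \eqref{mu-orb}, and back to $\bV_{g,n}$ through the flip isomorphism.

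The main technical obstacle is constructing an excision decomposition that realizes all eigenvalues in the target range, including the extreme values $\pm(2g+n-2)$ which correspond to $1$-dimensional generalized eigenspaces in the factors (by Lemma \ref{Lem-UgnWgn-eigenvalues}); these must survive both the iterated excision and the translation back to $\bV_{g,n}$. Additionally, the bookkeeping between $\mu(\Sigma^*)$ on the tubed surface $\Sigma^*$ and $\muu(\Sigma)$ on the original $\Sigma_g$ requires careful matching of the contributions of the surgery tubes around the $p_i$ with the $\sigma_{p_i}$-corrections appearing in the definition \eqref{mu-orb} of the flip-invariant operator $\muu$.
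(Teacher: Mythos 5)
The paper's proof does not proceed by excision at all: it constructs a pair-of-pants cobordism $(\Sigma_{g,n}\times P,\ldots)$ giving a map $\bU_{g,n}\otimes \bU_{g,n}\to\bV_{g,n}$ (with $\omega = \emptyset$ on the outgoing end), observes that composing with the disk cobordism $\bV_{g,n}\to\bC$ yields the non-degenerate duality pairing on $\bU_{g,n}$, and deduces directly that the spectrum of $(\muu(\Sigma),\mu(\pt))$ on $\bU_{g,n}$ is contained in the spectrum on $\bV_{g,n}$; combined with Lemma~\ref{Lem-UgnWgn-eigenvalues} this immediately gives all the required eigenvectors. Your proposal instead wants a tensor-product decomposition of $\bV_{g,n}'$ itself via iterated non-singular excision, and this is where the gap lies.

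The central unjustified step is the claim that excision along the tubed surface $\Sigma^*$ (or two parallel copies of it) decomposes $\bV_{g,n}'$ ``as a tensor product of Floer homologies of simpler admissible triples of the form $\bU$ or $\bW$.'' Theorem~\ref{thm_nonsingularExcision} provides an isomorphism between $\II(Y,L,\omega|\Sigma)$ and $\II(\widetilde Y,\widetilde L,\tilde\omega|\widetilde\Sigma)$ for a reglued manifold $\widetilde Y$; for this to produce a tensor factorization one needs $\widetilde Y$ to become disconnected into \emph{recognizable} pieces. Cutting $S^1\times\Sigma_{g,n}$ along one or two parallel copies of $\Sigma^*$ does disconnect it, but the resulting pieces carry boundary equal to the genus-$(g+n)$ surface $\Sigma^*$, and after regluing neither piece is of the form $S^1\times\Sigma'$ with a product link, so neither gives a $\bU$ or $\bW$. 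Note also that the only tori of the form $S^1\times u$ in $S^1\times\Sigma_{g,n}$ either miss $\omega'=S^1\times\{q\}$ or meet it non-transversely in a circle, so the torus excisions used in the proof of Lemma~\ref{Lem-UgnWgn-eigenvalues} have no analogue here. You flag ``constructing an excision decomposition that realizes all eigenvalues'' as the main technical obstacle, but it is not merely a bookkeeping issue: there is no such excision decomposition available, which is precisely why the paper avoids excision and uses the cobordism/pairing argument. A secondary issue: the flip-symmetry reduction to $\bV_{g,n}'$ is not needed for the paper's argument, since a cobordism map from $\bU_{g,n}\otimes\bU_{g,n}$ to a triple with $\omega=\emptyset$ does not require a non-integral excision surface, only that each end satisfies the admissibility condition on its own.
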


\begin{proof}
It suffices to show that the spectrum of $(\muu(\Sigma_{g,n}),\mu(\pt))$ on $\bU_{g,n}$ (when $n\ge 2$) and $\bW_{g,n}$ (when $g\ge 1$)
is a subset of the spectrum of $(\muu(\Sigma_{g,n}),\mu(\pt))$ on $\bV_{g,n}$. This follows from the same argument as \cite[Lemma 2.8.2]{Street}. To start, if $n\ge 2$, remove a disk from the interior of $S^1\times[0,1]$ that is disjoint from $\{x_0\}\times[0,1]$, and take the product space of the resulting pair of pants with the triple $(\Sigma_{g,n}, \{p_1,\cdots,p_n\}, c_2)$. This gives a cobordism from $(Y_{g,n},L_{g,n},\omega')\sqcup (Y_{g,n},L_{g,n},\omega')$ to $(Y_{g,n},L_{g,n},\emptyset)$, hence it defines a map from $\bU_{g,n}\otimes\bU_{g,n}$ to $\bV_{g,n}$. There is another map from $\bV_{g,n}$ to $\bC$ defined by the cobordism
$$(D^2\times \Sigma_{g,n} , D^2\times \{p_1,\cdots,p_n\}, \emptyset).$$
The composition of these two maps is a bilinear pairing $\bU_{g,n}\otimes\bU_{g,n}\to \bC$ that is given by the product triple
$$([0,1]\times S^1 \times \Sigma_{g,n}, [0,1]\times S^1 \times\{p_1,\cdots,p_n\}, [0,1]\times\{x_0\}\times c_2)$$
with the orientation of one of the boundary components reversed. 
Therefore, the composition map $\bU_{g,n}\otimes\bU_{g,n}\to \bC$ is a non-degenerate pairing, hence the map $\bU_{g,n}\otimes\bU_{g,n}\to \bV_{g,n}$ is non-degenerate on each simultaneous generalized eigenspace of $(\muu(\Sigma_{g,n}),\mu(\pt))$, which implies that the spectrum of $(\muu(\Sigma_{g,n}),\mu(\pt))$ on $\bU_{g,n}$ is a subset of  its spectrum on $\bV_{g,n}$.
A similar argument works for $\bW_{g,n}$ when $g\ge 1$. 
\end{proof}

\subsection{Multiplicity of the top eigenvalues}
Recall that the ring $\bV_{g,n}$ decomposes as $\bV_{g,n}^+\oplus \bV_{g,n}^-$ by the eigenvalues of $E$, and $\Phi=\Phi^++\Phi^-$ under this decomposition. In the following we will focus our discussion on $\bV_{g,n}^+$, as the case for $\bV_{g,n}^-$ is essentially the same. Since $\bV_{g,n}^+$ is finite dimensional, there exists an integer $k_{g,n}$ such that
$$\ker(\bV_{g,n}^+\xrightarrow{(\Phi^+(\beta)-2)^{k_{g,n}}} \bV_{g,n}^+) = \ker(\bV_{g,n}^+\xrightarrow{(\Phi^+(\beta)-2)^{1+k_{g,n}}} \bV_{g,n}^+).$$
By the definition of $k_{g,n}$, we have
$$
(J^+_{g,n}, (\beta-2)^{k_{g,n}}) = (J^+_{g,n}, (\beta-2)^{k_{g,n}+h})
$$
for every integer $h\ge 0$.

This subsection studies the spectrum of $\muu(\Sigma_{g,n})$ in $\bV_{g,n}^+$ in the generalized eigenspace of $\mu(\pt)$ for the eigenvalue $2$. 

\begin{Lemma}
\label{lem_JVSameEigenvalues}
The set of simultaneous eigenvalues of
$$(\muu(\Sigma_{g,n}),\mu(\pt), \sum_{j}\mu(a_j)\mu(a_{j+g}),\sigma_{p_1},\cdots,\sigma_{p_n})$$ 
in $\bV_{g,n}^+$ is equal to the set of simultaneous eigenvalues of the multiplications by
$(\alpha,\beta,\gamma,\delta_1,\cdots,\delta_n)$
in
$\bC[\alpha,\beta,\gamma,\delta_1,\cdots,\delta_n]/J^+_{g,n}.$
\end{Lemma}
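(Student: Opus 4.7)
The proof plan combines two previously established ingredients: the decomposition
\[
\bV^+_{g,n}\cong \bigoplus_{k=0}^g \Lambda_0^k H_1(\Sigma_g;\bC)\otimes \bC[\alpha,\beta,\gamma,\delta_1,\ldots,\delta_n]/J^+_{g-k,n}
\]
from Proposition~\ref{prop_decompositionOfPhi} and Corollary~\ref{cor_IdealOnlyDependsOng-k}, together with the ideal containments $J^+_{g,n}\subset J^+_{g-1,n}\subset\cdots\subset J^+_{0,n}$ from Lemma~\ref{lem_inclusionOfJIng+1}. Under the isomorphism induced by $\Phi^+$, each of $\mu(\Sigma_{g,n})$, $\mu(\pt)$, $\sum_j\mu(a_j)\mu(a_{j+g})$, and $\sigma_{p_i}$ preserves the decomposition and acts on the $k$-th summand as $\id_{\Lambda_0^k}$ tensored with multiplication by the corresponding generator in $\{\alpha,\beta,\gamma,\delta_i\}$. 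Because $\muu(\Sigma)=\mu(\Sigma)+\tfrac12\sum_i\epsilon_i\sigma_{p_i}$ by \eqref{mu-orb}, the orbifold operator corresponds to multiplication by $\widetilde\alpha:=\alpha+\tfrac12\sum_i\epsilon_i\delta_i$ on the polynomial factor.

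Since each quotient $\bC[\alpha,\beta,\gamma,\delta_1,\ldots,\delta_n]/J^+_{g-k,n}$ is a finite-dimensional commutative $\bC$-algebra, the simultaneous spectrum of a commuting family of multiplication operators on it coincides with the set of closed points of the affine variety $V(J^+_{g-k,n})$. My first step is therefore to conclude that the simultaneous spectrum of the operator tuple on $\bV^+_{g,n}$ equals the union over $k\in\{0,\ldots,g\}$ of $V(J^+_{g-k,n})$, read in the coordinates $(\widetilde\alpha,\beta,\gamma,\delta_i)$. My second step is to observe that the ideal containments imply $V(J^+_{g-k,n})\subset V(J^+_{g,n})$ for every $k\ge 0$, so the union collapses to $V(J^+_{g,n})$.

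The final step is to identify this with the simultaneous spectrum of the multiplications by $(\alpha,\beta,\gamma,\delta_i)$ on $\bC[\alpha,\beta,\gamma,\delta_1,\ldots,\delta_n]/J^+_{g,n}$. The substitution $\widetilde\alpha\leftrightarrow\alpha$ is a linear automorphism of $\bA^{n+3}$, and the flip-symmetry invariance of $\muu$ from Section~\ref{subsection-flip} translates into an invariance of $V(J^+_{g,n})$ under this automorphism, giving the desired equality. The main obstacle I anticipate is precisely this final identification: ensuring that the flip-symmetry structure on $\bA_{g,n}$ produces a genuine equality of sets rather than merely a bijection between them; the earlier steps are formal consequences of the previously established decomposition and ideal-inclusion results.
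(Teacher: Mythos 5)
Your first two steps exactly reproduce the paper's argument: the paper also writes the simultaneous spectrum via the decomposition \eqref{Vgn-decom2} as a union of $V(J^+_{g-k,n})$, and collapses the union to $V(J^+_{g,n})$ using the ideal inclusions of Lemma \ref{lem_inclusionOfJIng+1}. You are also right to notice that this establishes the claim for the tuple $(\mu(\Sigma_{g,n}),\mu(\pt),\sum_j\mu(a_j)\mu(a_{j+g}),\sigma_{p_1},\ldots,\sigma_{p_n})$ — since $\Phi(\alpha)=\mu(\Sigma)(e)$ — whereas the lemma is phrased with $\muu(\Sigma_{g,n})$, which under $\Phi^+$ corresponds to $\widetilde\alpha=\alpha+\tfrac12\sum_i\epsilon_i\delta_i$. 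This is a real subtlety, and the paper's proof does not address it either (it treats the two operators as interchangeable).

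However, your proposed resolution of the final step via flip symmetry does not work. The flip automorphism $\sigma_S$ (for $|S|$ even) preserves $J^+_{g,n}$, and on $V(J^+_{g,n})$ this gives invariance under $(a,b,c,d_1,\ldots,d_n)\mapsto(a+\sum_{i\in S}d_i,b,c,d'_1,\ldots,d'_n)$ with $d'_i=-d_i$ for $i\in S$ and $d'_i=d_i$ otherwise. Every such map, and every composite of them, negates an even number of $\delta$-coordinates while shifting $a$; by contrast the coordinate change $L^*:(a,b,c,d_i)\mapsto(a+\tfrac12\sum_i\epsilon_i d_i,b,c,d_i)$ needed for the final identification fixes all $\delta$-coordinates. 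So $L^*$ is not in the group generated by the $\sigma_S^*$, and flip-symmetry invariance alone does not show $L^*(V(J^+_{g,n}))=V(J^+_{g,n})$. The quantity $\widetilde\alpha$ being flip-invariant is precisely why $\muu$ was introduced, but that invariance says nothing about whether the set of $\widetilde\alpha$-eigenvalues coincides with the set of $\alpha$-eigenvalues. To close the gap as you propose, you would need an additional input. Note, though, that in every place this lemma is subsequently invoked (Corollary \ref{cor_JVSameEigenvalues} and Proposition \ref{lem_AllEigenvalues} onward) one works in the generalized eigenspace of $\mu(\pt)$ for eigenvalue $2$, where $\sigma_{p_i}^2=2-\mu(\pt)$ forces every $d_i=0$; there $L^*$ is the identity, $\muu$ and $\mu$ have the same spectrum, and the distinction evaporates — this is made explicit in the proof of Proposition \ref{lem_AllEigenvalues}.
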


\begin{proof}
By Lemma \ref{lem_inclusionOfJIng+1}, every vanishing polynomial of $\alpha,\beta,\gamma,\delta_1,\cdots,\delta_n$ in 
\begin{equation}\label{eqn_decompositionRingJgn+1}
\bC[\alpha,\beta,\gamma,\delta_1,\cdots,\delta_n]/J^+_{g+1,n}
\end{equation}
 is a vanishing polynomial in
 \begin{equation}\label{eqn_decompositionRingJgn}
\bC[\alpha,\beta,\gamma,\delta_1,\cdots,\delta_n]/J^+_{g,n},
\end{equation}
hence the spectrum of $(\alpha,\beta,\gamma,\delta_1,\cdots,\delta_n)$ in \eqref{eqn_decompositionRingJgn} is a subset of its spectrum in \eqref{eqn_decompositionRingJgn+1}. The result then follows from the decomposition \eqref{Vgn-decom2}.
\end{proof}

\begin{Corollary}
\label{cor_JVSameEigenvalues}
The set of simultaneous eigenvalues of
$$(\muu(\Sigma_{g,n}),\sum_{j}\mu(a_j)\mu(a_{j+g}),\sigma_{p_1},\cdots,\sigma_{p_n})$$ 
on $\bV_{g,n}^+$ in the generalized eigenspace of $\mu(\pt)$ for the eigenvalue $2$ is equal to the set of simultaneous eigenvalues of the multiplications by
$(\alpha,\gamma,\delta_1,\cdots,\delta_n)$
in
$\bC[\alpha,\beta,\gamma,\delta_1,\cdots,\delta_n]/(J^+_{g,n}, (\beta-2)^{k_{g,n}}).$
\end{Corollary}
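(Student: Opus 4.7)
The plan is to deduce this corollary from Lemma~\ref{lem_JVSameEigenvalues} by restricting both sides to the locus where $\mu(\pt)$ (resp.~multiplication by $\beta$) has generalized eigenvalue $2$. The key observation is that the generalized eigenspace decomposition for a finite-dimensional commutative algebra is controlled by its maximal spectrum, so fixing one coordinate in the simultaneous eigenvalues corresponds cleanly to a quotient operation on each side.

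First, since the $n+3$ operators
$\muu(\Sigma_{g,n}), \mu(\pt), \sum_{j}\mu(a_j)\mu(a_{j+g}), \sigma_{p_1},\ldots,\sigma_{p_n}$
pairwise commute on the finite-dimensional space $\bV_{g,n}^+$, they admit a simultaneous generalized eigenspace decomposition. The generalized $\mu(\pt)$-eigenspace for the eigenvalue $2$ is the sum of those pieces whose $\mu(\pt)$-coordinate equals $2$, and therefore the set of simultaneous eigenvalues of the remaining $n+2$ operators on this subspace is precisely the set of $(n+2)$-tuples $(\lambda_1,\lambda_3,\mu_1,\ldots,\mu_n)$ such that $(\lambda_1, 2, \lambda_3, \mu_1,\ldots,\mu_n)$ is a simultaneous eigenvalue of the full tuple on $\bV_{g,n}^+$.

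On the algebraic side, set $R := \bC[\alpha,\beta,\gamma,\delta_1,\ldots,\delta_n]$. The simultaneous eigenvalues of multiplication by $(\alpha,\beta,\gamma,\delta_1,\ldots,\delta_n)$ on the finite-dimensional commutative $\bC$-algebra $R/J_{g,n}^+$ are, by the Artinian structure theorem, exactly the tuples of values at its maximal ideals. The maximal ideals on which $\beta$ takes the value $2$ are those containing $\beta - 2$; since $\beta - 2$ and $(\beta - 2)^{k_{g,n}}$ generate ideals with the same radical, these maximal ideals coincide with the maximal ideals of $R/(J_{g,n}^+, (\beta - 2)^{k_{g,n}})$. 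Consequently, the set of $(n+2)$-tuples that appear as $(\lambda_1, \lambda_3, \mu_1, \ldots, \mu_n)$ with $\beta$-coordinate $2$ in the simultaneous eigenvalues on $R/J_{g,n}^+$ coincides with the set of simultaneous eigenvalues of multiplication by $(\alpha, \gamma, \delta_1,\ldots,\delta_n)$ on $R/(J_{g,n}^+, (\beta - 2)^{k_{g,n}})$.

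Combining these two observations with Lemma~\ref{lem_JVSameEigenvalues} yields the claimed equality. I do not expect any real obstacle: the only subtle point is the identification of the maximal-ideal sets of $R/(J_{g,n}^+,(\beta-2)^{k_{g,n}})$ and $R/(J_{g,n}^+, \beta - 2)$, which is a standard radical-ideal consideration and does not require the specific value of $k_{g,n}$. (The particular choice $k_{g,n}$, whose virtue is the stability relation $(J^+_{g,n}, (\beta-2)^{k_{g,n}}) = (J^+_{g,n}, (\beta-2)^{k_{g,n}+h})$ recorded before the corollary, will become essential only for finer structural uses of the quotient $R/(J_{g,n}^+,(\beta-2)^{k_{g,n}})$ in the sequel.)
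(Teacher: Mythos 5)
Your proof is correct. The paper states this as a Corollary of Lemma~\ref{lem_JVSameEigenvalues} without an explicit argument, and the intended deduction is exactly the one you give: restricting both sides to the locus $\beta=2$, using the standard fact that fixing one coordinate in a simultaneous generalized eigenvalue corresponds on the algebra side to passing to the quotient by a power of $\beta-2$, whose maximal spectrum is unchanged if the power is replaced by $1$. Your parenthetical remark about the role of $k_{g,n}$ is also accurate: any positive exponent gives the same eigenvalue set, and the specific stabilization property of $k_{g,n}$ matters only for the finer module-theoretic uses of $\bC[\alpha,\beta,\gamma,\delta_1,\ldots,\delta_n]/(J^+_{g,n},(\beta-2)^{k_{g,n}})$ later in the section.
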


\begin{Proposition} \label{lem_AllEigenvalues}
The set of simultaneous eigenvalues of
$$(\muu(\Sigma_{g,n}),\sum_{j}\mu(a_j)\mu(a_{j+g}),\sigma_{p_1},\cdots,\sigma_{p_n})$$ 
on $\bV_{g,n} = \bV_{g,n}^+\oplus \bV_{g,n}^-$ in the generalized eigenspace of $\mu(\pt)$ for the eigenvalue $2$ is given by 
$(\lambda, 0, \cdots, 0)$ for $\lambda\in \{-(2g+n-2),-(2g+n-4),\cdots, 2g+n-4,2g+n-2 \}$. Moreover, half of 
the eigenvalues are from $\bV_{g+n}^+$, the other half comes from  $\bV_{g+n}^-$.
\end{Proposition}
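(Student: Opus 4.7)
The plan is to combine three ingredients: nilpotency of $\sigma_{p_i}$ and of $\gamma := \sum_{j}\mu(a_j)\mu(a_{j+g})$ on the $\mu(\pt)=2$ generalized eigenspace, a degree bound coming from the Mumford polynomial of Proposition \ref{prop_MumfordRelations}, and the existence of eigenvectors from Lemma \ref{lem_ExistenceOfEigenvectors}. Throughout I write $m:=(n-1)/2$, so that the claimed spectrum has $2g+n-1=2(g+m)$ elements.

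First I would show that on the generalized $\mu(\pt)=2$ eigenspace both $\sigma_{p_i}$ and $\gamma$ act nilpotently. For $\sigma_{p_i}$ this is immediate from \eqref{delta^2+beta}, which gives $\sigma_{p_i}^2 = 2\cdot \id - \mu(\pt)$, a nilpotent operator on this eigenspace. For $\gamma$, expanding $\gamma^{g+1}=\bigl(\sum_{j=1}^g\psi_j\psi_{j+g}\bigr)^{g+1}$ in $\bA_{g,n}$ shows that every term contains a repeated $\psi_i$ by the pigeonhole principle and therefore vanishes by anticommutativity; thus $\gamma^{g+1}=0$ identically in $\bA_{g,n}$, which forces $\gamma^{g+1}\in J^{\pm}_{g,n}$ and makes $\gamma$ a nilpotent element of $\bC[\alpha,\beta,\gamma,\delta_1,\ldots,\delta_n]/J^{\pm}_{g,n}$. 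Consequently, on the $\mu(\pt)=2$ generalized eigenspace of $\bV^{\pm}_{g,n}$, every simultaneous eigenvalue of $(\gamma,\sigma_{p_1},\ldots,\sigma_{p_n})$ must be $(0,0,\ldots,0)$; in particular the $\sigma_{p_i}$-summands in $\muu(\Sigma)=\mu(\Sigma)+\tfrac12\sum\pm\sigma_{p_i}$ contribute nothing, so the $\muu$-eigenvalue equals the $\mu(\Sigma)$-eigenvalue.

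Next I would apply Corollary \ref{cor_JVSameEigenvalues} to transport the problem to the ring $\bC[\alpha,\beta,\gamma,\delta_1,\ldots,\delta_n]/(J^{\pm}_{g,n},(\beta-2)^{k_{g,n}})$, on which every simultaneous eigenvalue now has the form $(\alpha_0,0,0,\ldots,0)$. Proposition \ref{prop_MumfordRelations} provides a polynomial $f\in I^d_{g,n,0}$ of degree $2(g+m)$ with leading term $\alpha^{g+m}$ of coefficient $1$, and Corollary \ref{deform-vanishing-polynomial} lifts this to $\tilde f^{\pm}\in J^{\pm}_{g,n}$ differing from $f$ only by terms of strictly lower total degree. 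Setting $\beta=2$, $\gamma=0$, $\delta_i=0$ turns $\tilde f^{\pm}$ into a monic polynomial in $\alpha$ of degree exactly $g+m$; evaluating on a simultaneous generalized eigenvector forces $\alpha_0$ to be one of its at most $g+m$ roots. This upper bound applies independently to $\bV^{+}_{g,n}$ and $\bV^{-}_{g,n}$.

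Finally, Lemma \ref{lem_ExistenceOfEigenvectors} produces a nonzero simultaneous $(\muu,\mu(\pt))$-eigenvector of $\bV_{g,n}$ with eigenvalue $(\lambda,2)$ for every $\lambda$ in the $2(g+m)$-element set $\{-(2g+n-2),-(2g+n-4),\ldots,2g+n-2\}$. Because $E$ commutes with $\muu$ and squares to the identity, each such eigenvector splits into nonzero components in $\bV^{+}_{g,n}$ or $\bV^{-}_{g,n}$ with the same $\muu$-eigenvalue. Therefore the union of the $\muu$-spectra of $\bV^{+}_{g,n}$ and $\bV^{-}_{g,n}$ in the $\mu(\pt)=2$ eigenspace has at least $2(g+m)$ elements, while each spectrum has at most $g+m$ by the previous paragraph. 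This forces both spectra to have exactly $g+m$ distinct values and to be disjoint, with union equal to the claimed set; that simultaneously gives the description of the full simultaneous spectrum and the half-half decomposition. I expect the key difficulty to be the upper bound via Mumford: one must locate a universal polynomial relation in $J^{\pm}_{g,n}$ of the correct degree whose specialization retains the correct leading coefficient, and this is exactly what Proposition \ref{prop_MumfordRelations} together with Corollary \ref{deform-vanishing-polynomial} provides.
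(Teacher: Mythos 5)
Your proof is correct and follows essentially the same route as the paper: nilpotency of $\sigma_{p_i}$ via \eqref{delta^2+beta} and of $\gamma$, the Mumford relation from Proposition \ref{prop_MumfordRelations} combined with Corollary \ref{deform-vanishing-polynomial} to bound each of $\bV^{\pm}_{g,n}$ by $g+m$ eigenvalues, and Lemma \ref{lem_ExistenceOfEigenvectors} for the lower bound. The only difference is that you establish nilpotency of $\gamma$ by observing $\gamma^{g+1}=0$ in $\bA_{g,n}$ directly via pigeonhole and anticommutativity (hence $\gamma^{g+1}\in J^{\pm}_{g,n}$), whereas the paper cites Lemma \ref{lem_multiplyGamma} together with $\Phi(\gamma)=0\in\bV_{0,n}$; your shortcut is valid and slightly more self-contained, but does not change the substance of the argument.
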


\begin{proof}
By Lemma \ref{lem_multiplyGamma} and the observation that $\Phi(\gamma)=0\in \bV_{0,n}$, the element $\Phi^\pm(\gamma)$ is nilpotent in $\bV_{g,n}^\pm$. Therefore the eigenvalue of $\sum_{j}\mu(a_j)\mu(a_{j+g})$, which equals the multiplication by $\Phi(\gamma)$, is always zero. \

By \eqref{delta^2+beta} we have $\mu(\pt) + \sigma_{p_i}^2= 2\id $, hence the eigenvalue of $\sigma_{p_i}$ in the generalized eigenspace of $\mu(\pt)$ for the eigenvalue $2$ is always zero. Therefore, the spectra of $\muu(\Sigma_{g,n})$ and $\mu(\Sigma_{g,n})$ are the same in the generalized eigenspace of $\mu(\pt)$ for the eigenvalue $2$.

By Corollary \ref{deform-vanishing-polynomial} and Proposition \ref{prop_MumfordRelations}, there exists a polynomial
$$f^+(\alpha,\beta,\gamma,\delta_1,\cdots,\delta_n)\in J_{g,n}^+,$$ 
such that $\deg f^+=2(g+m)$ and the coefficient of $\alpha^{g+m}$ in $f^\pm$ is $1$. Since in the generalized eigenspace of $\mu(\pt)$ for the eigenvalue $2$, the only eigenvalue of the multiplications by $\Phi^+(\gamma)$ and $\Phi^+(\delta_i)$ is zero,  the multiplication by $\alpha$ has at most $(g+m)$ different eigenvalues in
\begin{equation*}
\bC[\alpha,\beta,\gamma,\delta_1,\cdots,\delta_n]/(J^+_{g,n}, (\beta-2)^{k_{g,n}}).
\end{equation*}
By Corollary \ref{cor_JVSameEigenvalues}, we conclude that in the generalized eigenspace of $\mu(\pt)$ for the eigenvalue $2$, the operator $\muu(\Sigma_{g,n})$ has at most $(g+m)$ different eigenvalues in $\bV_{g,n}^+$. A similar result holds for $\bV_{g,n}^-$. The result then follows from Lemma \ref{lem_ExistenceOfEigenvectors}.
\end{proof}

\begin{Lemma} \label{lem_eigenvaluesOfAlphaInV^+}
There exists a sequence of integers $\lambda_1,\lambda_2,\cdots,$ such that
\begin{itemize}
\item $|\lambda_i|=2i-1$ for each $i$.
\item For each pair $(g,n)$ with $n=2m+1$, The set of simultaneous eigenvalues of 
$$(\muu(\Sigma_{g,n}),\sum_{j}\mu(a_j)\mu(a_{j+g}),\sigma_{p_1},\cdots,\sigma_{p_n})$$ 
in $\bV_{g,n}^+$ in the generalized eigenspace of $\mu(\pt)$ for the eigenvalue $2$ is given by 
$(\lambda, 0, \cdots, 0)$ for $\lambda=\lambda_1,\cdots,\lambda_{g+m}$
\end{itemize}
\end{Lemma}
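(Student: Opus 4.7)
The plan is to determine the sets $S(g,n)^\pm \subset \bZ$ of $\alpha$-eigenvalues (where $\alpha := \muu(\Sigma_{g,n})$) on $\bV_{g,n}^\pm$ in the generalized $\mu(\pt)=2$ eigenspace by induction, with a $\bZ/4$-grading symmetry as the central structural input. By Corollary \ref{cor_JVSameEigenvalues} together with the nilpotence of $\gamma$ and of each $\sigma_{p_i}$ in this eigenspace (observed in the proof of Proposition \ref{lem_AllEigenvalues}), every simultaneous eigenvalue has the form $(\lambda,0,\ldots,0)$, so only the $\alpha$-eigenvalues matter. By Proposition \ref{lem_AllEigenvalues}, $S(g,n)^+ \sqcup S(g,n)^-$ equals $\{\pm 1,\pm 3,\ldots,\pm(2(g+m)-1)\}$ with $|S(g,n)^\pm|=g+m$.

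\emph{Key symmetry $S(g,n)^- = -S(g,n)^+$.} The operator $E$ has degree $2$ in the $\bZ/4$-grading, satisfies $E^2=\id$, and commutes with $\alpha$ because the class $[\{\pt\}\times \Sigma]$ is homologous through the product cobordism that defines $E$. Writing $\bV_{g,n} = \bigoplus_{i\in\bZ/4} V_i$ for the graded decomposition, the condition $Ev=\pm v$ forces the degree-$(i+2)$ component to be $\pm E$ of the degree-$i$ component. Hence the projection $v\mapsto (v_0,v_1)$ onto the degree-$0$ and degree-$1$ parts gives linear isomorphisms $\bV_{g,n}^\pm \cong V_0 \oplus V_1$ with inverses $(v_0,v_1)\mapsto v_0+v_1\pm Ev_0\pm Ev_1$. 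Under these isomorphisms, $\alpha|_{\bV^+}$ corresponds to $E\alpha \oplus E\alpha$ and $\alpha|_{\bV^-}$ corresponds to $-E\alpha \oplus -E\alpha$, so their spectra are negatives of one another. Combined with the full-spectrum decomposition, this forces $S(g,n)^+$ to contain exactly one element of each pair $\{+(2i-1),-(2i-1)\}$ for $1\le i\le g+m$.

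\emph{Induction on $g$.} The injective $\bA$-module maps $U:\bV_{g,n-2}^+\hookrightarrow \bV_{g,n}^+$ (from Lemma \ref{lem_inclusionWhenIncreaseN}) and $f:\bV_{g,n}^+\hookrightarrow \bV_{g+1,n}^+$ (from the cobordism at the start of Section \ref{sec_cobordism}) both preserve $\bV^\pm$, since their defining cobordisms are compatible with the one defining $E$. In the base case $g=0$, define $\lambda_i$ iteratively as the unique element of $S(0,2i+1)^+ \setminus S(0,2i-1)^+$, a singleton by the cardinality count, and the key symmetry gives $|\lambda_i|=2i-1$. For the inductive step $g\to g+1$, the inclusion $S(g,n)^+\subset S(g+1,n)^+$ combined with the induction hypothesis yields $S(g+1,n)^+ = \{\lambda_1,\ldots,\lambda_{g+m}\}\sqcup\{\nu\}$, where necessarily $|\nu|=2g+n$.

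To identify $\nu=\lambda_{g+m+1}$, pass to the ambient space $\bV_{g+1,2g+n+2}^+$: iterating $U$ places $\nu \in S(g+1,n)^+\subset S(g+1,2g+n+2)^+$, while iterating $f$ from the base case places $\lambda_{g+m+1}\in S(0,2g+n+2)^+ \subset S(g+1,2g+n+2)^+$. Both have absolute value $2g+n$, but the key symmetry applied to this ambient space permits at most one of $\pm(2g+n)$ in the set, forcing $\nu=\lambda_{g+m+1}$ and completing the induction. The most delicate step is the $\bZ/4$-grading argument, where the two commuting degree-$2$ operators $E$ and $\alpha$ interact in a way that must be tracked carefully through the parameterization of $\bV^\pm$.
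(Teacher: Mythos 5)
Your proposal is correct and reaches the conclusion by essentially the same induction as the paper, but with one structural variation worth noting. The paper's terse proof establishes only the inclusions $S(g,n)^{\pm}\subset S(g,n+2)^{\pm}$ and $S(g,n)^{\pm}\subset S(g+1,n)^{\pm}$ and then cites Proposition~\ref{lem_AllEigenvalues}; the new eigenvalue is implicitly pinned down by tracking $\bV^+$ and $\bV^-$ in parallel and using the fact that $S^+\sqcup S^-$ fills out $\{\pm 1,\dots,\pm(2(g+m)-1)\}$ disjointly. You instead derive the standalone symmetry $S(g,n)^-=-S(g,n)^+$ from the $\bZ/4$-grading, using that $E$ and $\muu(\Sigma_{g,n})$ are commuting degree-$2$ multiplication operators with $E^2=\id$, and then track only $\bV^+$. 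Your $\bZ/4$-grading argument checks out (the parametrization $\bV^{\pm}\cong V_0\oplus V_1$, under which $\muu(\Sigma)$ becomes $\pm E\muu(\Sigma)$, is correct), and it is the same flavor of reasoning the paper uses in Section~\ref{subsec_Vgn} to prove $\dim\bV^+_{g,n}=\dim\bV^-_{g,n}$ in \eqref{eqn_VgnpmHaveSameDim}, so it is a natural strengthening rather than a foreign idea. The trade-off: your version makes the $\pm$ symmetry visible upfront at the cost of a slightly longer setup, while the paper's version avoids the explicit symmetry statement but requires quietly carrying $S^-$ through the induction. Two small cosmetic points: you should note that $\muu(\Sigma)$ rather than $\mu(\Sigma)$ is the degree-$2$ operator you feed into the grading argument (it is, since $\sigma_{p_i}$ also has degree $2$), and the base of the induction at $(g,n)=(1,1)$ rests on the degenerate fact $S(0,1)^+=\emptyset$ coming from $\bV_{0,1}=0$; both points are easily supplied and do not affect correctness.
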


\begin{proof}
By Lemma \ref{lem_inclusionWhenIncreaseN}, every vanishing polynomial of $\alpha$ in 
$$
\bC[\alpha,\beta,\gamma,\delta_1,\cdots,\delta_{n+2}]/(J^+_{g,n+2}, (\beta-2)^{k_{g,n+2}})
$$ 
is vanishing in  
$$
\bC[\alpha,\beta,\gamma,\delta_1,\cdots,\delta_{n}]/(J^+_{g,n}, (\beta-2)^{k_{g,n}}).
$$ 
Therefore by Corollary \ref{cor_JVSameEigenvalues},  in the generalized eigenspaces of $\mu(\pt)$ for the eigenvalue $2$, the spectrum of 
$\muu(\Sigma_{g,n})$ in $\bV_{g,n}^+$ is a subset of the spectrum of $\muu(\Sigma_{g,n+2})$ in $\bV_{g,n+2}^+$. It also follows from \eqref{Vgn-decom2} that in the generalized eigenspaces of $\mu(\pt)$ for the eigenvalue $2$, the spectrum of 
$\muu(\Sigma_{g,n})$ in $\bV_{g,n}^+$ is a subset of the spectrum of $\muu(\Sigma_{g+1,n})$ in $\bV_{g+1,n}^+$.
 The result then follows from Proposition \ref{lem_AllEigenvalues} and induction on $g$ and $n$.
\end{proof}

For the rest of this section, let $\lambda_1,\lambda_2,\cdots,$ be the sequence of eigenvalues  given by Lemma \ref{lem_eigenvaluesOfAlphaInV^+}.

\begin{Corollary} \label{cor_P(alpha)}
Suppose $n=2m+1$.
There exists a positive integer $N_{g,n}$ such that 
$$
P_{g,n}(\alpha):=\prod_{i=1}^{g+m} (\alpha - \lambda_i)^{N_{g,n}} \in (J^+_{g,n}, (\beta-2)^{k_{g,n}}).
$$
\end{Corollary}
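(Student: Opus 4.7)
The plan is to deduce the statement from Lemma \ref{lem_eigenvaluesOfAlphaInV^+} and Corollary \ref{cor_JVSameEigenvalues} by a purely algebraic argument on the finite-dimensional quotient ring
\[
R:=\bC[\alpha,\beta,\gamma,\delta_1,\cdots,\delta_n]/(J^+_{g,n},(\beta-2)^{k_{g,n}}).
\]

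First I would observe that $R$ is finite-dimensional over $\bC$. Indeed, by the decomposition \eqref{Vgn-decom2}, the $k=0$ summand of $\bV_{g,n}^+$ is isomorphic to $\bC[\alpha,\beta,\gamma,\delta_1,\cdots,\delta_n]/J^+_{g,n}$, and this is a quotient of the finite-dimensional vector space $\bV_{g,n}^+$; further quotienting by $(\beta-2)^{k_{g,n}}$ yields $R$, which is therefore also finite-dimensional.

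Next, multiplication by $\alpha$ defines a $\bC$-linear endomorphism $M_\alpha$ of $R$. By Corollary \ref{cor_JVSameEigenvalues}, the spectrum of $M_\alpha$ coincides with the spectrum of $\muu(\Sigma_{g,n})$ on the generalized eigenspace of $\mu(\pt)$ for the eigenvalue $2$ in $\bV_{g,n}^+$, which by Lemma \ref{lem_eigenvaluesOfAlphaInV^+} is contained in $\{\lambda_1,\ldots,\lambda_{g+m}\}$. Since $R$ is finite-dimensional, all eigenvalues of $M_\alpha$ lie in $\{\lambda_1,\ldots,\lambda_{g+m}\}$, so by the Cayley--Hamilton theorem (or equivalently, the generalized eigenspace decomposition) there exists a positive integer $N_{g,n}$ such that
\[
\prod_{i=1}^{g+m}(M_\alpha-\lambda_i\,\id)^{N_{g,n}}=0
\]
as an operator on $R$. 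Applying this operator to the element $1\in R$ gives $P_{g,n}(\alpha)=0$ in $R$, i.e.\ $P_{g,n}(\alpha)\in(J^+_{g,n},(\beta-2)^{k_{g,n}})$, as required.

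The argument is essentially routine linear algebra once the spectral information from Lemma \ref{lem_eigenvaluesOfAlphaInV^+} is in hand; there is no significant obstacle, and the main point is simply to package the eigenvalue constraint on $\alpha$ into a polynomial identity via finite-dimensionality of $R$.
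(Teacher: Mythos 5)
Your proof is correct and supplies exactly the natural reasoning that the paper leaves implicit by stating this as a corollary without a separate proof: Corollary \ref{cor_JVSameEigenvalues} identifies the spectrum of multiplication by $\alpha$ on the finite-dimensional quotient $R$ with the $\muu(\Sigma_{g,n})$-spectrum described in Lemma \ref{lem_eigenvaluesOfAlphaInV^+} (noting that every eigenvalue of $\alpha$ alone appears as the first coordinate of a simultaneous eigenvalue of the commuting tuple), and then finite-dimensionality plus Cayley--Hamilton applied to $1\in R$ gives the required ideal membership. One minor quibble: $\bC[\alpha,\beta,\gamma,\delta_1,\dots,\delta_n]/J^+_{g,n}$ is the $k=0$ \emph{direct summand} of $\bV_{g,n}^+$ in \eqref{Vgn-decom2} rather than just ``a quotient,'' though of course a direct summand is a quotient via the projection, so the conclusion of finite-dimensionality is unaffected.
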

For notational convenience, we define $P_{-1,n}(\alpha):=1$.
\begin{Lemma}\label{lem_Q(alpha)}
Suppose $n=2m+1$, we have
$$
Q_{g,n}(\alpha):=\prod_{i=1}^{g+m} (\alpha-\lambda_i) \in (J^+_{g,n}, \beta-2,\delta_1,\cdots,\delta_n,\gamma).
$$
\end{Lemma}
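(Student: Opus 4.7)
The plan is to study the finite-dimensional commutative $\bC$-algebra
$$R := \bC[\alpha,\beta,\gamma,\delta_1,\ldots,\delta_n]/(J_{g,n}^+, \beta-2, \gamma, \delta_1,\ldots,\delta_n)$$
and establish matching dimension bounds $\dim_\bC R = g+m$. This sandwich will force $R$ to split as a product of $g+m$ copies of $\bC$, one for each eigenvalue $\lambda_i$, with $\alpha$ acting by $\lambda_i$ in the $i$-th factor, so $Q_{g,n}(\alpha)$ vanishes in $R$ and the lemma follows.

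For the upper bound, Proposition~\ref{prop_MumfordRelations} (applied with $d=0$) produces a polynomial $f \in I_{g,n,0}^0$ of degree $2(g+m)$ whose coefficient of $\alpha^{g+m}$ equals $1$. Since $\Psi(f)=0$, Corollary~\ref{deform-vanishing-polynomial} provides a deformation $\tilde f \in \ker\Phi^+ \subset \bA_{g,n}$ differing from $f$ only by strictly lower-degree terms. Projecting $\tilde f$ onto its $k=0$ component via the $\Sp(2g,\bZ)$-equivariant decomposition of Proposition~\ref{prop_decompositionOfPhi} yields an element $\tilde f_0 \in J_{g,n}^+$ whose top-degree piece remains $\alpha^{g+m}$. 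Reducing $\tilde f_0$ modulo $(\beta-2,\gamma,\delta_1,\ldots,\delta_n)$ gives a monic polynomial of degree $g+m$ in $\alpha$ vanishing in $R$; hence $\dim_\bC R \le g+m$.

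For the lower bound, decompose $\bV_{g,n}^+$ into its local-ring summands $\bigoplus_\mathfrak{m}(\bV_{g,n}^+)_\mathfrak{m}$. Maximal ideals $\mathfrak{m}$ are in bijection with simultaneous eigenvalue tuples of the commuting multiplications by $\alpha,\beta,\gamma,\delta_1,\ldots,\delta_n$, and only those tuples with $\beta$-eigenvalue $2$ and $\gamma,\delta_j$-eigenvalues all $0$ survive in $R$ (otherwise one of $\beta-2,\gamma,\delta_j$ is a unit in the local ring, killing the contribution). By Lemma~\ref{lem_eigenvaluesOfAlphaInV^+} there are exactly $g+m$ such maximal ideals $\mathfrak{m}_1,\ldots,\mathfrak{m}_{g+m}$, corresponding to the distinct $\alpha$-eigenvalues $\lambda_1,\ldots,\lambda_{g+m}$. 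At each $\mathfrak{m}_i$ the ideal $(\beta-2,\gamma,\delta_j)$ lies inside the maximal ideal of $(\bV_{g,n}^+)_{\mathfrak{m}_i}$, so the quotient $R_i$ surjects onto its residue field $\bC$; thus $\dim_\bC R_i\ge 1$ for every $i$ and $\dim_\bC R\ge g+m$.

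Combining the two bounds forces $\dim R_i = 1$ for each $i$, so $R_i\cong \bC$ with $\alpha$ acting as $\lambda_i$; therefore $Q_{g,n}(\alpha)$ vanishes in every $R_i$ and hence in $R$, which is exactly the required containment. The main technical step (and the only non-formal point in the argument) will be verifying that the deformed polynomial $\tilde f$ from Corollary~\ref{deform-vanishing-polynomial} can be projected to the subring $\bC[\alpha,\beta,\gamma,\delta_1,\ldots,\delta_n]$ without losing its leading term $\alpha^{g+m}$; this relies on the $\Sp(2g,\bZ)$-equivariance built into the decomposition of $\ker\Phi^+$ in Proposition~\ref{prop_decompositionOfPhi}.
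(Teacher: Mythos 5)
Your proof is correct and rests on the same pillars as the paper's: the Mumford relation from Proposition~\ref{prop_MumfordRelations} (deformed via Corollary~\ref{deform-vanishing-polynomial} and projected onto the $k=0$ piece using Proposition~\ref{prop_decompositionOfPhi}) gives a monic polynomial of degree $g+m$ in $\alpha$ inside the ideal, and Lemma~\ref{lem_eigenvaluesOfAlphaInV^+} supplies the $g+m$ distinct eigenvalues $\lambda_1,\ldots,\lambda_{g+m}$. The difference is the endgame: the paper observes directly that a monic degree-$(g+m)$ polynomial whose roots include the $g+m$ distinct values $\lambda_i$ must equal $Q_{g,n}$, whereas you take a detour through a two-sided dimension bound on the quotient ring $R$ to conclude $R\cong\bC^{g+m}$. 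Your route is somewhat longer — the lower-bound half of the sandwich is not actually needed, since the paper's argument requires only that the $\lambda_i$ appear among the roots — and it contains a small imprecision: you decompose $\bV_{g,n}^+$ into local-ring summands indexed by eigenvalue tuples of $(\alpha,\beta,\gamma,\delta_j)$, but $\bV_{g,n}^+$ is also generated by the odd-degree classes $\Phi^+(\psi_i)$, so one should either decompose the honest commutative subring $\bC[\alpha,\beta,\gamma,\delta_1,\ldots,\delta_n]/J_{g,n}^+$ instead, or note separately that the $\Phi^+(\psi_i)$ are nilpotent and hence carry no extra eigenvalue data. On the positive side, you make the $k=0$ projection step explicit, which the paper leaves implicit; that is worth spelling out, since the deformation $\tilde f$ produced by Corollary~\ref{deform-vanishing-polynomial} a priori involves the $\psi_i$.
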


\begin{proof}
By Corollary \ref{deform-vanishing-polynomial} and Proposition \ref{prop_MumfordRelations}, there exists a monic polynomial $f_{g,n}\in\bC[\alpha]$ with leading term $\alpha^{g+m}$ in the ideal $(J^+_{g,n}, \beta-2,\delta_1,\cdots,\delta_n,\gamma)$. By Lemma \ref{lem_JVSameEigenvalues}, every eigenvalue of $\muu(\Sigma_{g,n})$ in the simultaneous generalized eigenspace of 
$$(\mu(\pt),\sum_{j}\mu(a_j)\mu(a_{j+g}),\sigma_{p_1},\cdots,\sigma_{p_n})$$ for the eigenvalues $(2,0,\cdots,0)$ in $\bV_{g,n}^+$ is a root of $f_{g,n}(\alpha)$. By Lemma \ref{lem_eigenvaluesOfAlphaInV^+}, 
$$f_{g,n}(\alpha)=\prod_{i=1}^{g+m} (\alpha-\lambda_i) . $$
\end{proof}

\begin{Lemma}
\label{lem_PolynomialHgn}
Let $q_{g,n}\in\{1,-1\}$, $c_{g,n}\in \bC$ be the constants given by Lemma \ref{lem_multiplyDelta}.
There exists a positive integer $M_{g,n}$, such that
$$
H_{g,n}(\alpha):= \prod_{i=1}^{g+m} (\alpha-\lambda_i)^{M_{g,n}}
$$
satisfies the following property. For every $S\subset\{1,\cdots,n\}$, we have
$$
H_{g,n}(\alpha+\sum_{i\in S} \delta_i)(\delta_{n+1}+q_{g,n}\,\delta_{n+2}+c_{g,n}) \in (J_{g,n+2}^+, (\beta-2)^{k_{g,n+2}}).
$$
\end{Lemma}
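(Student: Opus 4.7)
The plan is to reduce Lemma \ref{lem_PolynomialHgn} to an eigenvalue statement inside $\bV_{g,n}^+$, and then exploit the nilpotency of $\Phi^+(\delta_i)$ on the generalized $\mu(\pt)=2$ eigenspace to control the spectrum uniformly in $S$.

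For the reduction, two ingredients are key. First, by the definition of $k_{g,n+2}$, an element $h\in \bA_{g,n+2}$ lies in $(J_{g,n+2}^+,(\beta-2)^{k_{g,n+2}})$ if and only if $\Phi^+(h)$ has trivial component in the generalized eigenspace $V_2^{g,n+2}\subset \bV_{g,n+2}^+$ of $\mu(\pt)$ for the eigenvalue $2$. Second, the U-cobordism $\bW$ appearing in the proof of Lemma \ref{lem_multiplyDelta} induces a map $\II(\bW)\colon \bV_{g,n}\to \bV_{g,n+2}$ that is an $\bA_{g,n}[\epsilon]/(\epsilon^2-1)$-module homomorphism with
$$\II(\bW)(\Phi^+(f))=\tfrac{1}{2}\Phi^+\bigl(f\cdot(\delta_{n+1}+q_{g,n}\delta_{n+2}+c_{g,n})\bigr)$$
for every $f\in \bA_{g,n}$. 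Because it commutes with $\epsilon$ and with $\mu(\pt)$ (the point class is homologous across the two ends of $\bW$), it preserves both the $\bV^\pm$-splitting and the $V_2\oplus V_{\ne 2}$-splitting on each side. Combining these observations, it suffices to exhibit an exponent $M_{g,n}$ such that for every $S\subseteq\{1,\ldots,n\}$ the operator $\prod_{i=1}^{g+m}\bigl(\Phi^+(\alpha+\sum_{j\in S}\delta_j)-\lambda_i\bigr)^{M_{g,n}}$ annihilates $V_2^{g,n}\subset \bV_{g,n}^+$.

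For the spectral part I use that $\sigma_{p_i}^2=2-\mu(\pt)$ by \eqref{delta^2+beta}, so every $\Phi^+(\delta_i)$ is a commuting nilpotent operator on $V_2^{g,n}$. Consequently the multiplication operator $\Phi^+(\alpha+\sum_{j\in S}\delta_j)$ differs from $\muu(\Sigma_{g,n})=\Phi^+\bigl(\alpha+\tfrac{1}{2}\sum_{i=1}^n\delta_i\bigr)$ by a commuting nilpotent on $V_2^{g,n}$, so their spectra there coincide. By Lemma \ref{lem_eigenvaluesOfAlphaInV^+} this common spectrum is $\{\lambda_1,\ldots,\lambda_{g+m}\}$, hence $\prod_{i=1}^{g+m}\bigl(\Phi^+(\alpha+\sum_{j\in S}\delta_j)-\lambda_i\bigr)$ is nilpotent on the finite-dimensional space $V_2^{g,n}$. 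Since there are only $2^n$ subsets $S$, taking $M_{g,n}$ to be the maximum of the corresponding nilpotency indices gives a single uniform exponent that works for all $S$ simultaneously, which is precisely what the lemma demands.

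The main obstacle is the careful functorial verification that $\II(\bW)$ commutes with $\mu(\pt)$ and preserves the $\bV^\pm$-splitting; once this is recorded (it is essentially the module property stated in the proof of Lemma \ref{lem_multiplyDelta}), the rest is a short spectral argument. Notably, the reduction circumvents any direct comparison between the exponents $k_{g,n}$ and $k_{g,n+2}$ by passing through the $V_2$-decomposition, which is the key conceptual convenience of this approach.
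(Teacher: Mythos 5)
Your proof is correct, and it takes a genuinely different route from the paper's. The paper's proof establishes two separate ideal inclusions — $\delta_i^{m_{g,n+2}}\in (J_{g,n+2}^+,(\beta-2)^{k_{g,n+2}})$ from nilpotency of $\Phi^+(\delta_i)$ in $\bV_{g,n+2}^+$, and $P_{g,n}(\alpha)(\delta_{n+1}+q_{g,n}\delta_{n+2}+c_{g,n})\in(J_{g,n+2}^+,(\beta-2)^{k_{g,n+2}})$ from Corollary \ref{cor_P(alpha)} plus Lemma \ref{lem_multiplyDelta} — and then expands $H_{g,n}(\alpha+\sum_{i\in S}\delta_i)(\delta_{n+1}+q_{g,n}\delta_{n+2}+c_{g,n})$ monomial by monomial: terms with high total $\delta$-degree are handled by the first inclusion, and terms with low $\delta$-degree still carry a factor of $P_{g,n}(\alpha)$ and are handled by the second; this leads to the explicit bound $M_{g,n}\ge n\,m_{g,n+2}+N_{g,n}$. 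You instead push the whole problem back to $\bV_{g,n}^+$ at the outset: you characterize $(J_{g,n+2}^+,(\beta-2)^{k_{g,n+2}})$ as the preimage of the non-$V_2$ part of $\bV_{g,n+2}^+$, observe that $\II(\bW)$ preserves the $\mu(\pt)$-eigenspace decomposition (this is indeed recorded in the proof of Lemma \ref{lem_multiplyDelta} as part of the $\bA_{g,n}[\epsilon]/(\epsilon^2-1)$-module property), and reduce the assertion to the vanishing of $\prod_i(\Phi^+(\alpha+\sum_{j\in S}\delta_j)-\lambda_i)^{M_{g,n}}$ on $V_2^{g,n}$. The spectral argument — that $\Phi^+(\alpha+\sum_{j\in S}\delta_j)$ differs from $\muu(\Sigma_{g,n})$ by a commuting nilpotent on $V_2^{g,n}$ and hence has the same spectrum $\{\lambda_1,\dots,\lambda_{g+m}\}$ there — then finishes the job by taking $M_{g,n}$ to be the maximum of the finitely many nilpotency indices. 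This is cleaner: it avoids the combinatorial expansion entirely and works with the nilpotency inside $\bV_{g,n}^+$ rather than $\bV_{g,n+2}^+$. What it gives up is the explicit form of $M_{g,n}$, which neither proof actually needs downstream. (Minor quibble: $\muu(\Sigma_{g,n})=\Phi^+(\alpha+\frac12\sum_i\mathrm{sign}_p(\Sigma,L)\,\delta_i)$ carries signs, but since the $\delta_i$'s are nilpotent on $V_2$ this has no effect on the spectral comparison.)
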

\begin{proof}
By \eqref{delta^2+beta}, for each $i=1,\cdots,n+2,$ we have $\Phi^+(\delta_i)$ is nilpotent in $\bV_{g,n+2}^+$ in the generalized eigenspace of $\mu(\pt)$ for the eigenvalue $2$, thus there exists a positive integer $m_{g,n+2}$ such that 
\begin{equation}
\label{eqn_deltaNilpotent}
\delta_i^{m_{g,n+2}}\in (J_{g,n+2}^+, (\beta-2)^{k_{g,n+2}}) ~\text{ for }i=1,\cdots,n+2.
\end{equation}
Let $N_{g,n}$ be the positive integer given by Corollary \ref{cor_P(alpha)}. By Lemma \ref{lem_multiplyDelta}, we have
\begin{equation}
\label{eqn_PTimesDelta}
\prod_{i=1}^{g+m} (\alpha-\lambda_i)^{N_{g,n}}(\delta_{n+1}+q_{g,n}\delta_{n+2}+c_{g,n})\in (J_{g,n+2}^+, (\beta-2)^{k_{g,n+2}}).
\end{equation}
Let $M_{g,n}$ be a positive integer such that $M_{g,n}\ge n\,m_{g,n+2} + N_{g,n}$, it follows from \eqref{eqn_deltaNilpotent} and \eqref{eqn_PTimesDelta} 
that the polynomial $H_{g,n}(\alpha)$ satisfies the desired property.
\end{proof}

\begin{Lemma}
\label{lemHDelta}
For $i=1,\cdots,n+2$, we have 
$$
\delta_i\cdot H_{g,n}(\alpha) \in (J_{g,n+2}^+, (\beta-2)^{k_{g,n+2}}).
$$
\end{Lemma}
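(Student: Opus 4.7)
The plan is to translate the desired containment into an operator identity on a generalized eigenspace, then exploit Lemma \ref{lem_PolynomialHgn} together with permutation symmetry of the marked points and the flip symmetry of Section \ref{subsection-flip} to deduce it.

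As in the proof of Lemma \ref{lem_PolynomialHgn}, membership in $(J_{g,n+2}^+,(\beta-2)^{k_{g,n+2}})$ is equivalent to having zero projection to the generalized eigenspace $\bV_{g,n+2}^{+,2}$ of $\mu(\pt)$ for eigenvalue $2$. By Lemma \ref{lem_eigenvaluesOfAlphaInV^+} applied with $n+2=2(m+1)+1$, the space $\bV_{g,n+2}^{+,2}$ decomposes into simultaneous generalized eigenspaces of $\muu(\Sigma)$ with eigenvalues $\lambda_1,\ldots,\lambda_{g+m+1}$. On the eigenspaces for $\lambda_k$ with $k\le g+m$ the factor $H_{g,n}(\muu(\Sigma))$ already vanishes by the large multiplicity $M_{g,n}$, so the problem reduces to showing $\sigma_{p_i}=0$ on $W$, the generalized eigenspace for the ``new'' top eigenvalue $\lambda_{g+m+1}$; note that $H_{g,n}(\muu(\Sigma))$ is invertible on $W$ since $H_{g,n}(\lambda_{g+m+1})\ne 0$.

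Next I extend Lemma \ref{lem_multiplyDelta} by permutation symmetry: diffeomorphisms of $\Sigma$ permuting $\{p_1,\ldots,p_{n+2}\}$ induce automorphisms of $\bV_{g,n+2}^+$ preserving all relevant structure, so the same constants $q_{g,n},c_{g,n}$ serve every pair $\{i,j\}\subset\{1,\ldots,n+2\}$. Combined with $H_{g,n}(\alpha)\in(J_{g,n}^+,(\beta-2)^{k_{g,n}})$ (from Corollary \ref{cor_P(alpha)} and $M_{g,n}\ge N_{g,n}$), this gives $H_{g,n}(\alpha)(\delta_i+q_{g,n}\delta_j+c_{g,n})\in(J_{g,n+2}^+,(\beta-2)^{k_{g,n+2}})$ for every pair. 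Evaluating on $W$ and inverting $H_{g,n}(\muu(\Sigma))$ yields $\sigma_{p_i}+q_{g,n}\sigma_{p_j}+c_{g,n}=0$ on $W$; varying $j$ forces $\sigma_{p_1}=\cdots=\sigma_{p_{n+2}}=:\sigma$ on $W$, together with the identity $(1+q_{g,n})\sigma+c_{g,n}=0$.

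To upgrade $\sigma$ to $0$, I apply the flip symmetry $\tau_S$ for $S=\{n+1,j\}$ with $j\in\{1,\ldots,n\}$ and $|S|=2$. The corresponding polynomial automorphism $\phi_S$ from Section \ref{subsection-flip} sends $\alpha\mapsto\alpha+\delta_{n+1}+\delta_j$ and negates $\delta_{n+1},\delta_j$, fixing the other generators. Applying $\phi_S$ to the relation of Lemma \ref{lem_PolynomialHgn} produces
\[
H_{g,n}(\alpha+\delta_{n+1}+\delta_j)(-\delta_{n+1}+q_{g,n}\delta_{n+2}+c_{g,n})\in (J_{g,n+2}^+,(\beta-2)^{k_{g,n+2}}).
\]
Evaluating on $W$ with $\sigma_{p_i}=\sigma$ gives $H_{g,n}(\muu(\Sigma)+2\sigma)\cdot\bigl((q_{g,n}-1)\sigma+c_{g,n}\bigr)=0$; since $\muu(\Sigma)+2\sigma$ equals $\lambda_{g+m+1}\cdot\id$ plus a commuting nilpotent on $W$, the operator $H_{g,n}(\muu(\Sigma)+2\sigma)$ remains invertible, so $(q_{g,n}-1)\sigma+c_{g,n}=0$. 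Subtracting this from $(1+q_{g,n})\sigma+c_{g,n}=0$ gives $2\sigma=0$, hence $\sigma=0$ on $W$.

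The main obstacle is ensuring that the polynomial automorphism $\phi_S$ genuinely preserves the ideal $(J_{g,n+2}^+,(\beta-2)^{k_{g,n+2}})$ when $|S|$ is even---equivalently, that the flip involution $\tau_S$ preserves the subspace $\bV_{g,n+2}^+$. The commutation of $\tau_S$ with $E$ is subtle because the cobordism surface $\{\pt\}\times\Sigma$ defining $E$ intersects the flip locus $S^1\times\{p_j:j\in S\}$ in $|S|$ points, so a careful application of the transformation formulas \eqref{mu-flip-sym-homology}--\eqref{mu-flip-sym} is required to verify that the correction terms cancel on the relevant subspaces; this technicality, once handled, completes the argument.
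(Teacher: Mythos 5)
Your proof takes a genuinely different route from the paper's. The paper works purely algebraically in the ring: it applies Lemma~\ref{lem_PolynomialHgn} with $S=\{1\}$, conjugates the resulting relation by the flip $\tau_{\{1,n+1\}}$ to flip the sign in front of $\delta_{n+1}$, and subtracts the two relations to isolate $\delta_{n+1}H_{g,n}(\alpha)$, finishing by permuting the marked points. You instead translate the membership statement into an operator identity, localize to the generalized eigenspace $W$ of $\muu(\Sigma)$ for the new top eigenvalue $\lambda_{g+m+1}$, prove all $\sigma_{p_i}$ agree on $W$ by permutation symmetry, and then upgrade the common value to $0$ by a flip. The flip-plus-subtraction device is the same in spirit; what you gain is a cleaner separation of the ``new'' eigenvalue from the old ones, and what you lose is that you now must explicitly account for the old eigenspaces.

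That is exactly where there is a real gap. Your reduction discards the generalized eigenspaces $E_{\lambda_k}$, $k\le g+m$, on the ground that $H_{g,n}(\muu(\Sigma))$ ``already vanishes by the large multiplicity $M_{g,n}$.'' On $E_{\lambda_k}$ the factor $(\mu(\Sigma)-\lambda_k)^{M_{g,n}}$ is nilpotent, and it vanishes only if $M_{g,n}$ exceeds the nilpotency order of $\mu(\Sigma)-\lambda_k$ on $E_{\lambda_k}\cap\bV^{+,2}_{g,n+2}$. But the integer $M_{g,n}$ was chosen in the proof of Lemma~\ref{lem_PolynomialHgn} only to satisfy $M_{g,n}\ge n\,m_{g,n+2}+N_{g,n}$; there is no a priori relation between that bound and the Jordan block sizes of $\mu(\Sigma)$ on $\bV^{+,2}_{g,n+2}$. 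If $H_{g,n}(\mu(\Sigma))$ does not actually vanish on $E_{\lambda_k}$, nothing forces $\sigma_{p_i}\cdot H_{g,n}(\mu(\Sigma))$ to vanish there, and your localization collapses. This can be repaired by enlarging $M_{g,n}$ (for example to exceed $\dim\bV^{+,2}_{g,n+2}$), noting that Lemma~\ref{lem_PolynomialHgn} remains valid and that the downstream uses of $H_{g,n}$ only depend on its set of roots being $\{\lambda_1,\dots,\lambda_{g+m}\}$; but as written the step is unjustified. By contrast the paper's algebraic manipulation never needs to know the Jordan structure on the lower eigenspaces, so it sidesteps this issue entirely. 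Your closing concern about $\tau_S$ commuting with $E$ is legitimate, but it is equally implicit in the paper's proof and is asserted in Section~\ref{subsec_Vgn}, so it is not specific to your route; the one substantive obstacle is the vanishing claim on $E_{\lambda_k}$.
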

\begin{proof}
Let $q_{g,n}\in\{1,-1\}$, $c_{g,n}\in \bC$ be the constants given by Lemma \ref{lem_multiplyDelta}. By Lemma \ref{lem_PolynomialHgn}, we have
$$
H_{g,n}(\alpha+\delta_1)(\delta_{n+1}+q_{g,n}\,\delta_{n+2}+c_{g,n}) \in (J_{g,n+2}^+, (\beta-2)^{k_{g,n+2}}),
$$
hence by the flip symmetry along $S^1\times\{p_1,p_{n+1}\}$,
$$
H_{g,n}(\alpha)(-\delta_{n+1}+q_{g,n}\,\delta_{n+2}+c_{g,n}) \in (J_{g,n+2}^+, (\beta-2)^{k_{g,n+2}}).
$$
On the other hand, Lemma \ref{lem_PolynomialHgn} also gives
$$
H_{g,n}(\alpha)(\delta_{n+1}+q_{g,n}\,\delta_{n}+c_{g,n}) \in (J_{g,n+2}^+, (\beta-2)^{k_{g,n+2}}),
$$
therefore
$$
\delta_{n+1}\,H_{g,n}(\alpha)\in (J_{g,n+2}^+, (\beta-2)^{k_{g,n+2}}).
$$
Since the mapping class group of $(\Sigma,\{p_1,\cdots,p_{n+2}\})$ acts transitively on $\{p_1,\cdots,p_{n+2}\}$, the relation above implies
$$
\delta_i\cdot H_{g,n}(\alpha)\in (J_{g,n+2}^+, (\beta-2)^{k_{g,n+2}}),~\text{ for all }~i=1,\cdots,n+2.
$$
\end{proof}

We can now prove the following result as promised in Remark \ref{rmk_c=0}.

\begin{Lemma}\label{lem_multiplyDeltac=0}
The constant $c_{g,n}$ in Lemma \ref{lem_multiplyDelta} equals zero.
\end{Lemma}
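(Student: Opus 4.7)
The plan is to deduce the vanishing of $c_{g,n}$ by combining Lemma~\ref{lem_PolynomialHgn} (with $S = \emptyset$) and Lemma~\ref{lemHDelta}, and then using Lemma~\ref{lem_eigenvaluesOfAlphaInV^+} to derive a contradiction if $c_{g,n} \neq 0$.

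Concretely, first I would take Lemma~\ref{lem_PolynomialHgn} with $S=\emptyset$, which yields
$$H_{g,n}(\alpha)\bigl(\delta_{n+1}+q_{g,n}\,\delta_{n+2}+c_{g,n}\bigr)\in (J_{g,n+2}^+,(\beta-2)^{k_{g,n+2}}).$$
On the other hand, Lemma~\ref{lemHDelta} gives $\delta_{n+1}\,H_{g,n}(\alpha)$ and $\delta_{n+2}\,H_{g,n}(\alpha)$ both lie in $(J_{g,n+2}^+,(\beta-2)^{k_{g,n+2}})$. Subtracting, we obtain
$$c_{g,n}\,H_{g,n}(\alpha)\in (J_{g,n+2}^+,(\beta-2)^{k_{g,n+2}}).$$

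Next I would argue by contradiction: suppose $c_{g,n}\neq 0$. Then $H_{g,n}(\alpha)$ itself would lie in $(J_{g,n+2}^+,(\beta-2)^{k_{g,n+2}})$. Applying $\Phi^+$ and using the defining property of $k_{g,n+2}$, this would force $\Phi^+(H_{g,n}(\alpha))$ to vanish on the generalized eigenspace of $\mu(\pt)$ for the eigenvalue $2$ inside $\bV_{g,n+2}^+$. Equivalently, on this generalized eigenspace, every eigenvalue of $\muu(\Sigma_{g,n+2})$ would be a root of $H_{g,n}(\alpha)=\prod_{i=1}^{g+m}(\alpha-\lambda_i)^{M_{g,n}}$, hence would belong to $\{\lambda_1,\dots,\lambda_{g+m}\}$.

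The main (only) remaining observation is that this contradicts Lemma~\ref{lem_eigenvaluesOfAlphaInV^+} applied to the pair $(g,n+2)$, where $n+2=2(m+1)+1$: the spectrum of $\muu(\Sigma_{g,n+2})$ on this generalized eigenspace consists of $\lambda_1,\dots,\lambda_{g+m+1}$, and $\lambda_{g+m+1}$ has absolute value $2(g+m+1)-1 > 2i-1 = |\lambda_i|$ for $i\le g+m$, so in particular $\lambda_{g+m+1}\notin\{\lambda_1,\dots,\lambda_{g+m}\}$. Therefore $H_{g,n}(\lambda_{g+m+1})\neq 0$, giving the desired contradiction and forcing $c_{g,n}=0$. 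No step of this plan looks difficult once Lemmas~\ref{lem_PolynomialHgn}, \ref{lemHDelta}, and \ref{lem_eigenvaluesOfAlphaInV^+} are in place; the only subtlety is making sure one compares the correct index $g+m+1$ (for $n+2$) against the roots of $H_{g,n}$ (indexed up to $g+m$).
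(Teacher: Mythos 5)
Your proof is correct and takes essentially the same approach as the paper: combine Lemma~\ref{lem_PolynomialHgn} (with $S=\emptyset$) and Lemma~\ref{lemHDelta} to place $c_{g,n}H_{g,n}(\alpha)$ in $(J^+_{g,n+2},(\beta-2)^{k_{g,n+2}})$, then invoke Lemma~\ref{lem_eigenvaluesOfAlphaInV^+} (via Corollary~\ref{cor_JVSameEigenvalues}) to see that $\alpha$ has the eigenvalue $\lambda_{g+m+1}$ on the quotient ring, which is not a root of $H_{g,n}$ since $|\lambda_{g+m+1}|=2(g+m)+1 > |\lambda_i|$ for $i\le g+m$. Your writeup simply spells out the index bookkeeping that the paper leaves implicit.
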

\begin{proof}
By Lemma \ref{lem_PolynomialHgn} and Lemma \ref{lemHDelta},
$$
c_{g,n}H_{g,n}(\alpha)\in (J^+_{g,n+2}, (\beta-2)^{k_{g,n+2}}).
$$
On the other hand, by Lemma \ref{lem_eigenvaluesOfAlphaInV^+} and Corollary \ref{cor_JVSameEigenvalues}, the multiplication by $\alpha$ on 
$$\bC[\alpha,\beta,\gamma,\delta_1,\cdots,\delta_{n+2}]/(J^+_{g,n+2}, (\beta-2)^{k_{g,n+2}})$$ has an eigenvalue that is not a root of $H_{g,n}$. Therefore $c_{g,n}=0$.
\end{proof}

\begin{Lemma}
\label{lem_multiplyP_{g-1,n}P_{g,n-2}}
Suppose $n\ge 3$. We have
\begin{align}
\delta_i \cdot H_{g,n-2}(\alpha)
&\in(J^+_{g,n}, (\beta-2)^{k_{g,n}}),\quad \text{ for } i=1,\cdots, n,
\label{eqnMultiplyByHOnDelta}
\\
(\beta-2) \cdot H_{g,n-2}(\alpha)
&\in(J^+_{g,n}, (\beta-2)^{k_{g,n}}),
\label{eqnMultiplyByHOnBeta-2}
\\
\nonumber
\gamma \cdot P_{g-1,n}(\alpha)
&\in(J^+_{g,n}, (\beta-2)^{k_{g,n}}).
\end{align}
\end{Lemma}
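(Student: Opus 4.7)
The first claim is immediate from Lemma \ref{lemHDelta}: relabeling $n$ to $n-2$ there yields $\delta_i\cdot H_{g,n-2}(\alpha)\in(J^+_{g,n},(\beta-2)^{k_{g,n}})$ for each $i=1,\cdots,n$.

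The second claim reduces to the first via the relation $\sigma_p^2+\mu(\pt)=2\,\mathrm{id}$ from equation \eqref{delta^2+beta}, which on the connected 3-manifold $S^1\times\Sigma_g$ applied to the unit $e$ yields the polynomial identity $\delta_i^2+\beta-2\in J^+_{g,n}$ for every $i$. Choosing any $i\in\{1,\cdots,n\}$ (possible because $n\geq 3$), I expand
$$(\beta-2)\,H_{g,n-2}(\alpha)=-\delta_i\cdot\bigl(\delta_i\,H_{g,n-2}(\alpha)\bigr)+(\delta_i^2+\beta-2)\,H_{g,n-2}(\alpha).$$
The first summand lies in $(J^+_{g,n},(\beta-2)^{k_{g,n}})$ by the first claim, and the second lies in $J^+_{g,n}$.

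The third claim is the main obstacle. The case $g=0$ is trivial: by convention $P_{-1,n}(\alpha)=1$, and $\gamma=0$ in $\bV^+_{0,n}$ because the $\psi$-generators are absent when $g=0$, so $\gamma\in J^+_{0,n}$. For $g\geq 1$, my plan is to compare generalized eigenspace decompositions. By the definition of $k_{g,n}$, the quotient $\bV^+_{g,n}/(\beta-2)^{k_{g,n}}\bV^+_{g,n}$ is naturally isomorphic to the generalized $\beta$-eigenspace $V^{(2)}\subset\bV^+_{g,n}$ for the eigenvalue $2$. By Lemma \ref{lem_eigenvaluesOfAlphaInV^+}, the $\alpha$-spectrum on $V^{(2)}$ equals $\{\lambda_1,\cdots,\lambda_{g+m}\}$, giving a generalized $\alpha$-eigenspace decomposition $V^{(2)}=\bigoplus_{i=1}^{g+m}V^{(2)}_i$. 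Showing $\gamma P_{g-1,n}(\alpha)\in(J^+_{g,n},(\beta-2)^{k_{g,n}})$ amounts to showing that $\gamma P_{g-1,n}(\alpha)$ annihilates each $V^{(2)}_i$.

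The key step is to prove $\gamma\,V^{(2)}_{g+m}=0$. By Lemma \ref{lem_multiplyGamma}, multiplication by $\gamma$ descends to a $\bC[\alpha,\beta,\gamma,\delta_1,\cdots,\delta_n]$-module map $\gamma_\ast:\bV^+_{g-1,n}\to\bV^+_{g,n}$ with image $\gamma\,\bV^+_{g,n}$. Since $\gamma_\ast$ commutes with $\beta$, it restricts to a map $V^{(2)}_{g-1}\to V^{(2)}$ whose image is $\gamma V^{(2)}$. Consequently, the $\alpha$-spectrum of $\gamma V^{(2)}$ is contained in that of $V^{(2)}_{g-1}$, which by Lemma \ref{lem_eigenvaluesOfAlphaInV^+} applied to $(g-1,n)$ equals $\{\lambda_1,\cdots,\lambda_{g+m-1}\}$; since $\gamma$ preserves generalized $\alpha$-eigenspaces, this forces $\gamma V^{(2)}_{g+m}=0$. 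For the remaining eigenspaces, choose $N_{g-1,n}$ in the definition of $P_{g-1,n}$ large enough both to satisfy Corollary \ref{cor_P(alpha)} for $(g-1,n)$ and to exceed the nilpotency index of $(\alpha-\lambda_i)$ on each $V^{(2)}_i\subset\bV^+_{g,n}$ for $i<g+m$; then $P_{g-1,n}(\alpha)V^{(2)}_i=0$ for every $i<g+m$. Combining, $\gamma P_{g-1,n}(\alpha)$ annihilates every summand $V^{(2)}_i$, and hence all of $V^{(2)}$, which completes the proof.
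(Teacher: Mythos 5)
Your proofs of the first two inclusions agree with the paper's: the first is exactly Lemma \ref{lemHDelta} with $n$ replaced by $n-2$, and the second follows from it together with the relation $\beta-2+\delta_i^2\in J^+_{g,n}$ coming from \eqref{delta^2+beta}. For the third inclusion you take a genuinely different and considerably longer route. The paper treats it as an immediate consequence of Lemma \ref{lem_multiplyGamma} and Corollary \ref{cor_P(alpha)}: from $P_{g-1,n}\in(J^+_{g-1,n},(\beta-2)^{k_{g-1,n}})$ and $\gamma J^+_{g-1,n}\subset J^+_{g,n}$, together with the stability of the ideals $(J^+_{g',n},(\beta-2)^{k_{g',n}})$ under raising the exponent of $\beta-2$, the inclusion $\gamma P_{g-1,n}\in(J^+_{g,n},(\beta-2)^{k_{g,n}})$ drops out. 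Your argument instead decomposes the generalized $\beta$-eigenspace by generalized $\alpha$-eigenvalue, shows $\gamma$ annihilates the $\lambda_{g+m}$-summand by a spectrum comparison, and lets $P_{g-1,n}$ (with $N_{g-1,n}$ enlarged) annihilate the rest. This is sound, but there are two rough edges: the map you call $\gamma_\ast$ is, by Lemma \ref{lem_multiplyGamma}, a map between the $\Lambda_0^0$-summands $\bC[\alpha,\beta,\gamma,\delta_1,\ldots,\delta_n]/J^+_{g-1,n}\to\bC[\alpha,\beta,\gamma,\delta_1,\ldots,\delta_n]/J^+_{g,n}$, not between the full spaces $\bV^+_{g-1,n}$ and $\bV^+_{g,n}$ --- this turns out to be harmless here, since $\lambda_{g+m}$ occurs only in the $\Lambda_0^0$-summand of $\bV^+_{g,n}$ by Lemma \ref{lem_eigenvaluesOfAlphaInV^+}, but the reader must supply that correction; and enlarging $N_{g-1,n}$, while permitted by Corollary \ref{cor_P(alpha)}, is an extra device that the paper's direct ideal-theoretic argument does not need.
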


\begin{proof}
The inclusion \eqref{eqnMultiplyByHOnDelta} is given by Lemma \ref{lemHDelta}.
Notice that by \eqref{delta^2+beta},
 $$\beta -2 + \delta_i^2\in J^+_{g,n},$$
 therefore 
\eqref{eqnMultiplyByHOnDelta} implies \eqref{eqnMultiplyByHOnBeta-2}.
If $g\ge 1$, then by Lemma \ref{lem_multiplyGamma} and Corollary \ref{cor_P(alpha)} we have
$$
\gamma\cdot P_{g-1,n}(\alpha) \in (J^+_{g,n}, (\beta-2)^{k_{g,n}}).
$$
The inclusion above also holds for $g=0$ because $\Phi^+(\gamma)=0$ in $\bV^+_{0,n}$ and by definition $P_{-1,n}(\alpha) = 1$.
\end{proof}

\begin{Lemma}\label{lem_QPPInclusion}
Suppose $n\ge 3$, we have
$$
Q_{g,n}(\alpha)\cdot P_{g-1,n}(\alpha)\cdot H_{g,n-2}(\alpha)\in (J^+_{g,n}, (\beta-2)^{k_{g,n}}).
$$
\end{Lemma}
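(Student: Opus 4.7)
The plan is to combine Lemma~\ref{lem_Q(alpha)} with the three ``multiplication'' statements of Lemma~\ref{lem_multiplyP_{g-1,n}P_{g,n-2}} via a single linear decomposition of $Q_{g,n}(\alpha)$. Concretely, Lemma~\ref{lem_Q(alpha)} tells us $Q_{g,n}(\alpha)$ lies in the ideal $(J^+_{g,n},\beta-2,\delta_1,\dots,\delta_n,\gamma)$, so I would start by fixing a decomposition
\begin{equation*}
Q_{g,n}(\alpha)=u+(\beta-2)\,v+\sum_{i=1}^{n}\delta_i\,w_i+\gamma\,z,
\end{equation*}
with $u\in J^+_{g,n}$ and $v,w_i,z\in\mathbb{C}[\alpha,\beta,\gamma,\delta_1,\dots,\delta_n]$.

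Next, I would multiply this identity through by $P_{g-1,n}(\alpha)\,H_{g,n-2}(\alpha)$ and regroup the resulting four terms so that the ``dangerous'' factor $\beta-2$, $\delta_i$, or $\gamma$ is paired with exactly the polynomial that Lemma~\ref{lem_multiplyP_{g-1,n}P_{g,n-2}} can absorb. That is,
\begin{equation*}
Q_{g,n}\,P_{g-1,n}\,H_{g,n-2}
=u\,P_{g-1,n}H_{g,n-2}+v\,P_{g-1,n}\bigl[(\beta-2)H_{g,n-2}\bigr]+\sum_{i=1}^{n}w_i\,P_{g-1,n}\bigl[\delta_i H_{g,n-2}\bigr]+z\,H_{g,n-2}\bigl[\gamma\,P_{g-1,n}\bigr].
\end{equation*}
The first term is in $J^+_{g,n}$ since $u$ is; the remaining three, thanks to the three containments in Lemma~\ref{lem_multiplyP_{g-1,n}P_{g,n-2}}, each lie in $(J^+_{g,n},(\beta-2)^{k_{g,n}})$, which is an ideal and hence absorbs the extra polynomial factors $v\,P_{g-1,n}$, $w_i\,P_{g-1,n}$, $z\,H_{g,n-2}$.

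There is essentially no obstacle here beyond bookkeeping: the statement is an algebraic consequence of the three absorption properties already established. The only step requiring a brief justification is the observation that the decomposition of $Q_{g,n}(\alpha)$ can be chosen as above: this follows immediately from the fact that $Q_{g,n}(\alpha)$ belongs to the ideal generated by $J^+_{g,n}\cup\{\beta-2,\delta_1,\dots,\delta_n,\gamma\}$, so every such element is a polynomial combination of these generators. No further case analysis, no induction on $(g,n)$, and no appeal to the cobordism arguments of Section~\ref{sec_cobordism} is needed at this final stage, since all of the heavy lifting has been encapsulated in Lemmas~\ref{lem_Q(alpha)}, \ref{lem_PolynomialHgn}, \ref{lemHDelta}, and \ref{lem_multiplyP_{g-1,n}P_{g,n-2}}.
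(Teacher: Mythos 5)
Your proof is correct and is essentially the paper's own argument, just written out explicitly: the paper records the same conclusion by observing $Q_{g,n}P_{g-1,n}H_{g,n-2}\in(J^+_{g,n},\,(\beta-2)P_{g-1,n}H_{g,n-2},\,\delta_1 P_{g-1,n}H_{g,n-2},\dots,\delta_n P_{g-1,n}H_{g,n-2},\,\gamma P_{g-1,n}H_{g,n-2})$ and then invoking Lemma~\ref{lem_multiplyP_{g-1,n}P_{g,n-2}} on each generator, which is exactly your decomposition-and-regroup argument in condensed form.
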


\begin{proof}
By Lemma \ref{lem_Q(alpha)}, 
\begin{multline}\label{eqn_QPPInLongerIdeal}
Q_{g,n}\, P_{g-1,n}\, H_{g,n-2}
\in (J^+_{g,n}, \,\,
(\beta-2)\, P_{g-1,n}\, H_{g,n-2},
\\
\delta_1\, P_{g-1,n}\, H_{g,n-2},\cdots,
\delta_n \, P_{g-1,n}\, H_{g,n-2},\,\,
\gamma \, P_{g-1,n}\, H_{g,n-2}).
\end{multline}
By Lemma \ref{lem_multiplyP_{g-1,n}P_{g,n-2}}, every term on the right hand side of \eqref{eqn_QPPInLongerIdeal} is included in the ideal $(J^+_{g,n}, (\beta-2)^{k_{g,n}})$, therefore the result is proved.
\end{proof}

\begin{Corollary} \label{cor_diagonalizable}
Suppose $n\ge 3$, then the generalized eigenspace of the multiplication by $\alpha$ for the eigenvalue $\lambda_{g+m}$ in $$\bC[\alpha,\beta,\gamma,\delta_1,\cdots,\delta_n]/(J^+_{g,n}, (\beta-2)^{k_{g,n}})$$
coincides with its eigenspace. 
\end{Corollary}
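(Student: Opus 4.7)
The plan is to exploit Lemma \ref{lem_QPPInclusion} together with the fact, built into the very definitions of $Q_{g,n}, P_{g-1,n}$, and $H_{g,n-2}$, that the top eigenvalue $\lambda_{g+m}$ appears with multiplicity exactly one in the product $Q_{g,n}(\alpha) P_{g-1,n}(\alpha) H_{g,n-2}(\alpha)$, since $P_{g-1,n}$ and $H_{g,n-2}$ only involve the factors $(\alpha - \lambda_i)$ with $i\le g+m-1$ and $Q_{g,n}$ contributes a single simple factor $(\alpha - \lambda_{g+m})$.

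Concretely, let $R:=\mathbb{C}[\alpha,\beta,\gamma,\delta_1,\cdots,\delta_n]/(J^+_{g,n}, (\beta-2)^{k_{g,n}})$, and factor
\[
Q_{g,n}(\alpha)\, P_{g-1,n}(\alpha)\, H_{g,n-2}(\alpha) = (\alpha-\lambda_{g+m})\cdot \widetilde{Q}(\alpha),
\]
where
\[
\widetilde{Q}(\alpha):=\prod_{i=1}^{g+m-1}(\alpha-\lambda_i)\cdot P_{g-1,n}(\alpha)\cdot H_{g,n-2}(\alpha).
\]
By Lemma \ref{lem_QPPInclusion}, multiplication by $(\alpha-\lambda_{g+m})\widetilde{Q}(\alpha)$ is the zero operator on $R$.

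Let $V_{\lambda_{g+m}}\subset R$ denote the generalized eigenspace of multiplication by $\alpha$ for the eigenvalue $\lambda_{g+m}$. Since $\lambda_1,\dots,\lambda_{g+m}$ are pairwise distinct (indeed $|\lambda_i|=2i-1$ by Lemma \ref{lem_eigenvaluesOfAlphaInV^+}), for each $i\le g+m-1$ the operator $(\alpha-\lambda_i)$ acts on $V_{\lambda_{g+m}}$ as $(\lambda_{g+m}-\lambda_i)\cdot\mathrm{id}+N_i$ with $N_i$ nilpotent and $\lambda_{g+m}-\lambda_i\neq 0$, hence is invertible on $V_{\lambda_{g+m}}$. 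Therefore $\widetilde{Q}(\alpha)$, being a product of such factors, restricts to an invertible operator on $V_{\lambda_{g+m}}$.

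The conclusion now falls out: for any $x\in V_{\lambda_{g+m}}$,
\[
0 = (\alpha-\lambda_{g+m})\,\widetilde{Q}(\alpha)\,x,
\]
and applying the inverse of $\widetilde{Q}(\alpha)|_{V_{\lambda_{g+m}}}$ (which preserves $V_{\lambda_{g+m}}$ because it commutes with $\alpha$) yields $(\alpha-\lambda_{g+m})x=0$. Thus every generalized eigenvector for $\lambda_{g+m}$ is a genuine eigenvector, as claimed. I expect no essential obstacle in this argument; all the work has already been done in assembling Lemma \ref{lem_QPPInclusion}, and the remaining step is purely the standard linear-algebraic principle that if the minimal polynomial of a commuting operator has a simple root at a given eigenvalue, the corresponding generalized eigenspace is semisimple.
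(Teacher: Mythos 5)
Your proof is correct and follows exactly the approach of the paper: invoke Lemma \ref{lem_QPPInclusion}, observe that $(\alpha-\lambda_{g+m})$ appears with multiplicity one in $Q_{g,n}\cdot P_{g-1,n}\cdot H_{g,n-2}$ (since $P_{g-1,n}$ and $H_{g,n-2}$ involve only $\lambda_1,\dots,\lambda_{g+m-1}$, which are distinct from $\lambda_{g+m}$ by $|\lambda_i|=2i-1$), and conclude by the standard linear-algebra fact that a simple root in an annihilating polynomial forces the generalized eigenspace to be an honest eigenspace. The paper gives this as a one-sentence remark; you have simply unpacked the same reasoning in full.
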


\begin{proof}
The result follows from Lemma \ref{lem_QPPInclusion}, because the multiplicity of the factor $(\alpha-\lambda_{g+m})$ in 
$Q_{g,n}(\alpha)\cdot P_{g-1,n}(\alpha)\cdot H_{g,n-2}(\alpha)$ is $1$. 
\end{proof}

We now state the main theorem of this section.
\begin{Theorem} \label{thm_TopEigenspaceDim1}
Suppose $n\ge 3$, then the simlutaneous generalized eigenspaces of $(\muu(\Sigma_{g,n}),\mu(\pt))$ in $\bV_{g,n}$ for the eigenvalues $(2g+n-2,2)$ and $(-2g-n+2,2)$ have dimension $1$.
\end{Theorem}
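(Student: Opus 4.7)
The plan is to combine the ring decomposition of Proposition \ref{prop_decompositionOfPhi} with the divisibility relations collected in Lemma \ref{lem_multiplyP_{g-1,n}P_{g,n-2}}, reducing the statement to a short local-algebra computation.

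First, I would split $\bV_{g,n}=\bV_{g,n}^+\oplus \bV_{g,n}^-$ and treat the two eigenvalues $\pm(2g+n-2)$ by parallel arguments. By Proposition \ref{lem_AllEigenvalues} combined with Lemma \ref{lem_eigenvaluesOfAlphaInV^+} and its analogue for $\bV_{g,n}^-$, each of $\pm(2g+n-2)$ occurs as an eigenvalue of $\muu(\Sigma_{g,n})$ on the $\mu(\pt)=2$ generalized eigenspace in exactly one of $\bV_{g,n}^\pm$; write that eigenvalue as $\lambda_{g+m}$ and assume without loss of generality that it lives in $\bV_{g,n}^+$. From
$$\bV_{g,n}^+\cong \bigoplus_{k=0}^g \Lambda_0^k H_1(\Sigma_g)\otimes R_{g-k,n}^+, \qquad R_{g',n}^+:=\bC[\alpha,\beta,\gamma,\delta_1,\ldots,\delta_n]/J_{g',n}^+,$$
the spectrum of $\alpha$ on the $k$-th summand is $\{\lambda_1,\ldots,\lambda_{g-k+m}\}$ with $|\lambda_i|=2i-1$, so $\lambda_{g+m}$ is absent whenever $k\ge 1$. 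Consequently the target generalized eigenspace sits entirely inside the $k=0$ summand $R_{g,n}^+$.

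Next I would pass to $R':=R_{g,n}^+/(\beta-2)^{k_{g,n}}$, whose $\alpha=\lambda_{g+m}$ generalized eigenspace equals its actual eigenspace by Corollary \ref{cor_diagonalizable}. Decomposing the finite-dimensional commutative algebra $R'$ as a product of local Artinian factors over its maximal ideals, the target reduces to the local factor $R'_{\lambda_{g+m}}$ at $(\alpha-\lambda_{g+m},\beta-2,\gamma,\delta_1,\ldots,\delta_n)$. Inside this factor $\alpha$ acts as the scalar $\lambda_{g+m}$, so $H_{g,n-2}(\alpha)$ and $P_{g-1,n}(\alpha)$ specialize to the scalars $H_{g,n-2}(\lambda_{g+m})$ and $P_{g-1,n}(\lambda_{g+m})$, both of which are nonzero because $\lambda_{g+m}$ is strictly larger in absolute value than every $\lambda_i$ with $i<g+m$ and so is a root of neither polynomial. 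The three inclusions of Lemma \ref{lem_multiplyP_{g-1,n}P_{g,n-2}} hold in $R'$; dividing each by the corresponding nonzero scalar forces $\beta-2$, $\gamma$, and each $\delta_i$ to vanish in $R'_{\lambda_{g+m}}$. Combined with $\alpha=\lambda_{g+m}$, this shows $R'_{\lambda_{g+m}}$ is a quotient of $\bC$, and by Lemma \ref{lem_ExistenceOfEigenvectors} it is nonzero, hence isomorphic to $\bC$. The same argument applied to $\bV_{g,n}^-$ handles the eigenvalue $-(2g+n-2)$.

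The main difficulty is not really in this theorem itself but in the inputs: the substantive work is done by the Mumford-type relation in Proposition \ref{prop_MumfordRelations}, the U-cobordism analysis behind Lemma \ref{lem_multiplyDelta}, and the resulting divisibility statement Lemma \ref{lem_multiplyP_{g-1,n}P_{g,n-2}}. With those available, the only genuine verification here is that $\lambda_{g+m}$ avoids the roots of $H_{g,n-2}$ and $P_{g-1,n}$, which follows immediately from the strict inequality $|\lambda_{g+m}|=2g+n-2>2i-1=|\lambda_i|$ for $i\le g+m-1$.
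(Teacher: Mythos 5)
Your proposal is correct and follows essentially the same route as the paper's proof: reduce to the $k=0$ summand of the decomposition of $\bV_{g,n}^\pm$, use Corollary \ref{cor_diagonalizable} to replace the generalized eigenspace by the honest eigenspace, and then kill $\beta-2,\gamma,\delta_i$ using the inclusions of Lemma \ref{lem_multiplyP_{g-1,n}P_{g,n-2}} after observing that $\lambda_{g+m}$ is not a root of $H_{g,n-2}$ or $P_{g-1,n}$. The only cosmetic difference is that you phrase the final reduction as passing to a local Artinian factor of $\bC[\alpha,\beta,\gamma,\delta_1,\ldots,\delta_n]/(J_{g,n}^+,(\beta-2)^{k_{g,n}})$ and inverting the unit polynomials, whereas the paper simply bounds the dimension of the quotient by the ideal $(J_{g,n}^+,\alpha-\lambda_{g+m},(\beta-2)^{k_{g,n}})$; the two are equivalent.
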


\begin{proof}
By Lemma \ref{lem_eigenvaluesOfAlphaInV^+} and the analogous result for $\bV^-_{g,n}$, we only need to show that the simlutaneous generalized eigenspace of $(\muu(\Sigma_{g,n}),\mu(\pt))$ in $\bV^+_{g,n}$ for the eigenvalues $(\lambda_{g+m},2)$ has dimension at most $1$. Notice that by \eqref{delta^2+beta}, in the generalized eigenspace of $\mu(\pt)$ for the eigenvalue $2$ the operator $\sigma_{p_i}$ is nilpotent for all $i$, hence the generalized eigenspace of $\muu(\Sigma_{g,n})$ for the eigenvalue $\lambda_{g+m}$ is the same as the corresponding generalized eigenspace of $\mu(\Sigma_{g,n})$. 
Therefore by Lemma \ref{lem_eigenvaluesOfAlphaInV^+} and the decomposition \eqref{Vgn-decom2}, we only need to show that the generalized eigenspace of the multiplication by  $\alpha$ in 
$$
\bC[\alpha,\beta,\gamma,\delta_1,\cdots,\delta_n]/(J^+_{g,n}, (\beta-2)^{k_{g,n}})
$$
for the eigenvalue $\lambda_{g+m}$ has dimension at most $1$.

By Corollary \ref{cor_diagonalizable}, the generalized eigenspace of the multiplication by $\alpha$ in 
$$
\bC[\alpha,\beta,\gamma,\delta_1,\cdots,\delta_n]/(J^+_{g,n}, (\beta-2)^{k_{g,n}})
$$
for the eigenvalue $\lambda_{g+m}$ coincides with its eigenspace, hence we only need to show 
$$
\dim \bC[\alpha,\beta,\gamma,\delta_1,\cdots,\delta_n]/(J^+_{g,n},\, \alpha-\lambda_{g+m},\, (\beta-2)^{k_{g,n}}) \le 1.
$$
Since $P_{g-1,n}(\lambda_{g+m})\neq 0$, $H_{g,n-2}(\lambda_{g+m})\neq 0$, by Lemma \ref{lem_multiplyP_{g-1,n}P_{g,n-2}},
$$
(\beta-2,\delta_1,\cdots,\delta_n,\gamma)\subset (J^\pm_{g,n},\, \alpha-\lambda_{g+m},\, (\beta-2)^{k_{g,n}}),
$$
therefore 
\begin{multline*}
\dim \bC[\alpha,\beta,\gamma,\delta_1,\cdots,\delta_n]/(J^+_{g,n},\, \alpha-\lambda_{g+m},\, (\beta-2)^{k_{g,n}}) 
\\
\le \dim \bC[\alpha,\beta,\gamma,\delta_1,\cdots,\delta_n]/(\alpha-\lambda_{g+m},\beta-2,\delta_1,\cdots,\delta_n,\gamma) =1,
\end{multline*}
and the result is proved.
\end{proof}

Take a point $q\in \Sigma-\{p_1,\cdots,p_n\}$, let $\omega:=S^1\times \{q\}\subset S^1\times \Sigma$. The flip symmetry at $S^1\times \{p_1\}$ then maps 
$$\II( S^1\times \Sigma, S^1\times \{p_1,\cdots,p_n\},\emptyset)$$ isomorphically to 
$$\II( S^1\times \Sigma, S^1\times \{p_1,\cdots,p_n\},\omega),$$ and the map intertwines with $\muu(\Sigma)$ and $\mu(\pt)$. Therefore we have the following corollary.

\begin{Corollary}
\label{Cor_verticalw2}
Suppose $n$ is odd, and let $\omega$ be given as above. Then the spectrum of 
$\muu(\Sigma)$ in $I(S^1\times \Sigma,S^1\times \{p_1,\cdots,p_n\},\omega)$  in the eigenspace of $\mu(\pt)$ for the eigenvalue $2$ is
$$\{-(2g+n-2),-(2g+n-4),\cdots, 2g+n-4,2g+n-2 \}.$$
Moreover, if $n\ge 3$, the simultaneous generalized eigenspaces of $(\muu(\Sigma),\mu(\pt))$ for the eigenvalues $(-(2g+n-2),2)$ and $(2g+n-2,2)$ have dimension $1$.
\qed
\end{Corollary}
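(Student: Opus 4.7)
The plan is to deduce this corollary by transporting the spectral results for $\bV_{g,n}$ across the flip symmetry. I would proceed in three short steps: construct the relevant isomorphism, verify that it intertwines the two operators, and then quote the already-established theorems on $\bV_{g,n}$.

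First, I would invoke the flip symmetry of Section \ref{subsection-flip} with $L_1 := S^1\times\{p_1\}$ to obtain an isomorphism
\[
\tau : \bV_{g,n} \xrightarrow{\cong} \II(S^1\times\Sigma,\, S^1\times\{p_1,\ldots,p_n\},\, \omega),
\]
using that $\omega = S^1\times\{q\}$ and a parallel push-off $S^1\times\{p_1'\}$ represent the same class in $H_1(S^1\times(\Sigma-\{p_1,\ldots,p_n\});\bZ/2)$ and hence determine the same singular bundle data. This is precisely the isomorphism $\bV_{g,n}\cong\bV_{g,n}'$ already recorded in Section \ref{subsec_Vgn}.

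Next, I would verify that $\tau$ intertwines the two operators appearing in the statement. For $\mu(\pt)$ the intertwining is immediate because the reference point can be chosen disjoint from $L_1$, so the correction term in \eqref{mu-flip-sym} vanishes. For $\muu(\Sigma)$ the combination \eqref{mu-orb} was built precisely to be $\tau$-invariant, so applied with $L_1 = S^1\times\{p_1\}$ and the single intersection point $(\pt,p_1)\in\Sigma\cap L_1$, it yields the desired intertwining of $\muu(\Sigma)$ on $\bV_{g,n}$ with $\muu(\Sigma)$ on the target. Once this is in hand, both claims follow by transport: Proposition \ref{lem_AllEigenvalues} bounds the simultaneous eigenvalues of $(\muu(\Sigma),\mu(\pt))$ in the generalized $\mu(\pt)=2$ eigenspace of $\bV_{g,n}$ inside the asserted set, Lemma \ref{lem_ExistenceOfEigenvectors} produces an honest simultaneous eigenvector with eigenvalues $(\lambda,2)$ for each $\lambda\in\{-(2g+n-2),\ldots,2g+n-2\}$, and these are carried by $\tau$ to the target, yielding the stated spectrum. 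The one-dimensionality of the top and bottom simultaneous generalized eigenspaces for $n\geq 3$ follows directly from Theorem \ref{thm_TopEigenspaceDim1} and the fact that $\tau$ is a linear isomorphism intertwining both operators.

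The only point requiring mild care will be the bookkeeping around the flip symmetry: the identities \eqref{delta-flip-sym} and \eqref{mu-flip-sym} are phrased in Section \ref{subsection-flip} under the null-homologous assumption $[L_1]=0\in H_1(Y;\bZ/2)$, whereas $[S^1\times\{p_1\}]$ is non-zero in our setting and $\tau$ is an isomorphism between two distinct Floer groups rather than a self-involution. However, the underlying gauge-theoretic derivation from \cite{Kr-ob} carries over verbatim, with the formulas now reinterpreted as an identification of operators across the source and target spaces; I expect no substantive difficulty here.
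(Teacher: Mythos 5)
Your proof is correct and follows essentially the same route as the paper, whose entire argument for this corollary is the sentence immediately preceding it: apply the flip symmetry at $S^1\times\{p_1\}$ to identify $\bV_{g,n}$ with $\bV'_{g,n}$, observe that this identification intertwines $\mu(\pt)$ and $\muu(\Sigma)$, and then cite Proposition \ref{lem_AllEigenvalues} (for the spectrum) and Theorem \ref{thm_TopEigenspaceDim1} (for the one-dimensionality). Your caveat about the null-homologous hypothesis in Section \ref{subsection-flip} is well-placed --- here $[S^1\times\{p_1\}]\neq 0$, so $\tau$ is an isomorphism between distinct groups rather than an involution --- but, as you observe, the identities from \cite{Kr-ob} and \cite{Street} hold in this relative form, and the paper tacitly relies on exactly that reading; note also that for the generalized-eigenspace claims one only needs the intertwining modulo the commuting nilpotent operators $\sigma_{p_i}$, which is automatic.
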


\section{Excision for singular Instanton Floer Homology}
\label{sec_excision}
\begin{Proposition} \label{prop_allEigenvaluesForGeneral3mfld}
Let $Y$ be a closed connected 3-manifold and $L$ a link in $Y$.
Let $\Sigma\subset Y$ be a connected embedded closed surface in $Y$ with genus $g$, and suppose $\Sigma$ intersects $L$ transversely at $n$ points with $n$ odd. Then for every choice of $\omega$, the spectrum of $\muu(\Sigma)$ on $\II(Y,K,\omega)$, in the generalized eigenspace of $\mu(pt)$ for the eigenvalue $2$, is a subset of $\{-(2g+n-2),-(2g+n-4),\cdots, 2g+n-4,2g+n-2 \}$.
\end{Proposition}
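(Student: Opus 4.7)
The plan is to apply the nonsingular excision theorem (Theorem~\ref{thm_nonsingularExcision}) in order to reduce the eigenvalue bound for $(Y,L,\omega)$ to the model computation on $S^1\times\Sigma$ established in Corollary~\ref{Cor_verticalw2}. The guiding intuition is that the spectrum of $\muu(\Sigma)$ on the generalized eigenspace of $\mu(\pt)$ for the eigenvalue $2$ is controlled by the local product structure of $(Y,L,\omega)$ in a tubular neighborhood of $\Sigma$, and this local structure agrees with the one appearing in the product model.

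First, I would form the disjoint union of $(Y,L,\omega)$ with the model triple $(Y_0,L_0,\omega_0):=(S^1\times\Sigma_g,\, S^1\times\{p_1,\ldots,p_n\},\, \omega_0)$, where $\omega_0$ is chosen so that admissibility holds and so that its local structure near $\Sigma_0:=\{\pt\}\times\Sigma_g$ matches that of $\omega$ near $\Sigma$ (adjusted via a flip symmetry if necessary, using Subsection~\ref{subsection-flip}). The Floer homology factors as $\II(Y,L,\omega)\otimes\II(Y_0,L_0,\omega_0)$, and both $\Sigma$ and $\Sigma_0$ appear as non-integral surfaces meeting their respective links transversely at $n$ points.

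Next, I would pick auxiliary non-integral surfaces $T\subset Y\setminus L$ and $T_0\subset Y_0\setminus L_0$ of the same positive genus, each intersecting $\omega$ and $\omega_0$ at the same odd number of points (after stabilizing $Y$ by connected sums with $S^1\times S^2$ and enlarging $\omega$ if necessary, which does not alter the spectrum of $\muu(\Sigma)$ on the relevant generalized eigenspace). Applying Theorem~\ref{thm_nonsingularExcision} along $T\sqcup T_0$ with a gluing diffeomorphism $\varphi:T\to T_0$ produces a connected triple $(\tilde Y,\tilde L,\tilde\omega)$. I would choose $\varphi$ so that the images of $\Sigma$ and $\Sigma_0$ become homologous in $H_2(\tilde Y;\bZ)$, which then yields $\muu(\Sigma)=\muu(\Sigma_0)$ as operators on $\II(\tilde Y,\tilde L,\tilde\omega)$.

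Finally, the excision isomorphism intertwines $\muu(\Sigma)\otimes 1$ with the operator $\muu(\Sigma)$ on $\II(\tilde Y)$, and it likewise intertwines $1\otimes\muu(\Sigma_0)$ with $\muu(\Sigma_0)=\muu(\Sigma)$ on $\II(\tilde Y)$; hence on the simultaneous generalized eigenspace of $(\mu(T\sqcup T_0),\mu(\pt))$ for the appropriate eigenvalues, the spectra of $\muu(\Sigma)\otimes 1$ and $1\otimes\muu(\Sigma_0)$ coincide. Since tensoring with a nonzero vector space does not alter spectra, the former's spectrum equals that of $\muu(\Sigma)$ on $\II(Y,L,\omega)$ restricted to the generalized eigenspace of $\mu(\pt)=2$, while the latter's is contained in the spectrum of $\muu(\Sigma_0)$ on $\bV_{g,n}$ restricted to the generalized eigenspace of $\mu(\pt)=2$, which by Corollary~\ref{Cor_verticalw2} lies in $\{-(2g+n-2),-(2g+n-4),\ldots,2g+n-2\}$. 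The main obstacle will be the topological bookkeeping: constructing the auxiliary surfaces $T,T_0$ with the required $\omega$-intersections after any preliminary stabilization, and arranging $\varphi$ so that $[\Sigma]=[\Sigma_0]$ in $H_2(\tilde Y)$, which should follow from a Mayer--Vietoris analysis of the excision surgery together with Poincar\'e duality.
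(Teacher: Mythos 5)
Your proposal takes a genuinely different route from the paper, but it contains a fatal gap at the step where you claim one can choose the gluing $\varphi$ so that $[\Sigma]=[\Sigma_0]$ in $H_2(\widetilde Y;\bZ)$. This is false in general, and in fact \emph{cannot} hold whenever $\II(Y,L,\omega|T)\neq 0$. To see why, note that since $\muu$ depends only on the fundamental class in $H_2$, the equality $[\Sigma]=[\Sigma_0]$ would force $\muu(\Sigma)=\muu(\Sigma_0)$ as operators on $\II(\widetilde Y,\widetilde L,\tilde\omega|\widetilde T)$. Pulling this back through the excision isomorphism gives $\muu(\Sigma)\otimes 1 = 1\otimes\muu(\Sigma_0)$ on $\II(Y,L,\omega|T)\otimes\II(Y_0,L_0,\omega_0|T_0)$. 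But a simple linear-algebra fact is that if $A\otimes 1 = 1\otimes B$ on a tensor product of nonzero spaces then both $A$ and $B$ are the same scalar multiple of the identity. This contradicts the fact, used in your own argument, that $\muu(\Sigma_0)$ on the model $\II(Y_0,L_0,\omega_0|T_0)$ has spectrum $\{-(2g+n-2),\ldots,2g+n-2\}$, which has more than one element whenever $(g,n)\ne(0,1)$. Topologically one can also see the claim fails directly: pick a $1$-cycle $a\subset Y\setminus(T\cup\Sigma)$ with $[a]\cdot[\Sigma]\ne 0$; in $\widetilde Y$ the cycle $a$ is disjoint from $\Sigma_0$, so $[a]\cdot[\Sigma_0]=0\ne[a]\cdot[\Sigma]$. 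The only way to genuinely ``move'' $\Sigma$ into the model via excision would be to excise along a surface parallel to $\Sigma$, but such a surface meets $L$, and that is exactly the singular excision theorem being established in this paper --- using it here would be circular. Secondary issues: the stabilization by connected sums with $S^1\times S^2$ to manufacture the auxiliary non-integral surfaces $T,T_0$ is not justified and does not obviously preserve the spectrum; and for $\II(Y,L,\omega|T)$ to equal the full generalized $\mu(\pt)=2$ eigenspace you would need $T$ and $T_0$ to have genus exactly one, a restriction you do not state.

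The paper avoids all of these difficulties with a cobordism splitting argument rather than an excision. Taking $W=[0,1]\times Y$ and $N\subset W$ a tubular neighborhood of $\{1/2\}\times\Sigma$, the piece $N$ is a cobordism from $\emptyset$ to $S^1\times\Sigma$ and $W\setminus N$ is a cobordism from $Y\sqcup(S^1\times\Sigma)$ to $Y$. The induced maps $\varphi_1:\bC\to\II(S^1\times\Sigma,\ldots)$ and $\varphi_2:\II(S^1\times\Sigma,\ldots)\otimes\II(Y,L,\omega)\to\II(Y,L,\omega)$ satisfy $\varphi_2\circ(\varphi_1\otimes\id)=\pm\id$, and by functoriality every polynomial in $(\muu(\Sigma),\mu(\pt))$ that vanishes on the model vanishes on $\II(Y,L,\omega)$. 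This cleanly shows the joint spectrum on $\II(Y,L,\omega)$ is contained in the joint spectrum on the model, and the result follows from Proposition~\ref{lem_AllEigenvalues} and Corollary~\ref{Cor_verticalw2}. The paper's route also correctly handles the degenerate case $(g,n)=(0,1)$, where $\II(Y,L,\omega)=0$, which your proposal does not address.
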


\begin{proof}
If $(g,n)=(0,1)$, then the moduli spaces of flat $\SU(2)$ connections on $Y-L$ with the holonomy condition is empty, therefore $\II(Y,K,\omega)=0$, and the statement is vacuously true. For the rest of the proof, assume $(g,n)\neq(0,1)$.

Without loss of generality, assume $\omega$ and $\Sigma$ intersect transeversely.
Let $W:=[0,1]\times Y $ and let $N\subset W$ be a tubular neighborhood of $\{1/2\}\times\Sigma$. Let $\{p_1, \cdots , p_n\}:=\Sigma\cap L$.
Then $N$ is a cobordism from the empty set to $S^1\times \Sigma$, and $W-N$ is a cobordism from $Y\sqcup S^1\times \Sigma$ to $Y$. Let 
$$\varphi_1:\bC\to \II( S^1\times\Sigma ,S^1 \times \{p_1, \cdots , p_n\}, S^1 \times(\omega\cap\Sigma))$$  
be defined by $\varphi_1:=\II(N,N\cap L, N\cap ([0,1]\times \omega))$, and let 
$$\varphi_2: \II( S^1\times\Sigma ,S^1 \times \{p_1, \cdots , p_n\}, S^1 \times(\omega\cap\Sigma)) \otimes \II(Y,L,\omega) \to \II(Y,L,\omega)$$
be defined by $\varphi_2:= (W-N,W-N\cap L, (W-N)\cap ([0,1]\times \omega))$. Both $\varphi_1$ and $\varphi_2$ are defined up to a sign. Since the cobordism map induced by $(W,[0,1]\times \omega)$ is the identity on $\II(Y,L,\omega)$, we have 
$$
\varphi_2 \circ (\varphi_1 \otimes \id_{\II(Y,L,\omega)}) = \pm \id_{\II(Y,L,\omega)}.
$$
On the other hand, by the functoriality of $\II$, the maps $\muu(\Sigma)$ and $\muu(\pt)$ satisfies 
$$
\varphi_2 \circ ( ( \muu(\Sigma)\circ \varphi_1)\otimes \id_{\II(Y,L,\omega)})= \pm \muu(\Sigma),
$$
$$
\varphi_2 \circ ( ( \muu(\pt)\circ \varphi_1)\otimes \id_{\II(Y,L,\omega)})= \pm \muu(\pt),
$$
where the $\mu$ maps on the left hand side are defined on 
$$\II(S^1\times \Sigma,S^1\times \{p_1, \cdots,p_n\},S^1\times (\omega\cap\Sigma) ),$$
 and the $\mu$ maps on the right hand side are defined on $\II(Y,L,\omega)$.
Hence the spectrum of $(\muu(\Sigma),\mu(\pt))$ on $\II(Y,L,\omega)$ is a subset of the spectrum of $(\muu(\Sigma),\mu(\pt))$ on $\II(S^1\times \Sigma,S^1\times \{p_1, \cdots,p_n\},S^1\times (\omega\cap\Sigma) )$. Therefore the result follows from Proposition \ref{lem_AllEigenvalues} and Corollary \ref{Cor_verticalw2}.
\end{proof}

\begin{remark}
Since $\muu(\Sigma)$ only depends on the fundamental class $[\Sigma]\in H_2(Y;\bZ)$, 
Proposition \ref{prop_allEigenvaluesForGeneral3mfld} gives an lower bound for $2g+n$ in terms of the homology class of $\Sigma$ in $H_2(Y;\bZ)$.
\end{remark}

\begin{Definition}
Suppose $(Y,L,\omega)$ is an admissible triple with $Y$ connected.
Let $\Sigma\subset Y$ be an embedded surface in $Y$,  suppose the connected components of $\Sigma$ are $\Sigma_1,\cdots,\Sigma_k$, and suppose $\Sigma_i$ has genus $g_i$ and intersects $L$ transversely at $n_i$ points. 
Define 
$$\II(Y,L,\omega|\Sigma)\subset \II(Y,L,\omega)$$ 
to be the simultaneous generalized eigenspace of $(\muu(\Sigma_1),\cdots,\muu(\Sigma_k),\mu(\pt))$ for the eigenvalues $(2g_1+n_1-2,\cdots,2g_k+n_k-2,2)$ in $\II(Y,L,\omega)$.
\end{Definition}

\begin{Theorem} \label{thm_excision}
Let $Y$ be a closed 3-manifold and $L$ a link in $Y$.
Let $\Sigma_1,\Sigma_2\subset Y$ be two disjoint embedded connected surfaces in $Y$ such that they have the same genus  and both intersect $L$ transversely at $n\ge 3$ points with $n$ odd. 
Let $(\omega,\partial \omega)\subset(Y,L)$ be a $1$-manifold that intersects $\Sigma_1$ and $\Sigma_2$ transversely in its interior at an equal number of points. Let $\varphi:\Sigma_1\to\Sigma_2$ be a diffeomorphism that maps $\Sigma_1\cap L$ to $\Sigma_2\cap L$ and maps $\Sigma_1\cap \omega$ to $\Sigma_2\cap \omega$. Let $(\widetilde{Y}, \widetilde{L}, \tilde{\omega})$ be the resulting triple after cutting $Y$ open along $\Sigma=\Sigma_1\cup \Sigma_2$ and gluing the boundary from cutting $\Sigma_1$ to the boundary from cutting $\Sigma_2$ by the map $\varphi$, let $\widetilde{\Sigma}\subset\widetilde{Y}$ be the image of $\Sigma$ after gluing.  Then
$$
\II(Y,L,\omega|\Sigma) \cong \II(\widetilde{Y},\widetilde{L},\tilde{\omega}| \widetilde{\Sigma}).
$$
\end{Theorem}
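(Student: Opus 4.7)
The strategy extends the excision arguments of Floer \cite{floer1990instanton} and Kronheimer--Mrowka \cite[Theorem 7.7]{KM:suture} to the singular setting, following the blueprint of Street's genus-zero case \cite{Street}. The new algebraic input is the spectral information developed in Section \ref{section-spec}: Proposition \ref{prop_allEigenvaluesForGeneral3mfld} bounds the eigenvalues of $\muu(\Sigma)$, and Corollary \ref{Cor_verticalw2} asserts that the simultaneous generalized eigenspace of $(\muu(\Sigma),\mu(\mathrm{pt}))$ for the top eigenvalues on $S^1\times\Sigma$ is one-dimensional, which is precisely what is required to upgrade a cobordism computation to the multiplication by a scalar.

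I would first build the singular excision cobordism $W\colon(Y,L,\omega)\to(\widetilde Y,\widetilde L,\widetilde\omega)$ together with its reverse $\overline W$. Let $Y_0$ be $Y$ cut open along $\Sigma_1\sqcup\Sigma_2$, so that $\partial Y_0$ consists of two copies of each $\Sigma_i$. Take $Y_0\times[0,1]$ and, at $Y_0\times\{1\}$, attach two copies of $\Sigma\times P$, where $P$ is a two-dimensional pair of pants, following the standard Floer--Kronheimer--Mrowka recipe so that the opposite end becomes $\widetilde Y$. Compatibility of $\varphi$ with the intersections with $L$ and $\omega$ ensures that these data extend over $W$ and $\overline W$ as embedded submanifolds $S$ and $\omega_W$, producing a cobordism of admissible triples. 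Since each $\Sigma_i$ at the bottom is cobordant inside $W$ to $\widetilde\Sigma_i$ at the top through a ``leg'' of a pair-of-pants piece, the functoriality of the $\muu$ and $\mu(\mathrm{pt})$ actions implies that the cobordism maps restrict to
\[
\II(W)\colon\II(Y,L,\omega\,|\,\Sigma)\to\II(\widetilde Y,\widetilde L,\widetilde\omega\,|\,\widetilde\Sigma),\qquad
\II(\overline W)\colon\II(\widetilde Y,\widetilde L,\widetilde\omega\,|\,\widetilde\Sigma)\to\II(Y,L,\omega\,|\,\Sigma).
\]

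The core of the argument is to prove that $\II(\overline W)\circ\II(W)$ acts as a nonzero scalar on $\II(Y,L,\omega\,|\,\Sigma)$, and analogously with $W$ and $\overline W$ swapped. Neck-stretching along an embedded hypersurface modeled on $S^1\times\Sigma$ that naturally appears inside $\overline W\circ W$ factors the composition through $\II(S^1\times\Sigma,S^1\times\{p_1,\dots,p_n\},\omega')$ for an appropriate auxiliary $\omega'$ with $|\omega'\cap\Sigma|$ odd. Restricting to the $|\,\Sigma$ eigenspace on the outer ends forces the factoring to land in the corresponding top simultaneous eigenspace of $(\muu(\Sigma),\mu(\mathrm{pt}))$ on this $S^1\times\Sigma$ Floer group, which by Corollary \ref{Cor_verticalw2} is one-dimensional. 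Hence the composition acts on $\II(Y,L,\omega\,|\,\Sigma)$ as multiplication by a single scalar $c\in\bC$, and the same reasoning with the roles exchanged produces a corresponding scalar $c'$.

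The main obstacle I anticipate is showing that $c\ne 0$. I would handle this via a pairing computation: the one-dimensional top eigenspace of $\II(S^1\times\Sigma,S^1\times\{p_1,\dots,p_n\},\omega')$ has a distinguished generator arising from the $D^2\times\Sigma$ capping cobordism, and the scalar $c$ is, up to harmless constants, the self-evaluation of this generator under the pair-of-pants multiplication followed by capping at both ends. The non-degenerate pairing on $\bV_{g,n}$ from Section \ref{subsec_Vgn}, together with the identifications in Proposition \ref{ring-deformation} and the pairing formula of Proposition \ref{Agn-pairing-Rgn}, force this self-evaluation to be nonzero; the presence of the $\omega'$ datum (which transports $\mathbb{V}_{g,n}$ to the $\omega$-twisted variant $\mathbb{V}_{g,n}'$ via the flip symmetry of Section \ref{subsection-flip}) preserves this non-vanishing. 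Once $c\ne 0$ and $c'\ne 0$ are established, $\II(W)$ is an isomorphism between the two eigenspaces, yielding $\II(Y,L,\omega\,|\,\Sigma)\cong\II(\widetilde Y,\widetilde L,\widetilde\omega\,|\,\widetilde\Sigma)$ as desired.
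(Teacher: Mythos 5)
Your plan follows the same route as the paper's proof: construct the singular excision cobordism and its reverse, restrict the cobordism maps to the $(2g+n-2,2)$-eigenspace using the homology-invariance of $\muu$ and $\mu(\pt)$, and reduce the scalar-multiple claim to the one-dimensionality of the top eigenspace on $S^1\times\Sigma$ supplied by Theorem \ref{thm_TopEigenspaceDim1} and Corollary \ref{Cor_verticalw2}. However, the central step --- the claim that neck-stretching ``factors the composition through $\II(S^1\times\Sigma,\ldots)$'' --- is not correct as stated, and if it were taken literally it would over-prove. The copy of $(S^1\times\Sigma_g,S^1\times L_\Sigma,S^1\times\omega_\Sigma)$ that sits inside the composed cobordism $W_2\circ W_1$ is \emph{non-separating}: cutting along it yields a \emph{connected} cobordism with two new boundary copies of $S^1\times\Sigma_g$, and filling those two copies with $(D^2\sqcup D^2)\times\Sigma_g$ recovers the product $[0,1]\times Y$. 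A genuine factorization of the composed map through $\II(S^1\times\Sigma,\ldots)$ restricted to a one-dimensional eigenspace would force the composition to have rank at most one, which is too strong and not what you want.

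The correct mechanism, and what the paper actually runs, is to compare two ways of filling those two cut boundaries: by $[0,1]\times S^1\times\Sigma_g$ (reassembling $W_2\circ W_1$) versus by $(D^2\sqcup D^2)\times\Sigma_g$ (giving the product cobordism). These give two pairings $\varphi_1,\varphi_2$ on the one-dimensional space $\II(S^1\times\Sigma_g,\ldots|\Sigma_g)^{\otimes 2}$, which are therefore proportional; nonvanishing of both is elementary because $\varphi_1$ is dual to the identity and $\varphi_2=\varphi_4\otimes\varphi_4$ with $\varphi_4\neq 0$ following from $\varphi_1=\varphi_4\circ\varphi_3$ for $\varphi_3$ the pair of pants. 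This gives that $\II(W_2\circ W_1)$ is a nonzero scalar times the identity on the eigenspace. Your proposed nonvanishing argument via Propositions \ref{ring-deformation} and \ref{Agn-pairing-Rgn} could likely be pushed through, but it is much heavier machinery than this soft TQFT comparison needs; if you rephrase the ``factoring'' step as the above cut-and-refill comparison, the rest of your outline is sound.
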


\begin{proof}
The idea of the proof is originally due to Floer \cite{floer1990instanton, braam1995floer}, and our argument follows the strategy of \cite[Theorem 7.7]{KM:suture}.

The surgery defines a cobordism $(W_1,S_1,u_1)$ from $(Y,L,\omega)$ to $(\widetilde{Y}, \widetilde{L},\tilde{\omega})$, and a cobordism $(W_2,S_2,u_2)$ from $(\widetilde{Y}, \widetilde{L},\tilde{\omega})$ to $(Y,L,\omega)$. For a detailed construction of the cobordism defined by the excision, the reader may refer to, for example, \cite[Section 3.2]{KM:suture}. Since $\Sigma$ and $\widetilde{\Sigma}$ are homologous in $W_1$, the map $\II(W_1,S_1,u_1)$ takes $\II(Y,L,\omega|\Sigma)$ into $\II(\widetilde{Y},\widetilde{L},\tilde{\omega}| \widetilde{\Sigma})$. Similarly, the map $\II(W_2,S_2,u_2)$ takes $\II(\widetilde{Y},\widetilde{L},\tilde{\omega}| \widetilde{\Sigma})$ into $\II(Y,L,\omega|\Sigma)$.

Let $(W,S,u)$ be the composition of $(W_1,S_1,u_1)$ and $(W_2,S_2,u_2)$, then 
$$\II(W,S,u) = \II(W_2,S_2,u_2)\circ \II(W_1,S_1,u_1).$$ 
Let $\Sigma_g$ be a surface diffeomorphic to $\Sigma_1$, and let $L_\Sigma$, $\omega_\Sigma\subset \Sigma_g$ be the images of $L\cap\Sigma_1$ and $\omega\cap \Sigma_1$ respectively under the diffeomorphism.
The cobordism $(W,S,u)$ has the following property: 
there exists an embedded $S^1\times \Sigma_g\subset W$, such that its intersection with $S$ is $S^1\times L_\Sigma$, and its intersection with $u$ is $S^1\times \omega_\Sigma$. If one cuts $(W,S,u)$ open along $(S^1\times\Sigma_g, S^1\times L_\Sigma,S^1\times \omega_\Sigma)$, and fill the boundary with $( (D^2\sqcup D^2)\times\Sigma_g , (D^2\sqcup D^2)\times L_\Sigma,(D^2\sqcup D^2)\times \omega_\Sigma)$, then the resulting cobordism is the product triple $([0,1]\times Y, [0,1]\times L,[0,1]\times \omega)$.

There are two maps 
$$
\varphi_1,\varphi_2:\II(S^1\times \Sigma_g,  S^1\times L_\Sigma,S^1\times \omega_\Sigma|\Sigma_g)\otimes \II(S^1\times \Sigma_g,  S^1\times L_\Sigma,S^1\times \omega_\Sigma|\Sigma_g)\to \bC.
$$
The map $\varphi_1$ is defined by 
$$\varphi_1:=\II([0,1]\times S^1 \times \Sigma_g,[0,1]\times S^1 \times  L_\Sigma, [0,1]\times S^1 \times \omega_\Sigma)$$
where $[0,1]\times S^1 \times \Sigma_g$ is considered as a cobordism from $S^1 \times \Sigma_g\sqcup S^1 \times \Sigma_g$ to the empty set.
The map $\varphi_2$ is defined by 
$$\varphi_2:=\II((D^2\sqcup D^2)\times \Sigma_g , (D^2\sqcup D^2)\times  L_\Sigma, (D^2\sqcup D^2)\times \omega_\Sigma).$$
The signs of $\varphi_1$ and $\varphi_2$ can be fixed by taking canonical almost complex structures on the cobordisms, although we do not need it in the current proof.
We claim that $\varphi_1$ and $\varphi_2$ differ by a non-zero multiplicative factor. 

In fact, by Theorem \ref{thm_TopEigenspaceDim1} and Corollary \ref{Cor_verticalw2},
$$
\dim_\bC \II( S^1\times \Sigma_g,  S^1\times L_\Sigma, S^1\times \omega_\Sigma|\Sigma_g)=1,
$$ 
therefore we only need to show that both $\varphi_1$ and $\varphi_2$ are non-zero. The map $\varphi_1$ is not zero because the cobordism is diffeomorphic to a trivial product, thus the map is dual to the identity map. 
Let $\varphi_3$ be the cobordism map defined by the product of the pair-of-pants cobordism from $S^1\sqcup S^1$ to $S^1$ with $(\Sigma_g, L_\Sigma,\omega_\Sigma)$, let $\varphi_4$ be the cobordism map defined by the product of the disk cobordism from $S^1$ to $\emptyset$ with $(\Sigma_g, L_\Sigma,\omega_\Sigma)$, then $\varphi_1=\varphi_4\circ\varphi_3$, therefore $\varphi_4\neq 0$. On the other hand, $\varphi_2=\varphi_4\otimes\varphi_4$, hence $\varphi_2\neq 0$.

It then follows from the formal properties of singular instanton Floer homology that the restriction of $\II(W,S,u)=\II(W_2,S_2,u_2)\circ \II(W_1,S_1,u_1)$ to $\II(Y,L,u|\Sigma)$ equals a non-zero constant multiplication of $\id_{\II(Y,L,\omega|\Sigma)} $. Similarly, the restriction of the map $\II(W_1,S_1,u_1)\circ \II(W_2,S_2,u_2)$ to $\II(\widetilde{Y},\widetilde{L},\tilde{\omega}|\widetilde{\Sigma})$ equals a non-zero constant multiplication of $\id_{\II(\widetilde{Y},\widetilde{L},\tilde{\omega}|\widetilde{\Sigma})}$. Therefore $\II(Y,L,\omega|\Sigma)\cong \II(\widetilde{Y},\widetilde{L},\tilde{\omega}|\widetilde{\Sigma})$.
\end{proof}

\begin{Proposition}\label{inst-fiber-mfd}
Let $n\ge 3$ be an odd integer, let $\Sigma$ be a surface with genus $g$, let $\{p_1,\cdots, p_n,u_0\}\subset \Sigma$.
Let $f:\Sigma\to\Sigma$ be a diffeomorphism such that $f(u_0)=u_0$ and $f(\{p_1,\cdots, p_n\}) = \{p_1,\cdots, p_n\}$. Let 
$$M_f=[0,1]\times \Sigma/(0,x)\sim (1,f(x))$$
be the mapping torus of $f$, let $L_f$ be the image of $[0,1]\times \{p_1,\cdots,p_n\}$ in $M_f$, and let $u_f$ be the image of $[0,1]\times u_0$ in $M_f$. Then
$$
\II(M_f,L_f,\emptyset|\Sigma)\cong \mathbb{C}, \quad
\II(M_f,L_f,u_f|\Sigma)\cong \mathbb{C}.
$$
\end{Proposition}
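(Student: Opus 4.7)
The plan is to reduce the computation to the product case $S^1\times\Sigma$ via the singular excision theorem (Theorem \ref{thm_excision}), using the key observation that pairing $M_f$ with $M_{f^{-1}}$ allows the two monodromies to cancel. The product case has already been handled by Theorem \ref{thm_TopEigenspaceDim1} and Corollary \ref{Cor_verticalw2}.

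Concretely, first I would form the disjoint union $Y:=M_f\sqcup M_{f^{-1}}$ with link $L:=L_f\sqcup L_{f^{-1}}$ and, for the second statement, $\omega':=u_f\sqcup u_{f^{-1}}$ (with $\omega'=\emptyset$ for the first). This is well defined because $f^{-1}$ also permutes $\{p_1,\ldots,p_n\}$ and fixes $u_0$. Pick fibers $\Sigma_1\subset M_f$ and $\Sigma_2\subset M_{f^{-1}}$; each meets the link transversely at the $n\ge 3$ marked points, so $(Y,L,\omega')$ is admissible on every component. Take $\varphi:\Sigma_1\to\Sigma_2$ to be the canonical identification through the common abstract fiber $\Sigma$, and apply Theorem \ref{thm_excision} to $(\Sigma_1,\Sigma_2,\varphi)$.

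The main step that I expect to require careful bookkeeping is the topological identification of the excised triple. Cutting $Y$ along $\Sigma_1\sqcup\Sigma_2$ produces two cobordisms $P_f,P_{f^{-1}}$, each diffeomorphic to $[0,1]\times\Sigma$, whose two boundary identifications with the abstract fiber differ by $f$ and $f^{-1}$ respectively. The excision regluing concatenates these pieces end-to-end, and a direct monodromy computation shows that the total monodromy of the resulting $\Sigma$-bundle over $S^1$ is $f\circ f^{-1}=\mathrm{id}$. Tracing each $p_i$ and $u_0$ through the same regluing identifies
$$\widetilde Y\cong S^1\times\Sigma, \quad \widetilde L=S^1\times\{p_1,\ldots,p_n\}, \quad \widetilde\omega=S^1\times\{u_0\}\ \text{or}\ \emptyset,$$
and under this identification the image $\widetilde\Sigma$ of $\Sigma_1\sqcup\Sigma_2$ is two fibers of $S^1\times\Sigma$, both homologous to the standard fiber.

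Finally, Theorem \ref{thm_excision} combined with the tensor-product behavior of $\II$ under disjoint unions (extending the definition of $\II(\cdot\,|\,\Sigma)$ to disconnected $Y$ componentwise) yields
$$\II(M_f,L_f,\omega\,|\,\Sigma)\otimes \II(M_{f^{-1}},L_{f^{-1}},\omega^{-}\,|\,\Sigma)\cong \II\bigl(S^1\times\Sigma,\,S^1\times\{p_1,\ldots,p_n\},\,\widetilde\omega\,\big|\,\widetilde\Sigma\bigr),$$
where $\omega^{-}$ is the corresponding $1$-manifold in $M_{f^{-1}}$. The right-hand side is one-dimensional by Theorem \ref{thm_TopEigenspaceDim1} in the $\widetilde\omega=\emptyset$ case and by Corollary \ref{Cor_verticalw2} in the $\widetilde\omega=S^1\times\{u_0\}$ case, both invoking $n\ge 3$ and the fact that the two fibers of $\widetilde\Sigma$ define the same operator $\muu$. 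A tensor product of finite-dimensional vector spaces is one-dimensional precisely when each factor is one-dimensional, forcing $\II(M_f,L_f,\omega\,|\,\Sigma)\cong\bC$ (and similarly for the $f^{-1}$ factor), which completes the proof.
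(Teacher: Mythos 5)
Your proof is correct and follows essentially the same strategy as the paper: apply the singular excision theorem to $M_f\sqcup M_{f^{-1}}$ along a pair of fibers, identify the excised triple with $(S^1\times\Sigma, S^1\times\{p_1,\ldots,p_n\},\widetilde\omega)$, and invoke Theorem \ref{thm_TopEigenspaceDim1} / Corollary \ref{Cor_verticalw2} to force both tensor factors to be one-dimensional. The only cosmetic difference is that the paper first records the general isomorphism $\II(M_f,L_f,\cdot|\Sigma)\otimes\II(M_g,L_g,\cdot|\Sigma)\cong\II(M_{f\circ g},L_{f\circ g},\cdot|\Sigma)$ and then sets $g=f^{-1}$, whereas you go directly to the special case via an explicit monodromy calculation.
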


\begin{proof}
The proof follows from the same argument as \cite[Lemma 4.7]{KM:suture}.

Let $f, g :(\Sigma_g, \{p_1,\cdots, p_n\},\{u_0\}) \to (\Sigma_g, \{p_1,\cdots, p_n\},\{u_0\})$ be two diffeomorphisms. Apply Theorem \ref{thm_excision} to $M_f\sqcup M_g$ yields 
\begin{align*}
\II(M_f,L_f,\emptyset |\Sigma) \otimes \II(M_g,L_g,\emptyset |\Sigma) &\cong \II(M_{f\circ g},L_{f\circ g},\emptyset |\Sigma),
\\
\II(M_f,L_f,u_f |\Sigma) \otimes \II(M_g,L_g,u_g |\Sigma) &\cong \II(M_{f\circ g},L_{f\circ g},u_{f\circ g} |\Sigma).
\end{align*}
Let $g=f^{-1}$, it follows from Theorem \ref{thm_TopEigenspaceDim1} and Corollary \ref{Cor_verticalw2} that 
\begin{align*}
\II(M_f,L_f,\emptyset |\Sigma) \otimes \II(M_{f^{-1}},L_{f^{-1}},\emptyset |\Sigma) &\cong \bC,
\\
\II(M_f,L_f,u_f |\Sigma) \otimes \II(M_{f^{-1}},L_{f^{-1}},u_{f^{-1}} |\Sigma) &\cong \bC,
\end{align*}
therefore $\II(M_f,L_f,\emptyset |\Sigma)\cong \II(M_f,L_f,u_f |\Sigma) \cong \bC$.
\end{proof}

\begin{Theorem}
\label{thm_excisionAlongOneSurface}
Let $Y$ be a closed 3-manifold and $L$ a link in $Y$.
Let $\Sigma\subset Y$ be an embedded connected surfaces in $Y$ such that it intersects $L$ transversely at $n\ge 3$ points with $n$ odd. Let $(\omega,\partial \omega)\subset(Y,L)$ be a $1$-manifold that intersects $\Sigma$ transversely in its interior. Let $\varphi:\Sigma\to\Sigma$ be a diffeomorphism that maps $\Sigma\cap L$ to $\Sigma\cap L$ and maps $\Sigma\cap \omega$ to $\Sigma\cap \omega$. Let $(\widetilde{Y}, \widetilde{L}, \widetilde{\Sigma},\tilde{\omega})$ be the resulting manifolds after cutting $Y$ open along $\Sigma$ and gluing back using the map $\varphi$. Then
$$
\II(Y,L,\omega|\Sigma) \cong \II(\widetilde{Y},\widetilde{L},\tilde{\omega}| \widetilde{\Sigma}).
$$
\end{Theorem}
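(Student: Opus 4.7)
The plan is to reduce Theorem \ref{thm_excisionAlongOneSurface} to the two-surface excision Theorem \ref{thm_excision} by taking a disjoint union with an auxiliary mapping torus built from $\varphi$, following a strategy originally due to Floer.

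Let $\Sigma\cap L=\{p_1,\cdots,p_n\}$ and $\Sigma\cap \omega=\{q_1,\cdots,q_k\}$. Since $\varphi$ preserves both sets, I form the mapping torus $M_\varphi:=[0,1]\times \Sigma/(0,x)\sim(1,\varphi(x))$, together with the corresponding mapping tori $L_\varphi\subset M_\varphi$ and $u_\varphi\subset M_\varphi$ of the points $\{p_i\}$ and $\{q_j\}$. Since $n\geq 3$ is odd, any fiber $\Sigma'\subset M_\varphi$ is non-integral, so $(M_\varphi,L_\varphi,u_\varphi)$ is admissible. The first step is to apply Theorem \ref{thm_excision} to the disjoint union $(Y\sqcup M_\varphi,\,L\sqcup L_\varphi,\,\omega\sqcup u_\varphi)$ along the surfaces $\Sigma\subset Y$ and $\Sigma'\subset M_\varphi$, with gluing diffeomorphism induced by $\varphi$.

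A direct topological analysis, analogous to Floer's torus-excision argument, shows that the resulting 3-manifold is diffeomorphic to $\widetilde{Y}$ equipped with $\widetilde{L}$ and $\widetilde{\omega}$: the cut-open $M_\varphi$ is a product region $\Sigma\times[0,1]$ whose boundary-to-boundary transport carries the monodromy $\varphi^{\pm 1}$ coming from the identification in $M_\varphi$, so gluing it into the cut-open $Y$ according to Theorem \ref{thm_excision} effectively identifies the two sides $\Sigma_\pm$ of $\Sigma$ by $\varphi^{\pm 1}$, producing $\widetilde{Y}$. Moreover, the images $\widetilde{\Sigma}_1,\widetilde{\Sigma}_2\subset\widetilde{Y}$ of the two cut surfaces cobound this inserted product region, so they are homologous and have matching intersection patterns with $\widetilde{L}$; hence $\muu(\widetilde{\Sigma}_1)=\muu(\widetilde{\Sigma}_2)$ and
$$
\II(\widetilde{Y},\widetilde{L},\widetilde{\omega}|\widetilde{\Sigma}_1\cup\widetilde{\Sigma}_2)=\II(\widetilde{Y},\widetilde{L},\widetilde{\omega}|\widetilde{\Sigma}).
$$
Combining this with Theorem \ref{thm_excision} yields
$$
\II(Y,L,\omega|\Sigma)\otimes \II(M_\varphi,L_\varphi,u_\varphi|\Sigma)\cong \II(\widetilde{Y},\widetilde{L},\widetilde{\omega}|\widetilde{\Sigma}).
$$

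To conclude, it remains to show that $\II(M_\varphi,L_\varphi,u_\varphi|\Sigma)\cong \bC$, which is a mild generalization of Proposition \ref{inst-fiber-mfd}. Following the same strategy as in that proposition, applying Theorem \ref{thm_excision} along fibers of $M_\varphi\sqcup M_{\varphi^{-1}}$ yields
$$
\II(M_\varphi,L_\varphi,u_\varphi|\Sigma)\otimes \II(M_{\varphi^{-1}},L_{\varphi^{-1}},u_{\varphi^{-1}}|\Sigma)\cong \II(S^1\times \Sigma,\,S^1\times\{p_1,\cdots,p_n\},\,u_{\id}|\Sigma),
$$
where $u_{\id}$ is a disjoint union of $k$ parallel circles of the form $S^1\times\{q_j\}$. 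The right-hand side is one-dimensional: since the singular bundle data depends only on the class of $u_{\id}$ in $H_1(S^1\times\Sigma;\bZ/2)$, which is determined by the parity of $k$, we may replace $u_{\id}$ by either $\emptyset$ or $S^1\times\{q\}$ and apply Theorem \ref{thm_TopEigenspaceDim1} or Corollary \ref{Cor_verticalw2}. This forces both factors on the left to be one-dimensional, and the result follows. The main obstacle is the topological bookkeeping needed to verify that the two-surface excision of $Y\sqcup M_\varphi$ indeed reproduces $\widetilde{Y}$ with $\widetilde{L}$ and $\widetilde{\omega}$ matching the theorem's output, together with confirming the homology equality $[\widetilde{\Sigma}_1]=[\widetilde{\Sigma}_2]\in H_2(\widetilde{Y};\bZ)$ and the resulting equality of the $\muu$ operators on $\II(\widetilde{Y},\widetilde{L},\widetilde{\omega})$.
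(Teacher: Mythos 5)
Your proof is correct and takes essentially the same approach as the paper's: reduce to the two-surface excision of Theorem~\ref{thm_excision} together with the one-dimensionality of the top eigenspace for the mapping torus of $\varphi$. The paper applies Theorem~\ref{thm_excision} internally to $Y$ using a parallel copy $\Sigma'$ of $\Sigma$, which splits $Y$ into $\widetilde Y \sqcup M_\varphi$ directly; you instead start with $Y\sqcup M_\varphi$ and excise to obtain $\widetilde Y$, which is the inverse direction of the same move and requires the extra bookkeeping you flag (verifying that the gluing in the excision reproduces $\widetilde Y$ and matches $\widetilde L$, $\tilde\omega$; identifying $[\widetilde\Sigma_1]=[\widetilde\Sigma_2]$). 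The paper's variant avoids that bookkeeping since the split is automatic from Theorem~\ref{thm_excision}. On the other hand, the paper simply cites Proposition~\ref{inst-fiber-mfd} for the vanishing of the $M_\varphi$ factor, even though that proposition is stated only for a single $\varphi$-fixed point $u_0$, whereas $\Sigma\cap\omega$ may consist of several points permuted by $\varphi$; your parity argument replacing $u_{\id}$ with $\emptyset$ or $S^1\times\{q\}$ and invoking Theorem~\ref{thm_TopEigenspaceDim1} or Corollary~\ref{Cor_verticalw2} fills in this detail explicitly and is a genuine improvement in rigor over the paper's terse citation.
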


\begin{proof}
Let $\Sigma'$ be a parallel copy of $\Sigma$. Let $(M_\varphi,L_\varphi,\omega_\varphi)$ be the mapping torus of $\varphi$ on $(\Sigma,\Sigma\cap L,\Sigma\cap \omega)$. Apply Theorem \ref{thm_excision} to $(Y,\Sigma\cup\Sigma')$, we have
$$
\II(Y,L,\omega|\Sigma) \cong \II(\widetilde{Y},\widetilde{L},\tilde{\omega}| \widetilde{\Sigma}) \otimes \II(M_\varphi,L_\varphi,\omega_\varphi|\Sigma).
$$
The result then follows from Proposition \ref{inst-fiber-mfd}.
\end{proof}

\begin{Proposition}\label{horizontal-w2}
Let $n\ge 3$ be an odd integer, suppose $\{p_1,\cdots,p_n\}\subset\Sigma$, let $Y:=S^1\times \Sigma$, $L:=S^1\times \{p_1,\cdots,p_n\}$. Let $x_0\in S^1$, $u_0\in \Sigma-\{p_1,\cdots,p_n\}$. Let $(c,\partial c)\subset (\Sigma, \{p_1,\cdots,p_n\})$ be a properly embedded $1$-manifold, let $\omega:=\{x_0\}\times c$, $u:= S^1\times u_0$. Then
\begin{align*}
\II(Y,L,\omega|\Sigma)&\cong \bC,
\\
\II(Y,L,\omega+u|\Sigma)&\cong \bC.
\end{align*}
\end{Proposition}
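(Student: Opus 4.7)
The plan is to establish both isomorphisms by doubling $Y$ and applying the singular excision Theorem \ref{thm_excision}, then using homological invariance of $\II$ to reduce to Theorem \ref{thm_TopEigenspaceDim1}. As a preliminary observation, by the construction of the singular bundle data reviewed in Section \ref{review-instanton}, the triple $(\II(Y, L, \omega), \muu(\Sigma), \mu(\mathrm{pt}))$ depends on $\omega$ only through the class $[\omega] \in H_1(Y, L; \bZ/2)$. The vertical circle $u = S^1 \times \{u_0\}$ is homologous in $Y$ to the link component $S^1 \times \{p_1\}$ (via $S^1 \times \eta$ for an arc $\eta \subset \Sigma$ from $u_0$ to $p_1$), and link components vanish in $H_1(Y, L; \bZ/2)$; hence $[\omega + u] = [\omega]$, reducing the second assertion to the first.

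For the first isomorphism, take two disjoint copies $(Y_i, L_i, \omega_i) = (Y, L, \omega)$ with $\omega_i = \{x_0^{(i)}\}\times c$ at distinct $S^1$-coordinates, and choose horizontal slices $\Sigma_i := \{y_i\}\times\Sigma \subset Y_i$ with $y_i \neq x_0^{(i)}$ so that $\omega_i \cap \Sigma_i = \emptyset$. Each $\Sigma_i$ has genus $g$ and meets $L_i$ transversely at $n \geq 3$ points, so Theorem \ref{thm_excision} applied with $\varphi: \Sigma_1 \to \Sigma_2$ the identity on $\Sigma$ yields
\[
\II(Y, L, \omega|\Sigma)^{\otimes 2} \cong \II(\widetilde Y, \widetilde L, \widetilde\omega \,|\, \widetilde \Sigma),
\]
where $\widetilde Y \cong S^1\times \Sigma$, $\widetilde L = S^1\times\{p_1,\ldots,p_n\}$, $\widetilde\Sigma$ is a union of two parallel $\Sigma$-slices, and $\widetilde\omega$ is the union of two parallel horizontal copies of $c$ at $S^1$-heights $s_1, s_2$.

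The rectangle $R = [s_1, s_2]\times c \subset \widetilde Y$ cobounds the two copies of $c$, with remaining boundary $[s_1, s_2]\times\partial c$ lying inside $\widetilde L$, so $[\widetilde\omega] = 0 \in H_1(\widetilde Y, \widetilde L; \bZ/2)$. By homological invariance, $\II(\widetilde Y, \widetilde L, \widetilde\omega \,|\, \widetilde\Sigma) \cong \II(\widetilde Y, \widetilde L, \emptyset \,|\, \widetilde\Sigma)$, and since the two components of $\widetilde\Sigma$ are homologous their $\muu$-operators agree, so this joint eigenspace coincides with the top eigenspace of $\bV_{g,n}$, which has dimension $1$ by Theorem \ref{thm_TopEigenspaceDim1}. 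Hence $\dim \II(Y, L, \omega|\Sigma)^{\otimes 2} = 1$, forcing $\II(Y, L, \omega|\Sigma) \cong \bC$.

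The step I expect to require the most care is the homological invariance of $\II$ under changes of $\omega$ within its class in $H_1(Y, L; \bZ/2)$. While this is standard from Kronheimer--Mrowka's construction of singular instanton Floer homology, a self-contained argument would construct an explicit cobordism surface $\Omega \subset [0,1] \times \widetilde Y$ by combining the rectangle $R$ placed at time $t = 1/2$ with a half-cylinder $[0, 1/2]\times\widetilde\omega$, and check that this cobordism together with its time-reverse induces mutually inverse isomorphisms on top eigenspaces: this amounts to verifying that the composed closed $\omega$-surface in $[0,2]\times\widetilde Y$ agrees with the trivial cylinder $[0, 2]\times\widetilde\omega$ up to a null-homologous class in $H_2([0,2]\times\widetilde Y, [0,2]\times\widetilde L; \bZ/2)$.
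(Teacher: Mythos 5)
Your argument is correct and follows the same strategy as the paper's: excise $Y\sqcup Y$ along two parallel copies of $\{\pt\}\times\Sigma$, identify the result with $\bV_{g,n}$ (or its analogue with $u$) via homological invariance, and quote Theorem \ref{thm_TopEigenspaceDim1}. The one genuine difference is your preliminary reduction of the $\omega+u$ case to the $\omega$ case, which the paper avoids: it instead runs the excision argument for $\omega+u$ in parallel, obtaining $\II(Y,L,u|\Sigma)$ on the right-hand side, and then invokes Corollary \ref{Cor_verticalw2}. Your reduction works, but the justification you cite is not quite sufficient on its own. The circle $u$ is a parallel copy of the link component $S^1\times\{p_1\}$, and since $[S^1\times\{p_1\}]\neq 0$ in $H_1(Y;\bZ/2)$, the singular bundle data $\check P$ for $\omega$ and for $\omega+u$ are \emph{not} the same orbifold bundle (as explained in Section \ref{subsection-flip}); the isomorphism of the triples $(\II(Y,L,\omega),\muu(\Sigma),\mu(\pt))$ and $(\II(Y,L,\omega+u),\muu(\Sigma),\mu(\pt))$ is supplied by the flip symmetry $\tau$ of \eqref{flip-Floer} together with the $\tau$-invariance of $\muu(\Sigma)$ and $\mu(\pt)$, not merely by the construction of Section \ref{review-instanton}. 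This is exactly the role Corollary \ref{Cor_verticalw2} plays in the paper. Your explicit exhibition of the rectangle $R=[s_1,s_2]\times c$ to verify that the post-excision $\widetilde\omega$ is null-homologous rel $\widetilde L$ is a helpful clarification of a step the paper's proof states without comment.
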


\begin{proof}
The proof follows the argument of \cite[Proposition 7.8]{KM:suture}.
Let $x_1$ be a point on $S^1$ that is distinct from $x_0$.
Notice that cutting $Y\sqcup Y$ open along $\{x_1\}\times\Sigma \sqcup\{x_1\}\times\Sigma $ and glue back using the identity map on $\Sigma$ yields $Y$. Therefore by Theorem \ref{thm_excision},
\begin{align*}
\II(Y,L,\omega|\Sigma)\otimes \II(Y,L,\omega|\Sigma)&\cong \II(Y,L,\emptyset|\Sigma),
\\
\II(Y,L,\omega+u|\Sigma)\otimes \II(Y,L,\omega+u|\Sigma)&\cong \II(Y,L,u|\Sigma).
\end{align*}
By Theorem \ref{thm_TopEigenspaceDim1} and Corollary \ref{Cor_verticalw2}, $\II(Y,L,\emptyset|\Sigma)\cong \II(Y,L,u|\Sigma)\cong \bC$, therefore $\II(Y,L,\omega|\Sigma)\cong \II(Y,L,\omega+u|\Sigma)\cong \bC$.
\end{proof}

\section{Instanton Floer homology for sutured manifolds with tangles}
\label{sec_suturedFloer}
We first recall the definition of a balanced sutured manifold from \cite{Juhasz-holo-disk,KM:suture}, which is an adjustment of Gabai's
notion of sutured manifold \cite{G:Sut-1}.
\begin{Definition}\label{Def-sutured-mfd}
A \emph{balanced sutured manifold} is a compact oriented 3-manifold $M$ together with a collection of oriented circles 
$\gamma\subset\partial M$ and a decomposition 
\begin{equation}\label{partial-M-decom}
\partial M=A(\gamma)\cup R^+(\gamma)\cup R^-(\gamma),
\end{equation}
 where $R^+(\gamma)$ is oriented by the same orientation as $\partial M$ and $R^-(\gamma)$ is oriented by the opposite orientation of $\partial M$, 
which satisfy the following conditions:
\begin{itemize} 
\item Suppose $\{\gamma_i\}$ are the connected components of $\gamma$, then $A(\gamma)$ is the disjoint union of $A(\gamma_i)$, where $A(\gamma_i)$ is a closed tubular neighborhood of $\gamma_i$; \item
$R(\gamma):=R^+(\gamma)\cup R^-(\gamma)$ is the closure of $\partial M- A(\gamma)$; 
\item $\partial R(\gamma)=\partial A(\gamma)$ as oriented manifolds, 
where $\partial R(\gamma)$ carries the boundary orientation and $\partial A(\gamma)$ is oriented in the same way 
as $\gamma$;
\item $M$ and $R(\gamma)$ have no closed components;
\item $\chi(R^+(\gamma))=\chi(R^-(\gamma))$. 
\end{itemize}
\end{Definition}

In the following, we will write $R^\pm(\gamma)$ as $R^\pm$ if the suture $\gamma$ is clear from the context.
The next lemma is a well-known property of sutured manifolds. We state it here for later reference.

\begin{Lemma}\label{lem_boundaryHasOneCircle}
Let $(M,\gamma)$ be a sutured manifold. Then there exists a surface $F$, such that 
\begin{enumerate}
\item $(F,\partial F) \subset (M,A(\gamma))$;
\item $[(F,\partial F)]$ is homologous to $[(R^+, \partial R^+)]$ in $H_2(M,A(\gamma))$, and $F$ minimizes the Thurston norm in the homology class $[(R^+, \partial R^+)]$.
\item For every $\gamma_i$ that is a connected component of $\gamma$, the intersection $\partial F\cap A(\gamma_i)$ consists of one circle that is parallel to $\gamma_i$ and has the same orientation as $\gamma_i$.
\end{enumerate}

\end{Lemma}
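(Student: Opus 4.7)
The plan is to realize $F$ as the minimizer of the Thurston norm within a restricted class of surfaces, and then verify that this minimizer is in fact a minimizer over all representatives of the homology class. Let $\mathcal{F}$ denote the collection of properly embedded oriented surfaces $(F,\partial F)\subset (M,A(\gamma))$ representing the class $[(R^+,\partial R^+)]\in H_2(M,A(\gamma))$ whose intersection with each annulus $A(\gamma_i)$ consists of a single circle parallel to and co-oriented with $\gamma_i$. The set $\mathcal{F}$ is nonempty because $R^+$ itself lies in $\mathcal{F}$, so I would choose $F\in\mathcal{F}$ minimizing $\chi_{-}(F)$.

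To show that this $F$ realizes the Thurston norm in the unrestricted class, I would argue that every representative $(F_0,\partial F_0)\subset(M,A(\gamma))$ of the class can be modified into a surface in $\mathcal{F}$ without increasing $\chi_{-}$. The modification proceeds in two stages. First, eliminate null-homotopic components of $\partial F_0\cap A(\gamma_i)$: if $c$ is an innermost such circle, it bounds a disk $D\subset A(\gamma_i)$ whose interior is disjoint from $\partial F_0$; pushing $D$ slightly into $M$ yields a compression disk $D'$ for $F_0$, and surgery along $D'$ produces a new surface with Euler characteristic increased by $2$, so $\chi_{-}$ does not increase. Any sphere or disk component arising from this surgery can be discarded since its class is supported near $A(\gamma)$ or bounds in $M$, and discarding such components cannot affect $\chi_{-}$. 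Second, once every boundary circle of $F_0$ on $A(\gamma_i)$ is parallel to $\gamma_i$, the algebraic intersection with the core of $A(\gamma_i)$ must equal $1$, so any excess comes as pairs of oppositely-oriented adjacent circles; between such a pair lies a sub-annulus $B\subset A(\gamma_i)$, and a surgery of $F_0$ using a parallel copy of $B$ pushed into $M$ cancels the pair while leaving $\chi_{-}$ unchanged and preserving the homology class. Iterating these operations produces a surface in $\mathcal{F}$ with $\chi_{-}$ no larger than $\chi_{-}(F_0)$, so $\chi_{-}(F)\le\chi_{-}(F_0)$ by the choice of $F$.

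The main obstacle will be the careful tracking of homology classes under these surgeries and the verification that sphere or disk components produced during disk-compressions can be dropped without altering the class in $H_2(M,A(\gamma))$. One needs to check that compressing across $D'$ changes the relative 2-cycle by a surface lying in a ball (for the sphere case) or by a surface isotopic into $A(\gamma)$ (for the disk case), so that the class is preserved. These technicalities are standard in the theory of sutured manifolds going back to Gabai's original work \cite{G:Sut-1}, and the final write-up should invoke them rather than re-derive them.
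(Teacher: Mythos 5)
Your proof is correct and uses the same two modification operations as the paper (capping off innermost null-homotopic boundary circles in $A(\gamma_i)$ and tubing together adjacent oppositely-oriented parallel circles), differing only in presentation: the paper minimizes the number of boundary components among Thurston-norm minimizers and derives a contradiction, while you minimize $\chi_-$ within the constrained class $\mathcal{F}$ and then show every representative can be pushed into $\mathcal{F}$. One small slip to fix in the write-up: filling in a null-homotopic boundary circle $c$ with a pushed-in copy of the disk $D\subset A(\gamma_i)$ is a capping-off of a \emph{boundary} circle, not an interior compression, so it raises $\chi$ by $1$ rather than $2$; this is harmless, since the conclusion that $\chi_-$ does not increase still holds.
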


\begin{proof}
Let $F$ be a surface such that $\partial F$ has the minimum number of connected components among the surfaces that satisfy the the first two conditions in the lemma.  
Let $\gamma_i$ be a connected component of $\gamma$, and let $A(\gamma_i)$ be the associated annulus. Notice that $A(\gamma_i)\cap \partial F$
is a collection of circles that are either parallel to $\gamma_i$ or null-homologous in $A$. If at least one of the circles is null-homologous, we can reduce the number of connected components of $\partial F$ by attaching a disk along an inner most null-homologous circle to $F$. If all of the circles are parallel to the suture and at least two of them have opposite orientations, we can reduce the number of connected components of $\partial F$ by
joining a pair of adjacent and oppositely oriented circles using an annulus. Since the operations above do not increase the Thurston norm of $F$, we conclude that  $A(\gamma_i)\cap \partial F$ can only consist of circles that are parallel to the suture and have the same orientation. Since $[(F,\partial F)]$ is homologous to $[(R^+, \partial R^+)]$ in $H_2(M, A(\gamma_T))$, it follows that $A(\gamma_i)\cap \partial F$ consists of exactly one circle that is parallel to $\gamma_i$ and has the same orientation as $\gamma_i$.
\end{proof}

Recall the the following definition from Section \ref{sec_intro}.

\begin{Definition}
Let $(M,\gamma)$ be a balanced sutured manifold with the decomposition \eqref{partial-M-decom} as above. 
A \emph{tangle} $T$ in $M$ is a properly embedded 1-manifold with $\partial T\subset R^+\cup R^-$. 
A tangle $T$ in $M$ is called \emph{balanced} if $|T\cap R^+|=|T\cap R^-|$.
A tangle $T$ in $M$ is called \emph{vertical} if $T$ has no closed components and every connected component of $T$
intersects both $R^+$ and $R^-$. 
\end{Definition}

Now we review some notions introduced in \cite{Schar}, which generalize the original definitions of Gabai \cite{G:Sut-1}. 
Let $T$ be a tangle in the sutured manifold $(M,\gamma)$ as above. Suppose $(S,\partial S)\subset (M,A(\gamma))$ is a connected surface
which is transverse to $T$, define
\begin{equation}
x_T(S):=\max \{0,|S\cap T|-\chi(S) \}.
\end{equation}
Notice that if $S$ is a closed connected surface of genus $g$ that intersects $T$ at $n$ points, then $x_T(S)=\max\{2g+n-2,0\}$.
If $S$ is not connected, then we define
\begin{equation}
x_T(S):=\sum_i x_T(S_i),
\end{equation}
where the $S_i$'s are the connected components of $S$. Given a homology class $a\in H_2(M,A(\gamma))$, its \emph{Thurston norm} is defined by
\begin{equation*}
x_T(a):=\min \{x_T(S)|(S,\partial S)\subset (M,A(\gamma)), [(S,\partial S)]=a\}.
\end{equation*}
We say that a surface $S$ is $T$-\emph{norm-minimizing} if 
\begin{equation*}
x_T(S)=x_T[(S,\partial S)].
\end{equation*}
When $T=\emptyset$, we simply denote $x_T$ by $x$.

\begin{Definition}[\cite{Schar}]\label{Def-taut}
Let $T$ be a tangle in a balanced sutured manifold $(M,\gamma)$.
\begin{itemize}
\item $M$ is called \emph{T-irreducible} if $M- T$ is irreducible, i.e. any 2-sphere in $M- T$ bounds a 3-ball in $M- T$.

\item A properly embedded surface $(S,\partial S)\subset (M,A(\gamma))$ is called \emph{incompressible} if 
$S- T$ is incompressible in $M- T$. i.e. any circle in $S- T$ that bounds a disk in 
$M- T$ must also bound a disk in $S- T$.

\item The triple $(M,\gamma, T)$ is called \emph{taut} if (1) each connected component of $T$ intersects 
$R=R^+\cup R^-$ with the same sign,
(2) $M$ is $T$-irreducible,
(3) $R$ is $T$-incompressible, 
and (4) $R^+$ and $R^-$ are both $T$-norm-minimizing,  

\end{itemize}
\end{Definition}

\begin{remark}
Condition (1) in the third part of Definition \ref{Def-taut} is the original formulation in \cite[Definition 1.2]{Schar}. Since the condition does not depend on the choice of orientation of $T$, the definition is valid for an unoriented tangle, and it is equivalent to the condition that every non-closed component of $T$ intersects both $R^+$ and $R^-$.
\end{remark}

Suppose $T$ is a balanced tangle in a balanced sutured manifold $(M,\gamma)$. Take an oriented connected
surface $F$ with $n$ ($n\ge 2$) marked points $\{p_1,\cdots,p_n\}$, and suppose there is 
an orientation-reversing diffeomorphism from $\gamma$ to $\partial F$.
Let $[-1,1]\times F$ be the product sutured manifold with sutures $\{0\}\times \partial F$, let
$$T_n:=[-1,1]\times \{p_1,\cdots,p_n\}\subset [-1,1]\times F$$
be the product tangle, and let $u$ be an arc joining $p_1$ and $p_2$ on $\{0\}\times F$. The diffeomorphism from $\gamma$ to $\partial F$ extends to a diffeomorphism from $A(\gamma)$ to $[-1,1]\times \partial F$ that maps $\{1\}\times\partial F$ to $A(\gamma)\cap R^+(\gamma)$, and maps $\{-1\}\times\partial F$ to $A(\gamma)\cap R^-(\gamma)$.
Define
\begin{equation*}
M':=M\cup_{A(\gamma)} [-1,1]\times F, ~ T':=T\cup T_n,
\end{equation*}
and
\begin{equation*}
\bar{R}^\pm:=R^\pm(\gamma)\cup \{\pm 1\}\times F.
\end{equation*}
Now $M'$ is a manifold with two boundary components $\bar{R}^\pm$. Take a diffeomorphism
\begin{equation*}
f:(\bar{R}^+, T'\cap \bar{R}^+)\to (\bar{R}^-, T'\cap \bar{R}^-)
\end{equation*}
which fixes $p_1,p_2$. Close up the boundary of $M'$ using the map $f$, we obtain an admissible triple 
$(Y(M,\gamma),L(T),u):=(M'/f(x)\sim x, T'/f(x)\sim x, u).$
We use $\bar{R}$ to denote the surface in $Y({M},\gamma)$ given by the image of $\bar{R}^\pm$.
\begin{Definition}
\label{def_SHI}
For a balanced sutured manifold $(M,\gamma)$ with a balanced tangle $T$, let $(Y(M,\gamma),L(T),u)$ and $\bar{R}$ be given as above. The instanton Floer homology of $(M,\gamma,T)$ is defined by
\begin{equation*}
\SHI(M,\gamma, T):=\II(Y(M,\gamma),L(T),u|\bar{R}).
\end{equation*} 
\end{Definition}

\begin{Proposition}
The isomorphism class of $\SHI(M,\gamma, T)$ does not depend on the choice of the surface $F$, the tangle $T_n$ or the diffeomorphism $f$. 
\end{Proposition}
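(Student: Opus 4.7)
The plan is to adapt the independence argument of \cite[Proposition 7.11]{KM:suture}, which handles the analogous statement for ordinary sutured instanton Floer homology, using the singular excision Theorems \ref{thm_excision} and \ref{thm_excisionAlongOneSurface} in place of the non-singular excision \cite[Theorem 7.7]{KM:suture}. The argument naturally splits into two stages: first, independence of the closing diffeomorphism $f$ with $(F, T_n)$ held fixed; then, independence of the auxiliary pair $(F, T_n)$.

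For the first stage, let $f_0, f_1$ be two admissible closing diffeomorphisms and let $(Y_0, L, u)$ and $(Y_1, L, u)$ be the resulting closures. Then $Y_1$ is recovered from $Y_0$ by cutting along the image of $\bar{R}^+$ in $Y_0$ and regluing via the self-diffeomorphism $\varphi := f_1 \circ f_0^{-1}$ of $\bar{R}$, which preserves $\bar{R} \cap L$ and is disjoint from $u$. Since the number of intersection points $|\bar{R} \cap L| = |T \cap R^+| + n$ can be arranged to be odd and at least $3$ by choosing $n$ of the appropriate parity (possibly after invoking the moves of Stage 2 below), Theorem \ref{thm_excisionAlongOneSurface} applied along $\bar{R}$ yields
\begin{equation*}
\II(Y_0, L, u \mid \bar{R}) \;\cong\; \II(Y_1, L, u \mid \bar{R}).
\end{equation*}

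For the second stage, it suffices to prove invariance under two elementary stabilization moves on the auxiliary data: (S1) attaching a $1$-handle to $F$ disjoint from the marked points, and (S2) adjoining a pair of new marked points to $F$ together with the corresponding vertical product strands in $[-1,1]\times F$. Any two valid auxiliary pairs $(F, T_n)$ and $(F', T_{n'})$ can be connected by a finite sequence of (S1), (S2), and diffeomorphisms, so invariance under these moves together with Stage 1 yields the full statement. In each case I would realize the stabilized closure $Y'$ as the output of an excision applied to $Y \sqcup Y_{\mathrm{aux}}$ for a carefully chosen model manifold $Y_{\mathrm{aux}}$, where the relevant top eigenspace of $\II(Y_{\mathrm{aux}}, L_{\mathrm{aux}}, \omega_{\mathrm{aux}} \mid \Sigma_{\mathrm{aux}})$ is one-dimensional. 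For (S1) one takes $Y_{\mathrm{aux}} = S^1 \times T^2$ with empty tangle and excises along a non-separating torus disjoint from the singular locus, invoking Theorem \ref{thm_nonsingularExcision}; the auxiliary factor is one-dimensional by \cite[Proposition 7.8]{KM:suture}. For (S2) one takes $Y_{\mathrm{aux}} = S^1 \times S^2$ with tangle $S^1 \times \{q_1, q_2, q_3\}$ and a suitable arc $\omega_{\mathrm{aux}}$, excising along a thrice-punctured $2$-sphere via Theorem \ref{thm_excision}; the auxiliary factor is one-dimensional by Corollary \ref{Cor_verticalw2}.

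The principal obstacle I anticipate is the geometric step of realizing each stabilization as a concrete excision. One must select the excision surfaces in $Y$ and $Y_{\mathrm{aux}}$, specify the gluing diffeomorphism between them so that it reproduces the stabilized closure on the nose, and maintain throughout the hypothesis that the excision surfaces meet the singular locus in an odd number of points at least $3$. Move (S2) is the most delicate because it enlarges the tangle itself, forcing one to arrange the model piece so that the resulting closure of the two new vertical strands matches the product structure prescribed by the enlarged $T_{n+2}$ and so that the arc $u$ is correctly tracked across the excision. Once these geometric identifications are in place, the algebraic conclusion follows immediately from the excision isomorphisms and the eigenspace computations carried out in Section \ref{sec_excision}.
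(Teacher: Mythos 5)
Your two-stage skeleton is the right one, and Stage 1 ($f$-independence via Theorem \ref{thm_excisionAlongOneSurface}, after possibly first stabilizing to make $|\bar R\cap L|$ odd and at least $3$) coincides with the paper's argument. Stage 2 is where you diverge, and there are two genuine problems.

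First, move (S2) as you describe it is not geometrically realizable. The paper never excises along a sphere meeting the link; it excises along the torus $S^1\times c$, where $c$ is a small circle on $F$ around the marked point $p_1$. This torus is \emph{disjoint} from $L$ and meets $u$ once, so it is non-integral and Theorem \ref{thm_nonsingularExcision} applies directly — the singular excision theorem is not needed anywhere in this step. Pairing that torus with the analogous torus $S^1\times c'$ around $q_1$ inside the model $(S^1\times\Sigma_{g',n'}, S^1\times\{q_1,\dots,q_{n'}\},\omega')$ and using Lemma \ref{Lem-UgnWgn-eigenvalues} (the relevant top eigenspace of $\bU_{g',n'}$ is one-dimensional) produces, in a single step, the closure whose auxiliary surface is $F$ connect-summed with $\Sigma_{g',n'}$ at $p_1\leftrightarrow q_1$; this raises the genus by $g'$ and the number of marked points by $n'-2$. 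In your version there is no obvious embedded thrice-punctured sphere in $Y(M,\gamma)$ along which to cut, and even if one existed, excising along it with $(S^1\times S^2,S^1\times\{q_1,q_2,q_3\})$ would replace a ball-with-tangle in $Y$ by a product ball, which does not alter the auxiliary surface $F$ at all — it is simply the wrong operation.

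Second, even granting (S1) and (S2) as stated, they do not generate all stabilizations: the definition only requires $n\ge 2$, with no parity restriction, so $(F,n)$ and $(F',n')$ with $n\not\equiv n'\pmod 2$ are both valid auxiliary data, yet neither (S1) (which fixes $n$) nor (S2) (which changes $n$ by $2$) can connect them. The paper's single move has no such defect, because taking $n'=3$ and $g'=0$ adds exactly one marked point. The fix for your argument is therefore to replace (S2) by the paper's torus-around-a-marked-point excision with model piece $\bU_{0,3}$ (and absorb (S1) into the same family by taking $\bU_{1,2}$), after which the algebra does indeed close up as you anticipate.
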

\begin{proof}
First we prove that $\SHI(M,\gamma, T)$ does not depend on the genus of $F$ and the number of marked points on $F$.
Let $\mathbb{U}_{g,n}$ be the Floer homology of the triple 
$(S^1\times \Sigma_{g,n},S^1\times \{q_1,\cdots,q_n\},\omega')$ defined in Section \ref{subsection-spec-existence},
where $\omega'$ is an arc joining $S^1\times \{q_1\}$ and $S^1\times \{q_2\}$. 
Take a small circle $c$ around $p_1$ on $F$ and assume $f$ fixes $c$. Then $[-1,1]\times c$ becomes a torus $S^1\times c$ in $Y(M,\gamma)$
which intersects $u$ at one point.
Let $c'$ be a small circle around $q_1$ on $\Sigma_{g,n}$. Then $S^1\times c'$ is a torus which intersects $\omega'$ at one point.
The excision on 
$$(Y(M,\gamma),L(T),u)\sqcup  (S^1\times \Sigma_{g,n},S^1\times \{q_1,\cdots,q_n\},\omega')$$ along $S^1\times c$ and $S^1\times c'$ yields the disjoint union 
$$(S^1\times S^2, S^1\times \{r_1,r_2\},\omega'')\sqcup (Y',L',u'),$$
where $r_1,r_2\in S^2$, $\omega''$ is an arc connecting $S^1\times \{r_1\}$ and $S^1\times \{r_2\}$, and $(Y',L',u')$ is diffeomorphic to the closure obtained by increasing the genus of $F$ by $g$ and the number of marked points on $F$
by $n-2$. 
By Theorem \ref{thm_nonsingularExcision} and Lemma \ref{Lem-UgnWgn-eigenvalues}, we have an isomorphism
\begin{equation}\label{iso-F+Ugn}
\II(Y(M,\gamma),L(T),u)\otimes \mathbb{U}_{g,n}\cong \II(Y',L',u)\otimes \mathbb{U}_{0,2}
\cong \II(Y',L',u),
\end{equation}
Let $\bar R'$ be the image of $\bar R\cup \Sigma_{g,n}$ in $Y'$.
Since the isomorphism \eqref{iso-F+Ugn} intertwines 
$\muu(\bar{R})\otimes 1+1\otimes \muu(\Sigma_{g,n})$ with $\muu(\bar{R}')$, and by Lemma \ref{Lem-UgnWgn-eigenvalues} the top eigenspace 
of $\muu(\Sigma_{g,n})$ is 1-dimensional in $\bU_{g,n}$, we have
$$
\II(Y(M,\gamma),L({T}),u|\bar{R})\cong \II(Y',L',u|\bar{R}').
$$ 
In summary, we can change the genus of $F$ and the number of marked points on $F$ without changing the isomorphism class of $\SHI(M,\gamma,T)$.

If $\bar{R}$ intersects $\overline{T}$ at $m$ points where $m\ge 3$ and $m$ is odd, then the independence on $f$ follows from
Theorem \ref{thm_excisionAlongOneSurface}. In general, we can always increase the number of marked points on $F$ to make $m$ odd and at least $3$. 
\end{proof}

\begin{remark}
When $T=\emptyset$, our definition of $\SHI(M,\gamma,\emptyset)$ coincides with Kronheimer and Mrowka's instanton Floer homology $\SHI(M,\gamma)$
in \cite{KM:suture}. This can be seen by using a surface $F$  with two marked points $\{p_1, p_2\}$ in Definition \ref{def_SHI}, and
conducting an excision along the boundary tori of the neighborhoods of $S^1\times \{p_1\}$ and $S^1\times \{p_2\}$ in $Y(M,\gamma)$.
The result of the excision is the disjoint union of a mainfold that defines $\SHI(M,\gamma)$ in \cite{KM:suture} and a triple that defines $\bU_{0,2}$.
\end{remark}

Suppose $T$ is a balanced tangle on a balanced sutured manifold $(M,\gamma)$ such that each component of $T$ intersects $R^\pm$ with
the same sign. This implies $T$ is the union of a vertical tangle 
and a link in $(M,\gamma)$. From now on, we will use the notation $N(\cdot)$ to denote tubular neighborhoods. Define $M_T:=M- N(T)$ and define a suture $\gamma_T$ on $\partial M_T$ to be the union of
\begin{itemize}
\item the original sutures $\gamma$;
\item a meridian suture $s(t_i)$ on $\partial N(t_i)$ for each non-closed component $t_i$ of $T$;
\item two oppositely-oriented meridian sutures $s^\pm(l_i)$ 
on $\partial N(l_i)$ for each closed component $l_i$ of $T$. 
\end{itemize}
Moreover, if $s(t_i)$ is the suture given by a non-closed component $t_i$ of $T$, we require that 
$A(t_i)$ is the closure of $\partial N(t_i)-\partial M$.
We have a decomposition
$$
\partial M_T=A(\gamma_T)\cup R^+(\gamma_T) \cup R^-(\gamma_T),
$$
where $R^+(\gamma_T)$ is the union of 
\begin{itemize}
\item An annulus $R^+_T(l_i)$ for each closed component $l_i$ of $T$; 
\item $S^+:=R^+(\gamma)- \bigcup_{t_i}D(t_i)$ where $D(t_i)$  is the interior of $\partial N(t_i)\cap \partial M$. 
\end{itemize}
A similar description works for $R^-(\gamma_T)$. 

\begin{Lemma}\label{M_T-taut}
If $(M,\gamma,T)$ is taut, then
the sutured manifold $(M_T,\gamma_T)$ defined above is taut.
\end{Lemma}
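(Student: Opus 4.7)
The plan is to verify the three tautness conditions for $(M_T,\gamma_T)$: irreducibility of $M_T$, incompressibility of $R(\gamma_T)$ in $M_T$, and Thurston norm-minimality of $R^\pm(\gamma_T)$. Irreducibility is immediate: any embedded $2$-sphere $S\subset M_T$ lies in $M-T$, hence by $T$-irreducibility of $M$ bounds a $3$-ball $B\subset M-T$; since $B\cap T=\emptyset$, for a sufficiently small $N(T)$ we have $B\subset M_T$.

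\textbf{Incompressibility.} Suppose $D\subset M_T$ is a hypothetical compressing disk for $R(\gamma_T)$. Its boundary $\partial D$ lies on a single component of $R(\gamma_T)$, so there are two cases. If $\partial D\subset S^\pm$, I would use $T$-incompressibility of $R$ to produce a disk $D'\subset R^\pm-T$ with $\partial D'=\partial D$. Each removed disk $D(t_i)^\pm$ has boundary disjoint from $\partial D\subset S^\pm$, so it lies entirely on one side of $\partial D$ in $R^\pm$; if any $D(t_i)^\pm$ were inside $D'$ then $p_i\in D'\cap T$, which is impossible, so $D'\subset S^\pm$, contradicting the choice of $D$. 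If instead $\partial D$ lies on an annular piece $R^\pm_T(l_i)$ coming from a closed component $l_i$, then essentiality on the annulus forces $\partial D$ to be a meridian of $l_i$. In this case $N(l_i)\cup N(D)$ is a $3$-ball (a solid torus together with a $2$-handle attached along a meridian of its core), and its boundary $\Sigma$ is a $2$-sphere in $M-T$. The sphere $\Sigma$ does not bound a ball in $M-T$ on the side containing $l_i$ (that region is a ball minus a circle), nor on the other side (whose boundary is $\partial M\cup\Sigma$ with $\partial M\neq\emptyset$, so at least two components), so $\Sigma$ would be a reducing $2$-sphere of $M-T$, contradicting $T$-irreducibility.

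\textbf{Norm-minimality.} Given a properly embedded $F'\subset M_T$ with $[F']=[R^+(\gamma_T)]\in H_2(M_T,A(\gamma_T))$, I would first simplify $\partial F'$ so that it meets each meridian suture annulus in a single meridian, then cap off every such meridian by a meridian disk inside $N(T)$. The resulting surface $F''\subset M$ has $\partial F''\subset A(\gamma)$, and $[F'']=[R^+(\gamma)]\in H_2(M,A(\gamma))$: the analogous capping applied to $R^+(\gamma_T)$ itself differs from $R^+(\gamma)$ only by spheres that bound sub-balls of $N(T)$, and capping is natural on homology, so the identity $[F']=[R^+(\gamma_T)]$ propagates. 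A direct component-wise Euler-characteristic count yields $x_T(F'')=x(F')$ and $x_T(R^+(\gamma))=x(R^+(\gamma_T))$. Hence $T$-norm-minimality of $R^+(\gamma)$ in $(M,\gamma,T)$ gives $x(F')\ge x(R^+(\gamma_T))$ as required, and the argument for $R^-(\gamma_T)$ is symmetric.

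\textbf{Main obstacle.} The delicate step is the meridian compressing disk case of incompressibility. The $T$-incompressibility of $R$ does not apply directly because the hypothetical disk's boundary sits on $\partial N(l_i)$ rather than on $R-T$. The essential maneuver is to bootstrap the compressing disk into a reducing $2$-sphere for $M-T$ via the ball $N(l_i)\cup N(D)$, and then invoke $T$-irreducibility; this argument depends crucially on the non-emptiness of $\partial M$, which is automatic for a sutured manifold, to prevent the complementary region from accidentally being a $3$-ball.
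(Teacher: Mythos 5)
Your proposal verifies the same three conditions as the paper's proof and is essentially correct; the irreducibility and incompressibility arguments match the paper's up to phrasing. Two points of contrast worth noting. First, the paper disposes of the case $\partial D\subset S^\pm$ of incompressibility in one sentence (``the boundary of $C$ cannot be on $S^+$ by the tautness assumption''), whereas you spell out the reason $T$-incompressibility forces the disk in $R^\pm-T$ to actually lie in $S^\pm$; this is a genuine, if small, gap in the paper that you fill. For the meridian case you argue that $\Sigma=\partial\bigl(N(l_i)\cup N(D)\bigr)$ is a reducing sphere by showing neither complementary piece is a ball; the paper instead lets $T$-irreducibility \emph{produce} a ball and then derives a closed component of $M$. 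These are the same argument in contrapositive form, and both rely on the observation (which you should make explicit, as the paper does implicitly) that a ball bounded by $\Sigma$ on the side away from $l_i$ would have to be the entire complementary region, since the complement of a separating sphere has two connected pieces. Second, for norm-minimality your capping recipe differs from the paper's: you cap \emph{every} boundary circle on a meridian suture annulus by a meridian disk of $N(T)$, accepting the resulting new intersections with $T$ and the sphere components arising from closed $l_i$'s, whereas the paper attaches meridian disks only along vertical components and instead glues one of the two complementary annuli $J^-(l_i)\subset\partial N(l_i)$ along closed components, producing a surface disjoint from $T$ near each $l_i$. Your uniform cap is conceptually simpler and the component-wise count $x_T(F'')=x(F')$ (each cap adds $1$ both to $\chi$ and to $|F\cap T|$) does go through, as does $x_T(R^+(\gamma))=x(R^+(\gamma_T))$; the paper's variant avoids creating sphere components and new intersections but only yields the inequality $x_T(F')\le x(F)$, which suffices. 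Both routes invoke without proof the naturality of the capping/gluing operation on relative homology classes, so your appeal to it is no weaker than the paper's.
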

\begin{proof}
To simplify notations, we will use $R^\pm$ to denote $R^\pm(\gamma)$ and use $R^\pm_T$ to denote $R^\pm(\gamma_T)$.
The irreducibility of $M_T$ follows directly from Definition \ref{Def-taut}. Recall that $x_T$ is defined to be the Thurston norm with respect to $T$, and when $T$ is empty we simply write $x_T$ as $x$. We have
$$
x(R^+_T(l_i))=0,~x(S^+)=x_T(R^+).
$$
We show that $R^+_T$ is norm-minimizing. Suppose there is a properly embedded 
surface $(F,\partial F)\subset (M_T, A(\gamma_T))$ with 
$$
[(F,\partial F)]=[(R^+_T, \partial R^+_T)]\in H_2(M_T, A(\gamma_T))
$$
such that 
$$
x(F)<x(R^+_T)=x(S^+).
$$
By Lemma \ref{lem_boundaryHasOneCircle}, we can choose the surface $F$ such that for each annulus 
 $A(\gamma_i)$ where $\gamma_i$ is a component of the suture $\gamma_T$, the intersection $A(\gamma_i)\cap \partial F$ consists of a circle that is parallel to $\gamma_i$ and has the same orientation as $\gamma_i$.

For each closed component $l_i$ of $T$, the previous argument shows that $\partial F\cap \partial N(l_i)$ consists of two circles. The two circles decompose $\partial N(l_i)$ into two annuli, which we label as $J^+(l_i)$ and $J^-(l_i)$ such that $J^\pm(l_i)$ is homologous to $R_T^\pm(l_1)$ in $H_2(M_T, A(\gamma_T))$ respectively.
 Attach $J^-(l_i)$ to $F$ for all closed components $l_i$ of $T$, and attach a disk in $N(t_i)$ along $A(s(t_i))\cap \partial F$ for all vertical components $t_i$ of $T$, we obtain a surface $F'$ that satisfies $x_T(F')\le x(F)$, and
$$
[(F',\partial F')]=[(R^+,\partial R^+)]\in H_2(M,A(\gamma)).
$$
Since
$$
x_T(F')\le x(F)< x(S^+)=x_T(R^+)
$$
this contradicts the tautness assumption of $(M,\gamma,T)$. 

Now we prove $R^+_T$ is incompressible. Suppose there is a compressing disk
$C\subset M_T$ such that $\partial C\subset R^+_T$. The boundary of $C$ cannot be on $S^+$ by the tautness assumption of $(M,\gamma,T)$.
Therefore $\partial C$ is the meridian of $R^+_T(l_i)$ for a closed component $l_i$ of $T$. Attaching a disk in $N(l_i)$ to $C$ 
along $\partial C$ gives a 2-sphere $H\subset M$ which intersects $T$ at a single point. Let $N(H\cup l_i)$ be a regular neighborhood of $H\cup l_i$, then $S^2\cong \partial N(H\cup l_i)\subset M- T$. The $T$-irreducibility assumption 
of $(M,\gamma,T)$ implies that $\partial N(H\cup l_i)$ bounds a 3-ball $B^3\subset M- T$. Therefore 
$N(H\cup l_i)\cup B^3$ forms a closed component of $M$, which contradicts Definition \ref{Def-sutured-mfd}.

Apply the same argument to $R_T^-$ shows that $(M_T,\gamma_T)$ is taut.
\end{proof}

\begin{Lemma}\label{SHI-vertical-tangle}
Suppose $T$ is a vertical tangle in a balanced sutured manifold $(M,\gamma)$, let $(M_T,\gamma_T)$ be defined as above. Then we have
$$
\SHI(M,\gamma,T)\cong \SHI(M_T,\gamma_T).
$$
\end{Lemma}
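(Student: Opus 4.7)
The plan is to realize both $\SHI(M,\gamma,T)$ and $\SHI(M_T,\gamma_T)$ as instanton Floer homologies of closed triples and to relate the two by a torus excision (Theorem~\ref{thm_nonsingularExcision}), in the same spirit as the treatment of the analogous tangle-free statement in \cite{KM:suture}. I will first reduce to the case $T=t$ a single vertical arc, by iterating the argument over the components of $T$. For this case I choose compatible closures for both sutured manifolds: fix a connected auxiliary surface $F$ with marked points $p_1,\dots,p_n$ (with $n+|t\cap R^+|$ odd and at least $3$), a point $q\in F$ disjoint from the $p_i$, and a small disk $D\subset F$ around $q$, and set $F_T:=F\setminus\mathrm{int}(D)$. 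Pick a closing diffeomorphism $f\colon\bar R^+\to\bar R^-$ that fixes $p_1,p_2$, is the identity on $D$, and sends the $\bar R^+$-endpoint of $t$ to its $\bar R^-$-endpoint; then $f_T:=f|_{\bar R_T^+}$ is a valid closing diffeomorphism for $(M_T,\gamma_T)$. This yields closed triples $(Y(M,\gamma),L(T),u)$ and $(Y(M_T,\gamma_T),L_T,u_T)$ built from compatible data.

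The key topological observation is the following. Let $K_t\subset Y(M,\gamma)$ be the knot obtained by closing up $t$, with solid-torus neighborhood $U_t:=N(t)/f$, and let $K_q\subset Y(M,\gamma)$ be the knot obtained by closing up $[-1,1]\times\{q\}$, with solid-torus neighborhood $U_q:=[-1,1]\times D/f$. Tracing through the closure definitions shows that $Y(M_T,\gamma_T)$ is obtained from $Y(M,\gamma)$ by cutting along the disjoint tori $T_t:=\partial U_t$ and $T_q:=\partial U_q$ and regluing by a diffeomorphism $\varphi\colon T_t\to T_q$ matching the meridian $s(t)\subset T_t$ with the meridian $\partial D\subset T_q$. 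By this meridian-to-meridian gluing, the complementary excision piece $U_t\cup_\varphi U_q$ is diffeomorphic to $S^1\times S^2$. Thus the passage from $(Y(M,\gamma),L(T),u)$ to $(Y(M_T,\gamma_T),L_T,u_T)$ is exactly the output of Floer excision along $T_t\sqcup T_q$.

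To apply Theorem~\ref{thm_nonsingularExcision} I need each torus to be non-integral, i.e.\ to meet $\omega$ in an odd number of points. Since the naive $u$ is disjoint from both tori, I modify $\omega$ via the flip symmetry of Section~\ref{subsection-flip}, together with an augmentation of the link by $K_q$ and a short arc $\eta$ from $K_t$ to $K_q$ crossing each torus once; this produces a cohomologous $\omega'$ with $|\omega'\cap T_t|=|\omega'\cap T_q|=1$. Applying Theorem~\ref{thm_nonsingularExcision} then expresses $\II(Y(M,\gamma),\ldots\,|\,\bar R)$ as $\II(Y(M_T,\gamma_T),\ldots\,|\,\bar R_T)\otimes \II(S^1\times S^2,\ldots)$, with the auxiliary $S^1\times S^2$ factor one-dimensional on the relevant eigenspace (in line with Proposition~\ref{horizontal-w2}).

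The hardest step in this plan is arranging the non-integrality of the two tori without altering the Floer homology on either side. It requires a careful choice of link augmentation and $\omega$-modifications, verifying via the flip symmetry that the resulting Floer homology agrees with the original, and checking that the auxiliary $S^1\times S^2$ factor contributes exactly one dimension in the top eigenspace. Once this cohomological bookkeeping is settled, the remaining steps are formal applications of the excision theorem and the top-eigenspace analysis developed in Section~\ref{section-spec}.
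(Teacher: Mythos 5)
Your topological picture—identifying $Y(M_T,\gamma_T)$ as the result of torus excision on $Y(M,\gamma)$ along $T_t=\partial U_t$ and $T_q=\partial U_q$, with $U_t\cup_\varphi U_q\cong S^1\times S^2$—is correct and is indeed the right ``interesting piece'' to land on. However, the step you rightly single out as the hardest has a genuine gap that the tools you cite cannot close. The flip symmetry of Section~\ref{subsection-flip} only replaces $\omega$ by $\omega+L_1'$ where $L_1'$ is a \emph{closed} parallel copy of link components; since $T_t$ and $T_q$ are separating tori, any closed $1$-cycle meets them in an even number of points, so flip symmetry can never change the parity of $|\omega\cap T_t|$ or $|\omega\cap T_q|$. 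The only way to make these tori non-integral is to add an \emph{arc} to $\omega$ with endpoints on $L$, and the paper's justification that such an addition preserves the top-eigenvalue Floer homology is precisely the singular excision Theorem~\ref{thm_excision} (applied along $\bar R$, via Proposition~\ref{horizontal-w2}). You do not invoke this theorem anywhere, even though it is the central technical result of the paper and is essential here.

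There is a second, independent problem: you take $q\notin\{p_1,\dots,p_n\}$, so $K_q=S^1\times\{q\}$ is \emph{not} a component of the link $L(T)$. After your excision, the $S^1\times S^2$ factor then contains only the single circle $K_t$; the corresponding triple is $(S^1\times S^2, S^1\times\{\mathrm{pt}\},\omega)$, whose Floer homology is $\bV_{0,1}=0$, which would force $\SHI(M,\gamma,T)=0$—false in general. To get the non-vanishing $\bU_{0,2}$ factor you need $K_q$ to be a genuine link component, i.e.\ $q$ must be one of the marked points on $F$, and then one still needs the singular excision along $\bar R$ to introduce an arc from $K_t$ to a link component (and, in the paper's version, a further nonsingular torus excision to trade $q$ away). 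In short: your reduction to a single arc, your choice of closing maps, and your identification of $(Y-U_t-U_q)/\varphi$ are all fine, but the plan as written cannot be completed without first proving (via the singular excision theorem) that adding horizontal arcs to $\omega$ along $\bar R$ does not change $\II(Y,L,\omega\,|\,\bar R)$; once that is in place the argument essentially collapses to the proof in the paper.
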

\begin{proof}
As in Lemma \ref{M_T-taut}, we use $R^\pm$ to denote $R^\pm(\gamma)$ and use $R^\pm_T$ to denote $R^\pm(\gamma_T)$.

Suppose $T$ has $m$ components $t_1,t_2,\cdots,t_{m}$. 
Let $F$ be the auxiliary surface used in the definition of $\SHI(M,\gamma,T)$. Let $p_1,p_2,\cdots,p_{m+2}, q$ be $(m+3)$ marked points on $F$ and let $u$ be 
an arc joining $p_{m+1}$ and $p_{m+2}$.
Let $T_{m+3}:=[-1,1]\times\{p_1,\cdots,p_{m+2},q\}$, and let $T_{m+2}:=[-1,1]\times\{p_1,\cdots,p_{m+2}\}$. Let $(Y,L)$ be the pair obtained by closing up
$(M\cup_{A(\gamma)} [-1,1]\times F, T\cup T_{m+3})$. We also assume that the closing map preserves the components of $T\cup T_{m+3}$, so that $L$ can be written as
$$
L=\overline{T}\cup \overline{T}_{m+3}=\bigcup_{i=1}^m \bar{t}_i \cup S^1\times \{p_1,\cdots,p_{m+2},q\},
$$
where $\overline{T}$ is the image of $T$, and $\bar{t}_i$ are the images of the components of $T$, and $\overline{T}_{m+3}$ is the image of $T_{m+3}$. Let $\overline{T}_{m+2}$ be the image of $T_{m+2}$ in $Y$. Let $\bar R\subset Y$ be the image of the closed-up boundary.
By definition, we have
$$
\SHI(M,\gamma,T)=\II(Y, L,u|\bar{R}).
$$

Consider the admissible triple $(S^1\times \Sigma_g, S^1\times \{q_1,\cdots,q_{2m+2},q\}, \omega)$ where $g$ equals the genus of $\bar{R}$, and $\Sigma_g$ is a closed surface of genus $g$, and
$\omega$ is a collection of arcs on $\{\pt\}\times \Sigma_g$ joining the pairs 
$$(q_1,q_{m+1}),(q_2,q_{m+2}),\cdots, (q_{m},q_{2m}).$$
According to
Proposition \ref{horizontal-w2}, we have
\begin{equation}\label{w2-pairing-markings}
\II(S^1\times \Sigma_g, S^1\times \{q_1,\cdots,q_{2m+3}\}, \omega|\Sigma_g)\cong\mathbb{C}.
\end{equation}
Apply excision to 
$$
(Y, L,u)\sqcup (S^1\times \Sigma_g, S^1\times \{q_1,\cdots,q_{2m+3}\}, \omega) 
$$
along $\bar{R}$ and $\Sigma_g$, by Theorem \ref{thm_excision} and \eqref{w2-pairing-markings} we obtain
\begin{equation}\label{YLu=YLw'+u}
\II(Y, L,u|\bar{R})\cong \II(Y, L,\omega'+u|\bar{R}),
\end{equation}
where $\omega'$ 
is a collection of arcs in $\bar{R}$ 
joining the pairs $(\bar{t}_1,S^1\times \{p_1\}),\cdots, (\bar{t}_{m},S^1\times \{p_{m}\})$. 

Let $c$ be the boundary of a disk in $F$ which contains $p_{m+2}$ and $q$ and is disjoint from the other marked points.
For $n\ge 2$, let $\mathbb{Y}_{g,n}$ be the triple given by $(S^1\times \Sigma_g,S^1\times \{r_1,\cdots,r_n\},s)$, where $r_1,\cdots,r_n\in \Sigma_g$, and $s$ is an arc connecting $S^1\times \{r_1\}$ to $S^1\times \{r_2\}$.
Recall from Section \ref{subsection-spec-existence} that $\bU_{g,n}=\II(\mathbb{Y}_{g,n})$. 
Apply torus excision to $(Y, L,\omega'+u) \sqcup \mathbb{Y}_{0,2}$ along
$S^1\times c$ and the boundary torus of a neighborhood of $S^1\times\{r_1\}$ in $\mathbb{Y}_{0,2}$, we obtain
\begin{equation*}
\II(Y, L,\omega'+u)\otimes \mathbb{U}_{0,2}\cong 
\II(Y, \overline{T}\cup \overline{T}_{m+2},\omega'+u)\otimes \mathbb{U}_{0,3}.
\end{equation*}
The above isomorphism intertwines $\muu(\bar{R})\otimes 1 +1\otimes \muu(\Sigma_{0,2})$ on the left with
$\muu(\bar{R})\otimes 1+1\otimes \muu(\Sigma_{0,3})$ on the right. The result on the eigenspaces of $\mathbb{U}_{g,n}$
in Lemma \ref{Lem-UgnWgn-eigenvalues} implies
\begin{equation}\label{YL=YL'}
\II(Y, L,\omega'+u|\bar{R})\cong 
\II(Y,L' ,\omega'+u|\bar{R})
\end{equation}
where $L':=\overline{T}\cup \overline{T}_{m+2}$.

Now apply excision to $(Y, L',\omega'+u)$ along the pairs $(\partial N(S^1\times \{p_{m+1}\}), \partial N(S^1\times \{p_{m+2}\}))$ and 
$(\partial N(\bar{t}_i),\partial N(S^1\times \{p_{i}\}))$ ($1\le i \le m$),
the resulting manifold  is the union of copies of $\mathbb{Y}_{0,2}$ and a triple which we denote by $(Y_1,\emptyset,\bar{\omega}'+\bar{u})$.
We have
\begin{equation}\label{Y=Y_1}
\II(Y, L',\omega'+u|\bar{R})\cong \II(Y_1,\emptyset,\bar{\omega}'+\bar{u}|S)
\end{equation}
where $S$ is the surface with genus $g+m+1$ obtained from $\bar{R}$ by excision.

It is straightforward to see that $Y_1$ is the closure of $(M_T,\gamma_T)$ by 
an auxiliary surface $G$ which is obtained from $F$ by
\begin{itemize}
\item removing the disks $N(p_{m+1}),N(p_{m+2})$ and attaching a handle along the boundary circles;
\item removing the disks $N(p_i)$ ($1\le i \le m$).
\end{itemize}
The boundary $\partial N(p_i)$ ($1\le i \le m$) of $G$ is paired with the meridian suture of $t_{i}$.
 
It was proved in \cite[Section 7.4]{KM:suture} that  
\begin{equation*}
\II(Y_1,\emptyset,\bar{\omega}'+\bar{u}|S)\cong \SHI(M_T,\gamma_T),
\end{equation*}
hence by \eqref{YLu=YLw'+u}, \eqref{YL=YL'} and \eqref{Y=Y_1}, we have 
$\SHI(M_T,\gamma_T)\cong \SHI(M,T,\gamma).$
\end{proof}

The following result is a generalization of \cite[Theorem 3.4.4]{Street}. The original statement 
needs to assume that the surface $\bar{R}$ in Definition \ref{def_SHI} has genus $0$. 
\begin{Theorem}\label{braid-detection}
Suppose $T$ is a vertical tangle
in a  sutured manifold  $(M,\gamma)$ which is a homology product, i.e. 
the inclusions $R^+(\gamma)\to M$ and 
$R^-(\gamma)\to M$ induce isomorphisms on the integer homology groups.
If 
\begin{equation*}
\SHI(M,\gamma,T)\cong \mathbb{C},
\end{equation*}
then $(M,\gamma,T)$ is diffeomorphic to a product sutured manifold with a product tangle.
\end{Theorem}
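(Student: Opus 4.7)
The strategy is to reduce the statement to Kronheimer--Mrowka's product detection theorem \cite[Theorem 7.18]{KM:suture} via the identification in Lemma \ref{SHI-vertical-tangle}. First, since $T$ is vertical, Lemma \ref{SHI-vertical-tangle} gives
\[
\SHI(M_T, \gamma_T) \cong \SHI(M, \gamma, T) \cong \mathbb{C},
\]
where $(M_T, \gamma_T)$ is the sutured manifold obtained from $(M, \gamma)$ by removing a tubular neighborhood of $T$ and adding a meridional suture on each new boundary annulus, as constructed before Lemma \ref{M_T-taut}.

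Next, I would verify that $(M_T, \gamma_T)$ is itself a homology product. Write $M = M_T \cup N(T)$, where $N(T) = \bigsqcup_i (D^2 \times [-1,1])$ is a tubular neighborhood of $T$ with $R^{\pm}(\gamma) \cap N(T) = \bigsqcup_i D^2 \times \{\pm 1\}$. Both $(N(T), R^+(\gamma) \cap N(T))$ and $(M_T \cap N(T), R^+(\gamma) \cap M_T \cap N(T))$ are disjoint unions of contractible pairs and hence have vanishing relative homology. The Mayer--Vietoris sequence for the pair decomposition
\[
(M, R^+(\gamma)) = (M_T, R^+(\gamma_T)) \cup (N(T), R^+(\gamma) \cap N(T)),
\]
combined with the hypothesis $H_\ast(M, R^+(\gamma)) = 0$, then forces $H_\ast(M_T, R^+(\gamma_T)) = 0$; the analogous argument handles $R^-$, so $(M_T, \gamma_T)$ is a homology product.

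Applying \cite[Theorem 7.18]{KM:suture} to $(M_T, \gamma_T)$ then produces a diffeomorphism
\[
(M_T, \gamma_T) \cong \bigl([-1,1] \times F_0,\; \{0\} \times \partial F_0\bigr)
\]
for some compact oriented surface $F_0$. Under this identification each meridional suture $s(t_j)$ corresponds to $\{0\} \times c_j$ for a distinguished boundary circle $c_j \subset \partial F_0$, and the annular region $A(s(t_j))$ is identified with $[-1,1] \times c_j$. To recover $(M, \gamma, T)$ from $(M_T, \gamma_T)$ one reglues each solid cylinder $N(t_j) \cong D^2 \times [-1,1]$ along $A(s(t_j)) \cong \partial D^2 \times [-1,1]$; in the product model this amounts to filling the circle $c_j$ with a disk $D_j$ to form $F := F_0 \cup_j D_j$. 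The core arc of each $N(t_j)$ then becomes $\{p_j\} \times [-1,1]$ where $p_j \in D_j$ is the center, yielding
\[
(M, T) \cong \bigl([-1,1] \times F,\; [-1,1] \times \{p_1, \dots, p_k\}\bigr)
\]
with $R^{\pm}(\gamma) = \{\pm 1\} \times F$, as required.

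The main obstacle is checking that the hypotheses of \cite[Theorem 7.18]{KM:suture} are genuinely satisfied by $(M_T, \gamma_T)$. In particular, one must ensure that any implicit connectivity or tautness condition on $R^{\pm}$ in \cite[Theorem 7.18]{KM:suture} either follows from the homology-product and rank-one conditions on $(M_T, \gamma_T)$, or can be arranged by refining the construction of $(M_T, \gamma_T)$ (for example by an appropriate choice of the auxiliary surface $F$ used to form the closure in Definition \ref{def_SHI}). Once this is settled, the Mayer--Vietoris computation and the reconstruction of the product structure above are routine.
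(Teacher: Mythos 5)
Your proof is correct and follows essentially the same route as the paper's: pass to $(M_T,\gamma_T)$ via Lemma \ref{SHI-vertical-tangle}, check that $(M_T,\gamma_T)$ is a homology product by Mayer--Vietoris, invoke \cite[Theorem 7.18]{KM:suture}, and reglue the solid cylinders to recover the product tangle structure on $(M,\gamma,T)$. The paper states the Mayer--Vietoris step and the final regluing step without detail, whereas you flesh both out; your closing hedge about unstated hypotheses of \cite[Theorem 7.18]{KM:suture} is unnecessary, since that theorem asks only for a balanced sutured homology product with rank-one $\SHI$, both of which you have verified (balancedness of $(M_T,\gamma_T)$ follows because $T$ is vertical and balanced, so $\chi(R^\pm(\gamma_T))=\chi(R^\pm(\gamma))-|T\cap R^\pm|$ remain equal).
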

\begin{proof}
Let $(M_T,\gamma_T)$ be the sutured manifold defined as before.
Since $(M,\gamma)$ is a homology product and $T$ is vertical, by a straightforward argument using the Mayer-Vietoris sequence, $(M_T,\gamma_T)$ is also a homology product.
According to Lemma \ref{SHI-vertical-tangle}, we have
$$
\SHI(M_T,\gamma_T)\cong \SHI(M,\gamma,T)\cong \mathbb{C}.
$$
It then follows from \cite[Theorem 7.18]{KM:suture} that 
the sutured manifold $(M_T,\gamma_T)$ is a product sutured manifold, therefore $(M,\gamma,T)$ is diffeomorphic to a product sutured manifold with a product tangle.
\end{proof}

\begin{Theorem}\label{taut-non-vanishing}
Let $(M,\gamma,T)$ be a balanced sutured manifold with a balanced tangle.
If $(M,\gamma, T)$ is taut, then $\SHI(M,\gamma, T)\neq 0$.
\end{Theorem}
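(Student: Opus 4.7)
My plan is to reduce the non-vanishing of $\SHI(M,\gamma,T)$ to Kronheimer and Mrowka's non-vanishing theorem for the sutured instanton Floer homology of taut sutured manifolds. First, I would use the tautness hypothesis: condition~(1) of Definition~\ref{Def-taut} forces every connected component of $T$ to meet $R^{\pm}$ with consistent signs, so $T = T_v \sqcup T_l$ where $T_v$ is a vertical tangle and $T_l$ is a link lying in the interior of $M$. This places us in the setting where the auxiliary sutured manifold $(M_T,\gamma_T)$ of Section~\ref{sec_suturedFloer} is defined, and Lemma~\ref{M_T-taut} directly gives that $(M_T,\gamma_T)$ is itself a taut balanced sutured manifold.

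The key technical step is to extend Lemma~\ref{SHI-vertical-tangle} to arbitrary balanced tangles, namely to prove
$$
\SHI(M,\gamma,T)\;\cong\;\SHI(M_T,\gamma_T).
$$
I would follow the excision argument used in the proof of Lemma~\ref{SHI-vertical-tangle} with additional bookkeeping to accommodate the closed components of $T_l$. For each closed component $l_i$ of $T_l$, I would enlarge the auxiliary surface $F$ by an extra marked point $q_i$, add $[-1,1]\times\{q_i\}$ to the auxiliary product tangle, and add to the auxiliary 1-cycle $\omega'$ a short arc on $\{0\}\times F$ joining a point of $l_i$ to a point of $S^1\times\{q_i\}$. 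Then each of the boundary tori $\partial N(l_i)$ and $\partial N(S^1\times\{q_i\})$ meets the enlarged 1-cycle at exactly one point, and Theorem~\ref{thm_nonsingularExcision} applies: the torus excision between these two tori factors off a copy of $\bU_{0,2}$ (which is $1$-dimensional in the relevant eigenspace by Lemma~\ref{Lem-UgnWgn-eigenvalues}) and replaces the link component $l_i$ by a pair of oppositely oriented meridional sutures on $\partial N(l_i)$, matching the definition of $\gamma_T$. Performing these excisions simultaneously for every closed component of $T_l$, together with the excisions for the vertical components already used in Lemma~\ref{SHI-vertical-tangle}, produces an admissible triple that one can identify with a closure of $(M_T,\gamma_T)$ with empty link.

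Once this isomorphism $\SHI(M,\gamma,T)\cong\SHI(M_T,\gamma_T)$ is established, I would invoke Kronheimer and Mrowka's non-vanishing theorem \cite[Theorem~7.12]{KM:suture} for the taut sutured manifold $(M_T,\gamma_T)$ to conclude that $\SHI(M_T,\gamma_T)\neq 0$, which immediately gives $\SHI(M,\gamma,T)\neq 0$.

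The main obstacle in this approach is the extension of Lemma~\ref{SHI-vertical-tangle} to tangles with closed components. One must verify that the additional torus excisions for closed components are compatible with those for the vertical components, that the resulting closed triple is genuinely a closure of $(M_T,\gamma_T)$, and that the simultaneous generalized eigenspace of $(\muu(\bar R),\mu(\pt))$ used in Definition~\ref{def_SHI} is preserved throughout the sequence of excisions. These verifications are of the same nature as the computations carried out in the proof of Lemma~\ref{SHI-vertical-tangle}, but the bookkeeping becomes somewhat heavier.
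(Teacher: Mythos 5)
Your reduction to Kronheimer--Mrowka's non-vanishing theorem via $(M_T,\gamma_T)$ and Lemma~\ref{M_T-taut} agrees with the paper's strategy, but the route you propose through an extension of Lemma~\ref{SHI-vertical-tangle} to all balanced tangles has a genuine gap, and that gap is exactly where the paper switches to a different tool (the earring trick plus the unoriented skein exact triangle). The paper does \emph{not} prove an isomorphism $\SHI(M,\gamma,T)\cong\SHI(M_T,\gamma_T)$ when $T$ has closed components; it proves only the implication $\SHI(M_T,\gamma_T)\neq 0\Rightarrow\SHI(M,\gamma,T)\neq 0$. The skein triangle
$$
\II(Y,L'\cup T_c\cup e_i,\omega'+u+u_i|\bar R)\to\II(Y,L'\cup T_c,\omega'+u|\bar R)\to\II(Y,L'\cup T_c,\omega'+u|\bar R)\to\cdots
$$
does not control dimensions, so the stronger isomorphism you are after may fail; it is certainly not available by the argument you sketch.

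The specific mechanism you propose does not work because the two excision tori interact with the closure surface $\bar R$ in incompatible ways. For a vertical component $\bar t_i$, both $\partial N(\bar t_i)$ and $\partial N(S^1\times\{p_i\})$ meet $\bar R$ in one circle each, so the excision removes two disks from $\bar R$ and identifies the boundary circles, producing a closed surface in the new manifold. For a closed component $l_i$, however, $l_i$ lies in the interior of $M$ and $\partial N(l_i)$ is disjoint from $\bar R$, while $\partial N(S^1\times\{q_i\})$ meets $\bar R$ in one circle. After excising along this pair, $\bar R$ loses the disk inside $N(S^1\times\{q_i\})$ and its boundary circle is glued onto the interior torus coming from $\partial N(l_i)$; there is no complementary disk on the $l_i$ side, so the image of $\bar R$ is no longer closed, and its boundary class is nontrivial in the glued torus. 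Consequently there is no well-defined $\muu$-operator to track, and the resulting piece cannot be identified with a closure of $(M_T,\gamma_T)$ carrying the surface $\bar R_T$. This is precisely why the paper introduces the earring $(e_i,u_i)$: the pair $\partial N(l_i)$, $\partial N(e_i)$ are \emph{both} disjoint from $\bar R$ (both tori lie in the interior of $M$), so the excision is compatible with $\bar R$, and \cite[Section 5.1]{KM:suture} identifies the result with the annular closing-up of the meridian sutures $s^\pm(l_i)$. Also, a small point: the arc you want to add joins a point on $l_i\subset\operatorname{int}M$ to $S^1\times\{q_i\}$, so it cannot lie on $\{0\}\times F$, and the claim that the Floer homology is unchanged by adding such an arc to $\omega$ would need its own argument (the absorption trick in Proposition~\ref{horizontal-w2} and equation~\eqref{YLu=YLw'+u} only applies to arcs on $\bar R$). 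The missing ingredient in your proposal is thus the earring together with the skein exact triangle; without it, you cannot establish the non-vanishing for tangles with closed components.
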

\begin{proof}
Let $(M_T,\gamma_T)$ be the sutured manifold as before.
By Lemma \ref{M_T-taut}, $(M_T,\gamma_T)$ is taut. Hence by \cite[Theorem 7.12]{KM:suture},
$$
\SHI(M_T,\gamma_T)\neq 0.
$$
Therefore in the case when $T$ is vertical, the theorem follows by Lemma \ref{SHI-vertical-tangle}.

In general, let 
$$
T=T_v\cup T_c=\bigcup_{1\le i \le m} t_i\cup \bigcup_{1\le i \le s} l_i
$$ 
be a decomposition of $T$, where $T_v=\bigcup_{1\le i \le m} t_i$ consists of the non-closed 
(hence vertical) components of $T$, and $T_c= \bigcup_{1\le i \le s} l_i$
consists of the closed components of $T$.  
We will continue using the notation from the proof of Lemma \ref{SHI-vertical-tangle}.
(the current $T_v$ corresponds to the tangle $T$ in Lemma \ref{SHI-vertical-tangle}). 

Let $(Y,L')$ be the resulting manifolds obtained by excision on the closure of $(M,\gamma,T_v)$ as given in \eqref{YL=YL'}.
The link $L'$ consists of $\bar{t}_1,\cdots,\bar{t}_m$
and $S^1\times \{p_1,\cdots,p_{m+2}\}$, where $p_1,\cdots, p_{m+2}$ are marked points on the auxiliary surface $F$.
Recall that $\bar R\subset Y$ is the image of the boundary surface, $u$ is an arc on $\bar{R}$ joining $S^1\times \{p_{m+1}\}$ and $S^1\times \{p_{m+2}\}$ and $\omega'$
is a collection of arcs on $\bar{R}$ joining the pairs 
$(\bar{t}_1,S^1\times \{p_1\}),\cdots, (\bar{t}_{m},S^1\times \{p_{m}\})$. 
It is proved in Lemma \ref{SHI-vertical-tangle} that
$$
\SHI(M,\gamma,T_v)\cong \II(Y,L',\omega'+u|\bar{R})\cong \SHI(M_{T_v},\gamma_{T_v}).
$$
Now we take the closed part $T_c$ into consideration. From the construction, it is clear
that $\bar{R}\subset Y$ is disjoint from $T_c$. Repeating the argument in the proof of Lemma \ref{SHI-vertical-tangle}, we have
\begin{equation}\label{MT=YL'+Tc}
\SHI(M,\gamma,T)\cong \II(Y,L'\cup T_c,\omega'+u|\bar{R}).
\end{equation}

By definition, an \emph{earring} on a closed component $l_i$ of $T$ is a pair $(e_i,u_i)$ where $e_i$ is a small meridian around $l_i$
and $u_i$ is a small arc joining $e_i$ and $l_i$. Let $(e_i,u_i)$ be an earing on $l_i$. 
Apply the unoriented skein exact triangle \cite[Proposition 6.11]{KM:Kh-unknot} to a crossing between $l_i$ and $e_i$, we have
an exact triangle
\begin{equation*}
\xymatrix{
  \II(Y,L'\cup T_c\cup e_i, \omega'+u+u_i|\bar{R}) \ar[r]^{} 
                &     \II(Y,L'\cup T_c,\omega'+u|\bar{R}) \ar[d]^{}    \\
                & \II(Y,L'\cup T_c,\omega'+u|\bar{R})   \ar[ul]_{}             }
\end{equation*}
Therefore $\II(Y,L'\cup T_c\cup e_i, \omega'+u+u_i|\bar{R})\neq 0$ implies 
$\II(Y,L'\cup T_c,\omega'+u|\bar{R})\neq 0$. Repeating this argument, we have:
\begin{equation}\label{YL'e-neq0}
\II(Y,L'\cup T_c\cup \bigcup_{1\le i \le s} e_i, \omega'+u+\sum_{1\le i \le s}u_i|\bar{R})\neq 0
\end{equation}
implies $\II(Y,L'\cup T_c,\omega'+u|\bar{R})\neq 0$.

As in the proof of Lemma \ref{SHI-vertical-tangle}, we apply excision to 
$$
(Y,L'\cup T_c\cup \bigcup_{1\le i \le s} e_i, \omega'+u+\sum_{1\le i \le s}u_i)
$$
along the pairs 
$(\partial N(S^1\times \{p_{m+1}\}), \partial N(S^1\times \{p_{m+2}\}))$ and 
$(\partial N(\bar{t}_i),\partial N(S^1\times \{p_{i}\}))$ ($1\le i \le m$).  
We further apply excisions along $\partial N(l_i)$ and $\partial N(e_i)$, for all $1\le i \le s$, 
using a diffeomorphism which maps the longitude of $l_i$ to the meridian of $e_i$.

The resulting triple is the disjoint union of an ``interesting'' component which is given by the image of 
$Y-\cup N(\bar{t}_i)-\cup N(S^1\times\{p_i\}) - \cup N(l_i)-\cup N(e_i)$,
and other admissible triples with 1-dimensional instanton Floer homology on the relevant eigenvalues. Let 
$$
(Y_2, \emptyset, \bar{\omega}'+\bar{u}+\sum_{1\le i \le s}\bar{u}_i)
$$ 
be the ``interesting'' part of the resulting triple after the excision.

In fact, if we take $[-1,1]\times A$ with $A$ an annulus, and attach it to $M_T$ by gluing $[-1,1]\times\partial A$ to the neighborhoods of the two meridian sutures $s^\pm(l_i)$
on $\partial M_T$, then the resulting manifold has two torus boundaries given by $\{\pm 1\}\times A\cup R^\pm_T(l_i)$.
According to \cite[Section 5.1]{KM:suture}, identifying the two tori $\bar{R}^\pm_T(l_i)$ by a suitable map gives the same $3$-manifold as doing the above excision along $\partial N(l_i)$ and $\partial N(e_i)$.
Therefore, the topology of $Y_2$ can be described by the following procedure:
\begin{itemize}
\item Let $G$ be the auxiliary surface given by $T_v$ described in the proof of Theorem \ref{SHI-vertical-tangle}.
 Attach $G$ to $(M_T,\gamma_T)$ along the sutures $\gamma\subset\partial M$ and $\gamma_{T_v}\subset \gamma_T$ and
      denote the top and bottom closure of $G$ by $\bar{G}^+$ and $\bar{G}^-$ respectively;
\item For each $i$, attach an auxiliary annulus $M_i$ ($1\le i \le s$) to $(M_T,\gamma_T)$  along the two meridian sutures $s^\pm(l_i)$ as above and
     denote the  top and bottom closure of $M_i$ by $\bar{R}_i^\pm$;
\item close up $M_T\cup [-1,1]\times G \cup [-1,1]\times M_i$ by diffeomorphisms from  
       $\bar{G}^+,\bar{R}_1^+,\cdots,\bar{R}_s^+$ to $\bar{G}^-,\bar{R}_1^-,\cdots,\bar{R}_s^-$ respectively.
\end{itemize}
Although neither the auxiliary surface $G\cup\bigcup_i M_i$ nor the closed-up surface
$\bar{R}_T:=\bar{G}^+\cup \bigcup_i \bar{R}_i^\pm$ is connected, the closure $Y_2$ still
defines the sutured Floer homology $\SHI(M_T,\gamma_T)$, as is explained in \cite[Section 2.3]{KM:Alexander}. Therefore we have
$$
\II(Y,L'\cup T_c\cup \bigcup_{1\le i \le s} e_i, \omega'+u+\sum_{1\le i \le s}u_i|\bar{R} )\cong 
\II(Y_2,\emptyset,\bar{\omega}'+\bar{u}+\sum_{1\le i \le s}\bar{u}_i|\bar{R}_T)\cong \SHI(M_T,\gamma_T).
$$
Since $\SHI(M_T,\gamma_T)\neq 0$, we have verified \eqref{YL'e-neq0}. Hence 
$\II(Y,L'\cup T_c,\omega'+u|\bar{R})\neq 0$ from the previous discussion, and the theorem follows from \eqref{MT=YL'+Tc}.
\end{proof}

\section{Applications to annular links}
\label{sec_applications}

Let $A:=  S^1\times [0,1]$ be an annulus, 
let $L$ be a link in the thickened annulus $ A\times [0,1]\cong S^1\times D^2$. In this case, $L$ is called an annular link.
The annular instanton Floer homology $\AHI(L)$ is defined in \cite{AHI} 
in the following way:
\begin{itemize}
\item Let $\mathcal{K}_2$ be the product link $S^1\times \{p_1,p_2\}$ in $S^1\times D^2$, let $u$ be an arc in $S^1\times D^2$ connecting $S^1\times\{p_1\}$ and $S^1\times\{p_2\}$;

\item View $A\times [0,1]$ as $S^1\times D^2$ and form the new link $L\cup \mathcal{K}_2$ in 
      \begin{equation*}
      S^1\times S^2=S^1\times D^2\cup_{S^1\times S^1} S^1\times D^2
      \end{equation*}
     where $L$ lies in the first copy of $S^1\times D^2$ and $\mathcal{K}_2$ lies in the second copy. 
\item Define
     \begin{equation*}
      \AHI(L):=\II(S^1\times S^2, L\cup \mathcal{K}_2,u).
     \end{equation*}
\end{itemize}
Recall that in this paper, all the Floer homology groups have coefficients in $\bC$. Also recall that by Proposition \ref{mu-pt=2} we have $\mu(\pt)=2\id$ on $\AHI(L)$.
The group $\AHI(L)$ is equipped with a $\mathbb{Z}$-grading called the f-grading. The degree-$i$ component $\AHI(L,i)$ of $\AHI(L)$ is defined 
to be the generalized eigenspace of $\muu(S^2)$ for the eigenvalue $i$. The f-grading is \emph{not} a lifting of the $\mathbb{Z}/4$-grading of the singular instanton Floer homology group $\II(S^1\times S^2, L\cup \mathcal{K}_2,u)$.

We will use $\mathcal{K}_n$ to denote the product link $S^1\times \{p_1,\cdots,p_n\}$ in $S^1\times D^2$, and use $\mathcal{U}_n$  to denote the unlink (i.e. a link that bounds a disjoint union of disks) in $S^1\times D^2$
with $n$ components.

\begin{Lemma}
\label{lem_nonvanishingOfAHI}
Suppose $L$ is an annular link that is included in a solid ball in $A\times[0,1]$, then 
$$
\dim \AHI(L)\ge 2^n,
$$
where $n$ is the number of components of $L$.
\end{Lemma}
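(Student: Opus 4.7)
The plan is as follows. Since $L$ sits inside a ball $B\subset A\times[0,1]$, the sphere $\Sigma=\partial B\subset S^1\times S^2$ separates $L$ from $\mathcal{K}_2\cup u$. This yields a connected sum decomposition
$$(S^1\times S^2,\,L\cup\mathcal{K}_2,\,u) \;\cong\; (S^3,\,L,\,\emptyset)\,\#_\Sigma\,(S^1\times S^2,\,\mathcal{K}_2,\,u),$$
and the key idea is to exploit this splitting to express $\AHI(L)$ in terms of a Kronheimer--Mrowka style singular instanton Floer homology of $L$ viewed as a link in $S^3$.

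The technical step is to establish an isomorphism
$$\AHI(L) \;\cong\; W \otimes \II^\natural(L),$$
where $W$ is a nonzero auxiliary vector space depending only on $(S^1\times S^2,\mathcal{K}_2,u)$, and $\II^\natural(L)$ is Kronheimer--Mrowka's reduced singular instanton Floer homology of $L\subset S^3$ with an earring, as in \cite{KM:Kh-unknot}. This is essentially a connected-sum formula, and the essential point is that the arc $u$ joining the two components of $\mathcal{K}_2$ provides, on the $(S^1\times S^2)$-side of $\Sigma$, the earring configuration needed to render the $S^3$-side admissible. The nonvanishing of $W$ can be verified by computing $\II(S^1\times S^2,\mathcal{K}_2,u)$ directly using classical torus excision (Theorem \ref{thm_nonsingularExcision}) along meridian tori of $S^1\times\{p_1\}$ and $S^1\times\{p_2\}$.

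Granting this decomposition, the lemma reduces to the lower bound $\dim \II^\natural(L)\ge 2^n$ for an $n$-component link $L\subset S^3$. This follows by induction on $n$: the base case $\dim \II^\natural(\text{unknot})=2$ is Kronheimer--Mrowka's computation, and the split-union formula $\II^\natural(L\sqcup U)\cong \II^\natural(L)\otimes\bC^2$ handles unlinks, with the general case reduced to the unlink through the unoriented skein exact triangle \cite[Proposition 6.11]{KM:Kh-unknot}. The main obstacle in this plan is establishing the Kunneth-type decomposition rigorously, since it requires carefully tracking the singular bundle data across the splitting sphere and formulating a precise connected-sum formula in the presence of nontrivial $\omega$-data on both sides.
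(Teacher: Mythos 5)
The first half of your plan — expressing $\AHI(L)$ in terms of a singular instanton homology of $L\subset S^3$ — is essentially what the paper does, except the paper invokes the already-established identity $\AHI(L)\cong\II^\sharp(L)$ from \cite[Proposition 4.7]{AHI} rather than re-deriving a connected-sum formula. That part of your argument is plausible and could be carried through.

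The second half has a real gap. You propose to prove the bound $\dim \II^\natural(L)\ge 2^{\#(\text{components})}$ by reducing $L$ to the unlink via the unoriented skein exact triangle. But exact triangles of the form $A\to B\to C\to A[1]$ give the inequality $\dim B\le \dim A+\dim C$; they give no lower bound on $\dim B$ unless you can show that certain connecting maps vanish, which is not automatic and is generally false. More concretely, the natural consequence of iterating the skein triangle is a spectral sequence (the cube-of-resolutions spectral sequence), and the dimension can only \emph{drop} as you pass from the $E_2$-page (a Khovanov-type complex) to the $E_\infty$-page. So this route would establish an upper bound, not the lower bound you need. The paper's proof gets the lower bound by a genuinely different mechanism: it passes to the local-coefficient instanton homology $\II^\sharp(L;\Gamma)$ over $\mathcal{R}=\bC[t,t^{-1}]$, cites \cite[Section 5.2]{KM:YAFT} for the fact that $\rank_\mathcal{R}\II^\sharp(L;\Gamma)=2^n$ is an invariant of the number of components only (this is where the deep input lies — roughly, the local system kills the "extra" contributions coming from representations that are not generic relative to the pillowcase), and then applies the universal coefficient theorem $\rank_\bC \II^\sharp(L;\bC)\ge \rank_\mathcal{R}\II^\sharp(L;\Gamma)$ to come back to $\bC$. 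Without replacing your skein-triangle step with something like this, the argument does not close.
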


\begin{proof}
By
\cite[Proposition 4.7]{AHI}, 
$$
\AHI(L)\cong \II^\sharp(L)
$$
where on the right hand side $L$ is viewed as a link in the $3$-ball, 
and $\II^\sharp$ is the knot invariant defined in \cite[Section 4.3]{KM:Kh-unknot}.
A local system $\Gamma$ on the configuration space $\mathcal{B}$ is introduced in \cite[Section 3]{KM-Ras}  
(see also \cite[Section 5]{KM:YAFT}). The instanton Floer homology with local coefficients $\II^\sharp(L;\Gamma)$ is 
an $\mathcal{R}:=\mathbb{C}[t,t^{-1}]$-module. It follows from 
\cite[Section 5.2]{KM:YAFT} that 
$\rank_{\mathcal{R}} \II^\sharp(L;\Gamma)$ is the same as 
$\rank_{\mathcal{R}} \II^\sharp(\mathcal{U}_n;\Gamma)=2^n$.
The homology $\II^\sharp(L)=\II^\sharp(L;\mathbb{C})$ can be computed from
$\II^\sharp(L;\Gamma)$ by the universal coefficient theorem where $\mathbb{C}$ is viewed as the $\mathcal{R}$-module
$\mathcal{R}/(t-1)$. Therefore we have
$$
\rank_\mathbb{C}\II^\sharp(L;\mathbb{C})\ge \rank_{\mathcal{R}} \II^\sharp(L;\Gamma)=2^n.
$$
\end{proof}

We now prove Theorem \ref{2g+n_intro} stated in the introduction. 
 Recall that a properly embedded, connected, oriented 
surface $S\subset S^1\times D^2$ is called a \emph{meridional surface} if $\partial S$ is a meridian of $S^1\times D^2$. The theorem shows that the annular instanton Floer homology detects the minimal Thurston norm among meridional surfaces. Let us repeat the statement of the theorem here.
\begin{Theorem}\label{2g+n}
Given an annular link $L$, suppose $S$ is a meridional surface that intersects $L$ transversely. Let $g$ be the genus of $S$, and let $n:=|S\cap L|$. Suppose $S$ minimizes the value of $2g+n$ among meridional surfaces,
then we have
\begin{equation*}
\AHI(L,i)= 0
\end{equation*}
when $|i|> 2g+n$ and
\begin{equation*}
\AHI(L,\pm(2g+n))\neq 0.
\end{equation*}
\end{Theorem}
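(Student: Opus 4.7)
The plan is to close $S$ into a closed surface $\hat{S}\subset S^1\times S^2$ and then apply the eigenvalue bound of Section \ref{section-spec} together with the singular excision machinery of Section \ref{sec_excision}. Write $S^1\times S^2=(S^1\times D^2)_1\cup(S^1\times D^2)_2$ with $L\subset(S^1\times D^2)_1$ and $\mathcal{K}_2\subset(S^1\times D^2)_2$. Since $\partial S\subset S^1\times S^1$ is a meridian, it bounds a meridional disk $D\subset(S^1\times D^2)_2$, and $\hat{S}:=S\cup D$ is a closed oriented genus-$g$ surface homologous to $\{\mathrm{pt}\}\times S^2$, meeting $L\cup\mathcal{K}_2$ transversely in $n+2$ points (the two extra being $D\cap\mathcal{K}_2$). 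Consequently $\muu(\hat S)$ acts on $\AHI(L)=\II(S^1\times S^2,L\cup\mathcal{K}_2,u)$ as the f-grading operator $\muu(S^2)$, and $\mu(\mathrm{pt})=2\cdot\id$ by Proposition \ref{mu-pt=2}.

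For the vanishing statement: when $n$ is odd (so $n+2$ is odd), Proposition \ref{prop_allEigenvaluesForGeneral3mfld} applied to $\hat{S}$ immediately constrains the spectrum of $\muu(\hat{S})$ on $\AHI(L)$ to $\{-(2g+n),-(2g+n-2),\dots,2g+n\}$, so $\AHI(L,i)=0$ for $|i|>2g+n$. When $n$ is even, the auxiliary arc $u$ can be chosen to wind once around the $S^1$-factor of $(S^1\times D^2)_2$, so that it intersects $D$ transversely in a single point. Then $\hat{S}$ is a non-integral surface of $(S^1\times S^2,L\cup\mathcal{K}_2,u)$ via $\omega$, and the same eigenvalue bound follows from a mild extension of Proposition \ref{prop_allEigenvaluesForGeneral3mfld}: the ring-theoretic arguments of Sections \ref{subsec_Vgn}–\ref{section-spec}, together with flip symmetry, give the analogous spectrum on $\II(S^1\times\Sigma,S^1\times\{p_1,\dots,p_{n+2}\},\omega')$ for a web $\omega'$ with odd intersection with the fiber.

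For the non-vanishing: the case $n=0$ is immediate since $L$ then lies in a ball, whence $\AHI(L)\ne0$ by Lemma \ref{lem_nonvanishingOfAHI} and $\AHI(L)=\AHI(L,0)$ by Step 2. Suppose $n\ge1$. Decompose $(S^1\times D^2,L)$ along $S$ to produce a sutured manifold with balanced tangle $(M',\gamma',T')$, where $R^\pm(\gamma')=S^\pm$ are the two copies of $S$, $A(\gamma')$ is the annular complement of a neighborhood of $\partial S$ on $T^2$, and $T'=L\setminus S$. Going along each component of $L$, the transverse intersections with $S$ alternate sides; hence every non-closed component of $T'$ has one endpoint on $S^+$ and one on $S^-$, giving the balancing of $T'$ and condition (1) of Definition \ref{Def-taut}. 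Since $x_{T'}(S^\pm)=|T'\cap S^\pm|-\chi(S^\pm)=n-(2-2g)=2g+n-2$, the minimality of $2g+n$ among meridional surfaces translates precisely to $R^\pm$ being $T'$-norm-minimizing in $(M',\gamma')$; $T'$-irreducibility and $T'$-incompressibility then follow by standard cut-and-paste (an innermost-disk argument using the minimality). Thus $(M',\gamma',T')$ is taut, and Theorem \ref{taut-non-vanishing} gives $\SHI(M',\gamma',T')\ne0$.

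Finally, the singular excision theorem (Theorem \ref{thm_excision}), applied to a pair of parallel copies of $\hat{S}\subset S^1\times S^2$ and a pair of parallel copies of the distinguished closed surface used in the closure defining $\SHI(M',\gamma',T')$, together with the $1$-dimensionality of top eigenspaces from Theorem \ref{thm_TopEigenspaceDim1} and Corollary \ref{Cor_verticalw2}, yields an isomorphism
\[
\SHI(M',\gamma',T')\;\cong\;\AHI(L,2g+n),
\]
proving $\AHI(L,2g+n)\ne0$. Reversing the orientation of $\hat{S}$ (equivalently of the $S^1$ factor) gives $\AHI(L,-(2g+n))\ne0$. The main obstacle is this last identification via excision, which extends the null-homologous computations of \cite{AHI} and parallels \cite[Proposition 6.9]{KM:suture} using the new singular excision; the even-$n$ parity workaround in Step 2 is a secondary but nontrivial technical point.
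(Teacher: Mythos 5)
Your overall strategy coincides with the paper's: close $S$ to $\overline{S}\subset S^1\times S^2$ and apply the eigenvalue bound for the vanishing, and cut $S^1\times D^2$ along $S$ to get a balanced sutured manifold with a tangle, then invoke Theorem \ref{taut-non-vanishing} for the non-vanishing. However, there are two genuine gaps.

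\textbf{The even-$n$ vanishing case.} You propose choosing the auxiliary arc $u$ to wrap once around the $S^1$-factor, so that $\overline{S}$ becomes non-integral via $\omega$. But replacing $u$ by such an arc $u'$ changes the mod-2 class by $[S^1\times\{\mathrm{pt}\}]\ne0\in H_1(S^1\times S^2-L\cup\mathcal{K}_2;\bZ/2)$, so $\II(S^1\times S^2,L\cup\mathcal{K}_2,u')$ is a priori a \emph{different} Floer group from $\AHI(L)$, and you would need a separate argument to transfer the spectral bound back. You also appeal to ``a mild extension'' of Proposition \ref{prop_allEigenvaluesForGeneral3mfld} to even intersection numbers with a nontrivial $\omega$, but this extension is precisely what is \emph{not} established in Sections \ref{subsec_Vgn}--\ref{section-spec}: the ring-theoretic analysis of $\bV_{g,n}$ is carried out only for $n$ odd, and the flip-symmetry trick you gesture at does not produce it. The paper sidesteps this entirely by a nonsingular torus excision replacing $\mathcal{K}_2$ with $\mathcal{K}_3$ (so $\overline S\cap(L\cup\mathcal{K}_3)$ has odd cardinality $n+3$), using Lemma \ref{Lem-UgnWgn-eigenvalues} to track the eigenvalue shift by $\pm1$ from the $\bU_{0,3}$ factor.

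\textbf{The $T'$-reducible case.} You assert that $T'$-irreducibility ``follows by standard cut-and-paste (an innermost-disk argument using the minimality).'' This is false. If $L$ has a component (or a split sublink $L_2$) contained in a ball disjoint from $S$, then $M-T'$ contains an essential separating $2$-sphere and $(M,\gamma,T')$ is \emph{not} taut; minimality of $2g+n$ is of no help here. The paper explicitly splits into two cases and, in the reducible case, factors $L=L_1\sqcup L_2$ with $L_2$ in a ball, applies \cite[Proposition 4.3]{AHI} to get $\AHI(L,2g+n)\cong\AHI(L_1,2g+n)\otimes\AHI(L_2)$, and then invokes Lemma \ref{lem_nonvanishingOfAHI} for $\AHI(L_2)\ne0$. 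Without this your argument is incomplete. You also omit the normalization (tubing $S$ so that each component of $L$ intersects $S$ with a single sign) that precedes the decomposition in the paper; and note $\chi(S^\pm)=1-2g$, not $2-2g$, since $S$ has one boundary circle. Finally, the identification $\SHI(M',\gamma',T')\cong\AHI(L,2g+n)$ that you obtain via a roundabout excision of parallel copies of $\overline{S}$ holds directly ``from the definition,'' since cutting $S^1\times S^2$ along $\overline{S}$ produces exactly the closure $M\cup_{A(\gamma)}([-1,1]\times D^2)$ used in Definition \ref{def_SHI} with $F$ a disk with two marked points.
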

\begin{proof}
Construct a closed surface $\overline{S}:=S\cup_{\partial S} D^2$ in $S^1\times S^2=S^1\times D^2\cup S^1\times D^2$, by attaching to $S$ an embedded disk bounded by $\partial S$
in the second copy of $S^1\times D^2$. Since $\overline{S}$ is homologous to $\{\pt\}\times S^2$ in $S^1\times S^2$, the operators 
$\muu(\{\pt\}\times S^2)$ and $\muu(\overline{S})$ are the same on 
 $\II(S^1\times S^2,L,\omega)$, for all choices of $\omega$.

We first prove that $\AHI(L,i)= 0$
when $|i|> 2g+n$. In fact,
if $|S\cap L|$ is odd, then the statement follows from  Proposition \ref{prop_allEigenvaluesForGeneral3mfld}. If $|S\cap L|$ is even,
torus excision gives an isomorphism
\begin{equation*}
\AHI(L)\otimes \mathbb{U}_{0,3}\cong \II(S^1\times S^2,L\cup \mathcal{K}_3,u)\otimes \mathbb{U}_{0,2},
\end{equation*}
and the statement follows from Proposition \ref{Lem-UgnWgn-eigenvalues} and Proposition \ref{prop_allEigenvaluesForGeneral3mfld}.

Now we prove $\AHI(L,\pm(2g+n))\neq 0$. We will only show $\AHI(L,2g+n)\neq 0$, since the other case is essentially the same.
If a component of $L$ intersects $S$ at points with different signs, 
then we can cancel a pair of consecutive intersection points with opposite signs of intersection by attaching a tube. This operation
increases $g$ by $1$ and decreases $n$ by 2, so the value of $2g+n$ does not change.
Therefore, without loss of generality, we may assume that each component of $L$ intersects $S$ with the same sign. 
 
Remove a tubular neighborhood of $S$ from $S^1\times D^2$. Let $\gamma$ be a meridian of $S^1\times D^2$ that is disjoint from $\partial S$. We obtain a balanced sutured manifold with a balanced tangle $T$ given by
$$
(M,\gamma,T):=(S^1\times D^2 - N(S),\gamma, L- N(S))
$$ 
with 
$$
\partial M=A(\gamma)\cup R^+\cup R^-:=A\cup S^+\cup S^-,
$$
where $S^+,S^-$ are the closures of the two parallel copies of $S$ in $\partial N(S)-\partial (S^1\times D^2)$, and $A$ is the closure of $\partial (S^1\times D^2)- N(S)$.
From the definition,
$$
\AHI(L,2g+n)\cong \SHI(M,\gamma,T).
$$

We discuss two cases.

{\bf Case 1. $M$ is $T$-irreducible.}
In this case, we show that $(M,\gamma,T)$ is taut, hence the result follows form Theorem \ref{taut-non-vanishing}.
To show that $S^+$ is $T$-norm minimizing, suppose there is a properly embedded surface 
$(F,\partial F)\subset (M,A)$ such that $[(F,\partial F)]=[(S^+,\partial S^+)]$ and its $T$-norm $x_T$ satisfies
\begin{equation}\label{xF<xS}
x_T(F)< x_T(S^+).
\end{equation}
By the same argument as Lemma \ref{lem_boundaryHasOneCircle}, $F$ can be chosen such that  $A\cap \partial F$ consists of one circle that is parallel to $\gamma$ and has the same orientation as $\gamma$. The component of $F$ that contains $\partial F$ then gives a meridional surface with a smaller value of $2g+n$, which contradicts the assumption on $S$.

To show that $S^+$  is T-incompressible.
Suppose $C\subset M- T$ is a $T$-compressing disk for $S^+$, such that $\partial C\subset S^+- T$
does not bound a disk in $S^+- T$. 
Compressing $S^+$ along $C$ then gives a meridional surface with a smaller value of $2g+n$, which is a contradiction. 

The same argument applies to $S^-$, hence $(M,\gamma,T)$ is taut.

{\bf Case 2. $M$ is $T$-reducible.}
In this case, the annular link $L$ can be written as the disjoint union of $L_1$ and $L_2$ with the following properties: 
\begin{itemize}
\item There exists a ball $B\in A\times [0,1]$, such that $B$ is disjoint from $L_1\cup S$, and $\emptyset\neq L_2\subset B$.
\item Let $T_1:=L_1-N(S)$, then $(M,\gamma)$ is $T_1$-irreducible.
\end{itemize}
By \cite[Proposition 4.3]{AHI},
$$
\AHI(L,2g+n) \cong \AHI(L_1,2g+n)\otimes \AHI(L_2).
$$
By the previous case and Lemma \ref{lem_nonvanishingOfAHI}, we have $\AHI(L,2g+n)\neq 0$.
\end{proof}

\begin{Corollary}\label{link-in-ball}
Suppose $L$ is an annular link with $\AHI(L)$ supported at the f-grading $0$. Then $L$ is included in a 3-ball in $S^1\times D^2$. 
\end{Corollary}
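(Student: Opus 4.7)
The plan is to apply Theorem \ref{2g+n} directly, using it as a detection statement: the minimum of $2g+n$ over meridional surfaces is read off from the support of $\AHI(L)$ in the f-grading. First I would observe that meridional surfaces always exist. For a generic point $p\in S^1$, the disk $\{p\}\times D^2\subset S^1\times D^2$ is a meridional surface, and after a small perturbation it meets $L$ transversely in finitely many points. Since the quantity $2g+n$ takes values in $\mathbb{Z}_{\geq 0}$, the set of such surfaces contains one, call it $S$, that minimizes $2g+n$.

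Now apply Theorem \ref{2g+n} to this minimizer. The theorem yields
\[
\AHI(L,\pm(2g+n))\neq 0.
\]
By hypothesis, $\AHI(L)$ is supported at the f-grading $0$, so $\AHI(L,i)=0$ for every $i\neq 0$. Comparing these two statements forces $2g+n=0$, which in turn forces $g=0$ and $n=0$. Hence $S$ is a disk disjoint from $L$ whose boundary is a meridian of $S^1\times D^2$.

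Finally, cutting $S^1\times D^2$ open along the disk $S$ produces $[0,1]\times D^2\cong B^3$, and since $L$ is disjoint from $S$, the link $L$ sits inside this $3$-ball. This completes the argument. There is essentially no obstacle to this step beyond invoking Theorem \ref{2g+n}, whose proof (via the sutured-tangle machinery of Section \ref{sec_suturedFloer} and the singular excision theorem of Section \ref{sec_excision}) is the serious work; the present corollary is a direct geometric readout of that result.
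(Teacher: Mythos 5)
Your proof is correct and follows essentially the same approach as the paper: apply Theorem \ref{2g+n} to a minimizing meridional surface, use the support hypothesis to force $2g+n=0$, and conclude that $L$ lies in the ball obtained by cutting along the resulting disjoint disk. The paper's version is simply terser, omitting the existence observation and the final cut-open step, which you have supplied correctly.
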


\begin{proof}
By Theorem \ref{2g+n}, there exists a meridional surface with genus zero that is disjoint from $L$. Therefore $L$ is included in a 3-ball.
\end{proof}

\begin{Corollary}\label{AHI-braid-detection}
Let $L$ be an annular link.
Then $L$ is isotopic to the closure of 
a braid with $n$ strands
if and only if the top f-grading of $\AHI(L)$ is $n$, and $\AHI(L,n)\cong \bC.$
\end{Corollary}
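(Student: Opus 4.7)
For the forward direction, assume $L$ is the closure of an $n$-strand braid. Take the meridional disk $D$ transverse to the braid projection; by construction $|D\cap L|=n$ with all intersections having the same sign. For every meridional surface $S$ and every component $L_i$ of $L$ we have $|S\cap L_i|\ge|w_i|$, where $w_i$ is the winding number of $L_i$, so $n_S\ge\sum_i|w_i|$. The components of an $n$-braid closure have positive winding numbers summing to $n$, giving $\sum_i|w_i|=n$, hence $2g+n_S\ge n$ for every meridional $S$, with equality realized by $D$. Theorem \ref{2g+n} then gives that the top f-grading of $\AHI(L)$ equals $n$ and $\AHI(L,\pm n)\ne 0$. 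From the identification in the proof of Theorem \ref{2g+n} we have $\AHI(L,n)\cong \SHI(M,\gamma,T)$, where $(M,\gamma,T)$ is obtained by cutting along $D$: here $M$ is a $3$-ball and $(M,T)$ is diffeomorphic, as an unmarked pair, to $([-1,1]\times D^2,\,[-1,1]\times\{p_1,\ldots,p_n\})$, since any braid is isotopic to the product tangle once the boundary identifications on $R^\pm$ are forgotten. For a product sutured manifold with product tangle, the closure constructed in Definition \ref{def_SHI} is a mapping torus to which Proposition \ref{inst-fiber-mfd} applies with $f=\id$, yielding $\SHI(M,\gamma,T)\cong\bC$ and hence $\AHI(L,n)\cong\bC$.

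For the reverse direction, assume the top f-grading of $\AHI(L)$ is $n$ and $\AHI(L,n)\cong\bC$. The component-wise bound $n_S\ge\sum_i|w_i|$ combined with the existence of a monotone presentation (isotoping each component of $L$ to be monotone in the $S^1$-direction produces a fiber disk $D'$ realizing $n_{D'}=\sum_i|w_i|$) yields $\min_S(2g+n_S)=\sum_i|w_i|$, so $n=\sum_i|w_i|$. For any minimizer $S$ we then have $n_S\ge n$ and $2g+n_S=n$, forcing $g=0$ and $n_S=n$. Thus $S$ is a meridional disk, $M=(S^1\times D^2)\setminus N(S)$ is a $3$-ball, $(M,\gamma)$ is a product sutured manifold and hence a homology product, and $\SHI(M,\gamma,T)\cong\AHI(L,n)\cong\bC$. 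A dimension count rules out split components: if $L=L_1\sqcup L_0$ with $L_0$ contained in a ball disjoint from $L_1$ and from $S$, then $\AHI(L,n)\cong\AHI(L_1,n)\otimes\AHI(L_0,0)$, and by Lemma \ref{lem_nonvanishingOfAHI} the factor $\AHI(L_0,0)$ has dimension at least $2^{|L_0|}\ge 2$, contradicting $\dim\AHI(L,n)=1$. Using the tubing argument from the proof of Theorem \ref{2g+n}, we may further arrange that each component of $L$ meets $S$ with a single sign, so every nonzero-winding component of $L$ contributes vertical arcs to $T$.

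The main obstacle is to verify that $T$ is vertical, equivalently, that $L$ has no component of winding number zero. Such a component $L_0$ would give a closed component of $T$ that is not split from $L_1=L\setminus L_0$ by the ball-splitting ruled out above, yet could still be linked with the vertical strands. The plan is to handle this by applying the unoriented skein exact triangle \cite[Proposition 6.11]{KM:Kh-unknot} to an earring on $L_0$ and carrying out the excision-and-surgery reduction in the spirit of the proof of Theorem \ref{taut-non-vanishing}, with the goal of showing that a non-split winding-zero component forces $\dim\SHI(M,\gamma,T)\ge 2$, contradicting $\SHI(M,\gamma,T)\cong\bC$. Once the verticality of $T$ is established, Theorem \ref{braid-detection} applies directly to give $(M,\gamma,T)\cong([-1,1]\times D^2,\,\gamma,\,[-1,1]\times\{p_1,\ldots,p_n\})$; reconstructing $S^1\times D^2$ by gluing $R^+$ to $R^-$ via the resulting diffeomorphism then presents $L$ as the closure of an $n$-strand braid, whose braid element is determined by the action of the gluing diffeomorphism on $\{p_1,\ldots,p_n\}$.
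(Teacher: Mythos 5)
Your forward direction is fine and is a self-contained alternative to the paper's citation of \cite[Corollary 3.9, (4.3)]{AHI}; both establish that for a braid closure the minimizer is the fiber disk and $\SHI$ of the resulting product tangle is $\bC$.

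The reverse direction, however, has a genuine gap, and it is in fact two linked problems. First, the identity $\min_S(2g+n_S)=\sum_i|w_i|$ that you derive from a ``monotone presentation'' is false. A winding-zero component that is geometrically essential in the solid torus (for example the Whitehead pattern) cannot be isotoped to be monotone, and more generally a component of nonzero winding number need not be a braid in the solid torus; for such $L$ one has $\min_S(2g+n_S)>\sum_i|w_i|$. This matters because you rely on this identity to obtain $\sum_i|w_i|=n$, which is exactly what forces the minimizer to be a disk meeting each component with a single sign; without it the chain $n_S\ge\sum_i|w_i|=n$ does not close and you cannot conclude $g=0$. Second, you openly acknowledge that your argument does not rule out a winding-zero component linked with the rest of $L$ and sketch only a ``plan'' involving the earring skein triangle; this step is not carried out, so the verticality hypothesis of Theorem \ref{braid-detection} is left unverified.

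The tool that closes both gaps at once, and which the paper uses, is the fact from \cite[Section 4.4]{AHI} that the \emph{parity} of $\dim_{\bC}\AHI(L,n)$ is invariant under crossing changes. If the absolute winding number were strictly less than $n$, one could crossing-change $L$ to a link $L'$ meeting a meridional disk in fewer than $n$ points, so $\AHI(L',n)=0$ by Theorem \ref{2g+n}; hence $\dim\AHI(L,n)$ would be even, contradicting $\dim\AHI(L,n)=1$. This gives the crucial bound $\sum_i|w_i|\ge n$, from which $\sum_i|w_i|=n$ follows since the Thurston-norm estimate gives the reverse inequality. In particular each component meets the minimizing disk in exactly $|w_i|$ points, which both forces $g=0$ and shows that a winding-zero component would have to be disjoint from the disk (and the same crossing-change/parity argument applied to such a component alone, splitting it off as an unknot in a ball, would again force $\dim\AHI(L,n)$ to be even). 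So the parity argument is not an optional shortcut; it is the step your proposal is missing, and the surgery-based ``plan'' you sketch is not a substitute for it.
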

\begin{proof}
Suppose $L$ has components $L_1,\cdots,L_n$, we define 
the \emph{absolute winding number} of $L$ to be the sum of the absolute values 
of $[L_i]/[\mathcal{K}_1]$ for $1\le i\le n$, where $[L_i]$, $[\mathcal{K}_1]\in H_1(A\times [0,1];\bZ)$ are the fundamental classes.

 Suppose the top f-grading of $\AHI(L)$ is $n$ and $\AHI(L,n)\cong \bC.$ 
 We first want to show that the absolute winding number of ${L}$ is at least $n$. 
 By \cite[Section 4.4]{AHI}, the parity of $\dim_\bC\AHI(L,n)$ is invariant under crossing-change.
 If the absolute winding number of ${L}$ is smaller than $n$, then 
 we can apply a finite number of crossing changes on $L$ and obtain a link $L'$ which intersects a meridional disk $S$ at less than
 $n$ points. By Theorem \ref{2g+n}, this implies $\AHI(L',n)=0$. Hence $\dim_\bC \AHI(L,n)$ is even, which contradicts the assumption.

By Theorem \ref{2g+n}, there exists a meridional surface $S$ with genus $g$ that intersects ${L}$ at $m$ points such that $2g+m=n$. 
Since the absolute winding number of ${L}$ is at least $n$, we must have $g=0$ and $m=n$. 
Let $\gamma$ be a meridian of $S^1\times D^2$ that is disjoint from $\partial S$, we have
\begin{equation*}
\mathbb{C}\cong\AHI({L},n)\cong \SHI(S^1\times D^2-N(S), \gamma, {L}-N(S)).
\end{equation*}
The assumption on the absolute winding number also implies that ${L}-N(S)$ is vertical in $(S^1\times D^2-N(S), \gamma)$. 
Therefore by Theorem \ref{braid-detection}, $(S^1\times D^2-N(S), \gamma, {L}-N(S))$ is a product sutured manifold with a product tangle. Since the mapping class group of a punctured disk is the same as the braid group, it follows that $\overline{L}$ is a braid closure, hence $L$ is a braid closure. (See \cite[Corollary 3.9, (4.3)]{AHI} for more details on the last step.)

The other direction is included in \cite[Corollary 3.9, (4.3)]{AHI}.
\end{proof}

It is clear from the definition that 
$\AHI(\mathcal{K}_1)\cong \mathbb{U}_{0,3}$, which is 1-dimensional at
$f$-degrees $\pm 1$ and vanishes at all other degrees. By \cite[Proposition 4.3]{AHI}, we have
$\AHI(\mathcal{K}_n)\cong \AHI(\mathcal{K}_1)^{\otimes n}$.

\begin{Corollary}\label{knot-K1}
An annular knot $K$ is isotopic to $\mathcal{K}_1$ if and only if
\begin{equation*}
\AHI(K)\cong \AHI(\mathcal{K}_1)
\end{equation*} 
as graded vector spaces with respect to the f-grading.
\end{Corollary}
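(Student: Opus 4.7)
The forward direction is immediate from the fact that $\AHI$ is an isotopy invariant of annular links, so I will focus on the backward direction. The plan is to read off the hypotheses on the f-graded structure of $\AHI(K)$ and then quote Corollary \ref{AHI-braid-detection} with the smallest possible braid index.

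First I would compute the right-hand side explicitly. Since $\AHI(\mathcal{K}_1) \cong \mathbb{U}_{0,3}$, the graded vector space $\AHI(\mathcal{K}_1)$ is one-dimensional in f-gradings $\pm 1$ and vanishes in all other f-gradings. Thus the assumption $\AHI(K) \cong \AHI(\mathcal{K}_1)$ as graded vector spaces immediately says that the top f-grading of $\AHI(K)$ is $1$ and $\AHI(K,1) \cong \bC$.

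Next I would apply Corollary \ref{AHI-braid-detection} with $n=1$ to conclude that $K$ is isotopic to the closure of a braid on one strand. The braid group on one strand is trivial, so up to isotopy in $A \times [0,1]$ the only such closure is $\mathcal{K}_1$. This finishes the backward direction, and hence the corollary.

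There is no real obstacle beyond recognizing the input: the corollary is essentially an unpacking of the $n=1$ case of Corollary \ref{AHI-braid-detection} combined with the explicit computation of $\AHI(\mathcal{K}_1)$. The only minor point to verify is that the hypothesis of isomorphism as \emph{graded} vector spaces is used in an essential way, since it is what supplies both the bound on the top f-grading and the one-dimensionality of the top f-graded piece needed to invoke Corollary \ref{AHI-braid-detection}.
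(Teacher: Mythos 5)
Your proof is correct and follows the same route as the paper: read off the top f-grading and its dimension from $\AHI(\mathcal{K}_1)\cong\mathbb{U}_{0,3}$, invoke Corollary~\ref{AHI-braid-detection} with $n=1$, and observe that the only $1$-strand braid closure is $\mathcal{K}_1$. Your version merely spells out these steps more explicitly than the paper does.
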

\begin{proof}
Suppose $\AHI(K)\cong \AHI(\mathcal{K}_1)$ as graded vector spaces. By Corollary \ref{AHI-braid-detection},
$K$ is isotopic to the closure of a braid with $1$ strand. Therefore $K$ is isotopic to $\mathcal{K}_1$ 
\end{proof}

The annular Khovanov homology for an oriented annular link is a triply-graded abelian group whose three gradings are called the 
h-grading, q-grading, and f-grading. We use $\AKh(L)$ to denote the annular Khovanov homology of
an oriented annular link $L$, and use $\AKh(L,i)$ to denote the component of $\AKh(L)$ with f-grading $i$.
In \cite{AHI}, a spectral sequence relating the annular Khovanov homology and annular instanton Floer homology is constructed.
\begin{Theorem}[{{\cite[Theorem 5.15]{AHI}}}] \label{s-sequence}
Let $L$ be an oriented annular link and $\overline{L}$ be its mirror image. For each $i\in \mathbb{Z}$,
there is a spectral sequence that has an $E_2$-page isomorphic to the annular Khovanov homology $\AKh(\overline{L},i;\bC)$ and converges to $\AHI(L,i)$. 
\end{Theorem}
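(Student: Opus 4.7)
The plan is to build the spectral sequence from a cube of resolutions, following the template Kronheimer--Mrowka used to build a spectral sequence from Khovanov homology to $\II^\sharp$, but refined to keep track of the f-grading so that it specializes to annular Khovanov homology at each fixed f-degree.

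First I would fix a diagram $D$ of the annular link $\overline{L}$ with $c$ crossings, and for each vertex $v\in \{0,1\}^c$ of the resolution cube let $\overline{L}_v$ be the oriented resolution. Each $\overline{L}_v$ is an annular unlink whose components are each either contractible in $A\times[0,1]$ (``trivial'' circles) or isotopic to the core $\mathcal{K}_1$ (``essential'' circles). Using Proposition~\ref{mu-pt=2} and the tensor behavior of $\AHI$ under disjoint union in a ball (\cite[Proposition~4.3]{AHI}), I would compute $\AHI(\overline{L}_v)$ together with its f-grading decomposition: each trivial circle contributes a copy of $\AHI(\mathcal{U}_1)$ supported in f-grading $0$, and each essential circle contributes a copy of $\AHI(\mathcal{K}_1)$ supported in f-gradings $\pm 1$. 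This is precisely the generator count used by Asaeda--Przytycki--Sikora to define $\AKh(\overline L;\bC)$ at each resolution vertex; both sides agree as f-graded vector spaces at the level of the cube.

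Next I would install differentials. For every crossing of $D$ the unoriented skein exact triangle of Kronheimer--Mrowka (\cite[Proposition~6.11]{KM:Kh-unknot}) gives a triangle relating $\AHI$ of the three local resolutions, and the same triangle restricts on each f-grading $i$ because the annular surface $S^2$ used to define the f-grading can be chosen disjoint from the skein ball. Assembling these triangles over all $c$ crossings yields a (hyper)cube whose total complex has $E_1$-page the direct sum of $\AHI(\overline{L}_v, i)$ over $v$, and the cube edge maps are induced by the elementary saddle cobordisms. Filtering by $|v|$ gives a spectral sequence converging to $\AHI(L,i)$, with mirror orientation appearing exactly as in \cite[Section~8.1]{KM:Kh-unknot} because the reversal from $\overline L$ to $L$ matches the $0$/$1$ convention on resolutions.

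The main obstacle is identifying the $d_1$ differential on $E_1$ with the annular Khovanov differential of Asaeda--Przytycki--Sikora, including the characteristic ``nontrivial'' behavior on essential circles (merging two essentials into a trivial, splitting a trivial into two essentials, etc.) which forces the non-standard grading convention in \cite{APS}. I would handle this by a local model computation for each saddle cobordism on $(S^1 \times S^2, S^1\times\{p_1,p_2\}, u)$ applied to the relevant union of essential/trivial circles, using the excision theorems of Section~\ref{sec_excision} and Proposition~\ref{inst-fiber-mfd} to reduce each local map to a cobordism between Frobenius-like elementary objects; matching the signs and the f-shift with the $\AKh$ differential then identifies $(E_2, d_2)$ with $\AKh(\overline L, i;\bC)$. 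Finally, functoriality and the naturality of the f-grading show that the spectral sequence is invariant under Reidemeister moves on $D$, so the $E_2 \Rightarrow \AHI(L,i)$ statement is independent of the diagram.
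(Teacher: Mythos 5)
This theorem is not proved in the paper under review: it is cited verbatim from \cite[Theorem 5.15]{AHI} and used as a black box. There is therefore no internal proof to compare your proposal against, and you have effectively attempted to reconstruct the argument from the cited reference rather than from this paper.

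That said, your outline does follow the expected template: a cube of resolutions over a diagram of $\overline L$, $E_1$-page given by $\bigoplus_v \AHI(\overline L_v, i)$ with trivial circles contributing in f-grading $0$ and essential circles in f-gradings $\pm 1$, differentials induced by unoriented skein triangles applied crossing by crossing (and compatible with the f-grading because the defining surface can be taken disjoint from each skein ball), filtration by cube weight, and a mirror appearing from the reversal of the cube direction. This is indeed the strategy of Kronheimer--Mrowka for $\mathrm{Kh}\Rightarrow \II^\sharp$ adapted to the annular setting, and it matches the general shape of the proof in \cite{AHI}. Two cautions, though. First, citing Proposition~\ref{mu-pt=2} is not what computes the cube vertices; what you need is the explicit f-graded computation of $\AHI(\mathcal{U}_1)$ and $\AHI(\mathcal{K}_1)$ together with the K\"unneth-type splitting for split annular links, and then a canonical identification at the chain (not just vector-space) level so that the edge maps can be matched. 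Second, the real content of the theorem is the identification of $d_1$ with the Asaeda--Przytycki--Sikora differential, including the asymmetric merges/splits involving essential circles; you flag this but reduce it only to ``local model computations'' without indicating which cobordism maps in $\II(S^1\times S^2, -\, , u)$ realize the APS Frobenius structure and why the f-grading shifts come out right. In a genuine proof this is where most of the work sits, so as written the proposal is a plausible roadmap rather than a self-contained argument.
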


Combine the above theorem and Theorem \ref{2g+n}, we obtain the following immediate corollary.
\begin{Corollary}\label{2g+n<AKh}
Let $L$ be an oriented annular link. If the top $f$-grading of $\AKh(L;\mathbb{Q})$ is $k$, then there is a meridional surface $S$ 
that intersects $L$ transversely and
$$
2g+n \le k
$$
where $g$ is the genus of $S$ and $n=|S\cap L|$. 
\end{Corollary}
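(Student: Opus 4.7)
The plan is to combine Theorem \ref{2g+n} with the spectral sequence of Theorem \ref{s-sequence}. Let $S_0$ be a meridional surface intersecting $L$ transversely that minimizes the value of $2g+n$, and write $g_0$, $n_0$ for its genus and number of intersection points with $L$. By Theorem \ref{2g+n}, $\AHI(L, i)=0$ for $|i|>2g_0+n_0$ and $\AHI(L,\pm(2g_0+n_0))\neq 0$, so the top $f$-grading of $\AHI(L)$ is exactly $2g_0+n_0$.

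By Theorem \ref{s-sequence}, for each $i$ there is a spectral sequence with $E_2$-page $\AKh(\overline{L},i;\bC)$ converging to $\AHI(L,i)$. In particular, $\dim_\bC \AHI(L,i)\le \dim_\bC \AKh(\overline{L},i;\bC)$ for every $i$, so the top $f$-grading of $\AHI(L)$ is bounded above by the top $f$-grading of $\AKh(\overline{L};\bC)$. Combining the two observations,
\begin{equation*}
2g_0+n_0 \;\le\; \text{top } f\text{-grading of }\AKh(\overline{L};\bC).
\end{equation*}

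To finish I would identify the top $f$-grading of $\AKh(\overline{L};\bC)$ with $k$: a change of coefficients from $\bQ$ to $\bC$ preserves the $f$-grading support by universal coefficients, and the $f$-grading on annular Khovanov homology is defined combinatorially from the winding numbers of the resolutions and is insensitive to the crossing signs flipped by mirroring, so the top $f$-gradings of $\AKh(L;\bQ)$ and $\AKh(\overline{L};\bC)$ coincide. This yields $2g_0+n_0\le k$, and $S:=S_0$ is the surface asserted by the corollary.

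The only step requiring any genuine care is the last one, namely the identification of the top $f$-grading between $L$ and $\overline{L}$; this is the place where the combinatorial conventions on annular Khovanov homology must be unwound, but it is essentially bookkeeping once the definitions from \cite{APS, Rob} are recalled. Everything else is a direct consequence of the detection theorem \ref{2g+n} proved in the previous section and the spectral sequence from \cite{AHI}.
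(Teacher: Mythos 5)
Your overall plan---combine Theorem \ref{2g+n} with the spectral sequence of Theorem \ref{s-sequence}---is exactly the paper's route, and everything up to the last step is fine. The gap is in the claim that ``the top $f$-gradings of $\AKh(L;\bQ)$ and $\AKh(\overline{L};\bC)$ coincide'' because the $f$-grading is ``defined combinatorially from winding numbers\ldots and is insensitive to the crossing signs flipped by mirroring.'' This reasoning only applies at the chain level: the cube of resolutions of $L$ and $\overline{L}$ have the same circles with the same winding numbers, so the chain groups have the same $f$-graded pieces. But the differentials are different, so this says nothing about the $f$-graded support of the homology. Worse, the standard duality for Khovanov-type invariants over a field sends $v_+\leftrightarrow v_-$ under the Frobenius pairing, so it \emph{negates} the $f$-grading: $\AKh(\overline{L},i;\bF)\cong(\AKh(L,-i;\bF))^\ast$. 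The identity of top $f$-gradings of $L$ and $\overline{L}$ is therefore equivalent to the symmetry of the $f$-graded support of $\AKh(L)$ about $0$---which is true, but is an additional structural input (e.g.\ from the $\mathfrak{sl}_2$-action on $\AKh$), not a tautology of the combinatorial definition.

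The cleanest repair, which sidesteps any claim about $\AKh(\overline{L})$, is to swap $L$ and $\overline{L}$ throughout. Mirroring is realized by an orientation-reversing self-diffeomorphism of $S^1\times D^2$ that preserves the meridian class, so it carries meridional surfaces for $L$ to meridional surfaces for $\overline{L}$ with the same genus and intersection number; in particular a minimizer $S$ for $\overline{L}$ is, under this identification, a meridional surface for $L$ with the same value of $2g+n$. Apply Theorem \ref{2g+n} to $\overline{L}$ to get $\AHI(\overline{L},2g+n)\neq 0$, and then Theorem \ref{s-sequence} with $\overline{L}$ in place of $L$, whose $E_2$-page is $\AKh(\overline{\overline{L}},i;\bC)=\AKh(L,i;\bC)$ and which converges to $\AHI(\overline{L},i)$. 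Thus $\AKh(L,2g+n;\bC)\neq 0$, and by universal coefficients $2g+n\le k$. This is in fact how the paper handles the $L$-versus-$\overline{L}$ bookkeeping in the proof of Theorem \ref{thm_annular_intro}(c).
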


We now prove Theorem \ref{thm_annular_intro} and Corollary \ref{cor_annular_intro} using Theorem \ref{2g+n} and Theorem \ref{s-sequence}. 
Let us repeat the statements from Section \ref{sec_intro} and combine them into the following statement.
\begin{Theorem}
Let $L$ be an oriented annular link. We have the following results.
\begin{itemize}
\item[(a)] $L$ is included in a 3-ball in $S^1\times D^2$ if and only if $\AKh(L;\bQ)$ is supported at the f-grading 0.

\item[(b)] Suppose $L$ has $n$ components, then $L$ is isotopic to the unlink $\mathcal{U}_n$ if and only if 
     $\AKh(L;\bZ/2)\cong\AKh(\mathcal{U}_n;\bZ/2)$ as triply-graded abelian groups.

\item[(c)] 
$L$ is isotopic to the closure of 
a braid with $n$ strands
if and only if the top f-grading of $\AKh(L;\bQ)$ is $n$, and $\AKh(L,n;\bQ)\cong \bQ.$

\item[(d)] $L$ is isotopic to $\mathcal{K}_n$ if and only if $\AKh(L;\bZ/2)\cong \AKh(\mathcal{K}_n;\bZ/2)$ as triply-graded abelian groups.
\end{itemize}
\end{Theorem}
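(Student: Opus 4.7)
The plan is to prove the four parts in sequence, with the spectral sequence of Theorem \ref{s-sequence} serving as the principal bridge between annular Khovanov homology and annular instanton Floer homology. A structural fact I will use throughout is that the annular Khovanov complex satisfies $\AKh^{h,q,f}(\bar L)\cong \AKh^{-h,-q,f}(L)$: the f-grading is preserved under mirror, because every resolution of a diagram of $L$ corresponds to a resolution of $\bar L$ with the same planar picture and hence the same trivial/essential circle structure. The forward directions of all four parts are direct computations: in (a), a link in a ball admits a diagram in a disk, so every resolution consists only of trivial circles and $\AKh(L)$ lives at $f=0$; in (c) and (d), the oriented resolution of a braid closure on $n$ strands is the unique resolution producing $n$ essential circles, giving $\AKh(L,n)\cong\bZ$ and no generators at higher f-grading, and the K\"unneth formula $\AKh(\mathcal{K}_n)=\AKh(\mathcal{K}_1)^{\otimes n}$ then determines $\AKh(\mathcal{K}_n)$ completely.

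For the backward direction of (a), I would combine mirror symmetry with the spectral sequence: $\AKh(\bar L;\bC)$ is concentrated at $f=0$, so Theorem \ref{s-sequence} forces $\AHI(L)$ to be concentrated at $f=0$, and Corollary \ref{link-in-ball} then places $L$ inside a $3$-ball. For the backward direction of (b), I would first lift the $f=0$ support condition from $\bZ/2$ to $\bQ$ via the universal coefficient theorem (a free or $\bZ/2$-summand of $\AKh(L;\bZ)$ at any f-grading $f\ne 0$ would contribute nonzero terms to $\AKh(L;\bZ/2)$ at $f\ne 0$), and then apply (a) to conclude $L\subset B^3$. Once $L$ is in a ball, the annular complex reduces to the ordinary Khovanov complex placed at $f=0$, so $\Kh(L;\bZ/2)\cong \Kh(\mathcal{U}_n;\bZ/2)$, and the Batson--Seed / Hedden--Ni unlink detection theorem identifies $L$ with $\mathcal{U}_n$ in $S^3$; an ambient isotopy inside the ball realizes this as an annular isotopy.

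For the backward direction of (c), the key idea is that the spectral sequence is sharp at the top f-grading. By mirror symmetry, $\dim_\bC \AKh(\bar L, n;\bC)=1$ and $\AKh(\bar L, i;\bC)=0$ for $i>n$. Since the spectral sequence of Theorem \ref{s-sequence} decouples by f-grading, the unique $E_2$-generator at $f=n$ has no possible source or target for any differential at $f=n$, so it survives to $E_\infty$. This pins down $\AHI(L,n)\cong \bC$ with top f-grading exactly $n$, and Corollary \ref{AHI-braid-detection} identifies $L$ as a braid closure on $n$ strands. This ``no-cancellation'' step is the main technical point; it is what upgrades the crude bound of Corollary \ref{2g+n<AKh} into a sharp identification, and it relies on the fact that the differentials of the spectral sequence truly preserve the f-grading.

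For the backward direction of (d), I would reduce to (c). The K\"unneth computation above shows that $\AKh(\mathcal{K}_n;\bZ/2)$ has top f-grading $n$ with one-dimensional summand. The universal coefficient identity
\begin{equation*}
\dim_{\bZ/2}\AKh(L,n;\bZ/2)=\dim_\bQ \AKh(L,n;\bQ)+2T_n,
\end{equation*}
where $T_n$ counts $2$-torsion summands of $\AKh(L;\bZ)$ in f-grading $n$, then forces $\dim_\bQ \AKh(L,n;\bQ)=1$, with an analogous vanishing statement above $f=n$. Part (c) identifies $L$ with a braid closure on $n$ strands, and Baldwin--Grigsby's braid detection theorem \cite{BG-braid} completes the argument by identifying $L$ with the trivial braid $\mathcal{K}_n$.
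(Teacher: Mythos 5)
Your proposal is correct and takes essentially the same route as the paper: lift the hypothesis to $\bQ$ (or $\bC$) coefficients by the universal coefficient theorem when the input is over $\bZ/2$, feed it into the $\AKh\Rightarrow\AHI$ spectral sequence of Theorem \ref{s-sequence}, observe that the $E_2$ term at the extremal f-grading cannot die, and then apply Theorem \ref{2g+n}, Corollary \ref{link-in-ball}, and Corollary \ref{AHI-braid-detection} together with the Batson--Seed/Hedden--Ni and Baldwin--Grigsby detection theorems. The only cosmetic difference is that you handle the mirror bookkeeping via a duality isomorphism on $\AKh$ itself, whereas the paper simply applies the spectral sequence with $L$ replaced by $\bar{L}$ (so the given $\AKh(L;\bC)$ is the $E_2$-page converging to $\AHI(\bar{L})$) and passes from $\bar{L}$ back to $L$ at the end.
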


\vspace{0.07in}

\begin{proof}
~
(a)
Suppose $\AKh(L;\bQ)$ is supported at the f-grading $0$, by Corollary \ref{2g+n<AKh}, there exists a meridional surface with genus zero that is disjoint from $L$. Therefore $L$ is included in a 3-ball. The other direction is clear from the definition of $\AKh$ in \cite[Section 2]{Rob}. 
\\

(b) Suppose $\AKh(L;\bZ/2)\cong\AKh(\mathcal{U}_n;\bZ/2)$ 
as triply-graded abelian groups. Then $\AKh(L;\bZ/2)$ is supported at the f-grading $0$. The universal coefficient theorem then implies that $\AKh(L;\bQ)$ is supported at the f-grading $0$, hence
by (a) we know $L$ is included in a 3-ball. In this case $\AKh(L,0)\cong \text{Kh}(L)$ as bi-graded  abelian groups by the h-grading and q-grading. 
Since it is known that the bi-graded Khovanov homology with $\bZ/2$-coefficients
detects the unlink \cite[Theorem 1.3]{Kh-unlink}, $L$ must be the unlink with $k$ components.
The other direction is obvious.
\\

(c) Suppose the top f-grading of $\AKh(L;\bQ)$ is $n$ and $\AKh(L,n;\bQ)\cong \bQ.$


By the universal coefficient theorem and Theorem \ref{s-sequence}, we have $\AHI(\overline{L},i)=0$ for all $i>n$ 
and $\dim_\bC \AHI(\overline{L},n)\le 1$. Since $\AKh(L,n;\bC)$ is 1-dimensional, 
it cannot collapse anymore in the spectral sequence. Therefore $\dim_\bC \AHI(\overline{L},n)=1$. By Corollary \ref{AHI-braid-detection},
$\overline{L}$ is the closure of a braid with $n$ strands. So does $L$.


The other direction follows from \cite[Proposition 2.5]{GN-braid}. Although \cite[Proposition 2.5]{GN-braid} 
is stated for $\bZ/2$-coefficients, the proof of this proposition works for arbitrary fields. 
\\

(d) Suppose $\AKh(L;\bZ/2)\cong \AKh(\mathcal{K}_n;\bZ/2)$. 
We have $\AKh(L,i;\bZ/2)=0$ for $i>n$ and $\rank\AKh(L,n;\bZ/2)=1$. By the universal coefficient theorem, we have  
 $\rank\AKh(L,i;\bQ)=0$ for $i>n$ and $\rank\AKh(L,n;\bQ)=1$. 
Hence $L$ is the closure of a braid with $n$ strands by Part (c).
By \cite[Theorem 3.1]{BG-braid}, 
the triply-graded annular Khovanov homology distinguishes the trivial braid closure among braid closures. Therefore the result is proved.
The other direction is obvious.
\end{proof}


\bibliographystyle{amsalpha}
\bibliography{references}

\end{document}